\newtheorem{theorem}{Theorem}[section]
\newtheorem{lemma}[theorem]{Lemma}
\newtheorem{proposition}[theorem]{Proposition}
\newtheorem{corollary}[theorem]{Corollary}
\newtheorem{key lemma}[theorem]{\bf Key Lemma}
\newtheorem{sub lemma}[theorem]{\bf Sublemma}
\theoremstyle{definition}
\newtheorem{definition}[theorem]{Definition}
\newtheorem{example}[theorem]{Example}
\newtheorem{conjecture}[theorem]{Conjecture}
\newtheorem{submetry conjecture}[theorem]{Submetry Conjecture}
\newtheorem{soul conjecture}[theorem]{Soul Conjecture}
\newtheorem{canonical bundle conjecture}[theorem]{Canonical bundle Conjecture}
\newtheorem{open problem}[theorem]{Open problem}
\theoremstyle{remark}
\newtheorem{remark}[theorem]{Remark}
\numberwithin{equation}{theorem}%
\numberwithin{figure}{theorem}
\begin{document}
\title{Open Alexandrov spaces of nonnegative curvature}

\author{Xueping Li$^1$ and Xiaochun Rong$^2$}
\date{\today}

\address{X. Li: Mathematics Department, Jiangsu Normal University, Xuzhou, Jiangsu Province, P. R. China}

\address{X. Rong: Mathematics Department, Capital Normal University, Beijing,
P.R.China}

\address{Mathematics Department, Rutgers University
New Brunswick, NJ 08903 USA}


\noindent \keywords{Open Alexandrov space of non-negative curvature, Sharafutdinov retraction,
weakly integrable submetry, soul of co-dimension two rigidity.
}
\thanks{1.\it {Partially supported by NSFC 11501258, research founds from Jiangsu Normal University and Capital Normal University. Part of this work is done during the author's visit at Capital Normal University}}.
\thanks{2.\it{Partially supported by NSFC 11821101, BNSF Z190003, and a research fund from Capital Normal University. \hfill{$\,$}}}

\maketitle

\begin{abstract} Let $X$ be an open (i.e. complete, non-compact and without boundary) Alexandrov $n$-space of nonnegative curvature with a soul $S$.  In this
paper, we will establish several structural results on $X$ that can be viewed as counterparts of structural results
on an open Riemannian manifold with nonnegative sectional curvature.
\end{abstract}

\section {Introduction}

\vskip4mm

In this paper, we will investigate interplays of geometric and topological structures on an open (complete non-compact) Alexandrov space of nonnegative curvature, all Alexandrov spaces in this paper are of finite dimensional.

An Alexandrov space with curvature $\ge \kappa$ is a complete length metric space on which
the Toponogov triangle comparison holds with respect to a simply connected surface of constant curvature $\kappa$. Alexandrov geometry is a synthetic geometry introduced by Burogo-Gromov-Perel'man in \cite{BGP}. A partial motivation is that the Gromov-Hausdorff limit of a sequence of
Riemannian $n$-manifolds $M_i$ of sectional curvature, $\text{sec}_{M_i}\ge \kappa$, $M_i\xrightarrow{GH}X$, may not be a Riemannian manifold, but $X$ is always an Alexandrov
space with curvature $\ge \kappa$.

A core issue in Alexandrov geometry is interplays between geometric and topological structures on an Alexandrov space, most of which, if not all, are counterparts to results in Riemannian geometry that rely on the Toponogov triangle comparison.

Our main results in this paper can be viewed as (partial) `counterpart' to classical results in Riemannian geometry on open manifolds of nonnegative sectional curvature, which
we briefly review below.

\begin{theorem} {\rm (Soul, Cheeger-Gromoll \cite{CG})} \label{sol-cg} Let $M$ be an open Riemannian
manifold of $\text{sec}\ge 0$. Then $M$ contains a compact totally convex submanifold
$S$ (called a soul of $M$), and there is a diffeomorphism, $f: T^\perp S\to M$,
where $T^\perp S$ denotes the normal bundle of $S$. Moreover, if $\text{sec}_M>0$, then
$S=\{\text{pt}\}$, thus $M$ is diffeomorphic to an Euclidean space (\cite{GM}).
\end{theorem}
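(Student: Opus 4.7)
The plan is to follow the three-part strategy of Cheeger-Gromoll: produce a compact, totally convex subset of $M$; reduce its dimension inductively to arrive at the soul $S$; and finally show $M$ is diffeomorphic to the normal bundle $T^\perp S$. The strictly positive case is then a separate (and earlier) theorem of Gromoll-Meyer that I would invoke at the end.

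First I would fix a basepoint $p\in M$ and, for every ray $\gamma:[0,\infty)\to M$ with $\gamma(0)=p$, introduce the Busemann function
\[
b_\gamma(x)=\lim_{t\to\infty}\bigl(t-d(x,\gamma(t))\bigr).
\]
Toponogov's comparison against the Euclidean plane, available because $\sec\ge 0$, shows that each $b_\gamma$ is concave along geodesics. Setting
\[
C=\bigcap_\gamma\{x\in M:b_\gamma(x)\ge 0\},
\]
total convexity of $C$ is immediate. Compactness follows from the fact that outside a sufficiently large metric ball about $p$ at least one $b_\gamma$ becomes strictly negative, so $C$ is bounded and closed.

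The dimension reduction is next. For any compact totally convex set $C_0\subset M$ with nonempty boundary, the distance-to-boundary function $d_{\partial C_0}$ is concave on the interior (again by Toponogov), so its maximum set $C_1\subset C_0$ is totally convex and has strictly smaller intrinsic dimension. Iterating produces a descending chain that terminates in a compact, totally convex, boundaryless submanifold $S$, which I declare to be the soul. A standard argument using the second variation formula shows that $S$ is automatically totally geodesic.

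Finally, to exhibit the diffeomorphism, I would construct the Sharafutdinov-type deformation retraction $r:M\to S$ by flowing along the gradients of the concave functions associated with the nested family $C\supset C_1\supset\cdots\supset S$, and verify that $r$ coincides with the projection induced by the normal exponential map $\exp^\perp:T^\perp S\to M$. The technical heart of the proof lies here: one must check that $\exp^\perp$ is a global diffeomorphism, equivalently that normal geodesics to $S$ remain minimizing to their foot points and that the retraction is smooth across the strata $C_i\setminus C_{i+1}$. I expect this step, which requires delicate use of the total convexity of all the $C_i$ together with the first and second variation formulas, to be the main obstacle. For the strict-inequality statement, the Gromoll-Meyer theorem---whose proof uses that under $\sec>0$ the Busemann functions are strictly concave on normal $2$-planes, forcing the soul construction to collapse $S$ to a single point---identifies $M$ with $T^\perp\{\text{pt}\}\cong\mathbb{R}^n$ and completes the argument.
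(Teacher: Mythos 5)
This theorem is quoted in the paper from \cite{CG} and \cite{GM} without proof, so the comparison is with the classical Cheeger--Gromoll argument; your first two stages (convex exhaustion by Busemann superlevel sets, then iterated maximum sets of boundary-distance functions terminating in a boundaryless totally convex $S$) are indeed that argument in outline. But your third stage contains a genuine error. You propose to obtain the diffeomorphism by showing that the Sharafutdinov-type retraction coincides with the projection induced by $\exp^\perp$, ``equivalently that $\exp^\perp\colon T^\perp S\to M$ is a global diffeomorphism.'' That is not equivalent to the theorem and is false in general: the normal exponential map of the soul need not be injective (the paper points this out immediately after Theorem \ref{sol-cg}, and its injectivity at even a single point is a special rigidity situation, cf.\ (0.3.1)). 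A capped half-cylinder already gives a soul (a point) whose exponential map is not injective. The correct route, recalled in the paper's Appendix (proof of Theorem \ref{8b}), is different: one shows that $d_S$ has no critical points on $M\setminus S$, builds from this a smooth gradient-like vector field, and uses its flow to produce a (non-canonical) diffeomorphism $T^\perp S\to M$; no identification with $\exp^\perp$ is available or needed. So the step you yourself flag as ``the technical heart'' is the step that would fail as formulated.

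Two smaller points. With your convention $b_\gamma(x)=\lim_t\bigl(t-d(x,\gamma(t))\bigr)$, the Busemann function is \emph{convex} under $\sec\ge 0$ (in $\mathbb R^n$ it is $\langle x,v\rangle$), so superlevel sets $\{b_\gamma\ge 0\}$ are not totally convex and your ``immediate'' convexity of $C$ needs the opposite sign (equivalently, intersect the complements of the open horoballs $\{b_\gamma\le 0\}$). Also, compactness of $C$ is not ``the fact that outside a large ball some $b_\gamma$ is negative''---that statement is just a restatement of boundedness; the actual argument uses total convexity: if $C$ were unbounded it would contain a ray $\sigma$ from $p$, and then $b_\sigma$ is negative along $\sigma$, a contradiction. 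These are repairable at sketch level, but the $\exp^\perp$ step is not.
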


Note that the exponential map, $\exp: T^\perp S\to M$,
may not be injective; so the diffeomorphism, $f: T^\perp S\to M$, may
not be canonically determined.

There is a canonically defined distance non-increasing map, $\phi: M\to S$, called
the Sharafutdinov projection (\cite{Sh}, \cite{Per1}); which is obtained through a deformation retracting process:
for $p\in M$, the Busemann function $b_p$ is a proper concave function, thus $b_p$ has a well-defined gradient field with a compact convex maximum point set. Moving $x\in M$ along the gradient curve of $b_p$ to its maximum point set $S_{\max b_p}$.
If $\partial S_{\max b_p}\ne \emptyset$, then one continuously moves the point along the gradient curve of $d_{\partial S_{\max b_p}}$. Repeating this process until one gets a maximum point set, $S$, without boundary. We will call $b_p$ a Busemann function of $S$.

Since Theorem \ref{sol-cg}, the most significant advance is the Perel'man discovery of the rigidity of
Sharafutinov projection, $\phi: M\to S$.

\vskip2mm

\begin{theorem} {\rm (Perel'man, \cite{Per3})} \label{sol} Let $M$ be an open manifold of $\text{sec}_M\ge 0$, and let $S$ be a soul. Then any distance non-increasing
retraction from $M$ to $S$ coincides with a Sharafutinov projection, $\phi: M\to S$,
which satisfies the following properties:

\vskip1mm

\noindent {\rm (0.2.1)} For any $\bar x\in S$ and $\bar v\in T_{\bar x}^\perp S$, $\phi(\exp_{\bar x}t\bar v)=\bar x$ for all $t>0$.

\vskip1mm

\noindent {\rm (0.2.2) (Flat strip)} For $\bar p\in S, \bar v\in T^\perp_{\bar p}S$,
if $\bar p\ne \bar q\in S$, then $\exp_{\bar \gamma(s)} t\bar v(s)$ form
a flat strip i.e., an embedding, $[0,d(\bar p,\bar q)]\times
\mathbb R_+\to M$, and isometric embedding on $[0,d(\bar p,\bar q)]\times [t-\delta_t,t+\delta_t]$
($\delta_t>0$), where $\bar v(s)$ denotes the parallel transport of $\bar v$
along a normal minimal geodesic $\bar \gamma(s)$ from $\bar p$ to $\bar q$.

\vskip1mm

\noindent {\rm (0.2.3) (Submetry)} $\phi: M\to S$, is a submetry i.e.,
$\phi(B_r(x))=B_r(\phi(x))$ for all $x\in M$ and $r>0$. Consequently, $\phi$ is a $C^1$ Riemannian submersion (thus a fiber bundle map).

\vskip1mm

\noindent {\rm (0.2.4) (Soul conjecture of Cheeger-Gromoll)} If there is $q\in M$ where any sectional curvature is positive, then $S$ is a point.
\end{theorem}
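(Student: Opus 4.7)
The plan is to establish the four assertions in sequence, with the flat strip theorem (0.2.2) as the central technical result from which (0.2.1), (0.2.3) and (0.2.4) follow by geometric rigidity arguments. Throughout, I rely on two main tools: gradient flows of concave proper functions (Busemann functions and $d(\cdot, S)$), and the equality case of the Toponogov comparison theorem.

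First I would establish the uniqueness of the retraction together with (0.2.1). Away from $S$, the distance function $d(\cdot, S)$ is smooth with nonvanishing gradient, and its integral curves terminate at unique points of $S$. If $\phi: M \to S$ is any distance non-increasing retraction and $c$ is such a gradient curve from $x$ to $\bar x \in S$, then $\phi \circ c$ is a curve in $S$ from $\phi(x)$ to $\bar x$ of length at most $\mathrm{Length}(c) = d(x, S)$; together with total convexity of $S$ this forces $\phi(x) = \bar x$. For (0.2.1), total convexity shows that the foot of $\exp_{\bar x} t\bar v$ on $S$ is precisely $\bar x$, so $\phi(\exp_{\bar x} t \bar v) = \bar x$.

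The flat strip theorem (0.2.2) is the main obstacle. Let $\bar\gamma: [0, L] \to S$ be a unit-speed minimal geodesic from $\bar p$ to $\bar q$ and $\bar v(s)$ the parallel transport of $\bar v$ along $\bar\gamma$. I would study the ruled surface $F(s, t) = \exp_{\bar\gamma(s)} t\bar v(s)$ and show that for each $t_0$ the curve $s \mapsto F(s, t_0)$ has length exactly $L$ and meets the two rays $F(0, \cdot)$ and $F(L, \cdot)$ orthogonally. Since normal rays from $S$ meet $S$ at angle $\pi/2$, applying Toponogov to the triangle with vertices $\bar p$, $\bar q$, $F(0, t_0)$, and then to the triangle with the roles of $\bar p, \bar q$ reversed, gives two comparison angles summing to at most $\pi$; on the other hand, the quadrilateral assembly of Toponogov triangles in nonnegative curvature forces the same sum to be at least $\pi$. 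Equality in Toponogov, applied to arbitrarily fine subdivisions of the quadrilateral, yields flatness of the rectangular region. Making this rigidity argument precise near the soul, where the gradient flow is not an isometry and the rays are only proved parallel a posteriori, is the subtle technical heart.

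Given (0.2.2), the submetry property (0.2.3) follows directly. For $x \in M$ with $\phi(x) = \bar x$ and $\bar y \in B_r(\bar x) \cap S$, join $\bar x$ to $\bar y$ inside $S$ by a minimal geodesic $\bar\gamma$; write $x = \exp_{\bar x} t_0 \bar v$ using (0.2.1), and set $x' = \exp_{\bar y} t_0 \bar v(L)$. The flat strip gives $d(x, x') = d(\bar x, \bar y) < r$ and $\phi(x') = \bar y$, proving $\phi(B_r(x)) \supseteq B_r(\bar x)$; the reverse inclusion is the contraction property. Berestovskii's theorem on submetries between smooth Riemannian manifolds upgrades $\phi$ to a $C^1$ Riemannian submersion, which is automatically a fiber bundle over the compact base $S$.

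Finally, for the soul conjecture (0.2.4), suppose $\dim S \ge 1$ and pick $q \in M$ at which all sectional curvatures are positive. Set $\bar x = \phi(q) \in S$ and write $q = \exp_{\bar x} t_0 \bar v$, then choose any nonconstant geodesic $\bar\gamma$ in $S$ starting at $\bar x$. The flat strip along $\bar\gamma$ contains $q$ and furnishes a $2$-plane $\sigma \subset T_q M$ spanned by $\partial_s F$ and $\partial_t F$; flatness of the strip forces the sectional curvature of $\sigma$ to vanish, contradicting positive curvature at $q$. Hence $\dim S = 0$, $S$ is a point, and Theorem 0.1 yields $M \cong \mathbb{R}^n$.
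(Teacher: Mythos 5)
You should first note that the paper offers no proof of this theorem at all: it is quoted as background and attributed to Perel'man \cite{Per3}, so your proposal has to stand on its own against the known argument, and there it has a genuine gap at its center. The derivations of (0.2.3) and (0.2.4) from (0.2.1)+(0.2.2) are essentially fine (for (0.2.4) you do need the strips to be totally geodesic, as in Perel'man's actual conclusion, not merely intrinsically flat: for a ruled strip with only $II(\partial_t,\partial_t)=0$ the Gauss equation gives $K_M=|II(\partial_s,\partial_t)|^2\ge 0$, which is no contradiction with positivity). But (0.2.1) and (0.2.2) \emph{are} the theorem, and your arguments for them do not go through. For (0.2.1) you argue that the foot point of $\exp_{\bar x}t\bar v$ on $S$ is $\bar x$ and conclude $\phi(\exp_{\bar x}t\bar v)=\bar x$; this assumes what must be proved (that $d(\exp_{\bar x}t\bar v,S)=t$ for all $t$ is itself a consequence of the theorem) and is a non sequitur besides, since the Sharafutdinov map is not defined as nearest-point projection — that it coincides with it is part of the conclusion. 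Likewise, in the uniqueness step, knowing that $\phi\circ c$ is a curve in $S$ from $\phi(x)$ to $\bar x$ of length at most $d(x,S)$ only yields $d(\phi(x),\bar x)\le d(x,S)$, which does not force $\phi(x)=\bar x$.

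For (0.2.2) your sketch is purely comparison-geometric: orthogonality of the normal rays at $S$, Toponogov angle sums at the corners, and fine subdivisions. No such argument can succeed, because the only properties of $S$ it invokes (compact, totally geodesic, normal geodesics meeting $S$ orthogonally) do not imply the flat strip property: in $M=S^2\times\mathbb R$ take $N$ a great circle in $S^2\times\{0\}$ and $\bar v$ the unit normal tangent to the $S^2$ factor; then $\exp_{\bar p}t\bar v$ and $\exp_{\bar q}t\bar v(L)$ meet at the pole at $t=\pi/2$, so the claimed length identity $\mathrm{Length}\bigl(s\mapsto F(s,t_0)\bigr)=L$ fails. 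What distinguishes the soul is precisely the existence of the $1$-Lipschitz retraction $\phi$, and Perel'man's proof uses it essentially: one runs an open--closed induction on $t$ in which the upper bound $d(\exp_{\bar p}t\bar v,\exp_{\bar q}t\bar v(L))\le d(\bar p,\bar q)$ comes from comparison applied to the rectangle already known to be flat up to level $t_0$ (right angles at the level-$t_0$ corners), while the reverse bound comes from applying the $1$-Lipschitz map $\phi$ together with $\phi(\exp_{\bar x}t\bar v)=\bar x$, which is established simultaneously in the same induction; the equality case then yields the totally geodesic flat strip, and (0.2.1) and the uniqueness of the distance non-increasing retraction fall out because any such retraction must agree with the now well-defined nearest-point projection. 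Your proposal never brings $\phi$ into the proof of the flat strip, so the step you flag as the ``subtle technical heart'' is not merely unfinished — the key idea that makes it work is missing.
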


\vskip2mm

In \cite{CS} and \cite{Wi}, it was independently proved that $\phi$ is $C^\infty$,
thus $\phi: M\to S$ is a Riemann submersion  (\cite{Guj}).
Because $\phi$ is solely determined by the metric structure on $M$, we will call $(M, S, \phi)$ the canonical fiber bundle. In \cite{Wi}, it was showed that there is a diffeomorphism,
$f: T^\perp S\to M$, such that $\phi\circ f=\text{proj}: T^\perp S\to S$, i.e., two bundles
$(T^\perp S,S,\text{proj})$ and $(M,S,\phi)$ are equivalent. In the Appendix, we will present
a proof different from \cite{Wi}.

In various situations, the canonical fiber bundle poses strong rigidities.

\vskip2mm

\begin{theorem} {\rm(Rigidities of canonical fiber bundles)} \label{rig} Let $(M,S,\phi)$
be as in Theorem \ref{sol}.

\vskip1mm

\noindent {\rm (0.3.1) (Canonical bundle equivalence)} If there is $\bar p\in S$ such that $\exp_{\bar p}: T_{\bar p}^\perp S\to M$ is injective, then $\exp: T^\perp S\to M$ is a diffeomorphism, thus $\exp$ is a bundle isomorphism of $(T^\perp S,S,\text{proj})$ and $(M,S,\phi)$.

\vskip1mm

\noindent {\rm (0.3.2) (Integrable horizontal distribution rigidity, \cite{Str}, \cite{Yim})} If the horizontal distribution of $(M,S,\phi)$ is integrable, then the Riemannian
universal cover, $\pi: (\tilde M,\tilde p)\to (M,\bar p)$, $\bar p\in S$, splits, $\tilde M=\pi^{-1}(S)\times \phi^{-1}(\bar p)$, and $\pi^{-1}(S)$ splits, $\pi^{-1}(S)=\hat S\times \mathbb R^k$, $k\ge 0$, and $\hat S$ is compact.

\vskip1mm

\noindent {\rm (0.3.3) (Codimension $2$ rigidity,  \cite{Wa}, \cite{Str}, \cite{Yim})} If $\dim(M)-\dim(S)=2$, then $M$ satisfies either (1.3.1) or (1.3.2).
\end{theorem}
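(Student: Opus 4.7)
I would address the three parts in order, leveraging the flat strip property (0.2.2) and the submetry structure (0.2.3) of Theorem~\ref{sol} throughout.

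For (0.3.1): I regard $\exp: T^\perp S \to M$ as a bundle map over $S$ from the normal bundle $(T^\perp S, S, \pi)$ to the canonical fibration $(M, S, \phi)$, with the intertwining $\phi \circ \exp = \pi$ being exactly (0.2.1). Surjectivity is immediate from (0.2.3): each $x \in M$ admits a shortest segment to $S$ that terminates normally at $\phi(x)$. For injectivity, the hypothesis says the restriction $\exp_{\bar p}: T^\perp_{\bar p} S \to \phi^{-1}(\bar p)$ is already a bijection. The flat strip property (0.2.2) identifies parallel transport in the normal bundle (with its normal connection) along any curve in $S$ with horizontal lift in $(M,S,\phi)$; consequently $\exp$ intertwines the two parallel transports. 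Since each parallel transport is a fiber isomorphism, bijectivity at $\bar p$ propagates to every fiber, giving global bijectivity. A Jacobi field computation inside a flat strip shows $d\exp$ is nonsingular, upgrading the bijection to a diffeomorphism.

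For (0.3.2): Integrability of the horizontal distribution $H$ is the vanishing of the O'Neill $A$-tensor. The fibers $V = \ker d\phi$ are totally geodesic, as follows from (0.2.1) and the flat strip structure making each fiber star-shaped and convex. Thus $H$ and $V$ are complementary parallel distributions on $M$. On the Riemannian universal cover $\tilde M$, the de Rham decomposition theorem then gives $\tilde M = L_H \times L_V$, where the horizontal and vertical leaves $L_H$ and $L_V$ through the basepoint are identified with $\pi^{-1}(S)$ and $\phi^{-1}(\bar p)$ respectively. The factor $\pi^{-1}(S)$ is complete, simply connected, and of nonnegative curvature; iterating the Cheeger-Gromoll splitting theorem on its lines produces $\pi^{-1}(S) = \hat S \times \mathbb{R}^k$ with $\hat S$ compact.

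For (0.3.3): I would set up a dichotomy on injectivity of $\exp_{\bar p}$. If $\exp_{\bar p}: T^\perp_{\bar p} S \to M$ is injective for some $\bar p \in S$, then (0.3.1) applies. Otherwise, for every $\bar p \in S$ there exist distinct $v_1, v_2 \in T^\perp_{\bar p} S$ with $\exp_{\bar p}(v_1) = \exp_{\bar p}(v_2)$, which is a strong rigidity inside a $2$-dimensional fiber. Combining the flat strip property along loops in $S$ with the codimension $2$ constraint (the structure group of the canonical bundle lies in $O(2)$), I would argue that the horizontal holonomy is abelian and, together with the non-injectivity, forces the $A$-tensor to vanish, reducing to (0.3.2).

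\textbf{The main obstacle} is part (0.3.3). Passing from non-injectivity of $\exp_{\bar p}$ on a $2$-dimensional fiber to the global vanishing of the $A$-tensor demands a careful analysis of how flat strips along loops in $S$ propagate and interact under the $O(2)$ holonomy; the codimension $2$ hypothesis is essential, since in higher codimension neither (0.3.1) nor (0.3.2) need hold.
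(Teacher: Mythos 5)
Your (0.3.1) is essentially the paper's own remark (the paper proves nothing here beyond observing that (0.3.1) follows from the flat strip property (0.2.2) and citing \cite{Str}, \cite{Wa}, \cite{Yim} for the rest), and the propagation-of-injectivity idea via identifying normal parallel transport with the holonomy of $\phi$ is the right one; only your justification of nonsingularity of $d\exp$ is misplaced — it comes not from a Jacobi field computation inside a flat strip but from the fact that every geodesic normal to $S$ is a ray realizing $d(\cdot,S)$, so there are no focal points. The genuine gaps are in (0.3.2) and (0.3.3). In (0.3.2) you assert that the fibers are totally geodesic because (0.2.1) and the flat strips make each fiber ``star-shaped and convex''; star-shapedness (a union of normal geodesics through one soul point) does not imply total geodesy, and without $T\equiv 0$ the de Rham decomposition simply does not apply: $A\equiv 0$ alone is compatible with non-product metrics (warped products over the horizontal leaves have integrable horizontal distribution with totally geodesic leaves and non-totally-geodesic fibers). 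So the nonnegative curvature and the soul structure must be used to kill $T$ or to build the splitting directly; this is exactly what \cite{Str} and \cite{Yim} do, by showing that integrability together with the flat strips makes normal parallel transport path-independent and then assembling the strips into an isometric splitting of the universal cover.

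In (0.3.3) your dichotomy is on injectivity of $\exp_{\bar p}$, and the whole burden rests on the claimed implication that non-injectivity at every point forces the $A$-tensor to vanish — which you do not prove and yourself flag as the main obstacle. That implication is essentially the content of the theorem, so the proposal is not a proof. The classical route, and the paper's one-line indication of it, dichotomizes instead on the holonomy of the normal bundle $(T^\perp S,S,\text{proj})$ (equivalently, via (0.2.2), of the fibration $\phi$): in codimension $2$ it is either $S^1$ or trivial; nontrivial holonomy is shown to force $\exp$ to be injective, hence a diffeomorphism by (0.3.1), while trivial holonomy yields globally parallel normal fields and hence the splitting of (0.3.2). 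Also, $O(2)$ is not abelian (only its identity component is), so the ``abelian holonomy'' step would need the orientation/connectedness issue addressed even if the rest were carried out.
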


We observe that (0.3.1) follows from (0.2.2). In (0.3.3), the holonomy group of
$(T^\perp S,S,\text{proj})$ is $S^1$ or trivial (correspondingly, $\exp$ is a
diffeomorphism, or $\tilde M$ splits). Note that if $S$ has codimension $>2$,
then geometric structures on $X$ are more complicated (cf. Theorem 2.9 in \cite{GW}).

Let's now review counterparts and conjectured counterparts of Theorems 0.1-0.3 in Alexandrov geometry. Let $\text{Alex}^n(\kappa)$ denote the set of complete $n$-dimensional Alexandrov spaces
of curvature $\ge \kappa$.

First, a counterpart of Theorem \ref{sol-cg} is:

\vskip2mm

\begin{theorem} {\rm (Soul, \cite{Per1})} \label{sol-alex} For an open $X\in \text{Alex}^n(0)$, $X$ contains a compact totally convex subset $S$ without boundary and there is a distance non-increasing map
(the Sharafutdinov projection), $\phi: X\to S$.
\end{theorem}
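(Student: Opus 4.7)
The proof adapts the Cheeger--Gromoll soul construction to the Alexandrov setting, with Perel'man's gradient flow of a semiconcave function replacing the smooth deformation retractions. The overall recipe is: build a proper concave Busemann-type function $b_p$ on $X$, whose compact maximum set $C_0$ is totally convex; if $\partial C_0\ne\emptyset$, pass to the maximum set $C_1$ of the distance-to-boundary $d_{\partial C_0}$; iterate until the boundary disappears; and concatenate the associated gradient flows to obtain the Sharafutdinov projection.

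Concavity of Busemann functions is a direct consequence of the Toponogov comparison with the Euclidean plane: for any ray $\gamma$ in $X$ and any geodesic $\alpha$, the function $s\mapsto d(\alpha(s),\gamma(t))$ is convex, and this passes to the limit $t\to\infty$ defining $b_\gamma$. Fix $p\in X$ and let $\mathrm{Ray}(p)$ denote the set of unit-speed rays from $p$ (nonempty by non-compactness). Set
\[
   b_p(x)\;=\;\inf_{\gamma\in\mathrm{Ray}(p)}\, b_\gamma(x).
\]
As a pointwise infimum of concave functions, $b_p$ is concave, with $b_p(p)=0$. The main quantitative step is the properness $b_p\to-\infty$ at spatial infinity: for $x$ with $d(p,x)$ large, I would extract via an Arzel\`a--Ascoli argument a ray $\gamma\in\mathrm{Ray}(p)$ whose initial direction is nearly opposite to that of a segment $px$, then apply Toponogov comparison on the triangle $p\,x\,\gamma(T)$ to bound $T-d(x,\gamma(T))$ below $-(1-\varepsilon)d(p,x)$. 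Once properness is in hand, $C_0:=\{b_p=\max b_p\}$ is a nonempty, compact, totally convex subset of $X$.

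If $\partial C_0\ne\emptyset$, then $C_0$ is itself an Alexandrov space of nonnegative curvature, and the distance-to-boundary $d_{\partial C_0}$ is concave on $C_0$; its maximum set $C_1$ is totally convex, disjoint from $\partial C_0$, and of strictly smaller dimension. Iterating yields a finite, strictly decreasing chain $C_0\supset C_1\supset\cdots\supset C_\ell=:S$ with $\partial S=\emptyset$, which is the soul. The Sharafutdinov projection $\phi:X\to S$ is then obtained by concatenating the Perel'man gradient flows of $b_p$ on $X$, of $d_{\partial C_0}|_{C_0}$ on $C_0$, and so on; since each function is concave on its domain, the corresponding gradient flow is $1$-Lipschitz, so the composition $\phi$ is a distance non-increasing retraction onto $S$.

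\textbf{Main obstacle.} The most delicate point is the properness of $b_p$: in the smooth Riemannian case one exploits the first variation of arclength, whereas in the Alexandrov setting the estimate has to be extracted purely from Toponogov comparison together with quantitative control on how well $\mathrm{Ray}(p)$ covers the asymptotic directions from $p$. The remaining ingredients --- concavity of distance-to-boundary on totally convex subsets, the strict dimension drop at each iteration, and $1$-Lipschitz continuity of the gradient flow of a concave function --- are furnished by Perel'man's gradient-flow calculus for semiconcave functions on Alexandrov spaces and need only routine assembly.
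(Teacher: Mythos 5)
You should first note that the paper offers no proof of this statement at all: Theorem \ref{sol-alex} is quoted from Perel'man \cite{Per1}, and the introduction only recalls the construction (concave proper Busemann function, maximum set, iterated distance-to-boundary functions, concatenated gradient flows). Your outline reproduces that standard Cheeger--Gromoll/Sharafutdinov scheme, so the architecture is the right one; the problem is that the two comparison-geometry steps you actually argue are both nonpositive-curvature facts and fail under curvature $\ge 0$. First, for $\mathrm{curv}\ge 0$ the function $s\mapsto d(\alpha(s),\gamma(t))$ is \emph{not} convex; Toponogov gives the reverse inequality $d(\alpha(s),\gamma(t))\ \ge\ $ (Euclidean comparison value). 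Concavity only appears after subtracting $t$ and letting $t\to\infty$, where the comparison value converges to the affine interpolation of the endpoint values, so that $b_\gamma:=\lim_t\bigl(d(\cdot,\gamma(t))-t\bigr)$ is concave (equivalently $\lim_t\bigl(t-d(\cdot,\gamma(t))\bigr)$ is convex). Your sign conventions are also inconsistent: in the properness step you work with $T-d(x,\gamma(T))$, i.e.\ the convex normalization, and a pointwise infimum of those over rays need not be concave; the function the paper uses is $b_p=\inf_{\gamma}\lim_t\bigl(d(\cdot,\gamma(t))-t\bigr)$, an infimum of concave functions.

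Second, the properness argument is wrong in direction. A ray from $p$ whose initial direction is nearly opposite to $\uparrow_p^x$ need not exist: on a flat cone of small total angle the ray directions at $p$ fill only a narrow arc around the outward radial direction, and no ray points into the sector opposite a far point $x$. Even granting such a ray, a lower curvature bound gives $\measuredangle\ \ge\ \tilde\measuredangle$ and hinge distances \emph{at most} the Euclidean comparison value, so from an angle near $\pi$ at $p$ you can conclude only $d(x,\gamma(T))\le T+d(p,x)$, never the lower bound $d(x,\gamma(T))\ge T+(1-\varepsilon)d(p,x)$ that your estimate needs (that implication is the CAT-type statement, matching your earlier convexity claim). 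The correct mechanism is the opposite one: if some superlevel set of $b_p$ contained points $x_i$ with $r_i=d(p,x_i)\to\infty$, take a ray $\sigma$ arising as a limit of segments $[px_i]$ (so pointing \emph{towards} the escaping points) and use the hinge comparison with small angle to get $\lim_t\bigl(d(x_i,\sigma(t))-t\bigr)\le -(1-\varepsilon)\,r_i\to-\infty$, contradicting the lower bound on $b_p(x_i)$; equivalently, for far $x$ one uses a ray pointing nearly towards $x$, not away from it. Once concavity and properness are set up with the correct signs, the remaining ingredients you list (total convexity of superlevel sets, iterated maximum sets of $d_{\partial C_i}$ with dimension drop, $1$-Lipschitz gradient flows of concave functions) are indeed what \cite{Per1} carries out, and are all the paper relies on.
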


\vskip2mm

The following are conjectured `counterparts' in Alexandrov geometry to Theorem \ref{sol}

\begin{submetry conjecture} [\cite{Per3}] \label{0e} Let $(X,S,\phi)$ be as
in Theorem \ref{sol-alex}. Then the Sharafutinov projection $\phi: X\to S$ is a submetry.
\end{submetry conjecture}

\begin{soul conjecture} [\cite{Per3}] \label{0f} Let $(X,S,\phi)$ be as
in Theorem \ref{sol-alex}. If $X$ contains an open subset where the curvature is bounded below by
a positive constant, then $S$ is a point.
\end{soul conjecture}

As in the Riemannian case, Submetry Conjecture \ref{0e} implies Soul Conjecture \ref{0f}, and different
from the Riemannian case, a simple example (Example \ref{1t}) shows that $\phi: X\to S$ is a submetry but not a fiber bundle map. Hence for $(X,S,\phi)$ to be a fiber bundle, additional restrictions are required.

Recall that a point $\bar x\in Y\in \text{Alex}^m(-1)$ is weakly $k$-strained (\cite{Fuj3}), if there are $(k+1$)-points, $\bar p_1,...,\bar p_k, \bar w$, such that
$$\measuredangle \bar p_i\bar x\bar p_j>\frac \pi2,\quad \measuredangle \bar p_i\bar x\bar w>\frac \pi2,$$
and $k$ is the largest integer (e.g., $\bar x$ is weakly $k$-strained implies that $\bar x$ is not weakly $j$-strained for any $j>k$).

Note that in a small neighborhood $U$ around a weakly $k$-strained point $\bar p$, the map, $h=(d_{\bar p_1},..., d_{\bar p_k}): U\to \Bbb R^k$, is strictly non-critical (\cite{Per1}) and $h$ defines a fiber bundle structure  (see Theorem \ref{1g}). We conjecture the following:

\begin{canonical bundle conjecture}
\label{0g} Let $(X,S,\phi)$ be as in Theorem \ref{sol-alex},
$m=\dim(S)\ge 1$. If all points in $S$ are weakly $m$-strained, then $\phi: X\to S$ is fiber bundle map.
\end{canonical bundle conjecture}

We will show that Conjecture \ref{0e} implies Conjecture \ref{0g} (see Theorem D).

In Theorem \ref{sol}, (0.2.3) and (0.2.4) follows from that the flat strip property (0.2.2). A major obstacle in Conjectures 0.5-0.7 is from the fact that $X$ in Theorem \ref{sol} may not have a flat strip property (\cite{Li}).

Partial progresses on Conjectures 0.5-0.7 have been made in low dimensions, and Submetry Conjecture \ref{0e} has been verified when $S$ has a small codimension (for details, see Section 1.5).

In order to describe (conjectured) counterparts to Theorem \ref{rig}, we introduce the following notion of integrability for a submetry, $f: X\to Y$, where $X\in \text{Alex}^n(\kappa)$ and $Y\in \text{Alex}^m(\kappa)$.

Because a distance function has a well-defined `differential', a submetry $f$ has a well-defined differential on tangent cones,
$Df_{x}: C(\Sigma_{x}X)\to C(\Sigma_{f(x)}Y)$, such that $Df_{x}(
tv)=tDf_{x}(v)$ ($t\ge 0$), where $x\in X$, $v\in \Sigma_{x}X$. The subspace, $H_x=\{v\in \Sigma_{x}X,\,\, Df_{x}(v)\in \Sigma_{f(x)}Y\}\subseteq \Sigma_{x}X$, is convex and is called the horizonal directions at $x$, and $Df_{x}: H_x\to \Sigma_{f(x)}Y$ is also a submetry. The
orthogonal complement to $H_x$, $V_{x}=\{w\in \Sigma_{x}X,\, |wH_x|=\frac \pi2\}$ (note that in \cite{KL}, the condition is that $|wH_x|\ge \frac \pi2$; which easily implies that $|wH_x|=\frac \pi2$, see Lemma \ref{8a} in Appendix), is also
closed convex subset, called the vertical directions at $x$, and $\Sigma_{x}X=[H_xV_{x}]=\bigcup_{v\in H_x, w\in V_{x}}[vw]$, where $[vw]$ denotes a minimal geodesic connecting $v$ and $w$ (\cite{Lyt2}, \cite{KL}).

We will call the set of horizontal spaces of directions, $H(f)=\{H_x,\,\, x\in X\}$, the horizontal (directions) distribution of $f$.

\begin{definition} [Integrability of submetries] \label{0h} Let $f: X\to Y$ be a submetry, $X\in \text{Alex}^n(\kappa), Y\in \text{Alex}^m(\kappa)$.

\noindent (0.8.1) (Weakly integrable) We say that $f$ is weakly integrable at $x\in X$ (resp. $\bar x\in Y$), if (resp. for any $x\in f^{-1}(\bar x)$) there is a subset $W_x\ni x$ such that $f: W_x\to W_{\bar x} $ is an isometry, and $W_{\bar x}$ is a convex neighborhood of $\bar x=f(x)$ (the existence of $W_{\bar x}$ can be found in \cite{Per2}; note that throughout this paper, `convex' means that for $\bar y, \bar z\in W_{\bar x}$, there is one minimal geodesic from $\bar y$ to $\bar z$ that is contained in $W_{\bar x}$); note that $W_x\subset X$ and $\Sigma_xW_x\subset H_x$ are also convex subsets.

\noindent (0.8.2) (Integrable) We say that $f$ is integrable at $x\in X$ (resp. $\bar x\in Y$), if (resp. for any $x\in f^{-1}(\bar x)$) $f$ is weakly integrable at $x$ such that $\Sigma_{x}W_x=H_x$.

We say that $f$ is (weakly) integrable if $f$ is (weakly) integrable at all $x\in X$.

\noindent (0.8.3) (Globalization) In (0.8.1) or (0.8.2), if $W_{\bar x}=Y$, then we say that $f$ is global weakly integrable (or global integrable) at $x$. If $f$ is global weakly integrable (or global integrable) for all $x\in X$, then we say that $f$ is global weakly integrable or global integrable.
\end{definition}

If $f: X\to Y$ is a Riemannian submersion (i.e., $X$ and $Y$ are Riemannian manifolds), then $f$ is weakly integrable if and only if $f$ is integrable, because $f: W_x\to W_{\bar x}$ is an
isometry implies that $H_x=\Sigma_xW_x$. We conjecture that this holds in Alexandrov geometry.

\begin{conjecture} \label{0i} (0.9.1) (Weakly integrable is integrable) Let $X\in \text{Alex}^n(\kappa), Y\in \text{Alex}^m(\kappa)$ with $\partial Y=\emptyset$. If $f: X\to Y$ is weakly integrable, then $f$ is integrable.

\noindent (0.9.2) (Weakly integrable is local isometry) Let $\Sigma\in \text{Alex}^n(1), \Sigma_0\in \text{Alex}^m(1)$ with $\partial \Sigma_0=\emptyset$. If $h: \Sigma\to \Sigma_0$ is weakly integrable, then $n=m$.
\end{conjecture}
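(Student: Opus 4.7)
The plan is to attack both parts of Conjecture \ref{0i} by induction on the target dimension, with (0.9.2) serving as the infinitesimal engine and (0.9.1) reducing to pointwise applications of (0.9.2) to the differential. For (0.9.2), I would induct on $n = \dim \Sigma$; the base $n = m$ is immediate, so assume $n > m$ and seek a contradiction. Fix $x \in \Sigma$ with $\bar x = h(x)$. The cone-preserving differential $Dh_x$ restricts on spherical slices to a submetry $Dh_x|_{H_x} : H_x \to \Sigma_{\bar x}\Sigma_0$ between Alexandrov spaces of curvature $\ge 1$; since $\partial \Sigma_0 = \emptyset$ propagates inductively to $\partial \Sigma_{\bar x}\Sigma_0 = \emptyset$, and the differential of $W_x$ provides a globally defined isometric section $\Sigma_x W_x \hookrightarrow H_x$, the hypothesis of (0.9.2) holds for $Dh_x|_{H_x}$ at least along this section.

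\textbf{Infinitesimal weak integrability.} To apply the inductive hypothesis one must verify that $Dh_x|_{H_x}$ is weakly integrable at every $v \in H_x$, not merely on $\Sigma_x W_x$. For this, take the geodesic $\gamma$ from $x$ in direction $v$, set $y_t = \gamma(t)$ for small $t > 0$, use weak integrability of $h$ at $y_t$ to obtain an isometric section $W_{y_t} \ni y_t$, and rescale by $1/t$; the rescaled convex sets live in the tangent cone $C(\Sigma_x\Sigma)$ and, by Gromov--Hausdorff precompactness of convex subsets of a fixed Alexandrov space together with the uniform size of the convex neighborhoods $W_{h(y_t)}$, they should accumulate at $v$ to yield a convex neighborhood of $v$ in $H_x$ which maps isometrically under $Dh_x|_{H_x}$ onto a convex neighborhood of $Dh_x(v)$ in $\Sigma_{\bar x}\Sigma_0$. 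The inductive hypothesis then forces $\dim H_x = m - 1$, and a parallel application of the inductive form of (0.9.1) yields $H_x = \Sigma_x W_x$.

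\textbf{Eliminating vertical directions and reducing (0.9.1).} It remains to show $V_x = \emptyset$, for then $\Sigma_x\Sigma = [H_xV_x] = H_x$ has dimension $m - 1$, giving $n = m$ and contradicting $n > m$. Here the condition $\partial \Sigma_0 = \emptyset$ is essential: every direction $\bar v \in \Sigma_{\bar x}\Sigma_0$ extends to a geodesic issuing both ways from $\bar x$, which via $W_x$ admits two-sided horizontal lifts at $x$. If a vertical $w \in V_x$ existed, one would aim to construct a second isometric section of $h$ through $x$ projecting onto the same neighborhood $W_{\bar x}$ but differing infinitesimally from $W_x$ in the $w$-direction, contradicting that the germ of $W_x$ at $x$ is uniquely determined by its image $W_{\bar x}$ via the isometry $h|_{W_x}$. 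For (0.9.1): given $f : X \to Y$ weakly integrable with $\partial Y = \emptyset$, applying (0.9.2) to $Df_x|_{H_x}$ yields $\dim H_x = m - 1 = \dim \Sigma_x W_x$; the global closed convex isometric section $\Sigma_x W_x$ inside the connected convex $H_x$ (of the same dimension and with $0$-dimensional fibers) then forces $\Sigma_x W_x = H_x$, i.e., integrability at $x$.

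\textbf{Main obstacle.} The genuinely hard step is the blow-up argument establishing infinitesimal weak integrability: the local sections $W_{y_t}$ are not canonical, so extracting a limiting convex subset at $v$ requires uniform control over the family $\{W_{y_t}\}$ and a uniqueness-of-germ result for isometric sections of weakly integrable submetries. A second, more structural obstacle is ruling out vertical directions: the mechanism by which $\partial \Sigma_0 = \emptyset$ forces a contradiction with the isometric section is intuitively plausible but appears to require rigidity results for horizontal lifts in Alexandrov geometry beyond what is currently available in the literature.
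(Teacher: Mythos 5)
You are attempting to prove something the paper itself leaves open: the statement is a Conjecture, and the paper only establishes (i) the equivalence of (0.9.1) and (0.9.2) (Proposition \ref{0j}) and (ii) partial cases (Theorem B: $n-m\le 1$, $Y=E_{\min}$, or $\Sigma$ a topologically nice sphere). Parts of your outline do track what the paper actually proves: the claim that $Dh_x\colon H_x\to \Sigma_{\bar x}\Sigma_0$ is weakly integrable at every $v\in H_x$ is exactly (A1), whose proof is not a routine blow-up but requires Theorem \ref{2a} (weak integrability over the top stratum is integrability) together with the enlargement Lemma \ref{2b} to get sections of uniform size before rescaling; and your reduction of (0.9.1) to (0.9.2) — dimension equality from the inductive hypothesis, $\partial H_x=\emptyset$, then $\Sigma_xW_x=H_x$ because a full-dimensional compact convex subset without boundary is everything — is precisely the paper's proof of the direction (0.9.2)$\Rightarrow$(0.9.1) in Proposition \ref{0j} (Lemmas \ref{2h} and \ref{2i}).

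The genuine gap is the step "eliminating vertical directions," which is the entire content of (0.9.2). Knowing $H_x=\Sigma_xW_x$ with $\dim H_x=m-1$ says nothing about $V_x$: if $n>m$ then $V_x\ne\emptyset$ automatically, and one must derive a contradiction from that, not from the mere coexistence of a vertical direction with an isometric section. Your proposed mechanism — a vertical $w$ would yield a second isometric section over the same $W_{\bar x}$, contradicting uniqueness of the germ of $W_x$ — is unsupported, and as a local argument it cannot work. Test it against Example \ref{1q}: $h\colon S^2_1\to S^2_1/S^1\cong[0,\frac\pi2]$ is weakly integrable; at a point $x$ lying over an interior point $\bar x$ of the base, geodesics of the base extend in both directions near $\bar x$, the isometric section through $x$ (the meridian) has a unique germ, and yet $V_x\ne\emptyset$. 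The only hypothesis of (0.9.2) that fails there is $\partial\Sigma_0=\emptyset$, which your vertical-elimination step never uses in any essential, global way — so the argument as described would locally "prove" a false statement. This is exactly where the conjecture remains open; the paper closes it only under extra hypotheses, e.g. for $\Sigma$ a topologically nice sphere, where the join rigidity of Lemma \ref{3a} plus Alexander duality (proof of (B2)) replaces the missing mechanism. As written, your proposal does not constitute a proof of Conjecture \ref{0i}, and you candidly flag both problematic steps yourself.
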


Conjecture  \ref{0i} is false if one removes the restriction that $\partial Y=\emptyset$ or $\partial \Sigma_0=\emptyset$ (see Example  \ref{1q}).

Note that (0.9.1) holds trivially if $X$ and $Y$ are Riemannian manifolds, and
(0.9.2) is not trivial if $\Sigma$ and $\Sigma_0$ are Riemannian manifolds (\cite{Wi}). To the contrary, we find that in Alexandrov geometry, (0.9.1) and (0.9.2) are equivalent, or equally non-trivial.

\begin{proposition} \label{0j} Conjectures in (0.9.1) and (0.9.2) are equivalent.
\end{proposition}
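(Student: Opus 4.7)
My plan is to bridge the two conjectures via the differential of a submetry. For a weakly integrable submetry $f: X \to Y$ with witness $W_x \ni x$, the induced map $Df_x: H_x \to \Sigma_{f(x)} Y$ is itself a submetry between $\Alex(1)$ spaces, and the global isometry $f|_{W_x}: W_x \to W_{\bar x}$ descends to an isometry $\Sigma_x W_x \to \Sigma_{\bar x} W_{\bar x} = \Sigma_{\bar x} Y$ that serves as a weak-integrability witness for $Df_x$. Crucially, the hypothesis $\partial Y = \emptyset$ forces $\partial \Sigma_{\bar x} Y = \emptyset$, putting (0.9.2) in position to apply to $Df_x$.

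For (0.9.2) $\Rightarrow$ (0.9.1), I would apply (0.9.2) to $Df_x$ to obtain $\dim H_x = \dim \Sigma_{\bar x} Y = \dim \Sigma_x W_x$. Then $\Sigma_x W_x \subseteq H_x$ is a closed convex subset of the connected $\Alex(1)$-space $H_x$, of full dimension with empty intrinsic boundary (since $\Sigma_x W_x \cong \Sigma_{\bar x} Y$). A standard Alexandrov convexity fact --- a closed, full-dimensional convex subset with empty boundary inside a connected Alexandrov space is open and hence equal to the ambient space --- then forces $\Sigma_x W_x = H_x$, which is exactly integrability of $f$ at $x$; since $x$ was arbitrary this yields (0.9.1).

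For (0.9.1) $\Rightarrow$ (0.9.2), I would take $h: \Sigma \to \Sigma_0$ weakly integrable between $\Alex(1)$ spaces with $\partial \Sigma_0 = \emptyset$, and apply (0.9.1) directly to deduce integrability $\Sigma_x W_x = H_x$, so that $Dh_x: H_x \to \Sigma_{\bar x} \Sigma_0$ is an honest isometry and $\dim H_x = m - 1$. It then remains to show $V_x = \emptyset$ at every $x$. I would argue by contradiction: picking $v \in V_x$, form the vertical geodesic $\gamma(t) = \exp_x(tv)$; the integrable horizontal slices $W_{\gamma(t)} \cong W_{\bar x}$ assemble along $\gamma$, and matching corresponding points across slices via $h$ should produce a local isometric embedding of the product $W_{\bar x} \times [0,\delta]$ into $\Sigma$, whose flat $[0,\delta]$-factor violates the $\Alex(1)$ triangle comparison for $\Sigma$, giving a contradiction. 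The hard step I expect is precisely this product construction: verifying that for each $\bar z \in W_{\bar x}$ the correspondent $z(t) \in W_{\gamma(t)}$ is joined to $z(0) \in W_x$ by a vertical geodesic of length exactly $t$, so that $(\bar z, t) \mapsto z(t)$ is an isometric embedding. This will require using integrability to get a genuine local foliation by horizontal leaves transverse to $\gamma$, together with the submetry property to control vertical distances between corresponding points on parallel leaves.
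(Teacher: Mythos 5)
Your direction (0.9.2) $\Rightarrow$ (0.9.1) is essentially the paper's argument: apply (0.9.2) to $Df_x\colon H_x\to \Sigma_{\bar x}Y$, get $\dim H_x=\dim\Sigma_xW_x$, note $\partial \Sigma_xW_x=\emptyset$ since $\partial Y=\emptyset$, and conclude $H_x=\Sigma_xW_x$ from the fact that a compact, full-dimensional, boundaryless convex subset fills the ambient space (Lemma \ref{2i}). But your justification that $Df_x$ is weakly integrable is too quick: the isometry $\Sigma_xW_x\to\Sigma_{\bar x}Y$ induced by $f|_{W_x}$ only supplies witnesses at directions $v\in\Sigma_xW_x$, whereas invoking (0.9.2) requires weak integrability at \emph{every} $v\in H_x$. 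That is exactly statement (A1), whose proof in the paper needs Theorem \ref{2a} together with a blow-up and enlargement argument; as written, your sketch says nothing about $v\in H_x\setminus\Sigma_xW_x$ (citing (A1) would repair this direction).

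The genuine gap is in (0.9.1) $\Rightarrow$ (0.9.2). You and the paper both start by applying (0.9.1) to $h$ itself to get integrability, but your route to $V_x=\emptyset$ --- assembling the leaves $W_{\gamma(t)}$ along a vertical geodesic into a flat product $W_{\bar x}\times[0,\delta]$ --- does not follow from integrability. Corresponding points on horizontal leaves over the same base point need not be equidistant: warped products $B\times_fF\to B$ give integrable submetries (each leaf $B\times\{q\}$ is an isometric copy of $B$) whose leaves spread apart or converge, so your key claim that $z(t)$ is joined to $z(0)$ by a vertical segment of length exactly $t$ is precisely the unproven hard step; in this paper such leaf-equidistance is obtained only under extra hypotheses ($\eta$-convex fibers in (C2), or the full codimension-two soul structure in Key Lemma \ref{7c}), and your sketch offers no mechanism that uses curvature $\ge 1$ to force it. Two further problems: a geodesic in a prescribed direction $v\in V_x$ need not exist in an Alexandrov space, nor need it stay in the fiber; and even granting the construction, a \emph{local isometric embedding} of a flat product into an $\text{Alex}(1)$ space is not by itself contradictory (the flat Clifford torus sits in $S^3_1$) --- you would need the strip to preserve ambient distances to violate the comparison. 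The paper circumvents all of this with a different device you are missing: pull the bundle back over the metric universal cover $\tilde\Sigma_0$, pass to Euclidean cones, and apply (0.9.1) a \emph{second} time to $\tilde h\colon C(\pi^*\Sigma)\to C(\tilde\Sigma_0)$; since the fiber over the cone vertex is a single point, integrability at the vertex forces $H_o=\pi^*\Sigma$ to coincide with $\Sigma_oW_o\cong\tilde\Sigma_0$, whence $n=m$ immediately.
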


The following is the conjectured counterpart of (0.3.2):

\begin{conjecture} \label{0k} Let $(X,S,\phi)$ be as in Theorem \ref{sol-alex}. If $\phi$ is integrable, then
the metric universal cover of $X$ splits, $\tilde X=\tilde S\times \phi^{-1}(\bar p)$ ($\bar p\in S$), where $\pi: (\tilde X,\tilde p)\to (X,\bar p)$ is the metric universal cover (thus $\tilde S=\pi^{-1}(S)$ splits as $\hat S\times \mathbb R^k$, where $\hat S$ is compact).
\end{conjecture}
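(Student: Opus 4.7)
The plan is to show that integrability produces a local metric product structure along fibers, and then to globalize on the universal cover. Fix $\bar x \in S$ and let $F = \phi^{-1}(\bar x)$. For each $x \in F$, the integrability hypothesis yields a convex neighborhood $W_{\bar x} \subset S$ and an isometric lift $W_x \subset X$ of $W_{\bar x}$ with $\Sigma_x W_x = H_x$. Using compactness of $S$ and a uniform version of Perelman's convex neighborhood construction, I would arrange $W_{\bar x}$ to be independent of $x \in F$. The first technical goal is to upgrade the family $\{W_x\}_{x \in F}$ to an isometry $\phi^{-1}(W_{\bar x}) \cong W_{\bar x} \times F$. The key identity to establish is the Pythagorean equality
$$d(x, y)^2 = d(x, y')^2 + d(y', y)^2 = d(\bar x, \phi(y))^2 + d(y', y)^2,$$
where $y \in \phi^{-1}(W_{\bar x})$ and $y'$ is the unique point of $W_x$ lying over $\phi(y)$. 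This uses the decomposition $\Sigma_x X = [H_x V_x]$, the orthogonality $|H_x V_x| = \pi/2$, and Toponogov comparison; the equality (rather than the automatic $\le$) comes from the isometric nature of the slice $W_x$ together with concavity of $d(\cdot, F)^2$.

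Second, once $\phi^{-1}(W_{\bar x})$ is a metric product, I would horizontally lift any curve $\bar \gamma : [0,1] \to S$ by concatenating local product charts along a covering of $\bar \gamma$, obtaining for each $x \in \phi^{-1}(\bar \gamma(0))$ an isometric horizontal lift $\gamma_x$ and hence a holonomy map $h_{\bar \gamma} : \phi^{-1}(\bar \gamma(0)) \to \phi^{-1}(\bar \gamma(1))$, which is an isometry by construction. Using the local splittings along a homotopy between two curves (the Alexandrov substitute for Frobenius), $h_{\bar \gamma}$ depends only on the homotopy class of $\bar \gamma$. Passing to the metric universal cover $\pi : \tilde X \to X$, the preimage $\tilde S := \pi^{-1}(S)$ is totally convex in $\tilde X$, and the Alexandrov splitting theorem, iterated on $\tilde S$, gives $\tilde S = \hat S \times \mathbb R^k$ with $\hat S$ compact and simply connected. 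Since $\tilde S$ is simply connected every horizontal holonomy on $\tilde X$ is trivial, so the local products glue into a global isometry $\tilde X \cong \tilde S \times \phi^{-1}(\bar p)$ for $\bar p \in \hat S$; uniqueness of distance non-increasing retractions to the soul identifies the lifted Sharafutdinov projection $\tilde\phi$ with the projection onto the first factor.

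The hardest step will be producing the local metric product $\phi^{-1}(W_{\bar x}) \cong W_{\bar x} \times F$ from the one-dimensional-at-a-time slice data $\{W_x\}$. The identity $\Sigma_x X = [H_x V_x]$ is a direction-level decomposition and does not by itself give metric orthogonality between the horizontal slice through $x$ and nearby fibers, and as emphasized in the introduction, open Alexandrov spaces of nonnegative curvature can fail to have the flat strip property (\cite{Li}), so Perelman's flat-strip argument from the Riemannian case is unavailable. Instead I would rely on concavity of $d(\cdot, F)^2$, on the submetry property of $\phi$ to identify vertical directions with genuine fiber tangent directions, and on a careful first-variation argument showing that horizontal slices through different fiber points stay at constant mutual distance. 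Once this local rigidity is established, the holonomy and universal-cover steps are largely formal.
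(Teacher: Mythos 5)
The statement you set out to prove is stated in the paper as an open conjecture (Conjecture \ref{0k}), not a theorem: the authors prove it only when $\dim(X)-\dim(S)=2$ and $X$ is topologically nice (Theorem F), and even there the entire Section 7 is devoted to the one missing ingredient, namely that a $\phi$-fiber is convex (Key Lemma \ref{7c}), after which the metric splitting follows from (C2). Indeed the paper explicitly records the reduction ``by (C2), Conjecture \ref{0k} reduces to show that \dots\ a $\phi$-fiber is $\eta$-convex.'' Your proposal founders at exactly this point. The step you call the hardest --- upgrading the slice data $\{W_x\}_{x\in F}$ to a local metric product $\phi^{-1}(W_{\bar x})\cong W_{\bar x}\times F$, via the Pythagorean identity --- is not a technical refinement of known facts but is equivalent to (in fact stronger than) the fiber convexity that is the open problem. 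The tools you invoke do not supply it: (i) $d(\cdot,F)^2$ for a fiber $F=\phi^{-1}(\bar x)$ is not known to be concave; in the paper concavity is available only for the Busemann function and for the special set $F$ built in Section 7 out of rays and flat strips (Lemma \ref{7f}), which is not a fiber, so you are conflating two different sets called $F$; (ii) the first-variation argument you allude to (the one used in Lemma \ref{4a} and in the proof of (C2)) needs a minimal geodesic joining nearby points of the \emph{same} fiber that stays \emph{inside} the fiber --- this is precisely the $\eta$-convexity hypothesis of (C2), and without it the derivative estimate $\ell^+(t)\le 0$ cannot even be set up; (iii) the decomposition $\Sigma_xX=[H_xV_x]$ with $|H_xV_x|=\tfrac\pi2$ is a statement about directions at a single point and, as the introduction stresses, the flat strip property can fail in this setting (\cite{Li}), so ``horizontal slices through different fiber points stay at constant mutual distance'' is exactly what one cannot assert in general.

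Once the local metric product is granted, your holonomy and universal-cover steps are indeed essentially formal (and the splitting $\tilde S=\hat S\times\mathbb R^k$ is standard), but that only emphasizes that all the content sits in the step you have not proved. If you want to see what an actual proof of this local rigidity costs in the one case where it is known, compare Section 7: there the authors exploit $\dim(V_x)=1$, the ray structure from Lemma \ref{1m}, concavity of $d_F$ for the ray-set $F$, and delicate measure-theoretic choices of independent vertical directions $(u,v)$ at almost every point (Lemmas \ref{7e}--\ref{7m}) to show the fiber is convex, and only then apply (C2). None of that machinery generalizes as written to higher-codimensional souls, which is why the statement remains a conjecture.
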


We now begin to state our results in this paper on Conjectures \ref{0g}, \ref{0i} and \ref{0k}; which are divided into two parts: the first part concerns a general weakly integrable submetry (Theorems A-C), and the second part focus on $(X,S,\phi)$ in Theorem \ref{sol-alex} and $\phi$ is a submetry (either assumed or verified) (Theorems D-F). Two main results in this paper are Theorems A and F.

A closed subset $E\subseteq Y$ is called extremal, if the gradient flows of any distance
function on $Y$ preserves $E$, and
an extremal subset $E$ is called primitive, if there is $\bar x\in E$ such that $E$ is the smallest extremal subset containing $\bar x$; in particular a primitive extremal subset is connected. Two points in a primitive extremal subset, $\bar x$ and $\bar y$ are equivalent, if $E$ is the smallest extremal subset containing $\bar x$ and $\bar y$. The subset of equivalent class of $\bar x$, $\overset o E$, is called the interior of $E$ (\cite{PP}).

\vskip2mm

\noindent{\bf Theorem A} {\rm (Properties of weakly integrable submetries)} {\it Let $X\in \text{Alex}^n(\kappa)$, $Y\in \text{Alex}^m(\kappa)$, and let $f: X\to Y$ be weakly integrable.

\noindent {\rm (A1)} For any $x\in X$, $Df_{x}: H_x\to \Sigma_{\bar x}Y$ is weakly integrable and global weakly integrable at any $v\in \Sigma_xW_x\subseteq H_x$. Moreover, if $\partial \Sigma_{\bar x}Y=\emptyset$, then $\partial H_x=\emptyset$.

Assume that $\partial Y=\emptyset$.

\noindent {\rm (A2)} {\rm ($f^{-1}$ preserves extremal subsets)} If $E\subseteq Y$ is an extremal subset, then $f^{-1}(E)$ is an extremal subset of $X$.

\noindent {\rm (A3)} {\rm (Constant vertical dimension over a primitive extremal subset)} If $E$ is a primitive extremal subset, then for all $x$ in a component of $f^{-1}(\overset o E)$, $\dim(V_x)$ is a constant independent of $x$.}

\vskip2mm

In investigating Conjecture  \ref{0i}, (A1) allows one to use inductive argument (e.g., in the proof of (A2), Theorems B and F, and Proposition  \ref{0j}), (A2) and (A3) are used in partially verifying Conjecture  \ref{0i} (Theorem B). Note that (A2) implies that any submetry from a Riemannian manifold
to an Alexandrov space without boundary and non-empty extremal subsets (\cite{KL} is not weakly integrable (e.g., Riemannian manifold to the orbit space of an isometric compact Lie group action with non-trivial isotropy groups (see Example  \ref{1u}).

A primitive extremal subset $E$ is called minimal (denoted by $E_{\min}$), if $E$ contains no proper extremal subset. A point $x\in X$ is called topologically nice, if the iterated space of
directions, $\Sigma_x, \Sigma(\Sigma_x),...,\Sigma(\Sigma (\cdots \Sigma_x))\cdots )$ are all homeomorphic to spheres. We call $X$ topologically nice, if every point in $X$ is topologically nice (\cite{Ka1}).

\vskip2mm

\noindent{\bf Theorem B} {\rm (Partial verifications of Conjecture  \ref{0i})} {\it \noindent {\rm (B1)} Let $f: X\to Y$ be as in (0.9.1). Then $f$ is integrable in the following cases: $n-m\le 1$, or $Y=E_{\min}$, or for every $E_{\min}$ contains an integrable point (e.g., each component of $f^{-1}(E_{\min})$ contains a point $x_0$ with $\dim(V_{x_0})=n-m-1$, or $x_0$ is topologically nice in $X$).

\noindent {\rm (B2)} Let $h: \Sigma\to \Sigma_0$ be as in (0.9.2). Then $n=m$ in the
following cases: $n-m\le 1$, or $\Sigma$ is topologically nice homeomorphic sphere.}

\vskip2mm

A consequence of (B1) is that if $X$ is topologically nice, then any weakly integrable submetry, $f: X\to Y$, is integrable such that $Y$ is topologically nice and a fiber is a topological manifold (see Corollary \ref{3e}).

The next result is closely related to Conjecture  \ref{0k}.

\vskip2mm

\noindent {\bf Theorem C} {\rm(Canonical local trivialization of integrable submetries)} {\it Let $X\in \text{Alex}^n(\kappa)$, $Y\in \text{Alex}^m(\kappa)$, and let $f: X\to Y$ be weakly integrable.
Then

\noindent {\rm (C1) (Canonical topologically splitting)} Assume that $f$ is integrable.
Then $(X,Y,f)$ is a fiber bundle with a canonical local trivialization: for $\bar x\in Y$, $\phi_{\bar x}: f^{-1}(W_{\bar x})\to f^{-1}(\bar x)\times W_{\bar x}$, $\phi_{\bar x}(z)=(x_z,f(z))$, $z\in W_{x_z}$. Moreover, if $Y$ is simply connected, then $X$ canonically splits into a product, $f^{-1}(\bar x)\times Y$.

\noindent {\rm (C2) (Splitting)} If every $f$-fiber is $\eta$-convex (i.e., if $x_1, x_2\in f^{-1}(\bar x)$ with $|x_1x_2|<\eta$, then there is a minimal geodesic from $x_1$ to $x_2$ that is contained in $f^{-1}(\bar x)$), then $f$ is integrable, and $\phi_{\bar x}$ is a homeomorphism and local isometry from $f^{-1}(W_{\bar x})$ to the metric product, $f^{-1}(\bar x)\times W_{\hat x}$.}

\vskip2mm

Note that by (C2), Conjecture  \ref{0k} reduces to show that if $(X,S,\phi)$ is weakly integrable
(see (0.9.1)), then a $\phi$-fiber is $\eta$-convex (see the proof of (F1)).

In the rest of the introduction, we will focus on $(X,S,\phi)$ in Theorem \ref{sol-alex}, and
we will always assume that $S$ is the soul of a Busemann function $b_{\bar p}$, and
$\bar p\in S$ is a regular point (see Lemma \ref{6a}). In particular, $b_{\bar p}(S)=0$, and
for $c<0$, $\Omega_c=b_{\bar p}^{-1}([c,0])\supsetneq S$.

\vskip2mm

\noindent {\bf Theorem D} {\rm (Conjecture \ref{0e} implies Conjecture \ref{0g})} {\it Let $(X,S,\phi)$ be as in Theorem \ref{sol-alex}. Assume that $\phi: X\to S$ is a submetry. If all points on $S$ are weakly $m$-strained, then $\phi: X\to S$ is a fiber bundle.}

\vskip2mm

It may be possible that Theorem D still holds when one weakens that $\phi$ is a submetry to
that $\phi$ is an $\epsilon$-submetry $(\epsilon<<1$) i.e.,
for $r>0$ and $x\in X$, $B_{e^{-\epsilon}r}(\phi(x))\subseteq \phi(B_r(x))\subseteq B_{e^\epsilon r}(\phi(x))$ (\cite{RX}).

Our proof relies on Perelman's construction of a local trivialization (\cite{Per3}).

For $\bar x\in S$, let $\Sigma^\perp_{\bar x} S=\{\bar v\in \Sigma_{\bar x}X,\,\, |\bar v\bar w|=\frac \pi2, \, \bar w\in \Sigma_{\bar x}S\}$, $C(\Sigma^\perp_{\bar x}S)=\{t\bar v,\, \bar v\in \Sigma^\perp_{\bar x}S, t\ge 0\}\ne \emptyset$, and let $C(\Sigma^\perp S)=\bigcup_{\bar x\in S}C(\Sigma_{\bar x}^\perp S)$. Then the gradient-exponential map, $g\exp: C(\Sigma^\perp S)\to X$, is onto (see Lemma \ref{ya}), thus $C(\Sigma^\perp S)$ is equipped with the pullback topology.

The following result is a counterpart of (0.3.1) (and (0.2.1)) in Alexandrov geometry.

\vskip2mm

\noindent {\bf Theorem E} {\it Let $(X,S,\phi)$ be as in Theorem \ref{sol-alex}. Assume $S$ has
a regular point $\bar p$ such that every $v\in \Sigma_{\bar p}^\perp S$ tangents to a ray. Then the following hold:

\vskip1mm

\noindent {\rm (E1) (Canonical foliation, submetries)} $\phi: X\to S$ is a submetry, and $g\exp: C(\Sigma^\perp S)\to X$ is a bijection such that $\phi\circ g\exp_{\bar x}(t\bar v)=\bar x$ for all $t\ge 0$.

\noindent {\rm (E2) (Canonical bundles)} If all points in $S$ are topologically nice (in $X$), then $(X,S,\phi)$ is a fiber bundle with fiber homeomorphic to an Euclidean space, and $S$ itself is topologically nice. Moreover, $g\exp: C(\Sigma^\perp S)\to X$ is a bundle isomorphism of $(C(\Sigma^\perp S),S,\text{proj})$ and $(X,S,\phi)$.}

\vskip2mm

Note that under the condition of Theorem E, (E1) verifies Submetry Conjecture \ref{0e}, and (E2) verifies a conjecture by Yamaguichi (\cite{Ya}) which says that if $X$ in Theorem \ref{sol-alex} is topologically nice, then a tubular neighborhood of $S$ is a disk bundle over $S$. Note that different from Theorem D, $S$ in (E2) may have an non-empty extremal subset (e.g., the metric product of $\Bbb R^k$ and spherical suspension over a $l$-sphere of constant curvature $\ge 4$.), while (E2) will be false without the assumption that points in $S$ are topologically nice (see Example  \ref{1t}).

Restricting to the case that $S$ is of codimension $2$, we obtain a counterpart in Alexandrov geometry to (0.3.3).

\vskip2mm

\noindent {\bf Theorem F} {\rm (Codimension $2$ rigidity)} {\it Let $(X,S,\phi)$ be as
in Theorem \ref{sol-alex} such that $\dim(X)-\dim(S)=2$. Assume that $X$ is topologically nice. Then $X$ satisfies one of the following two rigid properties:

\noindent {\rm (F1) (Canonical bundle)} $g\exp: C(\Sigma^\perp S) \to X$ defines a bundle isomorphism of $(C(\Sigma ^\perp S),S, \text{proj})$ and $(X,S,\phi)$, where $S$ is topologically nice and $\phi$-fiber is homeomorphic to $\Bbb R^2$. 

\noindent {\rm (F2) (Splitting)} The metric universal cover of $X$ splits, $\tilde X=\tilde S\times \phi^{-1}(\bar p)$, where $\tilde S$ is the metric universal cover of $S$, thus $\tilde S$ splits into $\Bbb R^k\times \hat S$, where $k$ is the maximal rank of a
free abelian subgroup of $\pi_1(X)$ and $\hat S$ is compact and simply connected.}

\vskip2mm

Note that Theorem F does not hold if $X$ is not assumed to be topologically nice, while
a topologically nice $X$ in Theorem F may have a non-empty extremal subset; see Example \ref{1w}.

In early work \cite{Li}, it is already obtained that $(X,S,\phi)$ in Theorem F satisfies that $g\exp: C(\Sigma^\perp S)\to X$ is a homeomorphism or that $\phi: X\to S$ is weakly integrable (see Theorem \ref{7a}). By (E2), $g\exp$ is a bundle isomorphism. We point it out that
in the proof of Theorem F, the main accomplishment is that a weakly integrable $\phi$ is integrable with fiber a convex subset (see (C2)), which verifies Conjecture \ref{0k} in the case that $\dim(S)=\dim(X)-2$.


\vskip4mm

The rest of the paper will be organized as follows:

\vskip2mm

In Section 1,  we will review basic notions and properties of an Alexandrov space that
will be used throughout this paper.

\vskip2mm

In Section 2, we will prove Theorem A and Proposition  \ref{0j}. The main technical result is
Theorem \ref{2a}, in proving it we establish several results that have its-own interest (e.g., Lemma \ref{2f}, Lemma \ref{2h}).

In Section 3, we will prove Theorem B.

In Section 4, we will prove Theorem C.

In Section 5, we will prove Theorem D.

In Section 6, we will prove Theorem E.

In Section 7, we will prove Theorem F.

\vskip4mm

\noindent {\bf Acknowledgement}: The present paper is a revision of arXiv:2311.15174v1. The authors are very grateful to a referee whose detailed report (questions/comments/suggestions)
stimulates authors to re-organize the proofs of Theorem A (with a new Theorem 2.1).
In particular, the referee's suggestions are valuable to our simplifying
proofs, or improving some results in the revision; notably proofs of Lemma \ref{1a},
Proposition  \ref{0j}, (A2), and improving Theorem D, among several others.

\vskip4mm

\section{Preliminaries}

\vskip4mm

In this section, we will review basic notions and properties of an Alexandrov space that
will be used throughout this paper. The general reference are \cite{AKP} and \cite{BBI}.

We start this section with fixing some notations:

\vskip2mm

\noindent $\bullet$ $Z$ denotes a complete metric space. For $x, y\in Z$, or subsets, $A, C\subset Z$:

(a) $|xy|$ denotes the distance between $x$ and $y$,  $|xA|=\inf \{|xy|,\, y\in A\}$, for $r>0$, $B_r(A)=\{x\in X,\,\, |xA|<r\}$.

(b) $\bar A=\{x\in Z, \,\, x_n\in A, x_n\to x\}$, $\overset{o}A=\{x\in A, \exists \, r>0, B_r(x)\subset A\}$, $\partial A=\bar A\setminus \overset{o}A$, $A\setminus  C=\{x\in A,\, x\notin C\}$, $\overset{\vee}A=\bar A\setminus \partial A$.

(c) $\pi: \tilde Z\to Z$ denotes the universal covering map, we always assume that $Z$ has a
simply connected universal cover.

\vskip2mm

\noindent $\bullet$ $\text{Alex}^n(\kappa)$ denotes the set of complete n-dimensional Alexandrov spaces $X$ with curvature $\ge \kappa$. For $x, y\in X$,

\vskip1mm

(d) $\Sigma_xX$ denotes the space of directions at $x\in X$, $C_xX=C(\Sigma_xX)$ denotes the metric cone at $x$ over $\Sigma_xX$, and $g\exp_x: C_xX\to X$ denotes the gradient-exponential map.

(e) $[xy]$ denotes a minimal geodesic from $x$ to $y$, $\uparrow_x^y$ denotes the direction of $[xy]$ at $x$, and $\Uparrow_x^y$ denotes the set of directions at $x$ of all possible $[xy]$.

(f) $F\subset X$ denotes a closed subset of $X$, $d_F: X\to \mathbb R_+$, $d_F(x)=|xF|=\min\{|xz|,\,z\in F\}$, denotes the distance function to $F$, and $\nabla d_F$ denotes the gradient of $d_F$.

(g) $E\subset X$ denotes an extremal subset of $X$ i.e., a closed subset preserved by the gradient flows of any distance function on $X$.

\vskip2mm

\noindent $\bullet$ $(X,Y,f)$: $X\in \text{Alex}^n(\kappa), Y\in \text{Alex}^m(\kappa)$, $f: X\to Y$ denotes a submetry, for $x\in X$, $Df_x: C_xX\to C_{\bar x}Y$ denotes the differential of $f$, $\bar x=f(x)$. For $\bar p\in Y$, we call the function, $d_{f^{-1}(\bar p)}: X\to \mathbb R_+$, the horizontal lifting of the distance function, $d_{\bar p}: Y\to \mathbb R_+$ (more general, if $h: Y\to \mathbb R$ is a function, then we call $h\circ f: X\to \mathbb R$ the horizontal lifting of $h$). A basic property of a submetry is horizontal lifting of a curve (which may not be unique at a given point). For a gradient curve of $d_{\bar p}$, its horizontal horizontal lifting is a gradient curve of $d_{f^{-1}(\bar p)}$ (\cite{Lyt2}, \cite{KL}).

\vskip1mm

(h) $H_x=\{u\in \Sigma_xX,\,\, Df_x(u)\in \Sigma_{\bar x}Y\}$ and $V_x=\{v\in \Sigma_{\bar x}Y,\,\, |vH_x|=\frac \pi2\}$ denote convex subsets of $\Sigma_xX$.

(i) $W_x$ denotes a subset of $X$ such that $f: W_x\to W_{\bar x}=f(W_x)$ is an isometry and $W_{\bar x}$ is a convex neighborhood of $x$. In particular, $\Sigma_xW_x\subseteq H_x$, and
$\Sigma_xX=[H_xV_x]=\bigcup_{v\in H_x, w\in V_{x}}[vw]$, $H_x*V_x$ denotes the join of
$H_x$ and $V_x$ i.e., $H_x*V_x=[H_xV_x]$ and for all $v\in H_x$ and $w\in V_x$, $[vw]$ is unique.

\vskip2mm

\noindent $\bullet$ {\bf $(X,S,\phi)$}: open $X\in \text{Alex}^n(0)$, $S$ denotes a soul of $X$, and $\phi: X\to S$ denotes a Sharafutinov projection map, which is $1$-Lipschitz.

\vskip1mm

(j) For $\bar p\in S$, $b_{\bar p}$ denotes the Bussmann function at $\bar p$, which is a proper and concave function, $S\subseteq b_{\bar p}^{-1}(0)$, for $c<0$, $\Omega_c=b_{\bar p}^{-1}([c,0])$ denotes a suplevel set, $\partial \Omega_c$ denoted the boundary of $\Omega_c$, and for $x\in X$, $b_{\bar p}(x)>c$, $\Uparrow_x^{\partial \Omega_c}$ denotes the set of directions of $[xz], z\in \partial \Omega_c$ such that $|xz|=|x\partial \Omega_c|$.

\subsection {Stratifications, submetries, of Alexandrov spaces, and lifting gradient flows}

A reference for this subsection is \cite{PP}.

Let $Y\in \text{Alex}^m(\kappa)$. A closed subset $E\subseteq Y$ is called extremal, if for any $y\in Y\setminus E$, $q\in E$ is a minimum of $d_y|_E$, $d_y(q)=\min\{d_y(z), \, z\in E\}$, then $q$ is a critical point of $d_y: X\to \mathbb R$. A basic property of $E$ is that the gradient
flows of $d_y$ preserves $E$.

For $y\in Y$, let $\text{Ext}(y)$ denote the smallest extremal subset that contains $y$, which is called a primitive extremal subset. Two points $y, y'\in Y$ are equivalent, if and only if $\text{Ext}(y)=\text{Ext}(y')$. Then $Y$ is
a disjoint union of equivalent classes, which form a stratification on $Y$ i.e., $Y$ is disjoint
union of strata, and each stratum is an open manifold. We may group strata in terms of dimensions, $Y_i=\bigcup _j Y_{i,j}$, $\dim(Y_{i,j})=\dim(Y_{i,k})
=m_i$, $0\le m_1<m_2<\cdots <m_s=m$. A point $x\in Y_{i,j}$ is called regular (in $Y_{i,j}$) if
$C_xY_{i,j}\cong \mathbb R^{m_i}$ (\cite{Fuj1}). We will always use the following stratification
notions:
$$Y_1, \,\, Y_2\,\, ,...,\,\, Y_s, \quad \bar Y_{i,j}=Y_{i,j} \quad \text{or} \quad
\bar Y_{i,j}= Y_{i,j}\cup \left(\bigcup_{Y_{j,k}\subset \bar Y_{i,j}}Y_{j,k}\right),$$
and the closure, $\bar Y_{i,j}$, is a primitive extremal subset. Any extremal subset of
$Y$ is a union of a number of $\bar Y_{i,j}$.

\begin{lemma} {\rm (Deforming interior points in $Y_{i,j}$)} \label{1a} Let $Y\in \text{Alex}^m(\kappa)$,
and let $Y_1,...,Y_s$ be a stratification as above. Then for $p\in Y_{i,j}$, $q\in \bar Y_{i,j}$, there is a finite number of distance functions whose
gradient flows deform $q$ to $p$ in a finite time. In particular, any $p, q\in Y_{i,j}$ can
be push back and forth in a finite time along gradient flows.
\end{lemma}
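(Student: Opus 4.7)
The plan is to prove the lemma in two stages: a \emph{local mobility} step and a \emph{chaining} step. The chaining step uses that the stratum $Y_{i,j}$ is a connected $m_i$-dimensional topological manifold (as a primitive equivalence class, \cite{PP}) whose regular points are dense (\cite{Fuj1}). I will focus on the main case $p,q\in Y_{i,j}$, from which the ``in particular'' assertion follows directly; the extension to $q\in\bar Y_{i,j}\setminus Y_{i,j}$ is handled by an approximation from $Y_{i,j}$, with extremality of $\bar Y_{i,j}$ ensuring the approximating trajectories remain confined to the primitive extremal subset.

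For the local step, fix a regular $p\in Y_{i,j}$, so that $C_pY_{i,j}\cong\mathbb R^{m_i}$. For $q'\in Y_{i,j}$ sufficiently close to $p$, I would choose a point $y\in\bar Y_{i,j}$ with $q'$ lying on a minimal geodesic $[py]$, so that $\uparrow_{q'}^y$ and $\uparrow_{q'}^p$ are antipodal in $\Sigma_{q'}Y$; then $\nabla d_y(q')$ is precisely $\uparrow_{q'}^p$. The gradient flow of $d_y$ starting at $q'$ is forced by extremality of $\bar Y_{i,j}$ to remain in $\bar Y_{i,j}$, and by Toponogov comparison together with regularity of $p$ it agrees with the geodesic $[q'p]$ to first order. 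A second corrective flow by another well-chosen distance function then absorbs the higher-order drift and lands exactly at $p$. This gives an open neighborhood $U_p\subset Y_{i,j}$ from which any point is pushed to $p$ by at most two gradient flows in finite time. For the chaining step, pick a path $\gamma\colon[0,1]\to Y_{i,j}$ from $q$ to $p$ (possible since $Y_{i,j}$ is path-connected); cover its compact image by finitely many neighborhoods $U_{p_k}$ produced by the local step, choose intermediate points $q=q_0,q_1,\dots,q_N=p$ with $q_k\in U_{p_{k+1}}$, and string together the associated finite sequence of gradient flows. This yields a finite list of distance functions whose flows, composed in finite total time, carry $q$ to $p$. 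The ``back and forth'' claim follows by running the same argument with the roles of $p$ and $q$ exchanged.

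The main obstacle I anticipate is the precision of the local step: ensuring the flow of $d_y$ lands \emph{exactly} at $p$ rather than at a first-order approximation. In a Riemannian manifold the relevant gradient curve is a literal geodesic that passes through $p$, but in an Alexandrov space the gradient curve of $d_y$ need only be tangent to the target geodesic at $q'$, with a residual drift governed by Toponogov comparison. Controlling this drift uses that $p$ is regular in $Y_{i,j}$ and that the local structure is modeled on $\mathbb R^{m_i}$; the corrective flow is then constructed using the density of regular points and a straightening argument in tangent-cone coordinates. A secondary technical point is verifying that the point $y$ can be selected so that the flow stays inside $\bar Y_{i,j}$ and actually moves into $Y_{i,j}$ near $p$ rather than being trapped in a lower stratum; this is secured by the $m_i$-dimensional manifold structure together with the fact that enough geodesics into $\bar Y_{i,j}$ emanate from $q'$ at a regular point.
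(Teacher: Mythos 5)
Your plan has a genuine gap at its crux, the local step. The statement that ``a second corrective flow by another well-chosen distance function then absorbs the higher-order drift and lands exactly at $p$'' is precisely the whole difficulty, and you give no construction of this corrective function nor any mechanism by which a gradient flow could be made to terminate \emph{exactly} at $p$: gradient curves of $d_y$ are not geodesics, and there is no a priori reason that any single (or second) distance function flows a nearby point onto $p$ rather than merely near it. In addition, the auxiliary point $y\in\bar Y_{i,j}$ with $q'$ an interior point of $[py]$ need not exist, since geodesics in an Alexandrov space (let alone geodesics staying inside an extremal subset) are in general not extendable; and your chaining step uses neighborhoods $U_{p_k}$ that are only produced at regular points of $Y_{i,j}$, with no lower bound on their size near non-regular points of the stratum, so covering an arbitrary path is not immediate. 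The missing tool is Perelman's construction of strictly concave functions: every point $z$ has a convex neighborhood carrying a strictly concave function, built from distance functions, whose gradient flow pushes the whole neighborhood to $z$ in finite time. This is what lets one ``land exactly'' at a prescribed point, and once it is available the delicate first-order/Toponogov analysis you propose is unnecessary.

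The paper's proof is soft and avoids your local analysis entirely: for $p\in Y_{i,j}$ it defines $G(p)$ to be the set of all points reachable from $p$ by finitely many gradient flows of semi-concave functions in finite total time, shows $G(p)$ is closed (an accumulation point $z$ of $G(p)$ has a convex neighborhood defined by a semi-concave function, whose flow carries a point of $G(p)$ near $z$ onto $z$), and observes that $G(p)$ is by construction invariant under gradient flows; hence $G(p)$ is an extremal subset containing $p$, and since $\bar Y_{i,j}=\text{Ext}(p)$ is the smallest such and gradient flows preserve it, $G(p)=\bar Y_{i,j}$. Note also that the content of the lemma is the deformation of $p$ onto any $q\in\bar Y_{i,j}$ (and back and forth when both lie in $Y_{i,j}$): a point $q$ in a strictly lower stratum can never be flowed to $p$, because every flow preserves $\text{Ext}(q)\subsetneq\bar Y_{i,j}$. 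So your proposed treatment of $q\in\bar Y_{i,j}\setminus Y_{i,j}$ by approximation from $Y_{i,j}$ is aimed at the impossible direction, and even in the correct direction an approximation argument would still need a final flow landing exactly at $q$ and a uniform bound on the number of flows as the approximating points degenerate toward the lower stratum --- both of which are exactly what the closedness-plus-extremality argument supplies.
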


Lemma \ref{1a} is a basic known property; because we do not find it in literature, for
completeness we present a proof below.

\begin{proof} [Proof of Lemma \ref{1a}] For $p\in Y_{i,j}$, let $G(p)=\{q|  \ \  \exists \text{ semi-concave functions }f_i, 1\leq i\leq N, \text{ and }a_i\in \mathbb R,1\leq i\leq N, \text{ such that }
q=\Phi_{f_N}^{t_N}\circ \Phi_{f_{N-1}}^{t_{N-1}} \circ \dots \circ \Phi_{f_1}^{t_1}(p), 0\leq t_i\leq a_i\}$. We claim that $G(p)$ is closed; if $z$ denotes an accumulation point of $G(p)$,
then $z$ has a convex neighborhood in $Y$ defined by a semi-concave function (\cite{Per2}); which implies that $z\in G(p)$.

Note that $G(p)$ is closed and invariant under gradient flow of semi-concave functions. Thus $G(p)$ is an extremal subset, and therefore $G(p)=\bar Y_{i,j}$.
\end{proof}

Assume that $f: X\to Y$ is a submetry. For $\bar x\ne \bar y\in Y$, let $\gamma(t)$ ($0\le t\le 1$) denote a gradient curve of $d_{\bar z}$ that flows an interior point $\bar x=\gamma(t_0)$ to $\bar y$. For any $x\in f^{-1}(\bar x)$, let $\tilde \gamma_x(t)$ be the gradient curve of $d_{f^{-1}(\bar z)}$ through $x$. Then $\psi: f^{-1}(\bar x)\to f^{-1}(\bar y)$ is a locally Lipschitz map, $\psi(x)=\tilde \gamma_x(1)$ (\cite{Pet}).

Recall that if $N$ is a closed totally geodesic submanifold of a Riemannian manifold $M$, then for $p\in N$, restricting to $T_xN\subset T_xM$ the exponential map of $M$ coincides with the exponential map of $N$ equipped with the induced metric.

The following counterpart of the above property in Alexandrov geometry (\cite{ SSW}) will be used in the proof of Lemma \ref{6a}.

\begin{lemma} \label{1b} Let $X\in \text{Alex}^n(\kappa)$, and let $W\subset X$ be a convex subset. For any $x\in W\setminus \partial W$ ($\partial W$ is treated as the boundary of the Alexandrov space $W$), $v\in \Sigma_xW$, then the gradient-exponential maps, $g\exp_x^Xtv=g\exp_x^Wtv$, as long as $g\exp_x^Wtv\in W\setminus \partial W$ is well-defined, where $g\exp_x^Z$ denotes the gradient-exponential map of an Alexandrov space $Z$.
\end{lemma}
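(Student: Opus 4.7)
My plan is to reduce the equality of the two gradient-exponential curves to the coincidence of their infinitesimal generators---gradient flows of distance functions---and then propagate the identity along the curve by continuation.

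The starting observation is that since $W$ is convex in $X$, every $W$-geodesic is an $X$-geodesic, so the inclusion $W\hookrightarrow X$ induces a canonical isometric embedding $\Sigma_xW\hookrightarrow \Sigma_xX$ (and hence $C_xW\hookrightarrow C_xX$) preserving angles and distances. Via this embedding the initial datum $v\in\Sigma_xW$ is interpreted in both spaces, and the hypothesis $x\in W\setminus\partial W$ guarantees $v$ lies off the Alexandrov boundary, so that Petrunin's gradient-exponential construction is applicable at $x$ in both $W$ and $X$. A key preliminary fact is that closed convex subsets of Alexandrov spaces are extremal (Petrunin--Perelman), hence gradient flows in $X$ of any distance function starting in $W$ remain in $W$.

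Next I would establish the crucial gradient-matching at interior points: for $y\in W$, convexity gives $d_y^W=d_y^X|_W$, and at $x\in W\setminus\partial W$ the variational characterization of the ascending gradient forces $\nabla_x^X d_y^X\in\Sigma_xW$ and equal to $\nabla_x^W d_y^W$, because any direction outside $\Sigma_xW$ would drive the $X$-gradient flow out of $W$ at $x$, violating extremality. Consequently the gradient flows $\Phi_{d_y^X}^t$ and $\Phi_{d_y^W}^t$ coincide for as long as the flow line remains in $W\setminus\partial W$. With this local identification, I would invoke the Perelman--Petrunin construction of $g\exp$ as a limit of polygonal concatenations of such gradient flows, with source points chosen so that their gradients at the successive base points approximate the transported tangent direction. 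Since every building block agrees in $W$ and $X$, the limits coincide on an initial interval, giving $g\exp_x^W(tv)=g\exp_x^X(tv)$ for small $t$.

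A standard continuation argument then extends the identity up to the first time the $W$-curve meets $\partial W$: let $T^*$ be the supremum of $t$ for which the identity holds and the $W$-curve stays in $W\setminus\partial W$; if $T^*$ were smaller than the maximal time for which $g\exp_x^W(\cdot v)$ stays interior, then reapplying the local argument at the point $y_0=g\exp_x^W(T^*v)\in W\setminus\partial W$ with its tangent velocity would extend the identity past $T^*$, contradicting maximality. The main obstacle is the gradient-matching step---showing that at an interior point of a closed convex subset the intrinsic and ambient Alexandrov gradients of a distance function coincide. Intuitively this is the Alexandrov-geometric content that a closed convex subset behaves like a totally geodesic subspace with respect to the entire Petrunin gradient formalism, but its rigorous execution relies on the interior regularity of $\Sigma_xW$ inside $\Sigma_xX$ supplied by $x\notin\partial W$ together with extremality, which is exactly where the SSW reference is invoked.
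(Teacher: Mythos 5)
Your overall reduction---identify the two curves by showing that at each point $q\in W$ the ambient gradient of the distance function $d_x$ is tangent to $W$ and equals the intrinsic gradient, then propagate by continuation---is indeed the same skeleton as the paper's argument. The problem is that your justification of this key step rests on a false assertion: closed convex subsets of Alexandrov spaces are \emph{not} extremal, and gradient flows of arbitrary distance functions do \emph{not} preserve convex subsets. A segment $W=[0,1]\times\{0\}$ in $X=\mathbb R^2$ is convex, but the only extremal subsets of $\mathbb R^2$ are $\emptyset$ and $\mathbb R^2$ itself; moreover, for $y=(\tfrac12,1)\notin W$ the gradient of $d_y$ at an interior point of $W$ is $(0,-1)$, so the flow leaves $W$ instantly. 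Hence your ``key preliminary fact'' (attributed to Perelman--Petrunin) fails, and with it the argument that ``any direction outside $\Sigma_xW$ would drive the $X$-gradient flow out of $W$, violating extremality.'' You even flag the gradient-matching step as the main obstacle, but the only mechanism you offer for it is this same extremality claim, so the crux of the lemma is left unproved.

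What actually has to be shown is that for the specific functions relevant to $g\exp_x$ (distance to the base point $x\in W\setminus\partial W$, or more generally $d_y$ with $y\in W$), the normalized ambient gradient $\xi=\nabla d_x(q)/|\nabla d_x(q)|$ at $q\in W$ already lies in $\Sigma_qW$; this is a statement about the geometry of $\Sigma_qW\subset\Sigma_qX$, not about invariance under all gradient flows. The paper proves it directly: assuming $\xi\notin\Sigma_qW$, take the projection $\eta$ of $\xi$ to $\Sigma_qW$, produce (by a first-variation limit of $W$-geodesics $[\gamma(t_i)x]$) a direction $\alpha\in\Uparrow_q^x\cap\Sigma_qW$ with $|\Uparrow_q^x\eta|=|\alpha\eta|$, and then use Lemma \ref{ya} together with Toponogov comparison to force $|\Uparrow_q^x\eta|=|\Uparrow_q^x\xi|$, whence $\eta=\xi$ by uniqueness of the gradient direction, a contradiction. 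Some such angle/comparison argument in the space of directions (or the citation to \cite{SSW}) is indispensable; as written, your proposal has a genuine gap at exactly this point, while the surrounding steps (isometric embedding of tangent cones, agreement of the polygonal approximations, open-closed continuation up to $\partial W$) are fine once the tangency of the gradient is supplied.
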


A proof of Lemma \ref{1b} can be found in \cite{SSW}; we will also present a different direct proof in the Appendix.

\subsection{Stabilities, and local structures}

A fundamental result in Alexandrov geometry is the following Perel'man stability theorem.

\begin{theorem} [Stability, Perel'man, \cite{Per1}, Kapovitch, \cite{Ka2}] \label{1c} Let compact $X, Y\in \text{Alex}^n(\kappa)$. There is $\epsilon(X)>0$ such that if $d_{\text{GH}}(X,Y)<\epsilon(X)$, then there is a homeomorphism which is also an $\epsilon$-distance distortion (also called Gromov-Hausdorff approximation, briefly GHA), $f: X\to Y$.
\end{theorem}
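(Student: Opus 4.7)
The plan is to prove Theorem \ref{1c} by induction on $n = \dim X$, following the approach pioneered by Perel'man (and refined by Kapovitch). The base case $n=0$ is immediate since a compact $0$-dimensional Alexandrov space is a finite set of points, and being GH-close forces the underlying sets to be in bijection with controlled distances.

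For the inductive step, I would first construct the homeomorphism locally. At a regular point $x \in X$, choose an $(n,\delta)$-strainer $(p_1,q_1),\dots,(p_n,q_n)$; then the ``admissible map'' $F = (d_{p_1},\dots,d_{p_n})\colon U \to \mathbb R^n$ is a bi-Lipschitz open embedding on a neighborhood of $x$ (this rests on Perel'man's non-criticality estimates coming from Toponogov comparison). If $Y$ is sufficiently GH-close, then lifting the strainer points via a GHA yields $\tilde p_i, \tilde q_i \in Y$ that form an $(n,\delta')$-strainer at any preimage $\tilde x$, and the corresponding map $\tilde F = (d_{\tilde p_1},\dots,d_{\tilde p_n})$ is also a local bi-Lipschitz embedding. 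Composing $\tilde F^{-1}\circ F$ gives a local homeomorphism near $x$ that is an $\epsilon$-GHA. At a singular point $x$, the same idea works after replacing the strainer map with a strictly concave ``admissible'' function whose level sets are homeomorphic to $\Sigma_x X$; here I would invoke the inductive hypothesis, applied to the space of directions (which lies in $\text{Alex}^{n-1}(1)$), to produce a homeomorphism $\Sigma_x X \to \Sigma_{\tilde x} Y$, and then use the local conic structure (Perel'man's conical neighborhood theorem) to extend this to a local homeomorphism of neighborhoods.

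The main obstacle, and where the deepest work lies, is the global gluing step: one must patch these local homeomorphisms into a single global homeomorphism $f\colon X \to Y$. The strategy is to use a finite cover of $X$ by admissible charts, construct the local homeomorphisms consistently by making compatible choices of strainers/admissible functions on overlaps, and then assemble a global map via an appropriate ``center of mass''--type or gradient-flow interpolation that respects the stratification by extremal subsets (so that extremal subsets of $X$ map to corresponding extremal subsets of $Y$). A compactness and contradiction argument then upgrades the resulting candidate map to a genuine homeomorphism: if the conclusion failed, one could extract a GH-converging sequence $Y_i \to X$ with no stable homeomorphisms $X \to Y_i$, and the local-to-global patching applied uniformly would produce a contradiction.

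The hard part is twofold: (i) controlling distance distortion of the candidate global map across all charts simultaneously (this requires delicate quantitative versions of non-criticality and of the conical neighborhood theorem), and (ii) ensuring that the inductively constructed homeomorphisms of spaces of directions can be chosen canonically enough to be consistent on the overlaps of charts. Kapovitch's contribution is precisely to streamline this gluing via a cleaner inductive framework using gradient-exponential maps, and that is the route I would follow to close the argument.
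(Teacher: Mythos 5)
The paper does not prove this statement at all: Theorem \ref{1c} is quoted as a known deep result, with the proof attributed to Perel'man's preprint \cite{Per1} and Kapovitch's survey \cite{Ka2}, so there is no in-paper argument to compare yours against. Judged on its own terms, your outline reproduces the broad shape of the Perel'man--Kapovitch strategy (induction on dimension, strainer/admissible maps near regular points, conical neighborhoods and the inductive hypothesis applied to spaces of directions at singular points), but it has a genuine gap exactly where you locate ``the deepest work'': the gluing step. Local homeomorphisms between Alexandrov spaces cannot be averaged by any ``center of mass''-- or gradient-flow--type interpolation -- these devices make sense for maps into spaces with convexity or linear structure, not for patching together homeomorphisms that merely agree up to small distortion on overlaps; nor can the choices of strainers and of the homeomorphisms $\Sigma_xX\to\Sigma_{\tilde x}Y$ be made ``canonical enough'' to force consistency on overlaps, since the inductively produced homeomorphisms are highly non-unique.

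What actually closes the argument in \cite{Per1}, \cite{Ka2} is Siebenmann's deformation theory of homeomorphisms of MCS-spaces: one proves by induction a \emph{stronger}, relative and parametrized statement -- stability respecting proper admissible (strainer) maps to $\Bbb R^k$, i.e.\ local fibration structures -- and then uses the strong gluing/deformation theorem to deform nearby local homeomorphisms so that they match on overlaps, rather than choosing them compatibly in advance. Without building this strengthened induction hypothesis and invoking (or reproving) the Siebenmann-type gluing machinery, the passage from your local homeomorphisms to a global one does not go through, and the concluding compactness-and-contradiction argument only reduces the problem to precisely this unresolved patching step. So the proposal is a reasonable road map of the known proof, but as written it omits the key technical input that makes the globalization possible.
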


In the proof of (A1), we shall use the following local refinement of Theorem \ref{1c}.

\begin{theorem} {\rm (\cite{Ka2})} \label{1d} Let $Y\in \text{Alex}^m(\kappa)$. Given $\bar x\in Y$, there is $\rho=\rho(\bar x)>0$ such that

\noindent {\rm (1.4.1)} $d_{\bar x}$ has no critical point in $B_\rho(\bar x)\setminus \{\bar x\}$.

\noindent {\rm (1.4.2)} There is a homeomorphism, $\psi: B_\rho(\bar x)\to B_\rho(o, C_{\bar x}Y)$, $\psi(\bar x)=\bar o$ and $\phi$ maps $B_\rho(\bar x)\cap E$
to an extremal subset $B_\rho(o,C_{\bar x}Y)$, where $E$ denotes an extremal subset in $Y$.
\end{theorem}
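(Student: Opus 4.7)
The plan is to prove Theorem 1.4 by a rescaling argument that uses the tangent cone $C_{\bar x}Y$ as a reference model, combined with Perelman's stability Theorem 1.3. The starting observation is that the pointed rescaled balls $(B_\rho(\bar x),\rho^{-1}d,\bar x)$ Gromov--Hausdorff converge to $(B_1(o,C_{\bar x}Y),o)$ as $\rho\to 0^+$, and that this convergence is compatible with extremal subsets: for an extremal $E\subseteq Y$, the rescaled $E\cap B_\rho(\bar x)$ converges to the tangent cone $T_{\bar x}E\subseteq C_{\bar x}Y$, which is itself extremal (the latter because extremality is a condition on comparison angles that passes to blow-up limits).

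For (1.4.1), I would argue by contradiction. In the metric cone $C_{\bar x}Y$, the function $d_o$ has no critical points away from the apex: at any $v=t\xi\neq o$, the radial direction pointing away from $o$ makes angle $\pi$ with $\Uparrow_v^o$, so $|\nabla d_o|(v)=1$. Suppose no $\rho>0$ satisfies (1.4.1); pick a sequence of critical points $q_n\to\bar x$, $q_n\ne\bar x$, and rescale by $r_n=|\bar xq_n|^{-1}$ so that $q_n$ sits at distance $1$ from $\bar x$. By non-collapsing Gromov--Hausdorff convergence and the lower semicontinuity of comparison angles, a subsequential limit $q_\infty\in\partial B_1(o,C_{\bar x}Y)$ would be a critical point of $d_o$, contradicting the cone computation.

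For (1.4.2), having (1.4.1) in hand, standard Perelman machinery (the fibration theorem for non-critical maps) furnishes, for small enough $\rho$, a homeomorphism $B_\rho(\bar x)\setminus\{\bar x\}\cong(0,\rho]\times \partial B_\rho(\bar x)$ compatible with $d_{\bar x}$. Apply Perelman stability (Theorem 1.3) to the rescaled sphere $\partial B_\rho(\bar x)$ (with intrinsic or restricted metric) and the unit tangent sphere $\Sigma_{\bar x}Y$, which are Gromov--Hausdorff close for $\rho$ small by the blow-up picture; this yields a homeomorphism $\partial B_\rho(\bar x)\cong\Sigma_{\bar x}Y$. Coning off via the $d_{\bar x}$-flow produces the desired homeomorphism $\psi:B_\rho(\bar x)\to B_\rho(o,C_{\bar x}Y)$ with $\psi(\bar x)=o$.

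The delicate point, and what I expect to be the main obstacle, is arranging $\psi$ to send $B_\rho(\bar x)\cap E$ to an extremal subset of $B_\rho(o,C_{\bar x}Y)$; this is exactly where Kapovitch's refinement of Perelman stability is needed. The key input is that the gradient flow of $d_{\bar x}$ (and more generally of any admissible function used in the fibration construction) preserves every extremal subset by definition, so the retraction onto the sphere respects $E$. One then applies the stratified version of the stability theorem to the pair $(\partial B_\rho(\bar x),E\cap\partial B_\rho(\bar x))$ and its tangent-cone counterpart $(\Sigma_{\bar x}Y,\Sigma_{\bar x}E)$ to upgrade the homeomorphism to one of pairs, and cone it off. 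Making this stratified stability work requires carrying Perelman's inductive construction of the homeomorphism through each stratum $Y_{i,j}$ simultaneously, using that the local canonical neighborhoods (convex neighborhoods built from semi-concave functions, as in Lemma 1.1) restrict to the corresponding neighborhoods on each extremal subset.
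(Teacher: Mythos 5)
This statement is quoted in the paper as a known result from Kapovitch's stability survey (\cite{Ka2}); the paper itself gives no proof, so there is nothing internal to compare against, and your proposal is essentially a reconstruction of the standard blow-up-plus-stability argument that underlies the cited result. Your treatment of (1.4.1) is fine in substance: the clean way to close it is not to say ``the limit of critical points is critical'' (criticality is not obviously closed), but to take the radial witness $w=2q_\infty$ in the cone, note $\tilde\angle(w,q_\infty,o)=\pi$, lift $w$ to nearby points $w_n$ in the rescaled spaces, and use continuity of comparison angles plus Toponogov to contradict criticality of $q_n$; this is the argument you are implicitly invoking and it works.

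The genuine gap is in your route to (1.4.2). You propose to apply Perelman stability (Theorem \ref{1c}, stated for compact Alexandrov spaces) to the metric sphere $\partial B_\rho(\bar x)$ with its restricted or intrinsic metric, compared with $\Sigma_{\bar x}Y$, and then cone off. But $\partial B_\rho(\bar x)$ is in general \emph{not} an Alexandrov space in either metric (it can fail to be a length space with curvature bounded below, and with the intrinsic metric even its Gromov--Hausdorff proximity to $\Sigma_{\bar x}Y$ is a nontrivial assertion), so Theorem \ref{1c} simply does not apply to it; the coning-off step therefore has no valid input. The way this is actually done in \cite{Ka2} is to avoid the sphere as a standalone space altogether: one applies the parametrized and relative versions of the stability theorem directly to the blow-up convergence $(\lambda Y,\bar x)\xrightarrow{GH}(C_{\bar x}Y,o)$, respecting the admissible function $d_{\bar x}$ (so level sets and the radial structure are carried along automatically) and respecting the extremal subsets, whose rescalings converge to $C(\Sigma_{\bar x}E)$, an extremal subset of the cone. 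That produces the homeomorphism of pairs $\psi\colon (B_\rho(\bar x),B_\rho(\bar x)\cap E)\to (B_\rho(o,C_{\bar x}Y),\,\cdot\,)$ in one step. Your final paragraph correctly identifies that the relative (extremal-subset-preserving) stability theorem is the key input, but as written the middle step of your construction rests on an illegitimate application of the absolute stability theorem to a non-Alexandrov space, and that is the point you would need to repair.
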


\begin{corollary}\label{1e} Let $Y_i, Y \in \text{Alex}^m(\kappa)$, $Y_i\xrightarrow {GH} Y$, For $\bar y\in Y$, let $\rho(\bar y)>0$ in Theorem \ref{1d}, there is a sequence, $\bar y_i\in Y_i$, such that for $i$ large Theorem \ref{1d} holds at $\bar y_i$ with $\rho(\bar y_i)\ge \rho(\bar y)$.
\end{corollary}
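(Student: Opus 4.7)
The strategy is to reduce Corollary \ref{1e} to verifying the no-critical-point condition (1.4.1) at $\bar y_i$ with the same radius $\rho(\bar y)$. Once that is established, Perelman's radial gradient-flow construction (the engine behind Theorem \ref{1d}) automatically yields the coning homeomorphism $\psi_i\colon B_{\rho(\bar y)}(\bar y_i)\to B_{\rho(\bar y)}(\bar o, C_{\bar y_i}Y_i)$ of (1.4.2); since the gradient flow of $d_{\bar y_i}$ preserves every extremal subset of $Y_i$, the resulting $\psi_i$ is automatically extremal-preserving, giving $\rho(\bar y_i)\ge\rho(\bar y)$.

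First, select any $\bar y_i\in Y_i$ with $\bar y_i\to\bar y$, which is immediate from the definition of Gromov--Hausdorff convergence. Next, argue (1.4.1) at $\bar y_i$ by contradiction: after passing to a subsequence, assume there exist critical points $x_i$ of $d_{\bar y_i}$ with $0<|x_i\bar y_i|<\rho(\bar y)$. Split by the behavior of $r_i=|x_i\bar y_i|$. If $r_i$ does not tend to $0$, extract a subsequential limit $x_i\to x_\infty\in\overline{B_{\rho(\bar y)}(\bar y)}\setminus\{\bar y\}$ using pointed GH compactness. The comparison-angle inequalities that define criticality of a distance function are upper semicontinuous under pointed GH convergence, so $x_\infty$ is a critical point of $d_{\bar y}$ in the punctured ball, contradicting (1.4.1) at $\bar y$. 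If instead $r_i\to 0$, rescale by $\lambda_i=r_i^{-1}\to\infty$; along a diagonal subsequence $(\lambda_i Y_i,\bar y_i)\xrightarrow{GH}(C_{\bar y}Y,\bar o)$, and the rescaled $x_i$ converges to a critical point of $d_{\bar o}$ in the tangent cone at distance $1$ from $\bar o$. But Theorem \ref{1d} applied at $\bar y$ forces the cone $C_{\bar y}Y$ to have no critical points of $d_{\bar o}$ outside $\bar o$ itself (radial scaling shows (1.4.1) at $\bar y$ lifts to the cone), a contradiction.

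The main obstacle is the rescaling step in the $r_i\to 0$ case: extracting a subsequence along which $(\lambda_i Y_i,\bar y_i)$ GH-converges to $(C_{\bar y}Y,\bar o)$ requires coordinating the two limits $\bar y_i\to\bar y$ in $Y$ and the blow-up of $Y_i$, and must be done by a careful diagonal argument to avoid circularity. The other pieces---picking $\bar y_i$, upper semicontinuity of criticality, and the coning construction of (1.4.2) from (1.4.1)---are standard and follow directly from \cite{Per1} and \cite{Ka2}.
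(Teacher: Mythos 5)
The paper gives no proof of Corollary \ref{1e}; it is recorded as an immediate consequence of Theorem \ref{1d} and the stability machinery of \cite{Ka2}, so your argument has to stand on its own. Its first half is fine in outline: regularity of $d_{\bar y}$ at $x\ne\bar y$ is equivalent to the existence of a point $q$ with comparison angle $\tilde\angle\, \bar y\, x\, q>\frac\pi2$, a purely distance-theoretic and hence GH-stable condition, so critical points $x_i$ of $d_{\bar y_i}$ with $|x_i\bar y_i|$ bounded away from $0$ subconverge to a critical point of $d_{\bar y}$. Note, however, a boundary caveat even here: the limit point may sit on the sphere of radius exactly $\rho(\bar y)$, where (1.4.1) is silent, so this argument only yields radius $\rho(\bar y)-\epsilon$ unless one knows there are no critical points on the closed punctured ball (for the literal statement with radius exactly $\rho(\bar y)$, compare $Y_i=[0,2-\frac1i]\to Y=[0,2]$, $\bar y=0$, $\rho(\bar y)=2$, where no choice of $\bar y_i$ works; the corollary is meant with such a robust choice of $\rho(\bar y)$ or a slight shrinking).

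The genuine gap is the case $r_i=|x_i\bar y_i|\to0$. You rescale by $\lambda_i=r_i^{-1}$ and assert that, after a diagonal argument, $(\lambda_iY_i,\bar y_i)\xrightarrow{GH}(C_{\bar y}Y,\bar o)$. That is not available: the scales $r_i$ are dictated by the hypothetical critical points, so you cannot choose them, and a blow-up of $Y_i$ at $\bar y_i$ at a scale tending to $0$ with $i$ only subconverges (Gromov compactness plus noncollapsing) to some $m$-dimensional nonnegatively curved space $Z$; nothing identifies $Z$ with $C_{\bar y}Y$, since the geometry of $Y_i$ near $\bar y_i$ at scale $r_i$ is not controlled by the limit space (blow-up and limit do not commute). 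So the contradiction you want, a critical point of $d_{\bar o}$ at distance $1$ in $C_{\bar y}Y$, never materializes, and this small-scale regime is exactly where the real content of the corollary lies; it is what the relative Perelman--Kapovitch stability/fibration theory with uniform constants in \cite{Ka2} provides and what the paper implicitly invokes. Relatedly, your claim that the extremal-subset-preserving property of $\psi_i$ in (1.4.2) is automatic because gradient flows of $d_{\bar y_i}$ preserve extremal subsets is too quick: the identification of $B_{\rho}(\bar y_i)$ with a ball in $C_{\bar y_i}Y_i$ comes from the stability theorem applied to rescalings, and its extremal-preserving form is precisely Kapovitch's relative stability theorem; citing \cite{Ka2} for that step is fine, but it is an input, not a byproduct of the flow.
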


For simplicity, we will state the following theorem in somewhat restrictive form that is all
required in this paper; precisely if $\bar x$ is a weakly $k$-strained point, a canonical neighborhood $U_{\bar x}\supset K(h_x,g_x)$, where $h_{\bar x}$ is $(d_{\bar p_1},...,d_{\bar p_k})$ (in general, $h_{\bar x}: U_{\bar x}\to \Bbb R^l, l\le k$, is defined not using
$\bar p_1,...,\bar p_k$).

Let $\b B_r^n(H)$ denote an $r$-ball in the simply connected $n$-manifold of constant sectional curvature $H$ (e.g., $H=0, \pm 1$).

\begin{theorem} {\rm (Canonical neighborhood, \cite{Per2}, \cite{Ka2}, \cite{Fuj2})} \label{1f} Let $Y\in \text{Alex}(\kappa)$, let $\bar x\in Y$ be a weakly $k$-strained point. Then there is an open neighborhood $U_{\bar x}\supset K(h_x,g_x)$ (called a canonical neighborhood of $\bar x$), and map, $\tau_{\bar x}=(h_{\bar x},g_{\bar x}): U_{\bar x}\to \Bbb R^k\times \Bbb R$, such that $K(h_{\bar x},g_{\bar x})$ is homeomorphic to $\b B^k_\rho(0)\times C(\Sigma)$, where $h_{\bar x}=(d_{\bar p_1},...,d_{\bar p_k})$, and $g_{\bar x}^{-1}(0)$ is homeomorphic to $\b B_\rho^k(0)$, and $C(\Sigma)\cong ([0,1]\times \Sigma)/((\{0\}\times \Sigma)\sim 0)$ denotes a topological cone. Moreover, points in
$K(h_{\bar x},g_{\bar x})\setminus g_{\bar x}^{-1}(0)$ are at least $(k+1)$-strained points.
\end{theorem}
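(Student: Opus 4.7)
The plan is to combine Perel'man's fibration theorem for strictly non-critical maps with the local conic structure supplied by the stability theorem (Theorem \ref{1d}), and then observe that the extra straining point $\bar w$ both witnesses the cone direction and provides the upgrade to $(k+1)$-strain off the zero set of $g_{\bar x}$.

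First I would use the weakly $k$-strained hypothesis to show that $h_{\bar x} = (d_{\bar p_1},\ldots,d_{\bar p_k})$ is strictly non-critical (in Perel'man's sense) on some neighborhood of $\bar x$. The angle inequalities $\measuredangle \bar p_i \bar x \bar p_j > \frac\pi2$ and $\measuredangle \bar p_i \bar x \bar w > \frac\pi2$ imply, via the first variation formula, that $-\Uparrow_{\bar x}^{\bar w}$ provides a direction with uniformly negative inner product against every $\Uparrow_{\bar x}^{\bar p_i}$. Strict non-criticality is an open condition because comparison angles are upper semicontinuous on Alexandrov spaces, so it persists on a neighborhood $U_{\bar x}$. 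Applying Perel'man's fibration theorem yields that $h_{\bar x}\colon U_{\bar x} \to \mathbb R^k$ is a topological fiber bundle with connected fiber $F$, locally trivializable as $\bar B_\rho^k(0)\times F$.

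Second, to identify $F$ with a topological cone $C(\Sigma)$ and to produce the scalar function $g_{\bar x}$, I would pass to the tangent cone. The strain condition forces (after tracking which directions stay orthogonal) a topological splitting $C_{\bar x} Y \approx \mathbb R^k \times C(\Sigma)$, where the $\mathbb R^k$-factor is spanned by the negatives of the $\Uparrow_{\bar x}^{\bar p_i}$ and where $\bar w$ marks the axial direction in $C(\Sigma)$. The stability theorem (Theorem \ref{1d}) provides a homeomorphism $B_\rho(\bar x)\to B_\rho(o, C_{\bar x}Y)$ which, in Kapovitch's refined form, can be arranged to respect the extremal/stratification structure, and hence to be compatible with the fibration by $h_{\bar x}$. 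Transporting the cone coordinate on $C(\Sigma)$ back through this homeomorphism defines $g_{\bar x}\colon U_{\bar x} \to \mathbb R$; the set $K(h_{\bar x},g_{\bar x})$ corresponds to $\bar B_\rho^k(0)\times C(\Sigma)$, and $g_{\bar x}^{-1}(0)$ is exactly the tip locus across fibers, homeomorphic to $\bar B_\rho^k(0)$.

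Third, for the final clause, take any $y\in K(h_{\bar x},g_{\bar x})$ with $g_{\bar x}(y)\neq 0$. By openness and upper semicontinuity of comparison angles, the original $(k+1)$-tuple $\bar p_1,\ldots,\bar p_k,\bar w$ continues to witness the strain inequalities $\measuredangle \bar p_i y\bar p_j > \frac\pi2$ and $\measuredangle \bar p_i y\bar w > \frac\pi2$ in a neighborhood of $\bar x$. Moreover, because $y$ lies off the tip, one has a distinguished direction $\Uparrow_y^{\bar x}$ that is (nearly) vertical relative to the $\mathbb R^k$-factor in the cone model, so it has angle $>\frac\pi2$ with each $\Uparrow_y^{\bar p_i}$ and with $\Uparrow_y^{\bar w}$. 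This gives a $(k+2)$-tuple $\bar p_1,\ldots,\bar p_k,\bar w,\bar x$ witnessing that $y$ is at least $(k+1)$-strained.

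The main obstacle, as in the works cited, is step two: turning Perel'man's bundle-of-unknown-fibers into a bundle whose fibers are topological cones with a coherent scalar coordinate $g_{\bar x}$. The naive trivialization supplied by non-criticality does not by itself identify the fiber with a cone; one must bootstrap using the local conic structure of Theorem \ref{1d} and verify that the resulting cone coordinates glue across nearby fibers. This is exactly the place where Kapovitch's stratified stability and Fujioka's straining analysis enter, and once this compatibility is in hand, the product structure $\bar B_\rho^k(0)\times C(\Sigma)$, the identification of $g_{\bar x}^{-1}(0)$, and the strain upgrade all follow from openness of the strain inequalities and the conic geometry of the model.
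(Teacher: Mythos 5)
This theorem is quoted in the paper as background (it is Perel'man's canonical neighborhood theorem, cited to \cite{Per2}, \cite{Ka2}, \cite{Fuj2}) and the paper gives no proof of it, so your proposal can only be measured against the standard arguments in those references. Against that standard your outline has a structural gap at exactly the step you flag as "the main obstacle": in the references the function $g_{\bar x}$ is not an abstract cone coordinate transported through a stability homeomorphism, but a concrete semiconcave function built from distance functions to a $\delta$-net $\{\bar q_\alpha\}\subset \partial B_\epsilon(\bar w)$ (the paper itself recalls this construction of $\tilde g_{\bar p}=\frac1N\sum_\alpha\phi_\alpha(d_{\bar q_\alpha})$ in the proof of Lemma \ref{2j}), and the product--cone structure of $K(h_{\bar x},g_{\bar x})$ is obtained by applying Perel'man's Morse theory/fibration theorem to the pair $(h_{\bar x},g_{\bar x})$, with an induction on dimension; it is not deduced from a splitting $C_{\bar x}Y\approx\Bbb R^k\times C(\Sigma)$, which is itself part of the same circle of results rather than an available input. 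Moreover, the claim that the stability homeomorphism of Theorem \ref{1d} "respects extremal subsets, hence is compatible with the fibration by $h_{\bar x}$" is a non sequitur: respecting extremal subsets says nothing about respecting level sets of $h_{\bar x}$, and arranging such compatibility (Perel'man/Kapovitch's "respectful" versions) is precisely the content you would need to prove. So step two assumes, or defers to the cited literature, the substance of the theorem.

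There is also a concrete error in step three. You propose to certify that $y\in K(h_{\bar x},g_{\bar x})\setminus g_{\bar x}^{-1}(0)$ is $(k+1)$-strained by the tuple $(\bar p_1,\dots,\bar p_k,\bar w)$ with $\bar x$ as the witness, on the grounds that $\Uparrow_y^{\bar x}$ is "nearly vertical". This fails for points $y$ whose $h_{\bar x}$-value differs from $h_{\bar x}(\bar x)$: in the model $\bar B^k_\rho(0)\times C(\Sigma)$ a point with tiny cone coordinate but horizontal displacement much larger than it lies off $g_{\bar x}^{-1}(0)$, yet $\Uparrow_y^{\bar x}$ is nearly horizontal; if the displacement increases $d_{\bar p_i}$, then $\measuredangle\, \bar p_i\, y\, \bar x<\frac\pi2$ (a first-variation/comparison computation, visible already in a flat half-plane with $k=1$), so $\bar x$ cannot serve as the witness. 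In addition, an angle bound between directions does not by itself yield the comparison-angle inequalities required by the definition of weakly strained, since actual angles dominate comparison angles. The correct source of the extra strainer is the explicit construction of $g_{\bar x}$: for $y$ off its fiberwise maximum set $g_{\bar x}^{-1}(0)$, one of the net points $\bar q_\alpha\in\partial B_\epsilon(\bar w)$, together with $\bar p_1,\dots,\bar p_k$ and a point in the direction in which $\tilde g_{\bar x}$ increases along the fiber, witnesses that $y$ is at least weakly $(k+1)$-strained — which is another reason the theorem cannot be proved with $g_{\bar x}$ defined purely topologically as in your step two.
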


Similar to Theorem \ref{1d}, in this paper we need a refinement of local homeomorphism $\phi_{\bar x}$ that
preserves extremal subsets.

\begin{theorem} {\rm (Theorem 9.7 in \cite{Ka2})} \label{1g} Let $E\subset Y\in \text{Alex}^m(\kappa)$ be an extremal subset, let $h: Y\to\Bbb R^k$ be regular at
$\bar p\in E$.
Then there exists an open neighborhood $U$ of $\bar p$, and an MCS-space $A$,
a stratified subspace $B\subset A$, and a homeomorphism $\psi: (U, E\cap U)\to (A,B)\times \Bbb R^k$ such that $\text{proj}_2\circ \phi=h$.
\end{theorem}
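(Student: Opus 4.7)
The plan is to deduce this as a refinement of Perel'man's local fibration theorem for regular maps, keeping track of extremal subsets throughout the construction. The starting point is that regularity of $h$ at $\bar{p}$ yields, on some small neighborhood $U \ni \bar{p}$, a homeomorphism $\psi : U \to A \times \mathbb{R}^k$ with $\mathrm{proj}_2 \circ \psi = h$, where $A$ is the ``fiber'' over $h(\bar{p})$ equipped with its MCS structure. This already appears in Theorem \ref{1f} in the weakly strained setting and in the more general form in \cite{Per2}, \cite{Ka2}. What we must additionally verify is that $E \cap U$ has the matching product form $B \times \mathbb{R}^k \subset A \times \mathbb{R}^k$.

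First I would recall that the trivialization $\psi$ is built from a finite iterated composition of truncated gradient (or gradient-like) flows of the coordinate functions of $h$, together with complementary semi-concave functions that play the role of fiber coordinates. Because $E$ is extremal, every such gradient flow preserves $E$: each flow line either lies entirely in $E$ or entirely in $Y \setminus E$. Consequently, the preimage $\psi(E \cap U)$ is saturated with respect to the product decomposition, so it has the form $B \times \mathbb{R}^k$, where one may take
\[
B = \mathrm{proj}_1\bigl(\psi(E \cap U)\bigr) = \psi(E \cap U) \cap (A \times \{h(\bar{p})\}).
\]

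Next I would equip $B$ with its stratification. The extremal subset $E$ carries a canonical decomposition into primitive extremal pieces $\bar{Y}_{i,j} \cap E$; each such piece is flow-invariant, hence maps under $\psi$ to a product $B_{i,j} \times \mathbb{R}^k$. Defining the strata of $B$ to be these slices $B_{i,j}$, one obtains a stratified subspace of the MCS space $A$, with each stratum a topological manifold (since regularity of $h$ restricted to each stratum follows from the critical-point behavior of distance functions on extremal subsets, cf.\ the proof of Theorem \ref{1d} and \cite{PP}).

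The main technical point, and the only place where the refinement goes beyond the bare fibration theorem, is to check that the flows appearing in Perel'man's construction can be realized simultaneously as flows on $Y$ and as flows on every primitive extremal piece $\bar{Y}_{i,j}$. This is built into the gradient flow formalism (gradient curves of $d_{\bar{p}_i}$ exist and are uniquely defined in $Y$, and their restriction to $E$ coincides with the intrinsic gradient flow on the extremal subset). Once this is in hand, the whole argument reduces to a direct translation of the construction in Theorem 9.7 of \cite{Ka2}, so the proof is by citation and no further independent argument is required.
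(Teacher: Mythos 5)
The paper offers no proof of this statement at all: it is quoted verbatim as Theorem 9.7 of Kapovitch's survey \cite{Ka2}, so your closing remark that ``the proof is by citation'' is exactly the paper's treatment, and your sketch of why the relative fibration theorem holds is consistent with the cited source. One caution about the sketch itself: the assertion that every gradient-flow line lies entirely in $E$ or entirely in $Y\setminus E$ is false (trajectories can enter an extremal subset from outside, e.g.\ the gradient flow of $d_0$ on $[0,1]$ carries interior points onto the extremal point $\{1\}$ in finite time), and the genuine argument in \cite{Ka2} instead tracks $E$ through the inductive construction of the local fibration rather than relying on such two-sided flow invariance --- but since the result is being cited, this imprecision does not affect the paper.
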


\subsection {Rigid structures on $\Sigma_{\bar x}X$}

Let $(X,S,\phi)$ be as in Theorem \ref{sol-alex}, and we will always consider the case that $\dim(S)=m\ge 1$.  Let's begin to review certain rigid geometrical structures on $\Sigma_{\bar x}X$, $\bar x\in S$, which may not have on $\Sigma_xX$, for $x\in X\setminus S$.

First, $\Sigma_{\bar x}X$ contains two compact convex subsets, $\Sigma_{\bar x}S$ with $\partial \Sigma_{\bar x}S=\emptyset$,
and $\Sigma_{\bar x}^\perp S=\{w\in \Sigma_{\bar s}X,\, |wv|=\frac \pi2, \, v\in \Sigma_{\bar x}S\}$. By the following lemma,
for $z\in \partial \Omega_c$ that realizes the distance to $S$ at $\bar x$, $c<0$ ($b_{\bar p}(S)=0$), $\uparrow_{\bar x}^z\in \Sigma_{\bar x}^\perp S\ne \emptyset$.

Here one needs the following fact:

\begin{lemma} {\rm(\cite{Ya})} \label{ya} Let $\Sigma\in \text{Alex}^n(1)$, and let $\Sigma_0\subset\Sigma$ be a closed locally convex subset without boundary (view $\Sigma_0$ as an
Alexandrov space) which is not a point. If there is a $p\in \Sigma$ such that $|p\Sigma_0|\geq \frac \pi 2$, then $|pq|=\frac \pi 2$, for any
$q\in \Sigma_0$.
\end{lemma}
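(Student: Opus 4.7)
The plan is to first show $|p\Sigma_0|=\pi/2$ (not just $\ge\pi/2$), and then to verify that $A=\{q\in\Sigma_0\colon |pq|=\pi/2\}$ is open in $\Sigma_0$; being also closed and nonempty, a clopen argument yields $A=\Sigma_0$.

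For the first step, I would pick $q_0\in\Sigma_0$ realizing $|p\Sigma_0|$ (which exists since $\Sigma$ is compact). The first variation formula at this minimum gives $\angle(\uparrow_{q_0}^p,v)\ge \pi/2$ for every $v\in\Sigma_{q_0}\Sigma_0$. Since $\Sigma_{q_0}\Sigma_0$ is a convex subset of $\Sigma_{q_0}\Sigma$ without boundary (by local convexity and the no-boundary hypothesis iterated to directions), Lemma~\ref{8a} applied inside $\Sigma_{q_0}\Sigma$ gives $|\uparrow_{q_0}^p\,\Sigma_{q_0}\Sigma_0|=\pi/2$, and compactness produces some $v^*\in\Sigma_{q_0}\Sigma_0$ with $\angle(\uparrow_{q_0}^p,v^*)=\pi/2$. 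Assuming $|pq_0|>\pi/2$ for contradiction, I would follow the geodesic $\gamma$ in $\Sigma_0$ from $q_0$ in direction $v^*$ (it remains in $\Sigma_0$ for small $t>0$ by local convexity, hence is a geodesic of $\Sigma$). Toponogov's hinge inequality in curvature $\ge 1$ yields $\cos|p\gamma(t)|\ge \cos|pq_0|\cos t$; since $\cos|pq_0|<0$ and $\cos t<1$ for $t\in(0,\pi/2)$, the right-hand side strictly exceeds $\cos|pq_0|$, so $|p\gamma(t)|<|pq_0|$, contradicting $|p\gamma(t)|\ge |p\Sigma_0|=|pq_0|$. Therefore $|pq_0|=\pi/2$.

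For the second step, set $g=-\cos|p\cdot|\ge 0$ on $\Sigma_0$. The curvature-$\ge 1$ comparison $(\cos|p\gamma|)''+\cos|p\gamma|\ge 0$ along any geodesic $\gamma$ of $\Sigma$ translates to $g''\le -g\le 0$ wherever $g\ge 0$, so $g$ is concave along every geodesic of $\Sigma_0$ (local convexity makes these into geodesics of $\Sigma$). At any $q'\in A$ we have $g(q')=0$; by the no-boundary hypothesis every direction $v\in\Sigma_{q'}\Sigma_0$ extends to a two-sided local geodesic $\gamma\colon(-\epsilon,\epsilon)\to\Sigma_0$, and for $s<0<t$, concavity together with $g\ge 0$ forces $0=g(\gamma(0))\ge \tfrac{t}{t-s}g(\gamma(s))+\tfrac{-s}{t-s}g(\gamma(t))\ge 0$, so $g\circ\gamma\equiv 0$ on $(-\epsilon,\epsilon)$. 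Local convexity then shows a whole neighborhood of $q'$ in $\Sigma_0$ is covered by such outgoing geodesics and therefore lies in $A$, so $A$ is open. Applied to each connected component, this gives $A=\Sigma_0$.

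The main obstacle is the two-sided geodesic extendability at points of $\Sigma_0$: a purely one-sided analysis would fail, as $g(t)=t$ on $[0,\epsilon)$ shows that a nonnegative concave function vanishing at $0$ need not be identically zero on a half-interval. This is precisely where the hypothesis $\partial\Sigma_0=\emptyset$ enters, through the standard extendability of geodesics at interior points of a boundaryless Alexandrov space.
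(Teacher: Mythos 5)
Your Step 2 is where the proof breaks down. The clopen argument needs, at each $q'\in A$, two-sided local geodesics of $\Sigma_0$ through $q'$ in enough directions that their union covers a neighborhood of $q'$, and you justify this by ``the standard extendability of geodesics at interior points of a boundaryless Alexandrov space.'' No such fact exists: extendability of geodesics has nothing to do with absence of boundary. For instance, the spherical suspension $S(S^1_\ell)$ of a circle of length $\ell<2\pi$ is a compact Alexandrov space of curvature $\ge 1$ without boundary in which no local geodesic passes through either pole (the space of directions there has diameter $\ell/2<\pi$, so incoming and outgoing directions can never make angle $\pi$); taking $\Sigma=S\bigl(S(S^1_\ell)\bigr)$, $\Sigma_0$ the equatorial copy of $S(S^1_\ell)$ and $p$ an outer pole shows that such non-extendable points of $\Sigma_0$ occur in exactly the setting of the lemma. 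At such a point your argument cannot even start, and even at points where some directions are tangent to two-sided geodesics, the set of such directions need not be dense and the union of those geodesics need not fill a neighborhood, so openness of $A$ is not established. As you yourself observe, the one-sided concavity argument is vacuous, so this is not a removable technicality; note also that $A$ must be shown nonempty in every component of $\Sigma_0$, whereas your Step 1 only produces a base point in the component of the global nearest point.

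There is a second, logical problem in Step 1: to get $v^*\in\Sigma_{q_0}\Sigma_0$ with $\angle(\uparrow_{q_0}^p,v^*)=\frac\pi2$ you invoke Lemma \ref{8a}, but that lemma is stated for the horizontal/vertical spaces of a submetry, and its ``$\frac\pi2$-apart'' half is proved in the paper precisely ``by the standard argument in the proof of Lemma \ref{ya}'' --- i.e.\ by the very statement you are proving, one dimension down. Done honestly, this step is an induction on dimension, applying the lemma to $(\Sigma_{q_0}\Sigma,\Sigma_{q_0}\Sigma_0,\uparrow_{q_0}^p)$ (with the case $\dim\Sigma_0=1$, where $\Sigma_{q_0}\Sigma_0$ is a pair of antipodal directions, handled by suspension rigidity), and one must also allow for $v^*$ being only a limit of directions of geodesics in $\Sigma_0$. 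Once that induction is in place it in fact yields that \emph{every} $v\in\Sigma_{q_0}\Sigma_0$ makes angle exactly $\frac\pi2$ with every direction to $p$, and then the whole lemma follows without any clopen or extendability argument: $\cos|p\cdot|$ is $\le 0$ on $\Sigma_0$ and convex along intrinsic shortest paths of $\Sigma_0$ (which are local geodesics of $\Sigma$ by local convexity), it vanishes at $q_0$ and has vanishing one-sided derivative there, hence it vanishes identically along every intrinsic shortest path issuing from $q_0$, i.e.\ $|pq|=\frac\pi2$ for all $q$ in the component of $q_0$. I recommend replacing your Step 2 by this propagation along paths from $q_0$; as written, the openness claim is a genuine gap.
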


\begin{proposition} {\rm(Rigid structures on $\Sigma_{\bar x}X$)} \label{1i} Let $(X,S,\phi)$ be as in Theorem \ref{sol-alex}, and $\bar x\in S$.

\noindent {\rm (1.9.1)} $\Sigma_{\bar x}S$ and $\Sigma_{\bar x}^\perp S$ are two compact convex subsets of $\Sigma_{\bar x}X$,
which are $\frac \pi2$-apart i.e., $|vw|=\frac \pi2$ for any $v\in \Sigma_{\bar x}S$ and $w\in \Sigma_{\bar x}^\perp S$.

\noindent {\rm (1.9.2)} Given any three points in $\Sigma_{\bar x}S\cup \Sigma_{\bar x}^\perp S$ (not all in one component), say $v_1, v_2\in \Sigma_{\bar x}S$ and $w\in \Sigma_{\bar x}^\perp S$, there is an isometric embedding of a triangle in $S^2_1$, $\tilde \triangle (v_1,v_2,w)\hookrightarrow
\Sigma_{\bar x}X$, whose sides coincide with $[v_1w]$ and $[v_1v_2]$.

\noindent {\rm (1.9.3)} If $\phi: X\to S$ is a submetry, then at $\bar x\in S$, $H_{\bar x}=\Sigma_{\bar x}S$ and $V_{\bar x}=\Sigma_{\bar x}^\perp S$.

\noindent {\rm (1.9.4)} If $\bar x$ is regular in $S$ i.e., $\Sigma_{\bar x}S$ is isometric to $S^{m-1}_1$, then $\Sigma_{\bar x}X\cong \Sigma_{\bar x}S*\Sigma_{\bar x}^\perp S$, the join of $\Sigma_{\bar x}S$ and $\Sigma_{\bar x}^\perp S$.
\end{proposition}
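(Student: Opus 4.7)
The plan is to work inside the compact Alexandrov-$(1)$ space $\Sigma_{\bar x}X$, with Lemma \ref{ya} and Toponogov comparison as the main tools, and to prove the four assertions in the stated order so earlier ones can be fed into later ones.

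For (1.9.1), $\Sigma_{\bar x}S$ is closed convex in $\Sigma_{\bar x}X$ because $S$ is totally convex in $X$, has no boundary because $\partial S=\emptyset$, and is compact automatically. Non-emptiness of $\Sigma^\perp_{\bar x}S$ is produced in the paragraph preceding Lemma \ref{ya} (take $z\in\partial\Omega_c$ realizing $d(\bar x,\partial\Omega_c)$ and use first variation plus total convexity of $S$); Lemma \ref{ya} then promotes ``$|\cdot\,\Sigma_{\bar x}S|\ge\pi/2$'' to ``$=\pi/2$''. For convexity of $\Sigma^\perp_{\bar x}S$, given $w_1,w_2\in\Sigma^\perp_{\bar x}S$, a midpoint $m\in[w_1w_2]$, and $v\in\Sigma_{\bar x}S$, the $S^2_1$-comparison triangle with $|vw_1|=|vw_2|=\pi/2$ has every point of its third side at distance exactly $\pi/2$ from $v$, so Toponogov gives $|vm|\ge\pi/2$, and Lemma \ref{ya} forces equality. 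For (1.9.2), the same midpoint estimate with $w$ fixed and $v$ varying along $\sigma=[v_1v_2]$ shows $|\sigma(s)w|\equiv\pi/2$; hence each triangle $\sigma(s)\sigma(s')w$ has two $\pi/2$-sides and third side $|s-s'|$, and its $S^2_1$-comparison has angle $|s-s'|$ at $w$, saturating the Toponogov hinge inequality. The rigidity case of Toponogov then produces an isometrically embedded spherical region bounded by $[\sigma(s)w]$, $[\sigma(s')w]$, and $\sigma|_{[s,s']}$; taking $s=0$, $s'=|v_1v_2|$ gives the stated isometric embedding of $\tilde\triangle(v_1,v_2,w)$ with sides $[v_1v_2]$ and $[v_1w]$.

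For (1.9.4), regularity of $\bar x$ makes $\Sigma_{\bar x}S$ isometric to the round $S^{m-1}_1$; combining this with (1.9.1) (two $\pi/2$-apart closed convex subsets, one a round sphere), the isometric spherical triangles of (1.9.2) ranging $w$ over $\Sigma^\perp_{\bar x}S$ so that together they cover $\Sigma_{\bar x}X$, and the join-splitting theorem for $\text{Alex}(1)$ spaces containing a round spherical factor, identifies $\Sigma_{\bar x}X$ with the spherical join $\Sigma_{\bar x}S*\Sigma^\perp_{\bar x}S$. For (1.9.3), the inclusion $\Sigma_{\bar x}S\subseteq H_{\bar x}$ is immediate from $\phi|_S=\mathrm{id}$. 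At a regular $\bar x$, the join decomposition of (1.9.4) makes $D\phi_{\bar x}$ the join projection to $C(\Sigma_{\bar x}S)$, so $|D\phi_{\bar x}(\cdot)|=1$ only on $\Sigma_{\bar x}S$, forcing $H_{\bar x}=\Sigma_{\bar x}S$; then $V_{\bar x}=\Sigma^\perp_{\bar x}S$ is tautological from the definitions. For a non-regular $\bar x$, I would approximate by regular $\bar x_i$ in the same stratum of $S$ (using Lemma \ref{1a}) and pass to the Gromov--Hausdorff limit on pointed tangent cones.

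The main obstacle I anticipate is the non-regular case of (1.9.3): neither the join decomposition nor the continuity of $H_x$ in $x$ is a priori available, so the limit route requires upper semicontinuity of $H_x$ under Gromov--Hausdorff convergence of pointed tangent cones together with the continuity of $\Sigma_{\bar x_i}S$ and $\Sigma^\perp_{\bar x_i}S$ along the stratum. A cleaner alternative, which I would prefer, is to argue directly from the submetry $D\phi_{\bar x}:H_{\bar x}\to\Sigma_{\bar x}S$ admitting the identity as an isometric section whose image is the full convex subset $\Sigma_{\bar x}S$, and to use the $\pi/2$-apart rigidity from (1.9.1)--(1.9.2) to show that this section exhausts $H_{\bar x}$; establishing this rigidity uniformly (independent of the regularity of $\bar x$) is where I expect the heaviest work.
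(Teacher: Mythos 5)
Your treatments of (1.9.1), (1.9.2) and (1.9.4) are correct and essentially the paper's own (Lemma \ref{ya} plus rigidity of the equality case in Toponogov for the first two; the round factor $\Sigma_{\bar x}S\cong S^{m-1}_1$ forcing $C_{\bar x}X$ to split off an $\mathbb R^m$-factor for the last). The genuine gap is (1.9.3). Even at a regular point, the step ``the join decomposition makes $D\phi_{\bar x}$ the join projection'' is asserted rather than proved: knowing $C_{\bar x}X=\mathbb R^m\times C(\Sigma^\perp_{\bar x}S)$ and that $D\phi_{\bar x}$ restricts to the identity on the $\mathbb R^m$-factor does not by itself determine $D\phi_{\bar x}$ on the rest of the cone; one must still argue (e.g.\ by testing $1$-Lipschitzness of $D\phi_{\bar x}$ against \emph{all} points of the $\mathbb R^m$-factor) that no direction outside $\Sigma_{\bar x}S$ can be horizontal. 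That is fixable. What is not fixable as written is the non-regular case, exactly where you flag trouble: you have neither the join structure nor any continuity of $H_x$, and the proposed remedy---approximate by regular points along a stratum and pass to Gromov--Hausdorff limits of tangent cones---rests on a semicontinuity of $H_x$ (and of $\Sigma_{\bar x_i}S$, $\Sigma^\perp_{\bar x_i}S$) that you neither prove nor can quote; your ``cleaner alternative'' is likewise left as the heaviest unproved step. So (1.9.3) is not established by the proposal.

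The paper proves (1.9.3) by a different, pointwise argument that needs no regularity and no limiting: it uses the construction of the soul as the last term of the chain of compact convex sets $b_{\bar p}^{-1}(0)=C_0\supset C_1\supset\cdots\supset C_s=S$, which yields the characterization
$$\Sigma_{\bar x}S=\bigl\{v\in\Sigma_{\bar x}X:\ |v\Uparrow_{\bar x}^{\partial\Omega_{c}}|=\tfrac\pi2,\ \ |v\Uparrow_{\bar x}^{\partial C_i}|=\tfrac\pi2,\ 0\le i\le s\bigr\}\qquad(c<0).$$
Since $\Uparrow_{\bar x}^{\partial\Omega_{c}}\cup\Uparrow_{\bar x}^{\partial C_0}\cup\cdots\cup\Uparrow_{\bar x}^{\partial C_s}\subseteq V_{\bar x}$ and every $v\in H_{\bar x}$ makes angle at least $\tfrac\pi2$ with $V_{\bar x}$ (definition of $V_{\bar x}$, cf.\ Lemma \ref{8a}), every horizontal direction satisfies the displayed conditions, so $H_{\bar x}\subseteq\Sigma_{\bar x}S$; with the trivial inclusion $\Sigma_{\bar x}S\subseteq H_{\bar x}$ this gives $H_{\bar x}=\Sigma_{\bar x}S$ at every $\bar x\in S$, and then $V_{\bar x}=\{w:\,|wH_{\bar x}|=\tfrac\pi2\}=\Sigma^\perp_{\bar x}S$ by definition. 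In short, the structural input you are missing is the soul construction itself (the chain $C_0\supset\cdots\supset C_s$ and the perpendicularity of its outward direction sets), not regularity of $\bar x$; once that is used, the case distinction and the limit passage in your scheme become unnecessary.
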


\begin{proof} (1.9.1) A proof is based on Lemma \ref{ya}

(1.9.2) follows from the rigidity of the equal case in the Toponogov comparison theorem.

(1.9.3) It suffices to show that $H_{\bar x}=\Sigma_{\bar x}S$. Recall that $V_x=\{v \in \Sigma_xX, \, |vH_x|=\frac \pi2\}$ (see Appendix, Lemma \ref{8a}), and that $S$ is obtained through a sequence of compact convex subsets, $b_{\bar p}^{-1}(0)=C_0\supset C_1\supset
\cdots \supset C_s=S$. Then
$$\Sigma_{\bar x}S=\{v, |v\Uparrow_{\bar x}^{\partial \Omega_{c<0}}|=\frac \pi2,\, |v\Uparrow_{\bar x}^{\partial C_i}|=\frac \pi2, \, 0\le i\le s\},$$
Because $\Uparrow_{\bar x}^{\partial \Omega_{c<0}}\cup \Uparrow_{\bar x}^{\partial C_0}\cup \cdots \cup \Uparrow_{\bar x}^{\partial C_s}\subseteq V_{\bar x}$, $H_{\bar x}\subseteq \Sigma_{\bar x}S$, thus $H_{\bar x}=\Sigma_{\bar x}S$.

(1.9.4) Because $\bar x\in S$ is regular, $C_{\bar x}X$ contains a $\mathbb R^m$-factor,
thus $C_{\bar x}X$ splits off a $\mathbb R^m$-factor.
\end{proof}

The following result will be used in the proof of Lemma \ref{3a}.

\begin{theorem} [Finite quotient of joins, \cite{RW1}] \label{1j}
Let $X\in \text{Alex}^n(1)$, and let $X_i$ $(i=0,1$) be convex subsets of $X$ such that $X_0$ and $X_1$ are $\frac \pi2$-apart i.e., for any $x_i\in X_i$,
$|x_0x_1|=\frac \pi2$.

\noindent {\rm (1.10.1)} If $\partial X_0=\emptyset$, then $\dim(X_0)+\dim(X_1)\le n-1$.

\noindent {\rm (1.10.2)} If $\partial X_i=\emptyset$ ($i=0, 1$), and  $\dim(X_0)+\dim(X_1)=n-1$, then $X$ is isometric to $\hat X/\Gamma$, such that $\hat X\in \text{Alex}^n(1)$ has a join structure and $\Gamma$ is a finite group of isometries.
\end{theorem}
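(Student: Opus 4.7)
The plan is to prove (1.10.1) by induction on $n=\dim X$ and to prove (1.10.2) by upgrading the equality case to a finite, locally isometric covering $\Phi: X_0*X_1\to X$.

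For (1.10.1), I would pick a regular point $x_0\in X_0$ (dense in $X_0$ since $\partial X_0=\emptyset$) and pass to $\Sigma:=\Sigma_{x_0}X\in\text{Alex}^{n-1}(1)$. Inside $\Sigma$, set $A:=\Sigma_{x_0}X_0$, a convex subset of dimension $\dim X_0-1$ with $\partial A=\emptyset$, and $B:=\{v\in\Sigma:|vA|=\pi/2\}$, convex by Lemma \ref{ya}. Two claims drive the argument. \emph{First}, each $\uparrow_{x_0}^{x_1}$ (for $x_1\in X_1$) lies in $B$: for any $v\in A$, extend the $X_0$-geodesic tangent to $v$ slightly in both directions (possible at a regular point of $X_0$, with an approximation argument in general) to $x_0^\pm\in X_0$ with $\uparrow_{x_0}^{x_0^+}=v$, $\uparrow_{x_0}^{x_0^-}=-v$; both satisfy $|x_0^\pm x_1|=\pi/2$, so Toponogov applied to the triangle with sides $\epsilon,\pi/2,\pi/2$ gives $\angle x_0^\pm x_0 x_1\ge\pi/2$, and the triangle inequality $\pi=|v(-v)|\le|v\uparrow_{x_0}^{x_1}|+|(-v)\uparrow_{x_0}^{x_1}|$ in $\Sigma$ forces equality $|v\uparrow_{x_0}^{x_1}|=\pi/2$. \emph{Second}, the map $\psi:X_1\to B$, $\psi(x_1):=\uparrow_{x_0}^{x_1}$, is distance non-decreasing: the $S^2_1$-comparison triangle with two sides $\pi/2$ and opposite side $|x_1x_1'|$ has apex angle exactly $|x_1x_1'|$, so $|\psi(x_1)\psi(x_1')|=\angle x_1x_0x_1'\ge|x_1x_1'|$. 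Hence $\psi$ is injective and $\dim B\ge\dim X_1$. The inductive hypothesis applied to $(A,B)\subset\Sigma$ yields $\dim A+\dim B\le n-2$, so $\dim X_0+\dim X_1\le n-1$.

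For (1.10.2), assume additionally $\partial X_1=\emptyset$ and equality $\dim X_0+\dim X_1=n-1$. The inductive step above is now sharp at every regular $x_0\in X_0$, so $\dim A+\dim B=n-2$ in each $\Sigma_{x_0}X$; iterating through all spaces of directions and invoking Proposition \ref{1i} (especially (1.9.4)) produces a finite-quotient-of-join structure on each $\Sigma_{x_0}X$. Combined with $\partial X_1=\emptyset$, the dimension saturation forces $\psi(X_1)=\Sigma_{x_0}^\perp X_0$. One can then define the join map
\[
\Phi:\hat X:=X_0*X_1\to X,\qquad \Phi(x_0,x_1,t):=g\exp_{x_0}\bigl(t\,\uparrow_{x_0}^{x_1}\bigr),\quad t\in[0,\pi/2],
\]
and verify, using the equality clause of Toponogov (cf.\ Proposition \ref{1i} (1.9.2)), that $\Phi$ is a local isometry. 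Since any Alexandrov space of curvature $\ge 1$ has diameter $\le\pi$ and is hence compact, and $\hat X,X$ both have dimension $n$, the map $\Phi$ must be a finite (Alexandrov) covering. Setting $\Gamma:=\mathrm{Deck}(\Phi)$ yields $X\cong\hat X/\Gamma$.

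The principal obstacle is passing from the pointwise/local join structure to a globally well-defined surjective local isometry $\Phi$. For well-definedness one needs uniqueness of $\uparrow_{x_0}^{x_1}$ for each $(x_0,x_1)\in X_0\times X_1$ (no branching of minimal geodesics between $X_0$ and $X_1$); this follows from the saturated dimension count, which rules out any extra flat direction perpendicular to $X_0\cup X_1$. Verifying the local isometry property at points of $X_0\cup X_1$ (the `edges' of the join) requires iteratively applying the join rigidity of Proposition \ref{1i} (1.9.2) at every stratum of the space of directions. Surjectivity reduces to the following observation: for any $y\in X$, choosing a nearest point $x_0\in X_0$ yields $\uparrow_{x_0}^y\in\Sigma_{x_0}^\perp X_0=\psi(X_1)$, whence $y=\Phi(x_0,x_1,|x_0y|)$ for some $x_1\in X_1$. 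Combined with the local isometry property, this yields the covering identification and completes (1.10.2).
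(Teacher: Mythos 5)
You should first note that the paper never proves Theorem \ref{1j}: it is quoted from \cite{RW1}, where it is the main theorem, so your proposal has to stand on its own. Your argument for (1.10.1) is essentially the standard one and is sound modulo routine points (a base case for the induction, density of geodesic directions at a regular point of $X_0$, and convexity of the polar set $B$ via Lemma \ref{ya}); the dimension estimate via the expanding map $\psi$ and induction on spaces of directions is fine.

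The proof of (1.10.2), however, has a genuine gap, and the route you chose cannot work. You build $\Phi: X_0*X_1\to X$, $\Phi(x_0,x_1,t)=g\exp_{x_0}(t\uparrow_{x_0}^{x_1})$, and claim the saturated dimension count forces uniqueness of $\uparrow_{x_0}^{x_1}$, so that $\Phi$ is well defined, a local isometry, and a finite covering with $\Gamma=\mathrm{Deck}(\Phi)$. Uniqueness is false in exactly the situation the theorem addresses: in the equality case there are in general $m>1$ minimal geodesics between a point of $X_0$ and a point of $X_1$, and the finite group $\Gamma$ is precisely the bookkeeping of this non-uniqueness (the present paper uses this in the proof of Lemma \ref{3a}, where it recalls from the proof of Theorem \ref{1j} that there are exactly $m$ geodesics joining $x_0$ and $x_1$, equivalently a group of order $m$ acting freely on $\hat X$). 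Concretely, let $X=S^3/\mathbb Z_m$ be a lens space with $\mathbb Z_m$ acting inside the torus, and let $X_0,X_1$ be the images of the two core circles of the join decomposition $S^3=S^1*S^1$. These are convex, boundaryless, $\frac\pi2$-apart circles of length $\frac{2\pi}m$ with $\dim X_0+\dim X_1=2=n-1$; each pair $(x_0,x_1)$ is joined by $m$ minimal geodesics; and $X_0*X_1$ (which has non-manifold points along both circles) admits no local isometry onto the smooth space $X$. The correct $\hat X$ here is the round $S^3=S^1_{2\pi}*S^1_{2\pi}$, whose join factors are covers of $X_0,X_1$, not $X_0,X_1$ themselves. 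For the same reason your claim $\psi(X_1)=\Sigma^\perp_{x_0}X_0$ fails for a single-valued choice of directions (in the example $\psi$ maps a circle of length $\frac{2\pi}m$ into one of length $2\pi$; only the union over all geodesic directions fills $\Sigma^\perp_{x_0}X_0$). Thus the core of (1.10.2) --- constructing the join $\hat X$ and the finite group $\Gamma$ out of the multivalued geodesic correspondence between $X_0$ and $X_1$ --- is missing from your sketch, and no choice-based version of your map $\Phi$ can supply it.
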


\begin{remark}\label{1k} Observe that if $X$ in Theorem \ref{1j} does not have a join structure, and if any $x\in X_0\cup X_1$, say $x\in X_0$, $\Sigma_{x}X$ is isometric to $\Sigma_{x}X_0*\Sigma_x^\perp X_0$, then $\Gamma$ acts freely on $\hat X$, thus $X$ is not simply connected.
\end{remark}

\subsection {Partial flat strips}

The main ingredient in Theorems \ref{sol} and \ref{rig} is the flat strip property (0.2.2) discovered by Perel'man; for instance (0.2.2) implies (0.2.3), (0.3.1) and (0.3.2). As mentioned seen
in the introduction, an open Alexandrov space of nonnegative curvature with a soul may not have the above flat strip property (\cite{Li}); which is the main difficulty in
Conjectures \ref{0e}-\ref{0g}, \ref{0i} and \ref{0k}.

A basic tool in the proofs of Theorems E and F is the following partial generalization of (0.2.2):

\begin{theorem} {\rm (Partial flat strip property, Yamaguichi, \cite{Ya})} \label{1l} Let $X\in \text{Alex}^n(0)$, let $\Omega\subset X$ be a convex closed subset, with
$\partial \Omega\neq \emptyset$, let $f=d_{\partial \Omega}$ and let
$\gamma(t)\subset \Omega$ ($t\in [0,b]$) be a minimal geodesic with
$\gamma(0)=p,\ \gamma(b)=q$, such that $f(\gamma(t))$ is a constant. Then
for any minimal geodesic $\gamma_0$ from $p$ to $\partial \Omega$, with
$|\gamma_0^+(0)\gamma^+(0)|=\frac\pi2$, there is a minimal
geodesic $\gamma_1$ from $q$ to $\partial \Omega$, such that $\{\gamma,
\gamma_0,\gamma_1\}$  bounds a flat totally geodesic rectangle.
\end{theorem}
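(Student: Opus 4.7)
The plan is to combine concavity of $f = d_{\partial\Omega}$ on the convex domain $\Omega$ (standard in nonnegative curvature) with Toponogov's right-angle hinge comparison and its equality/rigidity case to produce the flat rectangle. Throughout let $\ell := f(p) = f(q)$ and $q_0 := \gamma_0(\ell) \in \partial\Omega$.

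The first step is a first-variation argument: since $(f \circ \gamma)' \equiv 0$, for each $t \in [0,b]$ and each minimal segment $\sigma_t$ from $\gamma(t)$ to a closest point of $\partial\Omega$, the first-variation formula gives $0 \leq -\cos\angle(\gamma^{\pm}(t), \sigma_t^+(0))$ on both sides, and combined with the antipodality of $\gamma^{\pm}(t)$ this pins both angles to exactly $\pi/2$. In particular, $\gamma_0$ realizes this at $p$, and any minimal geodesic $\gamma_1$ from $q$ to a closest point $q_1 \in \partial\Omega$ will be perpendicular to $\gamma$ at $q$. Coupled with $\gamma$ being a level-set geodesic of the concave function $f$, this is what promotes the setup from ``one perpendicular'' at $p$ to a rigid configuration along all of $\gamma$.

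Next, I would apply Toponogov's hinge comparison to the right-angle hinges $(\gamma, \gamma_0)$ at $p$ and $(\gamma^{-1}, \gamma_1)$ at $q$ to obtain the upper bounds $|qq_0| \leq \sqrt{b^2 + \ell^2}$ and $|pq_1| \leq \sqrt{b^2 + \ell^2}$. The matching lower bounds must then be extracted from concavity of $f$ along the segments $[\gamma(t)\,q_0]$ and $[\gamma(t)\,q_1]$: using $f(q_0) = f(q_1) = 0$, $f(\gamma(t)) = \ell$, and a comparison of the decrease of $f$ along these segments with its slopes at the endpoints (which are controlled by the perpendicularity from Step 1), one forces $|\gamma(t)\,q_0| \ge \sqrt{t^2 + \ell^2}$ and hence equality for all $t$. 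At this point Perelman's equality/rigidity case of the Toponogov hinge comparison yields isometrically embedded flat totally geodesic triangles $\triangle(p,q,q_0) \hookrightarrow X$ and $\triangle(p,q,q_1) \hookrightarrow X$.

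Finally, I would glue these two flat triangles along the common side $\gamma = [pq]$. The right angles at $p$ and $q$ ensure they fit together into an isometrically embedded flat rectangle $[0,b] \times [0,\ell] \hookrightarrow X$ with boundary $\gamma \cup \gamma_0 \cup \gamma_1 \cup [q_0\,q_1]$; the containment $[q_0\,q_1] \subset \partial\Omega$ is then verified from concavity of $f$ restricted to the rectangle, which forces $f$ to vanish identically on the opposite side. The main technical obstacle is extracting the lower bound $|\gamma(t)\,q_0| \ge \sqrt{t^2 + \ell^2}$ from concavity: this is the essential use of the curvature sign together with the constancy of $f \circ \gamma$, and it is what upgrades the Toponogov inequality into its rigidity form. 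Once equality is in hand, assembling the rectangle is standard Alexandrov rigidity.
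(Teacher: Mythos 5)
The paper itself gives no proof of this statement (it is quoted from Yamaguchi \cite{Ya}), so your argument has to stand on its own, and it has a genuine gap at its decisive step: the lower bound $|\gamma(t)\,q_0|\ge\sqrt{t^2+\ell^2}$. Write $L=|\gamma(t)q_0|$, let $\sigma\subset\Omega$ be a minimal segment from $\gamma(t)$ to $q_0$, and let $\theta=|\sigma^+(0)\,\Uparrow_{\gamma(t)}^{\partial\Omega}|$. Concavity of $f$ along $\sigma$, used at the $\gamma(t)$ end, says $0=f(q_0)\le f(\gamma(t))+L\,D_{\gamma(t)}f(\sigma^+(0))=\ell-L\cos\theta$, i.e.\ $L\cos\theta\le\ell$; this is an upper bound and can never force $L$ to be large (it is vacuously satisfied by small $L$). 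Used at the $q_0$ end it says $\ell\le L\cdot D_{q_0}f(\uparrow_{q_0}^{\gamma(t)})$, so to get $L\ge\sqrt{t^2+\ell^2}$ you would need the slope bound $D_{q_0}f(\uparrow_{q_0}^{\gamma(t)})\le\ell/\sqrt{t^2+\ell^2}$ at the foot point. Neither bound is ``controlled by the perpendicularity from Step 1'': Step 1 only constrains the angle between $\Uparrow^{\partial\Omega}_{\gamma(t)}$ and $\gamma^{\pm}(t)$, whereas $\sigma$ is not along $\gamma$ (in the flat model it makes angle $\arctan(t/\ell)$ with the normal), and it says nothing about $Df$ at $q_0$. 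In fact this missing slope/angle control is essentially equivalent to the conclusion, which is exactly why the known proofs do not proceed by a single hinge-plus-concavity estimate at one segment but use more global input --- e.g.\ a quadrangle comparison in the spirit of \cite{CDM}, or an iteration over the superlevel sets $\{f\ge a\}$ (thin slabs) combined with a limiting and rigidity analysis.

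There are also secondary problems in the assembly step even if both equalities were available. The triangles $\triangle(p,q,q_0)$ and $\triangle(p,q,q_1)$ are not the two halves of the desired rectangle: the rectangle is split by a diagonal ($[qq_0]$ or $[pq_1]$), not by the side $[pq]$, and their union omits a neighborhood of the bottom side; moreover you never establish any relation between $q_0$ and $q_1$ (such as $|q_0q_1|=b$, or even that $q_1$ is the foot corresponding to $\gamma_0$), and at the endpoint $q$ the first variation only gives $\ge\frac\pi2$ for an arbitrary minimal geodesic to $\partial\Omega$, so $\gamma_1$ must be selected, not taken arbitrary. Finally, the last assertion is backwards: a concave function that vanishes at the two endpoints of $[q_0q_1]$ and is nonnegative lies \emph{above} the chord, so concavity does not force $f\equiv0$ there; showing the opposite side lies in $\partial\Omega$ has to come from the construction producing, for every $t$, a perpendicular of length exactly $\ell$ from $\gamma(t)$ to $\partial\Omega$.
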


Let $(X,S,\phi)$ be as in Theorem \ref{sol-alex}. For $\bar x\in S$ and $c<0$, let $\Uparrow_{\bar x}^{\partial \Omega_c}=\{v\in \Sigma_{\bar x}^\perp S,\, v=\uparrow_{\bar x}^z, z\in \partial\Omega_c, \,  |\bar xz|=|\bar x\partial \Omega_c|\}\subseteq \Sigma_{\bar x}^\perp S$. Using Theorem \ref{1l}, one constructs, given any $v\in \Uparrow_{\bar x}^{\partial\Omega_c}$, $\bar x\in S$, a flat strip as follows.

\begin{lemma} {\rm (\cite{Li})} \label{1m} Let $(X,S,\phi)$ be as in Theorem \ref{sol-alex}. Then

\noindent {\rm (1.13.1) (Normal rays)} For any $\bar x\in S$, $v\in \Uparrow_{\bar x}^{\partial \Omega_c}$ tangents to a ray in $X$.

\noindent {\rm (1.13.2) (Normal rays and flat strips)} For any $\bar x\ne \bar z\in S, v\in \Uparrow_{\bar x}^{\partial \Omega_c}$, the normal ray $g\exp_{\bar x}tv$ and minimal geodesic $[\bar x\bar z]$ bound a flat strip in $X$ i.e., they are the boundaries of an isometric embedding, $\mathbb R_+\times [0,|\bar x\bar z|]\rightarrow X$.
\end{lemma}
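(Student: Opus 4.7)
The plan is to prove the two parts together: (1.13.2) is a direct application of Yamaguchi's partial flat strip theorem (Theorem \ref{1l}) once one identifies the correct superlevel set, and (1.13.1) then follows by iterating the construction as the level $c$ tends to $-\infty$.

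The key preliminary step is the level-set distance identity $L_c:=|\bar x\,\partial\Omega_c|=-c$ for every $\bar x\in S$ and $c<0$ (from which $d_{\partial\Omega_c}|_S\equiv -c$ is constant). The inequality $L_c\ge -c$ is immediate from $b_{\bar p}$ being $1$-Lipschitz with $b_{\bar p}(\bar x)=0$ and $b_{\bar p}|_{\partial\Omega_c}\equiv c$; the reverse exploits the soul structure (and the regularity of $\bar p\in S$) to produce at each $\bar x\in S$ a tangent direction realizing the Lipschitz saturation rate $-1$ of $b_{\bar p}$, giving a geodesic of length $-c$ reaching $\partial\Omega_c$. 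With $L_c=-c$ in hand, applying the $1$-Lipschitz property of $b_{\bar p}$ at both endpoints of the minimizer $\gamma_v$ forces $g(t):=b_{\bar p}(\gamma_v(t))\equiv -t$ on $[0,-c]$.

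Now (1.13.2) follows from Theorem \ref{1l} applied to $\Omega=\Omega_c$, $f=d_{\partial\Omega_c}$, $\gamma=[\bar x\bar z]\subset S$ (so $f\circ\gamma\equiv -c$ by the above), and $\gamma_0=\gamma_v$; the perpendicularity $|\gamma_v^+(0)\,\gamma^+(0)|=\pi/2$ is Proposition \ref{1i}(1.9.1) since $v\in\Uparrow_{\bar x}^{\partial\Omega_c}\subseteq\Sigma_{\bar x}^\perp S$ and $\gamma^+(0)\in\Sigma_{\bar x}S$. Theorem \ref{1l} delivers a flat totally geodesic rectangle of dimensions $|\bar x\bar z|\times(-c)$. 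To obtain (1.13.1) and the infinite-strip version of (1.13.2), I iterate with $c'<c$ and claim the same $v$ lies in $\Uparrow_{\bar x}^{\partial\Omega_{c'}}$: extend $\gamma_v$ past $z_c$ to a locally minimizing geodesic $\tilde\gamma_v$ in $X$; then $g(t):=b_{\bar p}(\tilde\gamma_v(t))$ is concave with left derivative $-1$ at $t=-c$, so concavity forces $g'\le -1$ on the extension while the $1$-Lipschitz property forces $g'\ge -1$, giving $g(t)\equiv -t$ throughout. Hence $\tilde\gamma_v(-c')\in\partial\Omega_{c'}$ at distance $-c'=L_{c'}$ from $\bar x$, the extension is minimizing, and $v\in\Uparrow_{\bar x}^{\partial\Omega_{c'}}$. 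Theorem \ref{1l} applied to $\Omega_{c'}$ yields a flat rectangle of height $-c'$ extending the previous one, and passing $c'\to-\infty$ exhausts an isometrically embedded flat half-strip $[0,|\bar x\bar z|]\times[0,\infty)\hookrightarrow X$, whose left edge is the ray from $\bar x$ in direction $v$; this proves (1.13.1) and upgrades the rectangle to the claimed half-strip.

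The main technical obstacle is the identity $L_c=-c$ for $\bar x\in S$. The $\ge$ direction is routine, but the reverse is not a formal consequence of concavity and $1$-Lipschitz alone: it requires the soul construction to guarantee that every $\bar x\in S$ admits a tangent direction realizing the Lipschitz saturation rate of $b_{\bar p}$. Once this calibration is established, the concavity/$1$-Lipschitz squeeze makes both the ray extension and the compatibility of the nested flat rectangles essentially automatic, and Yamaguchi's Theorem \ref{1l} supplies the rest.
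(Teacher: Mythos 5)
Your overall skeleton---establish that the superlevel sets of $b_{\bar p}$ are equidistant, feed this into Yamaguchi's Theorem \ref{1l} applied to $\Omega_{c'}$, and let $c'\to-\infty$---is the intended route (the paper does not prove Lemma \ref{1m}; it quotes it from \cite{Li} and points to Theorem \ref{1l}). The trouble is that the two steps carrying the real content are not actually proved. The identity $|\bar x\,\partial\Omega_c|=-c$, equivalently $b_{\bar p}=c+d_{\partial\Omega_c}$ on $\Omega_c$ (which you need not only at $\bar x$ but along $[\bar x\bar z]$ to make $d_{\partial\Omega_c}\circ\gamma$ constant, at $z_c$ to pass to deeper levels, and implicitly for the perpendicularity $\Uparrow_{\bar x}^{\partial\Omega_c}\subseteq\Sigma_{\bar x}^{\perp}S$), is precisely the nontrivial input: in the Alexandrov category it is the quadrangle-comparison result of Cao--Dai--Mei, which this very paper invokes as $b_{\bar p}|_{\Omega_c}=d_{\partial\Omega_c}-|\bar p\,\partial\Omega_c|$ (\cite{CDM}) in the proof of Lemma \ref{7e}. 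Your sketch---``produce at each $\bar x\in S$ a tangent direction realizing the saturation rate $-1$''---is not an argument: the existence of such a direction is essentially a restatement of the claim (concavity, $1$-Lipschitzness and properness of $b_{\bar p}$ do not force slope $-1$ anywhere, and asymptotic rays make $b_{\bar p}$ \emph{increase} at unit rate, not decrease), and even granted such a direction you would still need a geodesic issuing from $\bar x$ in that prescribed direction, which Alexandrov spaces do not supply.

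The second genuine failure is the extension step for (1.13.1): ``extend $\gamma_v$ past $z_c$ to a locally minimizing geodesic'' is not available in Alexandrov spaces, where geodesics need not be extendable (e.g.\ past a point whose space of directions has diameter $<\pi$), so the concavity/Lipschitz squeeze has nothing to act on. Two standard repairs: (i) extend $\gamma_v$ as a quasigeodesic (\cite{Pet}); concavity of $b_{\bar p}$ along quasigeodesics plus $1$-Lipschitzness gives $b_{\bar p}(\tilde\gamma_v(t))=-t$ for all $t$, and since $\tilde\gamma_v$ is unit-speed while $b_{\bar p}$ drops at unit rate along it, $|\bar x\,\tilde\gamma_v(t)|=t$, so $\tilde\gamma_v$ is a ray; or (ii) avoid extension altogether: take $w\in\partial\Omega_{c'}$ nearest to $z_c$; the \cite{CDM} identity at $z_c$ gives $|z_cw|=c-c'$, hence $|\bar xz_c|+|z_cw|=-c'=|\bar x\,\partial\Omega_{c'}|$, so the concatenation of $[\bar xz_c]$ and $[z_cw]$ is a minimal geodesic extending $\gamma_v$, and $c'\to-\infty$ produces the ray. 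Finally, a smaller point in (1.13.2): Theorem \ref{1l} does not guarantee that the rectangle over $\Omega_{c'}$ extends the one over $\Omega_c$ (the opposite side $\gamma_1$ is not canonical), so rather than claiming the rectangles are nested you should take a sublimit of the flat rectangles of heights $-c_i\to\infty$, all of which share the side $[\bar x\bar z]$ and the initial segments of the fixed normal ray; the limit is the desired isometrically embedded half-strip.
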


\begin{corollary} {\rm (Global weakly integrable and flat strips)} \label{1n} Let $(X,S,\phi)$ be as in Theorem \ref{sol-alex}. Assume that $\phi: X\to S$ is global weakly integrable. For any $x\in X\setminus S$, $v\in \Uparrow_x^{\partial \Omega_c}$, $x\ne z\in W_x$, the normal ray $g\exp_xtv$ and minimal geodesic $[xz]\subset W_x$ bound a flat strip in $X$ i.e., they are the boundaries of an isometric embedding, $\mathbb R_+\times [0,|xz|]\rightarrow X$.
\end{corollary}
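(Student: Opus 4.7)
The plan is to parallel the proof of Lemma \ref{1m}(1.13.2), with the isometric section $W_x$ playing the role of the soul $S$; the global weakly integrable hypothesis supplies $W_x$ and an isometry $\phi\colon W_x\to S$, and the submetry structure of $\phi$ will let the verifications of the hypotheses of Theorem \ref{1l} transfer from the soul context to the $W_x$ context.

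Set $\bar x=\phi(x)$ and $\bar z=\phi(z)$, so $|\bar x\bar z|=|xz|$ and $\phi$ carries $[xz]$ isometrically onto $[\bar x\bar z]\subset S$. I plan to apply Theorem \ref{1l} with $\Omega=\Omega_c$, $\gamma=[xz]$, and $\gamma_0$ the minimizer from $x$ in direction $v$ to the nearest point of $\partial\Omega_c$, for each $c$ sufficiently negative so that the compact set $W_x$ (isometric to the compact soul $S$) is contained in $\Omega_c$. The condition $|v\uparrow_x^z|=\tfrac{\pi}{2}$ follows because $\phi$ is a submetry: $\uparrow_x^z\in\Sigma_xW_x\subseteq H_x$ while $v\in\Uparrow_x^{\partial\Omega_c}\subseteq V_x$ (a direction realizing the distance to a level set of $b_{\bar p}$, which is horizontally lifted under $\phi$, must be vertical), so $|vH_x|=\tfrac{\pi}{2}$ by definition of $V_x$. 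The essential hypothesis is constancy of $d_{\partial\Omega_c}$ along $[xz]$; on $[\bar x\bar z]\subset S$ this is the key input used to prove Lemma \ref{1m}(1.13.2), holding because at each stage of the soul construction $S$ is the innermost maximum set of the distance to the boundary. To transfer this to $[xz]\subset W_x$, I will use that gradient curves of $d_{\partial\Omega_c}$ horizontally lift under the submetry $\phi$ with length preserved, so that for $y\in W_x$, $d_{\partial\Omega_c}(y)=d_{\partial\Omega_c}(\phi(y))$, which is constant as $\phi(y)$ ranges over $[\bar x\bar z]$.

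With the hypotheses verified, Theorem \ref{1l} yields a flat totally geodesic rectangle of height $d_{\partial\Omega_c}(x)$ bounded by an initial piece of $g\exp_x(tv)$, the side $[xz]$, and a minimizer from $z$ to $\partial\Omega_c$. Letting $c\to-\infty$, properness of $b_{\bar p}$ gives $d_{\partial\Omega_c}(x)\to\infty$, and the nested flat rectangles paste into the desired isometric embedding $\mathbb R_+\times[0,|xz|]\to X$; the $s=0$ edge extends $g\exp_x(tv)$ to a complete ray because each finite portion is a minimizer sitting inside a flat rectangle. The main obstacle will be rigorously establishing $d_{\partial\Omega_c}(y)=d_{\partial\Omega_c}(\phi(y))$ for $y\in W_x$, which requires that $\phi$ interact cleanly with Busemann level sets via horizontal lifts of gradient flows; this is exactly the point where the global weakly integrable hypothesis, supplying an isometric copy of the entire soul $S$ through $x$, becomes essential, since without it no such section $W_x$ is available and the transfer of the soul's distance-to-boundary constancy cannot even be formulated at $x$.
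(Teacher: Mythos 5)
Your overall strategy is the intended one: run the argument of Lemma \ref{1m} with $W_x$ in place of $S$, verify the hypotheses of Theorem \ref{1l} for $\Omega=\Omega_c$ and $\gamma=[xz]$, and let $c\to-\infty$. However, the step on which the whole proof hinges is justified by a false identity. You claim that for $y\in W_x$ one has $d_{\partial\Omega_c}(y)=d_{\partial\Omega_c}(\phi(y))$. Since $d_{\partial\Omega_c}$ is constant on $S$, this would say $d_{\partial\Omega_c}(y)=|\bar p\,\partial\Omega_c|$ for all $y\in W_x$, i.e. $b_{\bar p}\equiv 0$ on $W_x$, i.e. $W_x\subset b_{\bar p}^{-1}(0)$ — which fails already for the flat cylinder $X=S^1\times\mathbb R$, where for $x=(\theta,t)$, $t\ne 0$, one has $W_x=S^1\times\{t\}$ and $b_{\bar p}\equiv-|t|<0$ there. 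The proposed mechanism cannot repair this: $d_{\partial\Omega_c}$ is not the horizontal lift (pullback) of any function on $S$ — it is far from constant on $\phi$-fibers — so lifting its gradient curves says nothing about its values along $W_x$. What is actually needed, and is true, is only that $d_{\partial\Omega_c}$ is constant on $W_x$; the natural route is to show that $b_{\bar p}$ is constant on $W_x$ (for instance, $b_{\bar p}|_{W_x}$ is a concave function on the convex subset $W_x$, which is isometric to the compact boundaryless space $S$, and a gradient-flow/topology argument in the spirit of Lemmas \ref{2h}, \ref{2i} and \ref{2l} forces such a function to be constant), and then to convert this into constancy of $d_{\partial\Omega_c}$ via the identity $b_{\bar p}|_{\Omega_c}=d_{\partial\Omega_c}-|\bar p\,\partial\Omega_c|$ from \cite{CDM}, which the paper itself invokes in the proof of Lemma \ref{7e}. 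Your proposal supplies no substitute for this step, so the key hypothesis of Theorem \ref{1l} is not verified.

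A second, related problem is the right-angle condition $|v\uparrow_x^z|=\frac\pi2$. You obtain it by placing $v\in V_x$ and $\uparrow_x^z\in H_x$, but $H_x$ and $V_x$ are defined in the paper only for submetries, whereas in the Corollary $\phi$ is merely the $1$-Lipschitz Sharafutdinov map assumed to be globally weakly integrable; moreover your parenthetical justification ("$b_{\bar p}$ is horizontally lifted under $\phi$") rests on the same false premise as above, and the containment $\Uparrow_x^{\partial\Omega_c}\subset V_x$ is established in the paper only at soul points and under the submetry hypothesis (see (1.9.3)). The correct derivation again passes through the missing constancy: once $d_{\partial\Omega_c}$ is constant on $W_x$, the first variation formula gives $|\Uparrow_x^{\partial\Omega_c}\,\Sigma_xW_x|\ge\frac\pi2$, and Lemma \ref{ya}, applied to the convex boundaryless subset $\Sigma_xW_x\subset\Sigma_xX$, upgrades this to equality, which is exactly what Theorem \ref{1l} requires. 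The remaining assembly — nesting the flat rectangles as $c\to-\infty$ and extracting the ray, as in (1.13.1)–(1.13.2) — is fine in outline, but as written your proof has a genuine gap at its central transfer step.
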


Note that (0.2.2) (Flat strips) is equivalent to a geodesic preserving embedding of $[0,a]\times \mathbb R^1_+$, thus an isometry to the image with the intrinsic metric, while the
image may not be a convex subset. Note that the flat strip in Lemma \ref{1m} is convex; the intrinsic metric on image coincides with the extrinsic metric.

\begin{example}(Fake flat strip) \label{1o} Note that a flat strip in Lemma \ref{1m} is a convex subset of $X$; the flat strip can be expressed as disjoint union of finite segment, or minimal geodesic isometric to $\mathbb R$. The following example is an embedding of $[0,\frac \pi2]\times [0,\frac \pi2] \hookrightarrow X$ satisfies that $[0,\frac \pi2]\times \{t_0\}\hookrightarrow X$, $\{t_0\}\times [0,\frac \pi2]\hookrightarrow X$ are minimal geodesics, but $\{(t,t)\in [0,\frac \pi2]\times [0,\frac \pi2],\, 0\le t\le \frac \pi2\}\hookrightarrow X$ is not a minimal geodesic.

Let $X=[0,\frac \pi2]\times [0,\frac \pi2]\cup _{\partial }\triangle_{\frac \pi2}(S^2_1)$ (a quarter of $S^2_1$), where the boundary map $\partial$ identifies $[0,\frac \pi2]\times \frac \pi2\cup \frac \pi2\times [0,\frac \pi2]$ with two sides of $\triangle_{\frac \pi2}(S^2_1)$.
Observe that $f: Y=[0,\frac \pi2]\times [0,\frac \pi2]\hookrightarrow X$ is an embedding with image satisfying the above properties. Note that $f(Y)$ is not a flat strip in $X$ (as in Lemma \ref{1m}), because $f(Y)$ is not convex in $X$.
\end{example}

\subsection {Partial progress in Conjectures \ref{0e}-\ref{0g}, and examples}

Conjectures \ref{0e}-\ref{0g} have been verified in low dimensions, and Conjecture \ref{0e} has also been verified when $S$ has a small codimension:

\vskip2mm

\noindent (1.16.1) Recall that Submetry Conjecture \ref{0e} implies Soul Conjecture \ref{0f}. Submetry Conjecture \ref{0e} holds for $n=3$ (\cite{SY}), $n=4$ (\cite{RW2}, the case that $X$ is a topological manifold was due to \cite{Li}), and the case that $X$ is topologically nice and $\dim(S)=n-1$ (\cite{Ya}), and $\dim(S)=n-2$ (\cite{Li}).

\vskip1mm

\noindent (1.16.2) The bundle conjecture by Yamaguichi (\cite{Ya}) holds for $n=3$ (\cite{SY}), and $n=4$ (\cite{Ya}, also \cite{Ge}).

\noindent (1.16.3) The bundle conjecture by Yamaguichi holds in the cases of $\dim(S)=n-1$ (follow from Theorem \ref{1l}), and $\dim(S)=n-2$ (\cite{Li}).

Basic tools used in proving the above results are (among others) Proposition \ref{1i}, Theorem \ref{1l} and Lemma \ref{1m}.

\vskip1mm

\stepcounter{theorem}

\begin{example} (`Counterexamples' to Conjecture  \ref{0i}) \label{1q} Let $(\Sigma,\Sigma_0,h)=(S^2_1,S^2_1/S^1, \text{proj})$, where $S^1$ acts isometrically on the unit $2$-sphere $S^2_1$), and let $(X,Y,f)=(C(S^2_1),C(S^2/S^1),f)$, wehre $f(t,u)=(t,h(u))$. Then $h$ and $f$ are weakly integrable, but $h$ is not a local isometry, nor $f$ is integrable; thus
(0.9.2) and (0.9.1) are false if one removes condition $\partial \Sigma_0=\emptyset$ or
$\partial Y=\emptyset$.
\end{example}

\begin{example} (Weakly integrable submetry and extremal subsets) \label{1r}
Consider a submetry between two Alexandrov spaces, $f: X\to Y$. If
$E\subset X$ is an extremal subset, then it is easy to see that $f(E)$ is an
extremal subset of $Y$, while the converse is false if $f$ is not weakly integrable (compare with (A2)).

Let $M$ be a Riemannian manifold of sectional curvature $\ge -1$ which admits an isometric torus $T^k$-action. If the $T^k$-action is not free, then $M/T^k$ is an Alexandrov space of curv $\ge -1$, such that $\text{proj}: M\to M/T^k$ is a submetry and the projection of the fixed point set of a maximal isotropy subgroup is a proper extremal subset of $M/T^k$; while $M$ has no extremal subset.
\end{example}

\begin{example} (A case of Canonical bundle Conjecture \ref{0g}) \label{1s} Let $(X,S,\phi)$ be as in Theorem \ref{sol-alex}. If $\phi$ is a submetry and $S$ is a Riemannian manifold, then $\phi$ is a fiber bundle map.
To see this, let $\rho=\text{injrad}(S)>0$. We now construct a local trivialization: for $z\in B_\rho(\bar s)\subset S$, $x\in \phi^{-1}(\bar s)$, $\psi: B_\rho(\bar s)\times \phi^{-1}(\bar s)\to \phi^{-1}(B_\rho(\bar s))$, $\psi(\bar z,x)=\tilde \gamma(1)$, where $\tilde \gamma$ is the horizontal lifting at $x$ of the unique
minimal geodesic from $\bar s$ to $\bar z\in B_\rho(\bar s)$.
\end{example}

\begin{example} (In Theorem \ref{sol-alex}, $\phi$ is a submetry but may not a bundle map) \label{1t} Let tangent bundle over unit sphere, $TS^2_1$ be equipped with a canonical metric, which has nonnegative sectional curvature, whose
soul is  $S^2_1$. Let $\mathbb Z_h$ acts isometrically on $S^2_1$, whose differentials defines
an isometric $\mathbb Z_h$-action on $TS^2_1$. Let $X=TS^2_1/\mathbb Z_h$, an open Alexandrov space of nonnegative curvature with a soul $S^2_1/\mathbb Z_h$. The Sharafutinov projection, $\phi: T(S^2_1)/\mathbb Z_h\to S^2_1/\mathbb Z_h$ ($h\ge 3$), is a submetry, but not a fiber bundle map.
\end{example}

\begin{example} (A submetry with $Df$ satisfying (A1) and (A2), but $f: X\to Y$ is not weakly integrable). \label{1u}

\noindent (1.21.1) Let $f: M\to N$ be a Riemannian submersion. Then $Df: T_xM\to T_{f(x)}N$ is
a projection which is an integrable submetry. But a Riemann submersion in general is not
integrable. Moreover, a Riemannian submersion trivially satisfies (A2).

\noindent (1.21.2) (A non weakly integrable submetry satisfying the inverse of an extremal subset
is extremal, comparing with (A2)) Let $X$ denote the spherical suspension of a circle of radius $\frac 14$. Then $\mathbb Z_2$ acts isometrically on $X$ with two vertices, $o_i$, fixed. Then $f: X\to X/\mathbb Z_2$ is not a weakly integrable submetry, and $f^{-1}(\bar o_i)=o_i$.
\end{example}

\begin{example} \label{1v} (A fiber not a topological manifold in (E2)) Given a compact Riemannian manifold $S$ of nonnegative sectional curvature, let $X=S\times C(\mathbb RP^2)$ be the metric product. Then $(X,S,\text{proj})$ satisfies (E2), and a fiber, $\text{proj}^{-1}(\bar x)=C(\mathbb RP^2)$, is not a topological manifold.
\end{example}

\begin{example} \label{1w} (Counterexample to Theorem F without $X$ topologically nice) Let $F$ be the double of $([0,1]\times \Bbb R_+)\cup_\partial \nabla/([0,1]\times \{0\}\sim \text{top side of the equilateral triangle $\nabla$ of size one})$. Then the metric product, $X=S^2_1\times F\in \text{Ale}^4(0)$ admits an isometric diagonal $\Bbb Z_2$-action
with two fixed point in $S^2_1\times o$. Then $X$ is topologically nice with an extremal subset $S^2_1\times o$ (the soul $S$), and $X/\Bbb Z_2\in \text{Alex}^4(0)$ with a soul of dimension $2$, but $(X/\Bbb Z_2,S^2_1/\Bbb Z_2,\phi)$ satisfies neither (F1) nor (F2).
\end{example}


\section {Proof of Theorem A}

\vskip4mm

Our proof of Theorem A is quite involved. We will prove (A1) in subsection 2.1, (A2) in subsection 2.2, (A3) in subsection 2.3, and
prove Proposition  \ref{0j} in subsection 2.4.

\subsection {Proof of (A1)}

In the proof of (A1), the main technical result is the following.

\begin{theorem} {\rm (Weakly integrable over $Y_s$ is integrable)} \label{2a} Let $X\in \text{Alex}^n(\kappa), Y\in \text{Alex}^m(\kappa)$, and let $f: X\to Y$ be a weakly integrable submetry. Then $f$ is integrable over $Y_s$, that is, for any $x\in  f^{-1}(Y_s)$, $H_x=\Sigma_xW_x$.
\end{theorem}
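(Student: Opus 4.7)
The plan is to compare $\Sigma_x W_x$ and $H_x$ inside $\Sigma_x X$ via a dimension count based on the rigidity of $\frac\pi2$-apart convex subsets (Theorem \ref{1j}), and then to upgrade the resulting dimension equality to set equality via a covering-rigidity principle for submetries admitting a global isometric section. First I would fix $x\in f^{-1}(Y_s)$ with $\bar x=f(x)$, and weak-integrability data $W_x,W_{\bar x}$; shrinking $W_{\bar x}$ I may assume $W_{\bar x}\subseteq Y_s$. Since $\bar x\in Y_s$ avoids every proper extremal subset of $Y$, in particular $\bar x\notin\partial Y$, so $\partial\Sigma_{\bar x}Y=\emptyset$. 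Because $f\colon W_x\to W_{\bar x}$ is an isometry onto an open neighborhood of $\bar x$, the convex subset $\Sigma_xW_x\subseteq\Sigma_xX$ is isometric to $\Sigma_{\bar x}Y$, so $\dim\Sigma_xW_x=m-1$ and $\partial\Sigma_xW_x=\emptyset$.

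Next I would do a two-sided dimension count on $V_x$. Let $\Sigma_x^\perp W_x=\{w\in\Sigma_xX:|w\,\Sigma_xW_x|=\frac\pi2\}$; by Lemma \ref{ya} this equals $\{w:|w\,\Sigma_xW_x|\ge\frac\pi2\}$ and is convex, and $V_x\subseteq\Sigma_x^\perp W_x$ (since $|H_xV_x|=\frac\pi2$ and $\Sigma_xW_x\subseteq H_x$). Applying Theorem \ref{1j}(1.10.1) inside $\Sigma_xX$ to the $\frac\pi2$-apart pair $(\Sigma_xW_x,\Sigma_x^\perp W_x)$ with $\partial\Sigma_xW_x=\emptyset$ yields $\dim V_x\le\dim\Sigma_x^\perp W_x\le n-m-1$. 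For the reverse bound, weak integrability supplies, for every $x'\in f^{-1}(\bar x)$, a local isometric $m$-dimensional lift $W_{x'}\cong W_{\bar x}$; a patching/transversality count (the expected role of the auxiliary technical lemmas announced in Section~2.1) shows these lifts fill an open neighborhood of $f^{-1}(\bar x)$, forcing $\dim f^{-1}(\bar x)=n-m$ locally at $x$. Since directions tangent to the fiber are annihilated by $Df_x$ and hence lie in $V_x$, $\dim V_x\ge n-m-1$, so $\dim V_x=n-m-1$.

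Now I would bound $\dim H_x$ and finish. Since $Df_x|_{H_x}\colon H_x\to\Sigma_{\bar x}Y$ is a submetry and submetries of Alexandrov spaces send boundary into boundary, $\partial\Sigma_{\bar x}Y=\emptyset$ forces $\partial H_x=\emptyset$. Applying Theorem \ref{1j}(1.10.1) to the $\frac\pi2$-apart pair $(H_x,V_x)$ then gives $\dim H_x+\dim V_x\le n-2$, whence $\dim H_x\le m-1$; combined with $\Sigma_xW_x\subseteq H_x$, $\dim H_x=m-1=\dim\Sigma_{\bar x}Y$. Thus $Df_x|_{H_x}$ is a submetry between Alex$(1)$-spaces of the same dimension with target of empty boundary, and by a theorem of Lytchak it is a locally isometric covering. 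The inclusion $\Sigma_xW_x\hookrightarrow H_x$ is a global isometric section of this covering; since $H_x$ is convex and hence path-connected, a connected covering admitting a global section must be a homeomorphism, and therefore $H_x=\Sigma_xW_x$.

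The main obstacle will be establishing $\dim f^{-1}(\bar x)=n-m$ for $\bar x\in Y_s$ directly from weak integrability, since this must be done \emph{without} invoking (A1) of Theorem A (that would be circular, as (A1) is proved using Theorem \ref{2a}); this is precisely where the auxiliary technical lemmas announced in Section~2.1 must do the heavy lifting. A related subtle point is deriving $\partial H_x=\emptyset$ as a standalone input via the submetry-preserves-non-boundary principle applied to $Df_x|_{H_x}$, rather than as part of the conclusion of (A1).
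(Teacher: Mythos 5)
The heart of Theorem \ref{2a} is precisely the lower bound you defer: that $\dim V_x\ge n-m-1$ for every $x\in f^{-1}(\bar x)$, $\bar x\in Y_s$ (in the paper's language, that every point of $Y_s$ is ``good''), and your proposal does not prove it. The specific claim that the lifts $W_{x'}$, $x'\in f^{-1}(\bar x)$, fill an open neighborhood of $f^{-1}(\bar x)$ is unjustified: a point $z$ near the fiber lies in a lift $W_z$ of a convex neighborhood of $f(z)$, which need not contain $\bar x$, so $z$ need not lie in any $W_{x'}$ with $x'\in f^{-1}(\bar x)$; and even granting coverage, passing from ``a union of $m$-dimensional isometric lifts covers an open set'' to $\dim_x f^{-1}(\bar x)=n-m$ would require a Lipschitz parametrization of the family by $f^{-1}(\bar x)\times W_{\bar x}$, i.e.\ essentially the local trivialization one is trying to establish. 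Keep in mind that $Y_s$ contains metrically singular points that are only weakly $k$-strained with $k<m$, where no splitting of the tangent cone is available; the paper's entire apparatus --- inverse induction on the strainer number, propagation of goodness along $g_{\bar x}^{-1}(0)$ via horizontally lifted gradient flows producing bi-Lipschitz maps between fibers (Lemmas \ref{2d}, \ref{2e}), and the new local extremality criterion (Lemmas \ref{2f}, \ref{2j}) used to show that a nonempty set of non-good points in $Y_s$ would force a proper extremal subset meeting $Y_s$ --- exists exactly to supply this step, and nothing in your ``patching/transversality count'' substitutes for it.

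Two auxiliary principles you invoke are also false as stated. ``Submetries send boundary into boundary'' fails (project the flat cylinder $[0,1]\times S^1$ onto $S^1$); the correct route to $\partial H_x=\emptyset$ uses the global isometric section $\Sigma_xW_x$ of $Df_x|_{H_x}$ and the contraction argument of Lemma \ref{2h}. Likewise, an equidimensional submetry onto a boundaryless $\text{Alex}(1)$ space need not be a locally isometric covering (e.g.\ $S^2_1\to S^2_1/\mathbb Z_2$, a spindle with two conical points), so the final covering-with-section argument is not available as stated; once one knows $\dim H_x=m-1$ and $\partial H_x=\emptyset$, the clean finish is that $\Sigma_xW_x$ is a compact convex subset of $H_x$ of full dimension without boundary, hence $H_x=\Sigma_xW_x$ by Lemma \ref{2i} --- which is the paper's argument (its remark preceding Lemma \ref{2d}). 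In short, the ``good implies integrable'' half of your plan coincides with the paper's easy step, but the core of the theorem --- goodness of all points of $Y_s$ --- remains unproven in your proposal.
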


Recall that if $\gamma(t)$ is a finite minimal geodesic in $Y$ such that its two ends are regular, then all points in $\gamma(t)$ points are regular. The set of all regular points in
$Y$, $\mathcal R(Y)\subset Y_s$, and the closure, $\overline{\mathcal R(Y)}=Y$.
In our proof of Theorem A, in order to apply Theorem \ref{2a} to an open subset, $U\cap Y_s$,
$U$ may not be convex (e.g., a canonical neighborhood of a weakly $k$-strained point),
to have the above property, we need the condition that regular points in the closure of $U\cap Y_s$ that are away from boundary are all contained in $U\cap Y_s$.

\begin{lemma} {\rm (Properties of integrable submetry)} \label{2b} Let $X\in \text{Alex}^n(\kappa), Y\in \text{Alex}^m(\kappa)$, and let $f: X\to Y$ be a weakly integrable submetry. Assume that $f$ is integrable over an open and path connected subset $U\subset Y$.

\noindent {\rm (2.2.1)} {\rm (Canonical local trivialization of a fiber bundle)} For $x\in f^{-1}(U)$, there is a (maximal) connected subset $U_x\subset f^{-1}(U)$ such that $f: U_x\to U_{\bar x}=U$ is a local isometry, thus a covering map. If $U$ is simply connected,
then for $x\ne x'\in f^{-1}(\bar x)$, $U_x\cap U_{x'}=\emptyset$, thus
$f: f^{-1}(U_{\bar x})\to U_{\bar x}$ is a trivial fiber bundle with the canonical trivialization map, $\phi_{\bar x}: f^{-1}(U_{\bar x})\to f^{-1}(\bar x)\times U_{\bar x}$, $\phi_{\bar x}(z)=(f^{-1}(\bar x)\cap U_x,f(z))$. 

\noindent {\rm (2.2.2)} {\rm (Extension of a homeomorphism and local isometry)} Assume that there is $\bar z\in Y$ such that $x\in W_z$, $W_{\bar z}\cup U_{\bar x}$ is simply connected, $W_{\bar z}\subset \overset{\vee}U=\bar U_{\bar x}\setminus \partial \bar U_{\bar x}$ (i.e., closure without boundary points) whose regular points are in $U_{\bar x}$. Then $f: \overset{\vee}U_x\to \overset{\vee}U_{\bar x}$ is a homeomorphism and local isometry.
\end{lemma}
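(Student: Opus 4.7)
The plan for (2.2.1) is to construct $U_x$ as the set of endpoints of horizontal lifts of paths in $U$ starting at $\bar{x}=f(x)$. Because $f$ is integrable at every point of $U$, each $\bar{y}\in U$ admits a convex neighborhood $W_{\bar{y}}$ with isometric lifts $W_y$ around each $y\in f^{-1}(\bar{y})$ satisfying the crucial identity $\Sigma_y W_y = H_y$. I declare $z\in U_x$ precisely when there is a path $\alpha:[0,1]\to f^{-1}(U)$ with $\alpha(0)=x$, $\alpha(1)=z$, together with a partition $0=t_0<t_1<\cdots<t_k=1$ such that $\alpha([t_{i-1},t_i])\subset W_{\alpha(t_{i-1})}$ for each $i$. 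Surjectivity of $f|_{U_x}$ onto $U$ follows by covering a path in $U$ from $\bar{x}$ to any $\bar{y}$ by finitely many $W_{\bar{y}_i}$'s and lifting successively. The identity $\Sigma_y W_y = H_y$ forces any two overlapping isometric charts through a common lifted point to agree on the overlap, so $U_x$ is canonically determined and $f:U_x\to U$ is a well-defined local isometry, in fact a covering map by the path-lifting property.

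If $U$ is simply connected, triviality of monodromy promotes this covering to a homeomorphism $f:U_x\to U$. For $x\ne x'\in f^{-1}(\bar{x})$, disjointness of $U_x$ and $U_{x'}$ is immediate: a shared point would horizontally lift uniquely back to the fiber, forcing $x=x'$ by the rigidity of isometric lifts. The disjoint decomposition $f^{-1}(U)=\bigsqcup_{x\in f^{-1}(\bar{x})}U_x$ then yields the canonical trivialization $\phi_{\bar{x}}(z)=(f^{-1}(\bar{x})\cap U_x,f(z))$, which is a homeomorphism.

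The plan for (2.2.2) is to glue the isometric chart $f:W_z\to W_{\bar{z}}$ onto the trivial bundle structure over $U_{\bar{x}}$ produced in (2.2.1). By hypothesis the regular points of $W_{\bar{z}}$ all lie in $U_{\bar{x}}$, so on the open dense subset $W_{\bar{z}}\cap U_{\bar{x}}$ the chart $W_z$ meets the integrable region; uniqueness of isometric lifts through a common point identifies that portion of $W_z$ with an open subset of some single sheet $U_{x'}$. Simple connectivity of $W_{\bar{z}}\cup U_{\bar{x}}$ forbids multi-valued attachment, pinning $W_z$ down to one sheet $U_x$. The inclusion $W_{\bar{z}}\subset \overset{\vee}{U}_{\bar{x}}=\bar{U}_{\bar{x}}\setminus \partial\bar{U}_{\bar{x}}$ lets one exhaust $\overset{\vee}{U}_{\bar{x}}$ by patching together such charts as $\bar{z}$ ranges over interior points of $\bar{U}_{\bar{x}}$, producing an enlarged domain $\overset{\vee}{U}_x$. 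On this enlargement $f$ is a local isometry by construction, and injectivity follows from injectivity of $f|_{U_x}$ combined with the single-sheet assignment.

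The main obstacle is the consistency step in (2.2.2): verifying that the chart $W_z$ attaches to a single sheet $U_x$ rather than splitting across several. Both hypotheses are essential here. Simple connectivity of $W_{\bar{z}}\cup U_{\bar{x}}$ eliminates potential monodromy obstructions, while the assumption that the regular points of $W_{\bar{z}}$ sit inside $U_{\bar{x}}$ ensures that $W_z$ overlaps some pre-existing sheet on a dense open set, giving a starting identification to propagate. Once consistency is established, the remaining verifications (local isometry, surjectivity onto $\overset{\vee}{U}_{\bar{x}}$, and injectivity) are routine and parallel the argument in (2.2.1).
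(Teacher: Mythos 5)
Your treatment of (2.2.1) is essentially the paper's argument: glue the isometric charts $W_x$ along paths, using the integrability identity $\Sigma_zW_x=H_z=\Sigma_zW_{x'}$ to force agreement on overlaps, obtain a maximal sheet $U_x$ on which $f$ is a local isometry hence a covering, and use simple connectedness of $U$ for triviality and disjointness of sheets. That part is fine. The first half of your (2.2.2) (attaching $W_z$ to a single sheet via density of regular points and simple connectedness of $W_{\bar z}\cup U_{\bar x}$) also matches the paper, although the paper additionally records why this works: since all regular points of $\overset{\vee}U_{\bar x}$ lie in $U_{\bar x}$, a minimal geodesic between two regular points of $W_{\bar z}$ stays in $W_{\bar z}$ and hence in $U_{\bar x}$, which is what makes the overlap usable for a consistent gluing, so that $f: W_z\cup U_x\to W_{\bar z}\cup U_{\bar x}$ is a local isometry onto a simply connected target and therefore a homeomorphism (this is also where injectivity of $f|_{U_x}$ comes from; it is not available beforehand, since $U$ itself is not assumed simply connected in (2.2.2)).

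The genuine gap is in your final exhaustion step. You extend from $W_z\cup U_x$ to $\overset{\vee}U_x$ by ``patching together such charts as $\bar z$ ranges over interior points of $\bar U_{\bar x}$,'' but the hypotheses you need to pin each new chart to a single sheet -- a convex $W_{\bar z'}\subset\overset{\vee}U_{\bar x}$ with $W_{\bar z'}\cup U_{\bar x}$ simply connected -- are only granted at the one point $\bar z$ with $x\in W_z$. The points of $\overset{\vee}U_{\bar x}\setminus U_{\bar x}$ one must reach are precisely the points where $f$ is only weakly integrable (in the application in (A1) they are the singular points of a rescaled ball), so there is no monodromy-killing hypothesis there, and ``multi-valued attachment'' is exactly the issue you cannot rule out; your own summary flags this consistency step as the crux, but the argument you give resolves it only at $\bar z$. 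The paper avoids new charts altogether: for $z,z'\in\overset{\vee}U_x\setminus U_x$ close together and away from $\partial U_x$, it approximates by $z_i\to z$, $z_i'\to z'$ in $U_x$ with $f(z_i),f(z_i')$ regular, notes that the minimal geodesic between $f(z_i)$ and $f(z_i')$ lies in $U_{\bar x}$ (again because regular points of $\overset{\vee}U_{\bar x}$ are in $U_{\bar x}$), uses the already-established homeomorphism on $W_z\cup U_x$ to get $|z_iz_i'|=|f(z_i)f(z_i')|$, and passes to the limit to get $|zz'|=|f(z)f(z')|$, i.e.\ a distance-preserving extension. To repair your proof you should replace the chart-patching by such a metric limiting argument (or prove that suitable simply connected unions exist at every interior point, which is not given and is false to hope for in general).
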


\begin{proof} (2.2.1) For $\bar x\in U_{\bar x}$, $x\in f^{-1}(\bar x)$, $x'\in W_x$, we may assume that $f(W_x),f(W_{x'})\subset U_{\bar x}$. Then $f: W_x\cap W_{x'}\to W_{\bar x}\cap W_{\bar x'}$ is an isometry, because for $z\in W_x$ and $f(z)\in W_{\bar x}\cap W_{\bar x'}$, $\Sigma_zW_x=H_z=\Sigma_zW_{x'}$. The local compactness implies that $f: W_x\cup W_{x'}\to W_{\bar x}\cup W_{\bar x'}$ is a local isometry. Because $U_{\bar x}$ is path connected, by the above
gluing process one gets a maximal subset at $x$, $U_x\subset f^{-1}(U_{\bar x})$, such that $f: U_x\to U_{\bar x}$ is a local isometry, thus a covering map. We will call $U_x$ the horizontal lifting of $U_{\bar x}$ at $x$; clearly if $z\in U_x$, then $U_z=U_x$. 
If $U$ is simply connected, then the map, $\phi_{\bar x}: f^{-1}(U)\to  f^{-1}(\bar x)\times U$, $\phi_{\bar x}(z)=(U_z\cap f^{-1}(\bar x), f(z))$, defines a trivial bundle, referred as a canonical trivialization. 

(2.2.2) We first show that $f: W_z\cup U_x\to W_{\bar z}\cup U_{\bar x}$ is a homeomorphism and local isometry. First, regular points in $W_{\bar z}$ are of full measure thus are dense. Secondly, $\overset{\vee} U_{\bar x}\supset W_{\bar z}$ and that all regular points in
$\overset{\vee}U_{\bar x}$ are contained in $U_{\bar x}$ implies that any two regular points in
$W_{\bar z}$, a minimal geodesic connecting the two points is in $W_{\bar z}$, thus in
$U_{\bar x}$ (otherwise, $\overset{\vee}U_{\bar x}$ contains regular points not in $U_{\bar x}$.
Consequently, $f: W_z\cup U_x\to W_{\bar z}\cup U_{\bar x}$ is a local isometry. Because
$W_{\bar z}\cup U_{\bar x}$ is simply connected, $f: W_z\cup U_x\to W_{\bar z}\cup U_{\bar x}$
is a homeomorphism and local isometry.

To see that $f: \overset{\vee}U_x\to \overset{\vee} U_{\bar x}$ remains a homeomorphism and local isometry,
for $z, z'\in \overset{\vee}U_x\setminus U_x$, $0<|zz'|<<1$, which are away from $\partial U_x$, let $z_i, z_i\in U_x$, $z_i\to z$ and $z_i'\to z'$, such that $f(z_i), f(z_i)$ are regular points in $U_{\bar x}$.
Because a minimal geodesic from $f(z_i)$ and $f(z_i')$ contains in $U_{\bar x}$ (otherwise,
$\overset{\vee}U_{\bar x}$ contains regular points that are not in $U_{\bar x}$). Because $f: W_z\cup U_x\to W_{\bar z}\cap U_{\bar x}$ is homeomorphic, $|z_iz_i'|=|f(z_i)f(z_i')|$, thus $|zz'|=|f(z)f(z')|$ i.e., $f$ is a homeomorphism and local isometry.
\end{proof}

\begin {proof} [Proof of (A1) by assuming Theorem \ref{2a}] For $x\in X$ and $\lambda_i\to \infty$, $Df_x$ is defined by the following commutative diagrams:
\[\xymatrix{
 (\lambda_iX,x) \ar[r]^{\text{GH}} \ar[d]_{\lambda_i f} & (C_xX,o)  \ar[d] ^{Df_x}  &
 (\lambda_iW_x,x)\ar[r] ^{\text{GH}} \ar[d]^{\lambda_i f} &(C_x(\Sigma_xW_x),o) \ar[d]^{ Df_x}\\
(\lambda_iY,\bar x) \ar[r] ^{\text{GH}} & (C_{\bar x}Y,\bar o),   &
 (\lambda_iW_{\bar x},\bar x)\ar[r] ^{\text{GH}} & (C_{\bar x}(\Sigma_{\bar x}Y),\bar o) }
\]
For $u\in \Sigma_xW_x\subset H_x$, $Df_x: \Sigma_xW_x\to \Sigma_{\bar x} Y$ is an isometry i.e., $Df_x: H_x\to \Sigma_{\bar x}Y$ is global weakly integrable at $u$.

For $u\in H_x\setminus \Sigma_xW_x$, let $x_i\in \lambda_iX$, $x_i\to u$. Because $\lambda_iW_{x_i}$ may converge to $u$, our approach is to enlarge $W_{x_i}$ to $\overset{\vee}U_{x_i}$ such that $f: \overset{\vee} U_{x_i}\to \overset{\vee}U_{\bar x_i}$ is a homeomorphism and local isometry and $\overset{\vee} U_{x_i}\supset B_\rho(x_i,\lambda_iX)$ for some $\rho>0$ independent of $i$. Consequently, $\overset{\vee}U_{x_i}\overset{\text{GH}}\longrightarrow \overset{\vee} U_u\supset B_\rho(u,C_xX)$, $\overset{\vee}U_{\bar x_i}\to \overset{\vee}U_{\bar u}$, such that $Df_x: \overset{\vee}U_u\to \overset{\vee}U_{\bar u}\subset C(\Sigma_{\bar x}Y)$ is an isometry, which implies that $\overset{\vee} U_u\cap H_x\supset B_\rho(u,H_x)$. By replacing $\overset{\vee} U_{\bar u}$ (resp. $\overset{\vee}U_u$) with a convex neighborhood $W_{\bar u}$ of $\bar u$ (resp. $W_u=f^{-1}(W_{\bar u})\cap \overset{\vee} U_u$), one concludes that $Df_x: H_x\to \Sigma_{\bar x}Y$ is weakly integrable at $u$.

By Corollary \ref{1e}, we may assume $\rho>0$ such that $B_\rho(Df_x(u))$ is contractible and
for $i$ large a homeomorphism, $\psi_i: B_\rho(\bar x_i,\lambda_iY)\to B_\rho(\bar u,C_{\bar x}Y)$ such that $\psi_i(\bar x_i)=\bar u$ and $\psi_i$ maps an extremal subset to an extremal subset. Because
$B_\rho(\bar u,C_{\bar x}Y_s)$ radially contracts into any small neighborhood of $\bar u$, for $i$ large we may assume that $U_{\bar x_i}=\lambda_iW_{\bar x_i}\cup [B_\rho(\bar x_i,\lambda_iY)\cap \lambda_i Y_s]$ is simply connected. 
For any $z_i\in W_{x_i}$ such that $f(z_i)\in U_{\bar x_i}\cap \lambda_iY_s$, then $U_{x_i}$ (i.e., the horizontal lifting of $U_{\bar x_i}$ at $z_i$)
satisfies (2.2.2): $\lambda_if: U_{x_i}\to U_{\bar x_i}$ is a homeomorphism and local isometry, thus $\lambda_i f: \overset{\vee} U_{x_i}\to \hat U_{\bar x_i}=B_\rho(\bar x_i,\lambda_iY)$ is
a homeomorphism and local isometry. By now we complete the proof by enlarging $W_{x_i}$ to $\overset{\vee} U_{x_i}$ as desired.
\end{proof}

It turns out that a proof of Theorem \ref{2a} is quite involved, technical and tedious. We will
divide the proof of Theorem \ref{2a} in the following four lemmas (Lemmas \ref{2c}-\ref{2f}).

For $\Sigma_0\in \text{Alex}^m(1)$, we say that $\Sigma_0$ is $(k,\frac \pi2)$-separate,
if there are $u_1,...,u_k$ in $\Sigma_0$ such that $|u_iu_j|>\frac \pi2$, $1\le i\ne j\le k$. If $\bar x\in Y$ is weakly $k$-strained, then $\Sigma_{\bar x}Y$ is $(k+1,\frac \pi2)$-separate with $(k+1)$ points,
$$|\uparrow_{\bar x}^{\bar p_i}\uparrow_{\bar x}^{\bar p_j}|>\frac \pi2,\, i=1,...,k, \qquad  |\uparrow_{\bar x}^{\bar p_i}\uparrow_{\bar x}^{\bar w}|>\frac \pi2.$$

\begin{lemma}\label{2c} Let $\Sigma\in \text{Alex}^n(1), \Sigma_0\in \text{Alex}^m(1)$, $\partial \Sigma_0=\emptyset$, and let $h: \Sigma\to \Sigma_0$ be a submetry.

\noindent {\rm (2.3.1)} If $\Sigma_0$ is $(k,\frac \pi2)$-separate, $k\ge m-1$, then $n=m$.

\noindent {\rm (2.3.2)} If $h$ is global weakly integrable at one point, then $h$ is an isometry.
\end{lemma}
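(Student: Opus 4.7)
The plan for (2.3.1) is an induction on $m$, lifting the $(k,\tfrac{\pi}{2})$-separation from $\Sigma_0$ to $\Sigma$ via the defining submetry property $h(B_r(x)) = B_r(h(x))$. Because $h$ is $1$-Lipschitz, successive lifts of $u_1,\dots,u_k$ inherit pairwise distances $\geq |u_iu_j| > \tfrac{\pi}{2}$, so $\Sigma$ is itself $(k,\tfrac{\pi}{2})$-separate with $k \geq m-1$. Since a submetry always satisfies $n\geq m$, the task reduces to showing $n \leq m$. At a regular point $\bar x \in \Sigma_0$ (dense since $\partial \Sigma_0 = \emptyset$), the space of directions is $\Sigma_{\bar x}\Sigma_0 \cong S^{m-1}_1$, which is $(m,\tfrac{\pi}{2})$-separate; applying the inductive hypothesis to the differential submetry $Dh_{\tilde x}\colon H_{\tilde x} \to \Sigma_{\bar x}\Sigma_0$ yields $\dim H_{\tilde x} = m - 1$. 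The $(k,\tfrac{\pi}{2})$-separation of $\Sigma$ then rules out extra vertical directions via the join decomposition $\Sigma_{\tilde x} = H_{\tilde x} * V_{\tilde x}$ from Lemma \ref{8a}, forcing $V_{\tilde x}$ to be trivial and so $n = m$.

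For (2.3.2), the global weak integrability at $x$ produces $W_x \subset \Sigma$ with $h\colon W_x \to \Sigma_0$ a global isometry, so $\Sigma_0$ embeds isometrically into $\Sigma$. Since $\partial \Sigma_0 = \emptyset$, regular points of $\Sigma_0$ have spherical links $S^{m-1}_1$ that supply the $(m,\tfrac{\pi}{2})$-separation required by (2.3.1), giving $n = m$. A submetry between equidimensional Alexandrov spaces must have discrete fibers, and since $W_x$ already meets every fiber (by the surjectivity of $h|_{W_x}$), $h$ is injective; a bijective submetry between complete metric spaces is automatically an isometry, which finishes (2.3.2).

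The main obstacle I foresee is the dimension-drop step in (2.3.1): inductive control on $\dim H_{\tilde x}$ alone is not enough, since one must additionally rule out a nontrivial vertical cone compatible with the join structure $\Sigma_{\tilde x} = H_{\tilde x} * V_{\tilde x}$. This is exactly where the quantitative hypothesis $k \geq m-1$ should enter, via standard packing estimates for pairwise $>\tfrac{\pi}{2}$-separated sets in $\text{Alex}^n(1)$ (and careful use of the fact that directions in $V_{\tilde x}$ would produce additional separated points at the cone level). A secondary technical point is the base case of the induction, where the separation hypothesis becomes vacuous; there one must handle submetries onto low-dimensional targets without boundary by direct inspection, using that $\partial \Sigma_0 = \emptyset$ pins down the topology of $\Sigma_0$.
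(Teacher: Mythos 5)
Your plan for (2.3.1) does not use the separation hypothesis where it is actually needed, and the step that is supposed to kill the vertical directions is not a valid argument. Knowing that $\Sigma$ (the total space) is $(k,\frac\pi2)$-separate, or that $\dim H_{\tilde x}=m-1$ over a regular point, gives no control on $V_{\tilde x}$: in the Hopf submetry $S^3\to S^2(\tfrac12)$ one has $\dim H_x=1=m-1$ at every point and $S^3$ is even $(4,\frac\pi2)$-separate, yet $V_x\neq\emptyset$ everywhere; the reason this is not a counterexample to the lemma is that the \emph{base} fails the (intended) separation hypothesis, so the hypothesis must be exploited on $\Sigma_0$, not on $\Sigma$. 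Moreover, Lemma \ref{8a} does not give a join decomposition $\Sigma_{\tilde x}=H_{\tilde x}*V_{\tilde x}$ — it only gives that $H_x$ and $V_x$ are $\frac\pi2$-apart when $\partial H_x=\emptyset$; the join structure requires the extra rigidity of Lemma \ref{3a} (topologically nice sphere), which is not available here, and even granting it nothing forces $V_{\tilde x}$ to be trivial at a regular point. The paper's mechanism is different: take the separated set $\bar S=\{\bar u_1,\dots\}$ in $\Sigma_0$, let $\bar w$ maximize $d_{\bar S}$, lift to a maximum point $w$ of $d_{h^{-1}(\bar S)}$ (possible because $h$ is a submetry), and use triangle comparison to show $d_{h^{-1}(\bar S)}$ is \emph{strictly} decreasing in every direction at $w$; this forces the fiber through $w$ to be locally $\{w\}$, hence $H_w=\Sigma_w\Sigma$, and the induction is applied to $Dh_w:\Sigma_w\Sigma\to\Sigma_{\bar w}\Sigma_0$, giving $n-1=m-1$ directly. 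Your proposal never produces such a point, and the "packing estimate on separated sets in $\Sigma$" you suggest as the fix cannot produce one. The base case ($m=1$, $\Sigma_0$ a circle) is also a real argument you omit: the paper pulls the fibration back over $\mathbb R\to S^1$ and gets a metric cover of $\Sigma$ splitting off an $\mathbb R$-factor, impossible for curvature $\ge 1$ unless $n=1$.

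For (2.3.2) there are two further gaps. First, you derive $n=m$ by saying that the spherical links $S^{m-1}_1$ of regular points "supply the $(m,\frac\pi2)$-separation required by (2.3.1)"; but (2.3.1) requires separation of $\Sigma_0$ itself, not of its spaces of directions — every point of $S^2(\tfrac12)$ has link $S^1_1$, yet $S^2(\tfrac12)$ contains no two points more than $\frac\pi2$ apart. Second, "discrete fibers and $W_x$ meets every fiber, hence $h$ is injective" is a non sequitur: meeting every fiber does not exclude fiber points outside $W_x$. The paper's route avoids both problems: once $\dim W_u=\dim\Sigma$, the set $W_u$ is a compact convex subset of full dimension with $\partial W_u=\emptyset$ (Lemma \ref{2h}), so $W_u=\Sigma$ by Lemma \ref{2i}, and $h=h|_{W_u}$ is an isometry. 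If you want to salvage your outline, you need (i) the max-point argument (or an equivalent device) to find a point with $H_w=\Sigma_w\Sigma$ in (2.3.1), and (ii) the full-dimensional convex-subset rigidity of Lemma \ref{2i} in (2.3.2) in place of the injectivity claim.
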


A point $\bar x\in Y$ ($\partial Y=\emptyset$) is called good, if for any $x\in f^{-1}(\bar x)$, $\dim(V_x)=n-m-1$. Because $V_x$ and $H_x$ are convex subsets of $\Sigma_xX$ which are $\frac \pi2$-apart, $\partial H_x=\emptyset$ (Lemma \ref{2h}), by (1.10.1) we have that $m-1\le \dim(H_x)\le n-2-(n-m-1)=m-1$, thus $\dim(H_x)=m-1=\dim(\Sigma_xW_x)$. Because $\partial ( \Sigma_xW_x)=\emptyset$, $H_x=\Sigma_xW_x$ (Lemma \ref{2i}) i.e., $f$ is integrable any $x\in f ^{-1}(\bar x)$. The above shows that $\bar x$ is good implies that $\bar x$ is integrable.

\begin{lemma} \label{2d} Let $X\in \text{Alex}^n(\kappa), Y\in \text{Alex}^m(\kappa)$, and let $f: X\to Y$ be a weakly integrable submetry. If any $\bar x\in Y\setminus \partial Y$ is weakly $k$-strained, $k\ge m-1$, then $\bar x$ is good (thus $\bar x$ is integrable).
\end{lemma}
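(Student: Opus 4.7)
The plan is to pin down $\dim(V_x) = n-m-1$ through two complementary dimension bounds on $\Sigma_x X$, after first forcing the differential $Df_x: H_x \to \Sigma_{\bar x}Y$ to be an isometry via Lemma \ref{2c} (2.3.2).

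First, since $f$ is weakly integrable at $x$, the isometry $f: W_x \to W_{\bar x}$ induces an isometry $Df_x: \Sigma_x W_x \to \Sigma_{\bar x} W_{\bar x}$ on spaces of directions. Because $W_{\bar x}$ is a convex neighborhood of $\bar x$, $\Sigma_{\bar x} W_{\bar x} = \Sigma_{\bar x} Y$, so this is an isometry onto the full target. Hence $Df_x : H_x \to \Sigma_{\bar x} Y$ is globally weakly integrable (in the sense of (0.8.3)) at every $v \in \Sigma_x W_x$. Since $\bar x \in Y \setminus \partial Y$ implies $\partial \Sigma_{\bar x} Y = \emptyset$, Lemma \ref{2c} (2.3.2) applies and forces $Df_x: H_x \to \Sigma_{\bar x} Y$ to be an isometry, so $\dim(H_x) = m - 1$ and $\partial H_x = \emptyset$.

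The upper bound on $\dim(V_x)$ then follows from (1.10.1) applied to the $\pi/2$-apart convex subsets $H_x, V_x \subseteq \Sigma_x X \in \text{Alex}^{n-1}(1)$ with $\partial H_x = \emptyset$: one gets $\dim(H_x) + \dim(V_x) \le (n-1) - 1 = n - 2$, hence $\dim(V_x) \le n - m - 1$. For the matching lower bound, I would use that $\Sigma_x X = [H_x V_x]$ is the image of the $1$-Lipschitz map $H_x \times V_x \times [0, \pi/2] \to \Sigma_x X$ sending $(u,w,t)$ to the point at parameter $t$ on a minimal geodesic $[uw]$; the standard Hausdorff-dimension bound for Lipschitz surjections then gives $n - 1 = \dim(\Sigma_x X) \le \dim(H_x) + \dim(V_x) + 1 = m + \dim(V_x)$, so $\dim(V_x) \ge n - m - 1$. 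Combining the two bounds yields $\dim(V_x) = n - m - 1$, which is the definition of goodness for $\bar x$; by the paragraph preceding Lemma \ref{2d}, goodness then implies integrability at $\bar x$.

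The main obstacle is setting up the hypotheses of (2.3.2) rigorously: one must verify that $Df_x: H_x \to \Sigma_{\bar x} Y$ is indeed a submetry between Alexandrov spaces of curvature $\ge 1$ (which follows from the basic properties of differentials of submetries recalled in the preliminaries) and that the convex-neighborhood structure of $W_{\bar x}$ ensures $\Sigma_{\bar x} W_{\bar x} = \Sigma_{\bar x} Y$ in full, not merely locally. The weakly $k$-strained hypothesis with $k \ge m-1$, which supplies $(k+1) \ge m$ pairwise $(>\pi/2)$-separated directions in $\Sigma_{\bar x} Y$, provides an alternative route via (2.3.1) to the dimension identity $\dim(H_x) = m-1$ should (2.3.2) fail to apply directly, and anchors $\bar x$ at a sufficiently generic stratum of $Y$ for the whole chain of reasoning to remain meaningful in the context of the proof of Theorem \ref{2a}.
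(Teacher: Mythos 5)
The decisive gap is your lower bound $\dim(V_x)\ge n-m-1$. The map $H_x\times V_x\times[0,\pi/2]\to\Sigma_xX$, $(u,w,t)\mapsto$ the point at parameter $t$ on $[uw]$, is not well defined: minimal geodesics between $u\in H_x$ and $w\in V_x$ need not be unique, and in an Alexandrov space there is in general no continuous, let alone $1$-Lipschitz, selection of geodesics as functions of their endpoints. One can repair the set-up by using the evaluation map on the compact space $G$ of all minimal geodesics from $H_x$ to $V_x$ (which is $1$-Lipschitz and surjective onto $[H_xV_x]=\Sigma_xX$), but then the argument needs $\dim G\le\dim H_x+\dim V_x$, i.e., that the endpoint map $G\to H_x\times V_x$ has zero-dimensional fibers; the structure result $\Sigma_xX=[H_xV_x]$ from \cite{KL} gives no such control, and this is exactly the difference between $[H_xV_x]$ and the join $H_x*V_x$. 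The inequality $\dim H_x+\dim V_x\ge n-2$ is essentially the content of goodness itself, and nowhere does the paper obtain it by a soft Hausdorff-dimension count: when precisely this estimate is needed in Lemma \ref{3a}, the authors invoke Alexander duality under the additional hypothesis that the ambient space is a topologically nice sphere. In the present lemma the paper argues differently: it first proves integrability at $\bar x$, then computes $\dim(V_z)=n-m-1$ exactly at points $z$ lying over a \emph{regular} point $\bar z\in W_{\bar x}$ via the splitting $C_zX=\mathbb R^m\times C(f^{-1}(\bar z))$ with $C(f^{-1}(\bar z))=C(V_z)$, and finally transports this value to every $x\in f^{-1}(\bar x)$ through the locally Lipschitz fiber maps $\Phi,\Psi$ given by horizontal lifts of gradient flows of $d_{\bar p_1},\dots,d_{\bar p_k}$ inside $W_{\bar x}$, whose composition is the identity because $f$ is globally integrable over the contractible set $W_{\bar x}$; hence $\Phi,\Psi$ are homeomorphisms and $\dim(V_x)=\dim(V_z)$. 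Your proposal has no substitute for this transfer step, and the Lipschitz-surjection bound cannot replace it.

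A secondary problem is your primary route through (2.3.2): the paper's proof of (2.3.2) begins by asserting $\dim(\Sigma)=\dim(\Sigma_0)$, i.e., it is only justified once the dimension equality is already known (in practice, supplied by (2.3.1)); as a standalone statement with $n>m$ it is not established, and using it to \emph{deduce} $\dim(H_x)=m-1$ reverses the logic. The equality $\dim(H_x)=\dim(\Sigma_{\bar x}Y)$ is exactly where the weakly $k$-strained hypothesis with $k\ge m-1$ enters: it makes $\Sigma_{\bar x}Y$ a $(k+1,\frac\pi2)$-separate space so that (2.3.1) applies, after which $\partial H_x=\emptyset$ (Lemma \ref{2h}) and Lemma \ref{2i} give $H_x=\Sigma_xW_x$, hence $Df_x|_{H_x}$ is an isometry. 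You mention this only as a fallback, but it is the paper's actual argument and must be the main line; with that correction the first half of your proof is sound, while the lemma as a whole still fails at the lower-bound step above.
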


We point it out that at least weakly $(m-1)$-strained point $\bar x\notin \partial Y$ guarantees that $\bar x$ has a convex neighborhood in which any two points can be
pushed back and forth via gradient flows of a finite number of distance functions (Lemma \ref{1a}); this property is crucial in our proof of Lemma \ref{2e} below.

By Theorem \ref{1f}, a weakly $k$-strained point $\bar x$ has a canonical neighborhood $K(h_{\bar x},g_{\bar x})$, defined by two functions, $(h_{\bar x},g_{\bar x}): K(h_{\bar x},g_{\bar x})\to \Bbb R^k\times \Bbb R$, such that $K(h_{\bar x},g_{\bar x})$
is homeomorphic to $\b B_\rho^k(0)\times C(\Sigma)$, where $g_{\bar x}^{-1}(0)$ is homeomorphic to $\b B_\rho^k(0)$ and $C(\Sigma)$ denotes a homeomorphic cone.

\begin{lemma} {\rm (Good points in $g_{\bar x}^{-1}(0)$)} \label{2e} Let $X\in \text{Alex}^n(\kappa), Y\in \text{Alex}^m(\kappa)$, and let $f: X\to Y$ be a weakly integrable submetry. Assume that if $\bar x\in Y\setminus \partial Y$ is at least weakly $(k+1)$-strained, then $\bar x$ is integrable. Let $\bar x\in Y\setminus \partial Y$ be weakly $k$-strained. If a canonical neighborhood $K(h_{\bar x},g_{\bar x})$ satisfies that $g^{-1}_{\bar x}(0)$ contains one good point, then all points in $g^{-1}_{\bar x}(0)$ are good.
\end{lemma}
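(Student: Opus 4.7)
The plan is to combine the product structure of the canonical neighborhood of $\bar x$ with the horizontal lifting of gradient flows by the submetry $f$. By Theorem \ref{1f}, $K(h_{\bar x},g_{\bar x})$ is homeomorphic to $\bar B^k_\rho(0)\times C(\Sigma)$, with $g_{\bar x}^{-1}(0)\cong \bar B^k_\rho(0)$ corresponding to the apex slice; moreover every point in $K\setminus g_{\bar x}^{-1}(0)$ is at least weakly $(k+1)$-strained and hence integrable by the inductive hypothesis. Lemma \ref{2b} (2.2.1) then supplies a canonical local trivialization of $f$ over simply connected open pieces of $K\setminus g_{\bar x}^{-1}(0)$, and Lemma \ref{2b} (2.2.2) is set up precisely to extend such trivializations across the singular slice $g_{\bar x}^{-1}(0)$.

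Next, I would fix the given good point $\bar x_0\in g_{\bar x}^{-1}(0)$ and an arbitrary target $\bar x_1\in g_{\bar x}^{-1}(0)$. The product description identifies the interior of $g_{\bar x}^{-1}(0)$ with the interior of a single primitive extremal subset of $K$, so by Lemma \ref{1a} there are finitely many semi-concave functions on $Y$ whose gradient flows push $\bar x_0$ to $\bar x_1$ inside this stratum in finite time, with a matching reverse deformation. Each such flow lifts horizontally through $f$ to a gradient flow of its horizontal lift, and composing these lifts yields locally Lipschitz maps $\psi:f^{-1}(\bar x_0)\to f^{-1}(\bar x_1)$ and $\psi':f^{-1}(\bar x_1)\to f^{-1}(\bar x_0)$. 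The goal is to upgrade these maps to identifications of tangent structure that preserve the vertical dimension, so that goodness at $\bar x_0$ forces $\dim V_{x_1}=n-m-1$ for every $x_1\in f^{-1}(\bar x_1)$.

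For this last step I would cover the flow path $\bar\gamma\subset g_{\bar x}^{-1}(0)$ by finitely many simply connected open sets $U_{\bar x'}\subset K\setminus g_{\bar x}^{-1}(0)$, each chosen to contain a convex arc of $\bar\gamma$ in its closure-without-boundary $\overset{\vee}{U_{\bar x'}}$ and each satisfying that every regular point of $\overset{\vee}{U_{\bar x'}}$ already lies in $U_{\bar x'}$. Lemma \ref{2b} (2.2.2) then upgrades the canonical local isometry on the horizontal lift $U_{x'}$ to a homeomorphism and local isometry on $\overset{\vee}{U_{x'}}$, identifying the fibers along $\bar\gamma$ with the single model fiber $f^{-1}(\bar x')$ and in particular keeping $\dim V_x$ constant along the lift. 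Concatenating overlapping pieces then connects $f^{-1}(\bar x_0)$ with $f^{-1}(\bar x_1)$ by homeomorphisms respecting the horizontal-vertical decomposition, which transfers the goodness condition from $\bar x_0$ to $\bar x_1$.

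The main obstacle will be the geometric engineering of the neighborhoods $U_{\bar x'}$ so that the hypotheses of Lemma \ref{2b} (2.2.2) actually hold along the flow path. Simple connectivity of $W_{\bar z}\cup U_{\bar x'}$ can fail if the fibers of $f$ over $\bar\gamma$ carry nontrivial holonomy, and the demand that every regular point of $\overset{\vee}{U_{\bar x'}}$ lie in $U_{\bar x'}$ forces $U_{\bar x'}$ to be adapted to the product decomposition $\bar B^k_\rho(0)\times C(\Sigma)$. I expect to handle the first issue by passing to local covers, and the second by choosing $U_{\bar x'}$ as a product-shaped neighborhood that straddles both the integrable bulk and the apex slice, so that the extension of the trivialization reduces to a closure argument inside the already-integrable region.
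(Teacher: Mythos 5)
Your outline follows the same route as the paper: invoke the inductive hypothesis to get integrability on $Z_{\bar x}\setminus g_{\bar x}^{-1}(0)$ (every point there being at least weakly $(k+1)$-strained), build a horizontal lifting via (2.2.1), extend it with (2.2.2), and then transfer goodness along $g_{\bar x}^{-1}(0)$ by horizontally lifting gradient flows that push the good point to an arbitrary point of the slice and back, as in the end of the proof of Lemma \ref{2d}. However, as written there are two genuine gaps. First, the concluding step is not what (2.2.2) delivers: the map $f\colon \overset{\vee}U_{x'}\to \overset{\vee}U_{\bar x'}$ is a homeomorphism onto the base region, so the lifted sheet meets each fiber over a point of $\bar\gamma\subset g_{\bar x}^{-1}(0)$ in a single point; it does not ``identify the fibers along $\bar\gamma$ with the single model fiber $f^{-1}(\bar x')$,'' and constancy of $\dim(V_x)$ along the sheet is exactly what has to be proved, not a consequence of the local isometry. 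The mechanism that actually closes the argument (and which the paper uses) is that the horizontal lifts of the forward and backward flows lie in \emph{one} lifted region and form a closed curve, so the induced locally Lipschitz maps satisfy $\Psi\circ\Phi=\mathrm{id}_{f^{-1}(\bar x)}$; only then are $\Phi,\Psi$ non-degenerate, forcing $\dim(V_z)=\dim(V_x)=n-m-1$ for every $z$ in the fiber. Your proposal introduces $\psi,\psi'$ but never establishes that the round trip is the identity, and with a chain of separately lifted neighborhoods $U_{\bar x'}$ this closing-up is precisely where the argument can fail.

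Second, the obstacle you flag (simple connectivity of the lifting region) is real, but ``passing to local covers'' is not a viable repair: once you replace the base region by a cover you lose the single-valued horizontal lifting inside $X$ and hence the closed-lift argument above. The paper resolves this by choosing one region $U_{\bar z}=W_{\bar z}\cup\bigl[Z_{\bar x}\setminus g_{\bar x}^{-1}(0)\bigr]$, whose simple connectivity follows from the product structure $B^k_\rho(0)\times C(\Sigma)$ of the canonical neighborhood together with the attached convex set $W_{\bar z}$, and by enlarging $U_{\bar z}$ over the weakly $m$-strained points of $g_{\bar x}^{-1}(0)$ so that the regular-point hypothesis of (2.2.2) holds; this yields a single homeomorphism and local isometry $f\colon \overset{\vee}U_z\to Z_{\bar x}$, inside which the Lemma \ref{1a}-type flows between the good point and an arbitrary $\bar z\in g_{\bar x}^{-1}(0)$ lift to a closed curve. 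If you want to keep your piecemeal covering of the flow path, you must still produce such a globally defined sheet over a simply connected region containing both flows, so in effect you are led back to the paper's construction.
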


\begin{lemma} {\rm (Local criterion of extremal subsets)} \label{2f} Let $Y\in \text{Alex}^m(\kappa)$, and let $F\subset Y$ be a subset consisting of weakly $k$-strained points such that for $\bar x\in F$ and any $(\bar p_1,...,\bar p_k,\bar w)$, the canonical
neighborhood determined by $(h_{\bar x},g_{\bar x}): K(h_{\bar x},g_{\bar x})\to \Bbb R^k\times \Bbb R$, $g_{\bar x}^{-1}(0)\subset F$. Assume that for any $\bar y\in \partial \bar F$ is at most weakly $(k-1)$-strained, then $\bar F$ is an extremal subset of $Y$.
\end{lemma}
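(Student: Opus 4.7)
The plan is to verify extremality of $\bar F$ via the standard criterion recorded at the start of subsection 1.1: for every $y\in Y\setminus\bar F$ and every $q\in\bar F$ that is a (local) minimum of $d_y|_{\bar F}$, the point $q$ is a critical point of $d_y$ in $Y$ in the Perelman sense, i.e.\ every $v\in\Sigma_qY$ satisfies $|\uparrow_q^y\,v|\leq\pi/2$ for some $\uparrow_q^y\in\Uparrow_q^y$.

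I first establish the local structure of $\bar F$. For each $\bar x\in F$, Theorem \ref{1f} gives a canonical neighborhood $K(h_{\bar x},g_{\bar x})$ homeomorphic to a $k$-disk times a topological cone $C(\Sigma)$, with $g_{\bar x}^{-1}(0)$ the $k$-disk of cone-tips and every other point of $K$ at least weakly $(k+1)$-strained. Since $F$ consists of points that are weakly $k$-strained exactly, no point of $K\setminus g_{\bar x}^{-1}(0)$ lies in $F$; together with the hypothesis $g_{\bar x}^{-1}(0)\subset F$ this yields $F\cap K(h_{\bar x},g_{\bar x})=g_{\bar x}^{-1}(0)$. Hence $F$ is relatively open in $\bar F$, locally a topological $k$-disk parametrized by the strainer coordinates $h_{\bar x}$, and $\partial\bar F=\bar F\setminus F$ consists of at most weakly $(k-1)$-strained points by hypothesis.

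For the interior case, let $q\in F$ with $q$ a local minimum of $d_y|_{\bar F}$, and fix a strainer $(\bar p_1,\dots,\bar p_k,\bar w)$ at $q$. Arguing by contradiction, suppose some $v\in\Sigma_qY$ satisfies $|\uparrow_q^y\,v|>\pi/2$ for every $\uparrow_q^y\in\Uparrow_q^y$. I split on whether $\uparrow_q^y$ lies in the tangential subspace $\Sigma_qg_q^{-1}(0)\subset\Sigma_qY$ (directions of curves in the local $k$-disk $g_q^{-1}(0)\subset\bar F$) or transversely. In the tangential case, the strict descent direction of $d_y$ is realized along a curve in $g_q^{-1}(0)\subset\bar F$, contradicting the local-minimum assumption on $\bar F$. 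In the transverse case, I combine $\uparrow_q^y$ (or a small perturbation of it) with the existing directions $\uparrow_q^{\bar p_i}$ and $\uparrow_q^{\bar w}$ via angle arithmetic, using the strainer inequalities $\measuredangle\bar p_i q\bar p_j>\pi/2$, $\measuredangle\bar p_iq\bar w>\pi/2$ together with the hinge comparison in $\Sigma_qY$, to upgrade the strainer to a $(k+1)$-tuple witnessing weak $(k+1)$-strainedness of $q$, contradicting $q\in F$. The freedom to choose the $(k+1)$-tuple, guaranteed by the hypothesis that $g_{\bar x}^{-1}(0)\subset F$ for \emph{every} such choice, is what makes the perturbation feasible.

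For the boundary case $q\in\partial\bar F$, I approximate $q$ by a sequence $q_i\in F$ with $q_i\to q$. Since $q$ is at most weakly $(k-1)$-strained while each $q_i$ is weakly $k$-strained, the interior case applies to $q_i$: for any $v\in\Sigma_qY$ I lift $v$ to $v_i\in\Sigma_{q_i}Y$ and use criticality (or equivalently the preservation of $F$ under the gradient flow of $-d_y$ obtained from the interior case) to force $|\uparrow_{q_i}^y\,v_i|\leq\pi/2$; passing to the limit under the convergence $\Sigma_{q_i}Y\to\Sigma_qY$, upper semicontinuity of the angle inequality yields $|\uparrow_q^y\,v|\leq\pi/2$, as required. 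The main obstacle is the transverse sub-case of the interior step: because $\bar w$ already occupies the $(k+1)$-st slot in the existing strainer, one cannot simply append $y$, and the $(k+1)$-upgrade must be extracted by a careful angle-chasing argument exploiting the rich supply of admissible $(k+1)$-tuples provided by the hypothesis.
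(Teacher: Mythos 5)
Your proposal has a genuine gap at exactly the point where the real work lies. In the interior case, the ``transverse'' sub-case is asserted rather than proved: from the non-criticality of $d_y$ at a weakly $k$-strained point $q\in F$ you claim one can ``upgrade the strainer to a $(k+1)$-tuple by angle arithmetic,'' but the existing strainer directions $\uparrow_q^{\bar p_i},\uparrow_q^{\bar w}$ carry no angle control whatsoever relative to $\Uparrow_q^y$, so neither $y$ nor a small perturbation of $\uparrow_q^y$ can simply be appended, and hinge comparison alone does not produce the missing separations. This upgrade is precisely the content of the paper's Lemma \ref{2j} together with Sublemma \ref{one}: one must first replace the strainer directions, one at a time, by directions lying in $\Sigma_q\bar F$ (which automatically make angle $\ge\frac\pi2$ with $\Uparrow_q^y$ because $q$ minimizes $d_y|_{\bar F}$), and to iterate this one has to pass to the tangent cone $C(\Sigma_q Y)$ and re-verify the hypothesis $g_{\bar o}^{-1}(0)\subset C(\Sigma_q\bar F)$ there (this is where the ``for any $(\bar p_1,\dots,\bar p_k,\bar w)$'' hypothesis is really used, via the lifting of canonical neighborhoods along the blow-up); only then does a gradient push by $d_{\uparrow_q^y}$, using $\nabla d_y(q)\ne 0$, produce a $(k+2,\frac\pi2)$-separated set and the contradiction. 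None of this is in your sketch, and the ``rich supply of admissible tuples'' remark does not substitute for it.

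The boundary case is also flawed as written. A point $q\in\partial\bar F$ can certainly be the minimum of $d_y|_{\bar F}$, but nearby points $q_i\in F$ need not be local minima of $d_y|_{\bar F}$, so the interior case gives no criticality statement at $q_i$ to pass to the limit; moreover ``lifting'' $v\in\Sigma_qY$ to $v_i\in\Sigma_{q_i}Y$ and taking limits of angle inequalities is not legitimate, since the spaces of directions at $q_i$ do not converge to $\Sigma_qY$ and angle semicontinuity goes the wrong way for your purpose. The paper avoids this entirely: it reduces Lemma \ref{2f} to the criterion of Lemma \ref{2j} by proving that $g_{\bar x}^{-1}(0)\subset\bar F$ also at points $\bar x\in\bar F\setminus F$, via an open--closed argument on $T=\{\bar z\in g_{\bar x}^{-1}(0):|\bar zF|=0\}$ carried out by inverse induction on the strainedness level and using Theorem \ref{1g}; this is where the hypothesis that boundary points are at most weakly $(k-1)$-strained enters (compare Example \ref{2g}, which shows the lemma fails without it -- a role your argument never isolates). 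So both structural pieces of the paper's proof, the reduction to Lemma \ref{2j} at boundary points and the strainer-replacement argument behind Lemma \ref{2j}, are missing or replaced by steps that do not work.
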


Lemma \ref{2f} is false if one removes the condition that any $\bar y\in \partial \bar F$ is at most weakly $(k-1)$-strained; see
example below.

\begin{example} \label{2g} Let $Y=[0,1]^3\setminus \triangle_3$, where $[0,1]^3\subset \Bbb R^3$ is a
unit cube, $\triangle_3\subset [0,1]^3$ denotes a $3$-simplex of length $\frac 13$ with a vertex $(1,1,1)\in [0,1]^3$. Then $\bar Y\in \text{Alex}^3(0)$ with proper extremal subsets. Let $F=\{(1,t,1)\in Y,\,0<t<\frac 13\}$. Then $F$ satisfies the conditions of Lemma \ref{2f} for $k=1$, except that $(1,\frac 13,1)\in \partial \bar F$ is weakly $1$-strained, not weakly $0$-strained. Note that
$\bar F$ is not an extremal subset of $Y$.
\end{example}

\begin{proof} [Proof of Theorem \ref{2a} by assuming Lemmas \ref{2c}-\ref{2f}]

We shall show that any $\bar x\in Y_s$ is good, thus $\bar x$ is integrable i.e., for any $x\in f^{-1}(\bar x)$, $H_x=\Sigma_xW_x$.

Let $\bar x$ be weakly $k$-strained, $1\le k\le m$. We shall proceed the proof by inverse induction on $k$, starting with $k=m$. By Lemma \ref{2d}, $\bar x$ is good for  $k\ge m-1$.

Assume that for $j>k$, any weakly $j$-strained point in $Y_s$ is good.

Let $\bar x\in Y_s$ be weakly $k$-strained, and let $K(h_{\bar x},g_{\bar x})$ be a canonical neighborhood of $\bar x$. Observe that if $g_{\bar x}^{-1}(0)$ contains a weakly $j$-strained point $\bar z$ with $j>k$, then by the inductive assumption
$\bar z$ is good, thus by Lemma \ref{2e} all points in $g_{\bar x}^{-1}(0)$ are good.
Hence, we may assume that all point in $g_{\bar x}^{-1}(0)$ are weakly $k$-strained,
so it remains to show that $g_{\bar x}^{-1}(0)$ contains at least one good point.

Arguing by contradiction, assuming all points in $g_{\bar x}^{-1}(0)$ are weakly $k$-strained and non good. Let $F\ne \emptyset$ denote the subset of $Y_s$
consisting of weakly $k$-strained points, $\bar x$, such that any canonical neighborhood satisfies that all points in $g_{\bar x}^{-1}(0)$ are weakly $k$-strained and non-good.

Because $\bar x\in F$ implies that $g_{\bar x}^{-1}(0)\subset F$, $F$ is an open topological $k$-manifold, if $\bar x\in \partial \bar F$, then $\bar x\notin F$. We shall show that $\bar x\in \partial \bar F$ is at most weakly
$(k-1)$-strained point, thus by Lemma \ref{2f} $\bar F$ is an extremal subset of $Y$, and
therefore $\bar F\cap Y_s=\emptyset$, a contradiction to that $F\subset Y_s$.

We argue by contradiction, assuming $\bar x$ is at least weakly $k$-strained point. We claim that $g_{\bar x}^{-1}(0)\cap F=\emptyset$, thus there is $\bar z\in [K(h_{\bar x},g_{\bar x})\setminus g_{\bar x}^{-1}(0)]\cap F\ne \emptyset$. Because $\bar z$ is at least weakly $(k+1)$-strained point, a contradiction to that $\bar z\in F$ is weakly $k$-strained.

The claim is clear if $\bar x\in Y\setminus Y_s$ (an extremal subset), because $g_{\bar x}^{-1}(0)\subset Y\setminus Y_s$ (\cite{Ka2}). If $\bar x\in Y_s$, then $g_{\bar x}^{-1}(0)$ contains a good point;
otherwise by induction all points in $g_{\bar x}^{-1}(0)$ are weakly $k$-strained and are not good, thus $\bar x\in F$, a contradiction. By Lemma \ref{2e}, all points in $g_{\bar x}^{-1}(0)$ are good, thus $g_{\bar x}^{-1}(0)\cap F=\emptyset$ i.e., the claim holds.
\end{proof}

We now present proofs for Lemmas \ref{2c}-\ref{2f}. In the proofs of Lemmas \ref{2c} and \ref{2d}, we need the following properties in Lemmas \ref{2h} and \ref{2i}.

\begin{lemma}\label{2h} Let $\Sigma\in \text{Alex}^n(1)$, $\Sigma_0\in \text{Alex}^m(1)$, $\partial \Sigma_0=\emptyset$, and let $h:\Sigma\to \Sigma_0$ be a submetry. If $h$ is global weakly integrable at one point, then $\partial \Sigma=\emptyset$.
\end{lemma}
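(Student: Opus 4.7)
The plan is to argue by induction on $n = \dim\Sigma$. In the base case $n = m$, the global section $W_x \subseteq \Sigma$ is a closed convex subset of full dimension with $\partial W_x = \emptyset$ intrinsically (since $W_x$ is isometric to $\Sigma_0$ and $\partial\Sigma_0 = \emptyset$). Iterating this rigidity on spaces of directions, each $\Sigma_z W_x \subseteq \Sigma_z\Sigma$ is a full-dimensional closed convex subset without intrinsic boundary and hence equal to $\Sigma_z\Sigma$, so $W_x$ is open at each of its points. Combined with $W_x$ closed and $\Sigma$ connected, $W_x = \Sigma$, so $h$ is an isometry and $\partial\Sigma = \partial\Sigma_0 = \emptyset$.

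For the inductive step $n > m$, the key observation is that $h$ is globally weakly integrable at every $y \in W_x$ (take $W_y = W_x$). Consequently, the differential $Dh_y : H_y \to \Sigma_{\bar y}\Sigma_0$ is a submetry between $\text{Alex}(1)$ spaces that is globally weakly integrable at every $v \in \Sigma_y W_x$ (where $Dh_y$ restricts to an isometry), and its target has empty boundary. Since $\dim H_y \le n-1 < n$, the inductive hypothesis yields $\partial H_y = \emptyset$ for every $y \in W_x$.

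Suppose for contradiction $\partial\Sigma \neq \emptyset$. The retraction $G := (h|_{W_x})^{-1}\circ h : \Sigma \to W_x$ is a submetry with $W_x$ as a global section (this uses that $h$ is a submetry and $h|_{W_x}$ an isometry). For any $y \in \partial\Sigma$, the gradient flow of $-d_{G(y)}$ starting at $y$ converges to $G(y)$ and, by extremality of $\partial\Sigma$, stays inside $\partial\Sigma$; hence $z_0 := G(y) \in W_x\cap\partial\Sigma$, so this intersection is nonempty. At $z_0$, the previous paragraph gives $\partial H_{z_0} = \emptyset$, yet $z_0 \in \partial\Sigma$ forces $\partial\Sigma_{z_0}\Sigma \neq \emptyset$. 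Using the decomposition $\Sigma_{z_0}\Sigma = [H_{z_0} V_{z_0}]$ with $|H_{z_0}V_{z_0}| = \pi/2$ strictly (Lemma \ref{8a}) together with Theorem \ref{1j} and the boundary formula for join-type structures, the boundary of $\Sigma_{z_0}\Sigma$ must lie in $H_{z_0}*\partial V_{z_0}$, so $\partial V_{z_0}\neq\emptyset$. Combined with Lemma \ref{1b} (geodesics from $z_0$ in directions $u \in \Sigma_{z_0}W_x$ stay in $W_x$) and extremality of $\partial\Sigma$ (propagating boundary-membership along these geodesics), one deduces $W_x \subseteq \partial\Sigma$.

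The main obstacle in the plan is closing the contradiction in the configuration $W_x \subseteq \partial\Sigma$. The approach is to descend: one verifies that $h|_{\partial\Sigma}:\partial\Sigma \to \Sigma_0$ is again a submetry (using that horizontal lifts of gradient curves remain in the extremal set $\partial\Sigma$) that is globally weakly integrable at $x$ with section $W_x$, but on an $\text{Alex}^{n-1}(1)$ space. Iterating the full argument within $\partial\Sigma$ (rather than just the automatic statement $\partial(\partial\Sigma) = \emptyset$) eventually forces the base case $\dim W_x = \dim\partial^{(k)}\Sigma$ to be attained for some $k$, which by the base-case rigidity identifies $W_x$ with a boundary stratum of the wrong dimension, contradicting $m < n$. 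This closes the induction.
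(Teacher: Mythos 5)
Your induction set-up and the base case $n=m$ (full-dimensional convex $W_x$ with empty boundary forces $W_x=\Sigma$, as in Lemma \ref{2i}) are fine, and so is the reduction giving $\partial H_y=\emptyset$ for $y\in W_x$ via (A1)-type differentials. But the contradiction argument has several steps that do not hold up. First, the ``gradient flow of $-d_{G(y)}$'' is not a legitimate object: $-d_{G(y)}$ is semiconvex, not semiconcave, so it has no gradient flow in the Alexandrov sense, and the extremality of $\partial\Sigma$ only guarantees invariance under gradient flows of semiconcave functions; moreover even a flow along $G$-fibers toward $G(y)$ is not known to converge to $G(y)$ or to stay in $\partial\Sigma$. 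Hence the key intermediate claim $W_x\cap\partial\Sigma\neq\emptyset$ is unsupported. Second, the deduction $\partial V_{z_0}\neq\emptyset$ is not justified: $\Sigma_{z_0}\Sigma=[H_{z_0}V_{z_0}]$ is in general not a join, and Theorem \ref{1j} only yields a finite quotient of a join under hypotheses you do not have (in particular it assumes $\partial V=\emptyset$ and a dimension identity, while you are trying to prove $\partial V\neq\emptyset$). There is no ``boundary formula'' for $[HV]$: for instance a hemisphere is $[HV]$ with $H$ the equator and $V$ the pole, where $\partial H=\emptyset$, $\partial V=\emptyset$, yet the boundary is all of $H$. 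The subsequent propagation ``$W_x\subseteq\partial\Sigma$'' by invoking extremality along geodesics issuing from $z_0$ is likewise not a valid use of extremality, since you have not shown $\Sigma_{z_0}W_x\subseteq\Sigma_{z_0}\partial\Sigma$.

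The final descent is the most serious gap, and you flag it yourself: it requires $\partial\Sigma$ with its intrinsic metric to lie in $\text{Alex}^{n-1}(1)$ (this is the boundary conjecture, not available), that $h|_{\partial\Sigma}$ is again a submetry onto $\Sigma_0$, and that $W_x$ remains a convex global section inside $\partial\Sigma$; none of these is established, so the induction does not close. For comparison, the paper's proof avoids all of this with a short global argument: if $\partial\Sigma\neq\emptyset$, then $d_{\partial\Sigma}$ is concave on the positively curved $\Sigma$ and its gradient flow contracts $\Sigma$ to the unique maximum point $p$; composing the flow with $h$ on the global section $W_x\cong\Sigma_0$ gives a homotopy contracting $\Sigma_0$ to a point, which is impossible for a compact $m$-dimensional Alexandrov space without boundary (nontrivial top $\mathbb{Z}_2$-homology). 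You might keep your inductive framework for other purposes, but for this lemma the contractibility argument is both simpler and complete.
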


\begin{proof} Let $W\subset\Sigma$ be a convex subset such that $h|_W: W\to \Sigma_0$ is an isometry. Arguing by contradiction, assuming $\partial \Sigma\not=\emptyset$. Because $\Sigma\in \text{Alex}^n(1)$, $d_{\partial \Sigma}: \Sigma\to \Bbb R$ is concave and achieves the maximum at
unique point $p\in \Sigma$. Let $\phi_t$ denote the gradient flow of $\nabla d_{\partial \Sigma}$, $\phi_0=\text{id}_\Sigma$ and $\phi_1: \Sigma\to \{p\}$. Identifying $W$ with $h(W)=\Sigma_0$, the gradient flows on $\Sigma$ induces a flow on $\Sigma_0$, $h\circ \phi_t|_W: I\times \Sigma_0 \, (=h|_W^{-1})\to \Sigma_0$, is a homotopy equivalence of $\Sigma_0$ to a point, a contradiction because
$\partial \Sigma_0=\emptyset$.
\end{proof}

\begin{lemma}\label{2i} Let $X\in \text{Alex}^n(\kappa)$. Let $X_0\subseteq X$ be a compact convex subset.
If $X_0\in \text{Alex}^n(\kappa)$ and $\partial X_0=\emptyset$, then $X_0=X$.
\end{lemma}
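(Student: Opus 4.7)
The plan is to induct on $n = \dim X_0 = \dim X$. The base case $n=1$ can be checked directly: a compact $1$-dimensional Alexandrov space with empty boundary is a circle, and an isometric, convex copy of a circle inside a connected $1$-dimensional Alexandrov space must exhaust the whole space (the condition $\partial X_0=\emptyset$ already forces $X$ itself to be a circle). For the inductive step the strategy is two-fold: first promote the assumption to the spaces of directions, then transfer the resulting equality back to a genuine neighborhood of every $x\in X_0$ in $X$.

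For the first step, at each $x\in X_0$ I would observe that $\Sigma_xX_0\subseteq \Sigma_xX$ is a closed convex subset of dimension $n-1$, both sitting in $\text{Alex}^{n-1}(1)$. Convexity of $\Sigma_xX_0$ inside $\Sigma_xX$ is obtained by rescaling minimal geodesics between points of $X_0$ and passing to a subsequential limit (the convexity of $X_0$ guarantees these geodesics stay in $X_0$); the dimension count comes from $\dim X_0=n=\dim X$. Crucially, the assumption $\partial X_0=\emptyset$ is, by the inductive definition of the Alexandrov boundary, exactly the statement that $\partial \Sigma_xX_0=\emptyset$ for every $x\in X_0$. Applying the inductive hypothesis to the pair $\Sigma_xX_0\subseteq \Sigma_xX$ then yields $\Sigma_xX_0=\Sigma_xX$, and hence $C_xX_0=C_xX$, at every $x\in X_0$.

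For the second step, since $\partial X_0=\emptyset$, Lemma \ref{1b} applies at every $x\in X_0$ and gives
\[
g\exp_x^X(tv)=g\exp_x^{X_0}(tv)
\]
for every $v\in \Sigma_xX_0=\Sigma_xX$ and every $t>0$ small enough that the gradient curve remains in $X_0$. For $r=r(x)>0$ sufficiently small, the gradient-exponential map $g\exp_x^X$ surjects $B_r(o,C_xX)$ onto $B_r(x,X)$ (every nearby point is reached from $x$ by a minimal geodesic, which is realized by $g\exp_x$ at small scales), and similarly for $g\exp_x^{X_0}$. Because the two maps agree on the common domain $C_xX_0=C_xX$, the images coincide: $B_r(x,X)=B_r(x,X_0)\subseteq X_0$. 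Thus $X_0$ contains a neighborhood of $x$ in $X$, so $X_0$ is open in $X$. Combined with $X_0$ being closed (it is compact) and $X$ connected, this forces $X_0=X$.

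The main obstacle is the faithful propagation of the Alexandrov-boundary condition $\partial X_0=\emptyset$ through the induction: one must simultaneously check that convexity and full dimensionality pass to the tangent cones so that the inductive hypothesis genuinely applies to $\Sigma_xX_0\subseteq \Sigma_xX$. A secondary subtlety is that the intrinsic metric on $X_0$ used to view it as an Alexandrov space must agree with the metric restricted from $X$; this follows from the convexity of $X_0$, since shortest paths in $X$ between points of $X_0$ stay in $X_0$, which in turn is what allows Lemma \ref{1b} to identify the two gradient-exponential maps on the nose.
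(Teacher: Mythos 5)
Your proof is correct, but it takes a genuinely different route from the paper's. The paper argues by contradiction in one stroke: take $x\in X\setminus X_0$ and a nearest point $x_0\in X_0$; by the first variation the direction $\uparrow_{x_0}^{x}$ is at distance $\ge\frac{\pi}{2}$ from $\Sigma_{x_0}X_0$, and since $\Sigma_{x_0}X_0$ is a convex subset of $\Sigma_{x_0}X$ with $\partial\Sigma_{x_0}X_0=\emptyset$ and $\dim\Sigma_{x_0}X_0=n-1$, the existence of a $\frac{\pi}{2}$-apart direction forces $\dim\Sigma_{x_0}X>n-1$ (as in (1.10.1) and Lemma \ref{ya}), i.e. $\dim X>n$, a contradiction. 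You instead induct on $n$: the hypothesis $\partial X_0=\emptyset$ passes to every space of directions, the inductive hypothesis gives $\Sigma_xX_0=\Sigma_xX$ for all $x\in X_0$, and then Lemma \ref{1b} together with the fact that $g\exp_x$ extends the exponential map along minimal geodesics shows that $X_0$ is open, hence equal to $X$ by connectedness. Your version is more self-contained (it avoids the $\frac{\pi}{2}$-apart dimension estimate) at the price of the induction and the open--closed argument; the paper's is shorter and purely local at a single point. Both arguments lean on the same standard but unproved fact, which you should state explicitly: for a closed convex $X_0$ and $x\in X_0$, the set of tangent directions $\Sigma_xX_0$ is a compact convex subset of $\Sigma_xX$ whose intrinsic metric agrees with the restricted one; your rescaling sketch yields convexity of the cone $C_xX_0$, and passing from that to convexity of the link (including pairs of directions at distance $\pi$) takes one extra remark. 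Finally, your last step can be compressed: once $\Sigma_xX_0=\Sigma_xX$ and $g\exp^X_x=g\exp^{X_0}_x$ on $C_xX_0=C_xX$, every $y\in X$ satisfies $y=g\exp_x\bigl(|xy|\uparrow_x^y\bigr)\in X_0$, so $X=X_0$ directly, without invoking openness and connectedness.
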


\begin{proof} Arguing by contradiction, assuming $x\in X\setminus X_0$, let $x_0\in X_0$ such that $|xx_0|=|xX_0|$. Then a minimal geodesic from $x_0$ to $x$ satisfies that
$|\uparrow_{x_0}^x\Sigma_{x_0}(X_0)|\ge \frac \pi2$. Consequently, because $\partial X_0=\emptyset$ and thus $\partial \Sigma_{x_0}X_0=\emptyset$, $\dim(\Sigma_{x_0}X)>\dim(\Sigma_{x_0}(X_0))$ i.e., $\dim(X)>\dim(X_0)=n$, a contradiction.
\end{proof}

\begin{proof} [Proof of Lemma \ref{2c}]

(2.3.1) We will proceed the proof by induction on $m$, starting with $m=1$. Because $\partial \Sigma_0=\emptyset$, $\Sigma_0$ is isometric to a circle $S^1$, thus $h: \Sigma\to
\Sigma_0$ is a fiber bundle projection (\cite{Per2}). Let $\pi: \Bbb R^1\to \Sigma_0$ denote the universal covering, and let $\pi^*\Sigma\to \Bbb R^1$ be the $\pi$-pull back fiber bundle, $h: \Sigma\to \Sigma_0$; each component of $\pi^*\Sigma$ is a metric covering of $\Sigma$, a contradiction because each component splits off an $\Bbb R^1$-factor.

Assume that (2.3.1) holds for $m$, $k\ge 1$.

Consider $\Sigma_0$, $\dim(\Sigma_0)=m+1$ and $\Sigma_0$ is $(m,\frac \pi2)$-separated
i.e., there are
$\bar u_1,...,\bar u_m$ such that $|\bar u_i\bar u_j|>\frac
\pi2$. Let $S=\{\bar u_1,...,\bar u_m\}$, and let $\bar w\in \Sigma_0$ such that $d_{\bar S}$ achieves the maximum at $\bar w$. Because $h: \Sigma\to \Sigma_0$ is a submetry, $d_{\tilde S}$ achieves a maximum at $w\in \Sigma$, where $\tilde S=\bigcup_{i=1}^mh^{-1}(\bar u_i)$. By a standard triangle comparison, one sees that $d_{\tilde S}$ is strictly decreasing along any direction at $w$, thus there is a small neighborhood $U\ni w$ such that $h^{-1}(\bar w)\cap U=\{w\}$, therefore $H_w=\Sigma_w\Sigma$.

Consider the submetry,  $Df_w: H_w\Sigma\to \Sigma_{\bar w}\Sigma_0$. By a standard triangle comparison, it is easy to see points in $\Sigma_{\bar w}\Sigma_0$, $\{\uparrow_{\bar w}^{\bar u_1},...,\uparrow_{\bar w}^{\bar u_m}\}$, are pair-wisely $\frac \pi2$-separated. Hence,
applying the inductive assumption to $Dh_w: H_w\to \Sigma_{\bar w}\Sigma_0$ we conclude
that $n-1=m$ i.e., $n=m+1$.

(2.3.2) Assume $u\in \Sigma$ and a convex subset $W_u\subset \Sigma$ such that $h: W_u\to \Sigma_0$ is an isometry. Because $\dim(\Sigma)=\dim(\Sigma_0)=\dim(W_u)$, and $\partial W_u=\emptyset$ (Lemma \ref{2h}), $\Sigma=W_u$ (Lemma \ref{2i}).
\end{proof}

\begin{proof} [Proof of Lemma \ref{2d}]

Let $\bar x\in Y\setminus\partial Y$ be weakly $k$-strained, $k\ge m-1$. Let $f: W_x\to W_{\bar x}$ be an isometry.

We first show that $\bar x$ is integrable. Because $Df_x: \Sigma_xW_x\to \Sigma_{\bar x}Y$ is an isometry, the submetry, $Df_x: H_x\to \Sigma_{\bar x}Y$, is global integrable at any $v\in \Sigma_xW_x$. Because $\bar x$ is weakly $k$-strained, $k\ge m-1$, $\Sigma_{\bar x}Y$ is
$(k+1,\frac \pi2)$-separate, thus by Lemma \ref{2c}, $\dim(H_x)=\dim(\Sigma_{\bar x}Y)=\dim(\Sigma_xW_x)$. Because $\partial H_x=\emptyset$ (Lemma \ref{2h}), $H_x=\Sigma_xW_x$ (Lemma \ref{2i}).

Let $\bar z\in W_{\bar x}$ be a regular point i.e., $C_{\bar x}Y\cong \Bbb R^m$. Then for any $z\in W_x\cap f^{-1}(\bar z)$, $C_zX=\Bbb R^m\times C(f^{-1}(\bar z))$ (\cite{KL}). Because $C(f^{-1}(\bar z))=C(V_z)$, $\dim(C(V_z))=n-m$, thus $\dim(V_z)=n-m-1$ i.e., $\bar z$ is good.

We now show that $\bar x$ is good. Consider gradient flows using distance functions, $d_{\bar p_1},...,d_{\bar p_k}$, one pushes $\bar x$ to $\bar z$, one pushes $\bar z$ back to $\bar x$ inside $W_{\bar x}$. Via horizontal lifting, each gradient flows defines a locally Lipschitz map, $\Phi: f^{-1}(\bar x)\to f^{-1}(\bar z)$, and $\Psi: f^{-1}(\bar z)\to f^{-1}(\bar x)$. Because $f: f^{-1}(W_{\bar x})\to W_{\bar x}$ is integrable and thus global integrable ($W_{\bar x}$ is
contractible), $\Phi\circ \Psi=\text{id}_{f^{-1}(\bar z)}$. Consequently, $\Phi$ and $\Psi$ are non-degenerate homeomorphisms, thus $\dim(V_x)=\dim(V_z)=n-m-1$ i.e., $\bar x$ is good.
\end{proof}

\begin{proof} [Proof of Lemma \ref{2e}]
Let $\bar x\in Y\setminus\partial Y$ be weakly $k$-strained (by Lemma \ref{2d}, $k+1\le m-1$). Without loss of generality, we may assume that $\bar x$ is good, and we will show that any $\bar z\in g_{\bar x}^{-1}(0)$ is good i.e., for $z\in f^{-1}(\bar z)$, $\dim(V_z)=n-m-1$.

Our approach is similar to the one seen at the end of proof of Lemma \ref{2d}; we will construct a horizontal lifting of $U_{\bar z}=W_{\bar z}\cup [Z_{\bar x}\setminus g_{\bar x}^{-1}(0)]$ at $z$, $U_z$, where canonical neighborhood $Z_{\bar x}=K(h_{\bar x},g_{\bar x})$ is homeomorphic to $\b B_\rho^k(0)\times C(\Sigma)$, $\b B_\rho^k(0)$ is homeomorphic to $g_{\bar x}^{-1}(0)$, thus
$W_{\bar z}\cup [Z_{\bar x}\setminus g_{\bar x}^{-1}(0)]$ is simply connected.

To guarantee that $U_{\bar z}$ contains all regular points in $\overset{\vee} U_{\bar z}$, 
we shall extend $U_{\bar z}$ to include the subset of $g_{\bar x}^{-1}(0)$ consisting of (possible) weakly $m$-strained points, still denoted by $U_{\bar z}$.
By (2.2.2), we conclude that $f: \overset{\vee} U_z\to \overset{\vee} U_{\bar z}=Z_{\bar x}$ is a homeomorphism and local isometry.
Similar to Lemma \ref{1a}, for any $\bar z\in g_{\bar x}^{-1}(0)$, there are gradient flows of finite number of distance functions at points ($\ne \bar z$) that flows $\bar x$ to $\bar z$ in $Z_{\bar x}$ and vice versa, whose horizontal lifting in $\overset{\vee} U_z$ form a closed curve, thus induces a locally Lipschitz map, $\Phi: f^{-1}(\bar x)\to f^{-1}(\bar z)$
and $\Psi: f^{-1}(\bar z)\to f^{-1}(\bar x)$ such that $\Psi\circ \Phi=\text{id}_{f^{-1}(\bar x)}$.
Consequently, $\Phi$ and $\Psi$ are non-degenerate homeomorphisms, thus $\dim(V_z)=\dim(V_x)=n-m-1$ i.e., $\bar z$ is good.

We now construct a horizontal lifting of $W_{\bar z}\cup[Z_{\bar x}\setminus g_{\bar x}^{-1}(0)]$:
let $z'\in W_z$ such that $f(z')\in Z_{\bar x}\setminus g_{\bar x}^{-1}(0)$. Because any point in $Z_{\bar x}\setminus g_{\bar x}^{-1}(0)$ is at least weakly $(k+1)$-strained, which, by the inductive assumption, is integrable, as in the proof of (2.2.1), $Z_{\bar x}\setminus g_{\bar x}^{-1}(0)$ has a (maximal) horizontal lifting at $z'$, $f: U_{z'}\to Z_{\bar x}\setminus g_{\bar x}^{-1}(0)$ is a local isometry, thus $f: U_z=W_z\cup U_{z'}\to W_{\bar z}\cup [Z_{\bar x}\setminus g_{\bar x}^{-1}(0)]$ is a local isometry, thus a homeomorphism because $W_{\bar z}\cup [Z_{\bar x}\setminus g_{\bar x}^{-1}(0)]$ is  simply connected.
\end{proof}

Lemma \ref{2f} is a new local criterion for an extremal subset formulated for our proof of Theorem \ref{2a}. Indeed, the proof of Lemma \ref{2f} replies on the following local criterion of extremal subsets, which is a strengthened version of Lemma 5.1 in \cite{Fuj4} (cf. \cite{Per4}, \cite{Fuj2}; where a criterion requires a verification for any $(h_{\bar x},g_{\bar x}): K(h_{\bar x},g_{\bar x})\to \Bbb R^\ell\times \Bbb R$, $\ell\le k$).

\begin{lemma} \label{2j} Let $F\subset Y\in \text{Alex}^m(\kappa)$ be a subset, $\bar x\in \bar F$ be a weakly $k$-strained ($1\le k\le m$). Assume for $(\bar p_1,...,\bar p_k,\bar w)$, $(h_{\bar x},g_{\bar x}): K(h_{\bar x},g_{\bar x})\to \Bbb R^k\times \Bbb R$
defines a canonical neighborhood. If $g_{\bar x}^{-1}(0)\subset \bar F$ (for any $\bar x$, $K(h_{\bar x},g_{\bar x})$), then $\bar F$ is an extremal subset of $Y$.
\end{lemma}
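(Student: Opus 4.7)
The plan is to reduce the statement to the classical local criterion of Perel'man and Fujioka (Lemma~5.1 in \cite{Fuj4}; see also \cite{Per4}, \cite{Fuj2}), which states that $\bar F$ is extremal provided $g_{\bar x}^{-1}(0) \subset \bar F$ for every $\bar x \in \bar F$, every $1 \le \ell \le k(\bar x)$, and every canonical neighborhood $(h_{\bar x}, g_{\bar x}) \colon K \to \mathbb{R}^{\ell} \times \mathbb{R}$, where $k(\bar x)$ denotes the maximal strain number at $\bar x$. Our hypothesis supplies this inclusion only in the top case $\ell = k(\bar x)$, so the essential task is to promote it to all smaller $\ell \le k(\bar x)$, after which Fujioka's criterion delivers the conclusion.

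First I would fix an arbitrary $\bar x \in \bar F$ weakly $k$-strained with $k = k(\bar x)$, and an $\ell$-canonical neighborhood $(h_{\bar x}^{(\ell)}, g_{\bar x}^{(\ell)}) \colon K^{(\ell)} \to \mathbb{R}^{\ell} \times \mathbb{R}$ determined by an $(\ell+1)$-strainer $(\bar p_1, \dots, \bar p_\ell, \bar w)$ with $\ell < k$. The slice $(g_{\bar x}^{(\ell)})^{-1}(0)$ is an $\ell$-dimensional topological ball through $\bar x$ whose points are weakly $\ge \ell$-strained; by semi-continuity of the strain number together with Corollary~\ref{1e}, a dense open subset $D \subset (g_{\bar x}^{(\ell)})^{-1}(0)$ consists of points $\bar z$ with $k(\bar z) \ge k$. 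For each such $\bar z$, after transporting the strainer to $\bar z$ by a small perturbation and adjoining $k(\bar z) - \ell$ additional strainer directions, I obtain a $(k(\bar z)+1)$-strainer at $\bar z$ whose associated $k(\bar z)$-canonical neighborhood satisfies $g_{\bar z}^{-1}(0) \subset \bar F$ by hypothesis, and contains an open neighborhood of $\bar z$ inside $(g_{\bar x}^{(\ell)})^{-1}(0)$. Density of $D$ and closedness of $\bar F$ then force $(g_{\bar x}^{(\ell)})^{-1}(0) \subset \bar F$.

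The hard part will be the compatibility step: verifying that the $k(\bar z)$-slice built from the enlarged strainer at $\bar z$ genuinely contains a piece of the ambient $\ell$-slice $(g_{\bar x}^{(\ell)})^{-1}(0)$, rather than an unrelated $k(\bar z)$-dimensional subset cutting through $\bar z$. This rests on the openness of the strainer inequalities and on the stability of canonical neighborhoods expressed in Theorems~\ref{1d}--\ref{1g}, which together allow the $(\ell+1)$-strainer at $\bar x$ to be perturbed to an $(\ell+1)$-strainer at $\bar z$ and then extended to a full $(k(\bar z)+1)$-strainer at $\bar z$ in a way that preserves the original $\ell$ directions, so that the enlarged $k(\bar z)$-slice at $\bar z$ restricts back to the $\ell$-slice at $\bar x$ in a neighborhood of $\bar z$. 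Once this compatibility is in place, no further ingredients are required beyond the closedness of $\bar F$ and the density of $D$.
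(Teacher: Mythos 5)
There is a genuine gap, and it sits at the heart of your reduction. The hypothesis of Lemma \ref{2j} only provides $g_{\bar z}^{-1}(0)\subset \bar F$ for points $\bar z\in \bar F$ (that is what ``for any $\bar x$, $K(h_{\bar x},g_{\bar x})$'' quantifies over), but in your promotion step you invoke it at arbitrary points $\bar z$ of the $\ell$-slice $(g^{(\ell)}_{\bar x})^{-1}(0)$, which are exactly the points you are trying to prove lie in $\bar F$. As written the argument is circular. One could try to repair this by restricting to $\bar z\in \bar F\cap (g^{(\ell)}_{\bar x})^{-1}(0)$ and running an open--closed argument in the connected $\ell$-slice (noting that every point of a small canonical neighborhood is weakly $\ge k$-strained, so Corollary \ref{1e} is not even needed), but then the entire weight falls on your ``compatibility'' claim: that the top $k(\bar z)$-slice at $\bar z$, built from an enlarged strainer, contains a neighborhood of $\bar z$ \emph{inside} the ambient $\ell$-slice. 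You flag this as the hard part but offer no proof, and it does not follow from Theorems \ref{1d}--\ref{1g}: the two slices are zero sets of different complementary functions $g$ (averages of distance functions to nets near different auxiliary points, with different normalizations), and nothing in the stability or fibration statements forces the lower-dimensional slice to sit locally inside the higher-dimensional one, even when the first $\ell$ strainer directions are preserved. So the reduction to the Perel'man--Fujioka criterion (Lemma 5.1 in \cite{Fuj4}, cf. \cite{Per4}) is not established.

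Note also that this is not how the paper proceeds, and for a reason: Lemma \ref{2j} is stated precisely as a \emph{strengthening} of the classical criterion, so deriving it as a corollary of that criterion requires exactly the slice-containment facts you leave unproven. The paper instead argues directly by contradiction: assuming $\bar q\notin \bar F$, $\bar p\in\bar F$ with $|\bar q\bar p|=|\bar q\bar F|$ and $\nabla d_{\bar q}(\bar p)\ne 0$, it uses the hypothesis (only at points of $\bar F$ and their blow-ups) to produce, via Sublemma \ref{one} and an iterated passage to tangent cones in which one verifies $g_{\bar o}^{-1}(0)\subset C(\Sigma_{\bar p}\bar F)$, a $(k+1,\frac\pi2)$-separated set of directions in $\Sigma_{\bar p}\bar F$, all at distance $\ge\frac\pi2$ from $\uparrow_{\bar p}^{\bar q}$; a gradient push in the direction given by $\nabla d_{\bar q}(\bar p)\ne 0$ then yields a $(k+2,\frac\pi2)$-separated set in $\Sigma_{\bar p}Y$, contradicting that $\bar p$ is weakly $k$-strained. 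If you want to pursue your route, you must prove the compatibility statement for the actual Perel'man construction of $(h_{\bar x},g_{\bar x})$, which is a substantial technical claim, not a soft consequence of openness of strainer inequalities.
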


In our proof of Lemma \ref{2f}, we shall also apply the following local version of Theorem \ref{1g}: let $\bar x\in Y$ be a weakly $k$-strained point, $\bar y\in Z_{\bar x}\setminus g_{\bar x}^{-1}(0)$, and $C_{\bar y}$ a convex neighborhood such that $C_{\bar y}\subset Z_{\bar x}\setminus g_{\bar x}^{-1}(0)$. Because all points in $C_{\bar y}$ are at least weakly $(k+1)$-strained, applying Theorem \ref{1g} to $(C_{\bar y},F\cap C_{\bar y})$, one may assume that $\text{proj}_2\circ \psi=(h_{\bar x},g_{\bar x})$.

\begin{proof} [Proof of Lemma \ref{2f}]

We shall show that $\bar F$ satisfies the conditions of Lemma \ref{2j} i.e.,
any $\bar x\in \bar F$, $g_{\bar x}^{-1}(0)\subset \bar F$. By definition, for $\bar x\in F$, $g_{\bar x}^{-1}(0)\subset F$, thus we may assume that $\bar x\in \bar F\setminus F$.
Let $T=\{\bar z\in g^{-1}_{\bar x}(0),\,|\bar zF|=0\}\ni \bar x$. We shall show that $T$ is both open and closed subset of $g_{\bar x}^{-1}(0)$, thus $T=g_{\bar x}^{-1}(0)$
which implies $g_{\bar x}^{-1}(0)\subset \bar F$. Because $T$ is clearly closed, it remains to show that $T$ is open.

Let $\bar x$ be weakly $j$-strained, and we shall proceed the proof with inverse induction on $j$, starting with $j=k$, because if $j\ge k+1$,
then all points in $g_{\bar x}^{-1}(0)$ are at least weakly $(k+1)$-strained,
thus $g_{\bar x}^{-1}(0)\cap F=\emptyset$ and $\bar z\in \overset{\vee} Z_{\bar x}\cap F\ne \emptyset$,
a contradiction.

Let $\bar x$ is weakly $k$-strained point. Because $\bar x\notin \partial \bar F$ (by the assumption), if $g_{\bar x}^{-1}(0)$ is not contained in $\bar F$, then $\overset{\vee} Z_{\bar x}\cap F\ne\emptyset$, a contradiction.

Assume that for $\bar x$ at least $(j+1)$-strained point ($j+1\le k$), $g_{\bar x}^{-1}(0)\subset \bar F$. Consider a weakly $j$-strained point $\bar x\in \bar F\setminus F$, and we shall show that $T=g_{\bar x}^{-1}(0)$.

Arguing by contradiction, let $\bar z\in g_{\bar x}^{-1}(0)\setminus T$, $0<\rho=|\bar z\bar F|$,
thus for any $\bar y\in \bar F\setminus g_{\bar x}^{-1}(0)$, $|\bar z\bar y|>\frac 12|z\bar F|$.
Because $g_{\bar x}^{-1}(0)$ is an open $j$-manifold and $F$ is an open $k$-manifold ($j<k$),
we may assume for any $\bar y\in \bar F\setminus g_{\bar x}^{-1}(0)$, $|\bar z\bar y|>\frac 12\rho$.
We shall derive a contradiction by finding some $\bar y'\in \bar F\setminus g_{\bar x}^{-1}(0)$ such that $|\bar z\bar y'|<\frac 12\rho$.

Let $C_{\bar y}\subset Z_{\bar x}\setminus g_{\bar x}^{-1}(0)$ be a small convex neighborhood of $\bar y$. Because points in $\bar C_{\bar y}$ are at least weakly $(j+1)$-strained, applying the inductive assumption on $(\bar F\cap \bar C_{\bar y}, \bar C_{\bar y}\in \text{Alex}^m(\kappa))$ we conclude that $\bar F\cap C_{\bar y}$ is an extremal subset in $\bar C_{\bar y}$. By applying Theorem \ref{1g} to $(\bar C_{\bar y},\bar F\cap \bar C_{\bar y})$, we may assume a homeomorphism, $\psi: (C_{\bar y},\bar F\cap C_{\bar y})\to (A,B)\times \Bbb R^{j+1}$: $C_{\bar y}$ is a bundle over $\Bbb R^{j+1}$ with fiber $A$ homeomorphic to $C_{\bar y}\cap (\{s\}\times \Sigma)$, and $C_{\bar y}\cap \bar F$ is a bundle over $\Bbb R^{j+1}$ with fiber $B=(C_{\bar y}\cap \bar F)\cap (\{s\}\times \Sigma)$, $s=g_{\bar x}(\bar y)$.

Consider the map, $\tau_{\bar x}=(h_{\bar x},g_{\bar x}): Z_{\bar x}\to \b B_\rho^j(0)\times (-\eta,0]$; $\tau_{\bar x}(\bar y)=(u,t)\in \tau_{\bar x}(\bar F\cap Z_{\bar x})$. Observe that for any $t<s<0$, $(u,s)\in \tau_{\bar x}(\bar F)$, and $\tau_{\bar x}(C_{\bar y}\cap \bar F)$ is an open neighborhood of $(u,t)=\tau_{\bar x}(y)\in \tau_{\bar x}(\bar F)$. This openness implies that there is $\bar y'\in \partial (\bar F\cap \bar C_{\bar y})$ such that $\tau_{\bar x}(y')$ is closer to $\tau_{\bar x}(\bar z)$ than $\tau_{\bar x}(\bar y)\in \tau_{\bar x}(\bar F\cap Z_{\bar x})$. Iterating the above operation starting with $\bar y'$, after a finite number of steps one gets a desired $\bar y'$, which can be chosen from $\bar F\setminus g_{\bar x}^{-1}(0)$.
\end{proof}

\begin{proof} [Proof of Lemma \ref{2j}] Argue by contradiction, assuming that $\bar F$ is not an extremal subset i.e., there are points, $\bar q\in Y\setminus \bar F$ and $\bar p\in \bar F$, such that $|\bar q\bar p|=|\bar q\bar F|$ and $\nabla d_{\bar q}(\bar p)\ne 0$.

Assume that $\bar p$ is a weakly $k$-strained point ($1\le k\le m$), and points $(\bar p_1,...,\bar p_k,\bar w)$ such that $(h_{\bar p},g_{\bar p}): K(h_{\bar p},g_{\bar p})\to \Bbb R^k\times \Bbb R^1$ and $g_{\bar p}^{-1}(0)\subset \bar F$. 

To derive a contradiction, we define the `tangent space' of $\bar F$ at $\bar p$, $C(\Sigma_{\bar p}\bar F)$, where
$$\Sigma_{\bar p}\bar F=\{v=\lim_{i\to \infty} \uparrow_{\bar p}^{\bar x_i}, \, \bar x_i\in
\bar F,\, \bar x_i\to \bar p\}.$$
Then $|\uparrow_{\bar p}^{\bar q}v|\ge \frac \pi2$ for any $v\in \Sigma_{\bar p}\bar F$. Hence the proof essentially reduces to find a $(k+1,\frac \pi2)$-separate points, $\{v_j\}_{j=1}^{k+1}\subset \Sigma_{\bar p}\bar F$; because $\nabla d_{\bar q}(\bar p)\ne 0$ implies a $v_0\in \Sigma_{\bar p}Y$ such that the gradient-push by $d_{\uparrow_{\bar p}^{\bar q}}: \Sigma_{\bar p}Y\to \Bbb R$, in the direction $v_0$ yields a $(k+2,\frac \pi2)$-separate set, $\{\uparrow_{\bar p}^{\bar q},v_j'\}_{j=1}^{k+1}\subset \Sigma_{\bar p}Y$ ($v_j'$ is
obtained from $v_j$ along the gradient flow), a contradiction to that $\bar p$ is weakly $k$-strained point.

Here is an outline to convert $(k+1,\frac \pi2)$-separate subset, $\{\uparrow_{\bar p}^{\bar p_1},...,\uparrow_{\bar p}^{p_k},\uparrow_{\bar p}^{\bar w})\subset \Sigma_{\bar p}Y$ to
a $(k+1,\frac \pi2)$-separate subset $\{v_1,...,v_{k+1}\}\subset \Sigma_{\bar p}\bar F$:
let $\tilde \gamma(t)\subset g_{\bar p}^{-1}(0)$ denote the projection of
$\gamma(t)=[\bar p\bar w](t)$, a minimal geodesic from $\bar p$ to $\bar w$. For the simplicity of
expansion, let's assume that $\bar x_i=c_{\bar w}(t_i)$ such that $\uparrow_{\bar p}^{\bar x_i}\to v_{k+1}\in \Sigma_{\bar p}\bar F$.
We first show that $|v_{k+1}\uparrow_{\bar p}^{\bar p_j}|>\frac \pi2$, $1\le j\le k$ (Sublemma 2.11). Due to a lack of control in $|v_{k+1}\uparrow_{\bar p}^{\bar w}|$, there is no estimate for $(|v_{k+1}\uparrow_{\bar p}^{\bar p_j}|-\frac \pi2)$ that forces us to pick $v_1,...,v_k$, by the above argument on $(C_{\bar p}Y,\bar o,C(\Sigma_{\bar p}\bar F))$ (which requires a verification that
$g_{\bar o}^{-1}(0)\subset C(\Sigma_{\bar p}\bar F)$), starting with a $(k+1,\frac \pi2)$ separate subset, $(v_{k+1},\uparrow_{\bar p}^{\bar p_j},...,\uparrow_{\bar p}^{\bar p_k})\subset \Sigma_{\bar p}Y$, and $(h_{\bar o},g_{\bar o}): K(h_{\bar o},g_{\bar o})\to \Bbb R^k\times \Bbb R$, following the above argument we replace $\uparrow_{\bar p}^{\bar p_k}$ with
$v_k\in \Sigma_{\bar p}\bar F$ such that $(v_{k+1},v_k,\uparrow_{\bar p}^{\bar p_1},...,\uparrow_{\bar p}^{p_{k-1}})$ is $(k+1,\frac \pi2)$-separate subset. Iterating
this process, we achieve the desired goal.

First we will show that

\begin{sub lemma}\label{one} There are $\bar x_i\in g_{\bar p}^{-1}(0)$, $\uparrow_{\bar p}^{\bar x_i}\to v_{k+1}\in \Sigma_{\bar p}\bar F$, such that $(v_{k+1},\uparrow _{\bar p}^{\bar p_1},...,\uparrow_{\bar p}^{\bar p_k})$ is $(k+1,\frac \pi2)$-separate in $C(\Sigma_{\bar p}Y)$.
\end{sub lemma}
In order to prove the above sublemma, we need to recall the construction of $g^{-1}_{\bar p}(0)$ and show some properties of it.

Note that by \cite{Ka2}, \cite{Per2}, the function $g_{\bar p}$ is constructed from a strictly concave function $\tilde g_{\bar p}$.
In the following, first we recall the construction of $\tilde g_{\bar p}$.

Let $(h_{\bar p},g_{\bar p}): K(h_{\bar p},g_{\bar p})\to \mathbb R^k\times \Bbb R$. Let $\tilde g_{\bar p}=
\frac  1 N\Sigma_\alpha g_{\alpha}=g_{\bar p}+\max\{\frac  1 N\Sigma_\alpha g_{\alpha}|_{h_{\bar p}^{-1}\circ h_{\bar p}}\}$, where $\{\bar q_\alpha\}_{\alpha=1}^N$
denotes a $\delta$-net in $\partial B_\epsilon(\bar w)$ used in a construction of
$g_{\bar p}$, $g_{\alpha}= \phi_{\alpha}(d_{\bar q_{\alpha}})$ with $g_{\alpha}(\bar p)=0$, where $2\epsilon=\min \{|\uparrow_{\bar p}^{\bar p_i}\uparrow_{\bar p}^{\bar p_j}|, |\uparrow_{\bar p}^{\bar p_i}\uparrow_{\bar p}^{\bar w}|, 1\le i\ne j\le k\}-\frac \pi2$. Because $\bar p$ is weakly $k$-strained, if $v\in \Sigma_{\bar p}Y$ satisfies that $D_{\bar p}d_{\bar p_j}(v)\ge 0$, $1\le j\le k$, $|v\Uparrow_{\bar p}^{\bar q_{\alpha}}|\leq \frac \pi 2$ i.e., $D_{\bar p}(g_{\alpha})(v)=-\phi_{\alpha}'(|\bar p\bar q_{\alpha}| )<\Uparrow_{\bar p}^{\bar q_{\alpha}},v> \leq 0$. According to \cite{Ka2}, \cite{Per2}, there is $\alpha$ such that $|v\Uparrow_{\bar p}^{\bar q_{\alpha}}|< \frac \pi 2$
(thus $D_{\bar p}g_{\bar p}(v)<0$). Then

\begin{equation}
\max D_{\bar p}\tilde g_{\bar p}|_{\{\bigcap_{j=1}^k\{v,\, D_{\bar p}d_{\bar p_j}(v)\geq 0\}}=D_{\bar p}\tilde g_{\bar p}(\bar o), \label{1}
\end{equation}
and $\bar o$ is vertex of $C(\Sigma_{\bar p}Y)$ (also the unique maximal point for $D_{\bar p}\tilde g_{\bar p}$).

\begin{proof}[Proof of Sublemma \ref{one}]

Assume that $t_i\to 0$, $\uparrow_{\bar p}^{\tilde \gamma(t_i)}\to v_{k+1}\in \Sigma_{\bar p}\bar F$, and we need to show that $|v_{k+1}\uparrow_{\bar p}^{\bar p_j}|>\frac \pi 2, 1\leq j\leq k$, equivalently, $D_{\bar p}d_{\bar p_j}(v) >0,1\leq j\leq k$. Observe the following two limits,
$$D_{\bar p}d_{\bar p_j}(v_{k+1}) =\lim_{i\to\infty}\dfrac{d_{\bar p_j}(\tilde \gamma(t_i))-d_{\bar p_j}(\bar p)} {|\bar p\tilde \gamma(t_i))|}$$
and
$$D_{\bar p}d_{\bar p_j}(\uparrow_{\bar p}^{\bar w})=\lim_{i\to \infty}\dfrac {d_{\bar p_j}(\gamma(t_i))-d_{\bar p_j}(\bar p)}{|\bar p\gamma(t_i)|}>0,$$
where $\tilde \gamma(t_i)$ is the projection of $\gamma(t_i)$, thus $h_{\bar p}(\tilde \gamma(t_i))=h_{\bar p}(\gamma(t_i))$ and $\tilde g_{\bar p}(\tilde \gamma(t_i))>\tilde g_{\bar p}(\gamma(t_i))$. Because the numerators in the above limits are equal, $D_{\bar p}d_{\bar p_j}(v)>0$ follows from that $D_{\bar p}d_{\bar p_j}(\uparrow_{\bar p}^{\bar w})>0$, if the denominators of the above limits are proportional i.e.,
$$\bar \lim\dfrac {|\bar p\tilde \gamma(t_i)|}{|\bar p \gamma(t_i) |}=c<\infty.$$
If $c=\infty$, let $\lambda_i=|\bar p\tilde \gamma(t_i)|$, and consider the sequence of
functions: $\lambda_i \tilde g_{\bar p}=\frac 1N\sum_\alpha \lambda_ig_\alpha: (\lambda_iK(h_{\bar p},g_{\bar p})\subset (\lambda_iY, \bar p)\to \Bbb R$; taking limit we may obtain that $D_{\bar p}\tilde g_{\bar p}: (C(\Sigma_{\bar p}),\bar o)\to \Bbb R$ and that $\gamma(t_i)\to z_{\infty}=\bar o$, and $\tilde \gamma(t_i)\to \tilde z_{\infty}=v_{k+1}$. Then $D_{\bar p}\tilde g_{\bar p}(\bar o)=0$. Because $\tilde g_{\bar p}(\gamma(t_i))<\tilde g_{\bar p}(\tilde \gamma(t_i))$, $v_{k+1}\in \bigcap_{j=1}^k\{v,\,D_{\bar p}d_{\bar p_j}(v)\geq 0\}$ on which $D_{\bar p}\tilde g_{\bar p}$ achieves unique maximum at $\bar o$, we derive
$D_{\bar p}\tilde g_{\bar p}(\bar o)\le D_{\bar p}\tilde g_{\bar p}(v_{k+1})<0$, a contradiction.
\end{proof}

We now continue the proof of Lemma 2.10. By Sublemma 2.11, we obtain $(k+1,\frac \pi2)$-separate points, $(v_{k+1},\uparrow_{\bar p}^{\bar p_1},...,\uparrow_{\bar p}^{\bar p_k}\}\subset \Sigma_{\bar p}Y$ i.e., $\bar o$ is weakly $k$-strained point, and we may assume $(h_{\bar o},g_{\bar o}): K(h_{\bar o},g_{\bar o})\to \Bbb R^k\times \Bbb R$. For our purpose, we need that $g_{\bar o}^{-1}(0)\subset C(\Sigma_{\bar p}\bar F)$ (to guarantee that repeating the above argument yields $v_k\in \Sigma_{\bar p}\bar F$).
Viewing $C(\Sigma_{\bar p}Y)$ as a blow-up limit, $(\lambda_iY,\bar p)\to (C(\Sigma_{\bar p}Y),\bar o)$, for $i$ large consider the lifting distance functions, $d_{k+1}, d_{j,i}, 1\le j\le k-1$, of $d_{v_{k+1}},...,d_{\uparrow_{\bar p}^{\bar p_1}},..., d_{\uparrow_{\bar p}^{\bar p_{k-1}}}$. By Lemma 6.14 in \cite{Ka2}, $h_{\bar p,i}$ and $g_{\bar p,i}$ are liftings of $\bar h_{\bar o}$ and $g_{\bar o}$, $K(h_{\bar p,i},g_{\bar p,i})\to K(h_{\bar o},g_{\bar o})$, and $g_{\bar p,i}^{-1}(0)\to g_{\bar o}^{-1}(0)$.
By the conditions of Lemma 2.10, $g_{\bar p,i}^{-1}(0)\subset \bar F$, then $g_{\bar o}^{-1}(0)\subset C(\Sigma_{\bar p}\bar F)$.
\end{proof}

\subsection {Proof of (A2)}

We first establish the following criterion of (A2) (comparing Example \ref{1r}, using which
we shall verify (A2) by induction on $\dim(Y)$.

\begin{lemma} [Criterion for $f^{-1}(E)$ extremal] \label{2k} Let $X\in \text{Alex}^n(\kappa), Y\in \text{Alex}^m(\kappa)$ and $\partial Y=\emptyset$,
and let a submetry $f: X\to Y$ be weakly integrable. If $E$ is an extremal subset of $Y$, then $f^{-1}(E)$ is
an extremal subset of $X$ if for $x\in f^{-1}(E)$,
\begin{equation*}(\Sigma_{x}f^{-1}
(E))\cap H_x \text{ is an extremal subset of $H_x$}.
\end{equation*}
\end{lemma}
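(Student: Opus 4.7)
The approach is to reduce this to a tangential statement via the standard Perelman--Petrunin characterization of extremality: a closed subset $E'\subseteq X$ is extremal iff $\Sigma_xE'$ is an extremal subset of $\Sigma_xX$ for every $x\in E'$. Thus it suffices to show that $\Sigma_xf^{-1}(E)$ is extremal in $\Sigma_xX$ for all $x\in f^{-1}(E)$.

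The key geometric input is the join $\Sigma_xX=[H_xV_x]$ coming from $f$ being a submetry, together with the fact that $H_x$ and $V_x$ are convex and $\pi/2$-apart. Since the whole fiber $f^{-1}(\bar x)$ lies in $f^{-1}(E)$, we have $V_x\subseteq \Sigma_xf^{-1}(E)$. Setting $A:=H_x\cap \Sigma_xf^{-1}(E)$, the hypothesis states that $A$ is extremal in $H_x$. I would first establish the identification $\Sigma_xf^{-1}(E)=[AV_x]$. The inclusion $\supseteq$ follows from $A\cup V_x\subseteq \Sigma_xf^{-1}(E)$ together with closure of the latter under taking minimal geodesics to $V_x$. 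For the reverse $\subseteq$, take $u=\lim \uparrow_x^{x_i}$ with $x_i\in f^{-1}(E)$, write $u\in[vw]$ with $v\in H_x$ and $w\in V_x$ via the join structure; the image $Df_x(u)$ lies in $\Sigma_{\bar x}E\cup\{o\}$, and tracking this image back through $Df_x|_{H_x}\colon H_x\to \Sigma_{\bar x}Y$ (a submetry, by (A1)) together with the weak integrability isometry $f\colon W_x\to W_{\bar x}$ forces $v\in A$.

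The remaining step is a join-extremality statement: if $A$ is extremal in $H_x$, then $[AV_x]$ is extremal in $[H_xV_x]=\Sigma_xX$. This is a general fact for $\frac{\pi}{2}$-separated convex subsets in $\mathrm{Alex}(1)$: the gradient of any distance function (or more generally, any semiconcave function) on the join splits into a horizontal contribution (preserving $A$ by extremality of $A$ in $H_x$) and a vertical one (which preserves $V_x\subseteq [AV_x]$ trivially). This decomposition follows from the distance splitting on the join and the Toponogov comparison, and it transfers the extremal property from $A\subseteq H_x$ up to $[AV_x]\subseteq [H_xV_x]$.

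I expect the principal obstacle to be the reverse inclusion $\Sigma_xf^{-1}(E)\subseteq [AV_x]$: a direction in $\Sigma_xf^{-1}(E)$ may be realized by a sequence in $f^{-1}(E)$ that never enters the neighborhood $W_x$ where weak integrability applies, so the horizontal component of its join decomposition is not automatically in $A$. Overcoming this requires passing to the submetry $Df_x\colon H_x\to \Sigma_{\bar x}Y$ (noting $\partial \Sigma_{\bar x}Y=\emptyset$ since $\partial Y=\emptyset$, so (A1) applies cleanly) and combining closedness of $A$ in $H_x$ with the gradient-flow lifting of curves in $E$ to curves in $f^{-1}(E)$ provided by the submetry property.
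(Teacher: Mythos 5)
The central gap is your third step. You assert, as ``a general fact for $\frac\pi2$-separated convex subsets in $\mathrm{Alex}(1)$,'' that $A$ extremal in $H_x$ forces $[AV_x]$ extremal in $[H_xV_x]=\Sigma_xX$, justified by a gradient-splitting heuristic ``on the join.'' But $[H_xV_x]$ is not a metric join in general (cf.\ (1.9.4) and Theorem \ref{1j}: even in the extremal-dimension case one only gets a finite quotient of a join), so there is no distance or gradient splitting to invoke; and the statement is simply false as a general fact in the degenerate case $A=\emptyset$, which is exactly the case $E=\{\bar x\}$: take $\Sigma=S^3$ with $H,V$ two orthogonal great circles, so $\emptyset$ is extremal in $H$ while $V$ is not extremal in $S^3$. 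What excludes such configurations in the lemma is input your argument never touches: the extremality of $E$ itself in $Y$ combined with weak integrability, which via Lemma \ref{2l} gives $\text{diam}(H_x)\le\frac\pi2$ when $E=\{\bar x\}$ (and similarly when $\dim E=1$, $\bar x\in\partial E$). Any correct transfer statement must be proved using these extra hypotheses, and that proof is essentially the entire content of the lemma, so step three cannot be waved through.

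Two further problems. First, the reduction you start from is misstated: for $E'=\{p\}$ one has $\Sigma_pE'=\emptyset$, which is extremal by convention, while $\{p\}$ need not be extremal (a point on a circle is not); the characterization must be phrased with tangent cones ($C_xE'$ extremal in $C_xX$), and its ``if'' direction, though provable by a short argument at the vertex, is an extra lemma you would have to supply. Second, the identification $\Sigma_xf^{-1}(E)=[AV_x]$ (both inclusions, not only the one you flag) requires the blow-up identity $C_xf^{-1}(E)=Df_x^{-1}(C_{\bar x}E)$ together with the precise behavior of $Df_x$ along geodesics from $H_x$ to $V_x$ from \cite{KL}; none of this is carried out. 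The paper avoids the whole package: it verifies the criticality condition directly at a closest point $x\in f^{-1}(E)$ to $y$, where the relevant direction $\uparrow_x^y$ lies in $H_x$; first variation gives $|w\uparrow_x^y|\ge\frac\pi2$ for all $w\in\Sigma_xf^{-1}(E)$, extremality of $A$ in $H_x$ plus the properties of extremal subsets in \cite{PP} gives $|\zeta\uparrow_x^y|\le\frac\pi2$ for all $\zeta\in H_x$, and the $\frac\pi2$-apartness of $H_x$ and $V_x$ (Lemmas \ref{2h} and \ref{8a}) plus comparison extends the bound over $\Sigma_xX=[H_xV_x]$, with the degenerate cases handled separately through Lemma \ref{2l}. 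That is a strictly weaker statement than extremality of $[AV_x]$ in $\Sigma_xX$, which is why the paper's route closes and yours, as written, does not.
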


Observe that $(\Sigma_{x}f^{-1}(E))\cap H_x=\emptyset$ if and only of $E=\{\bar p\}$; and it is
convention that the above condition automatically holds (see the proof of Lemma \ref{2k}.)

\begin{lemma}\label{2l} Let $\Sigma\in \text{Alex}^n(1)$, $\Sigma_0\in \text{Alex}^m(1)$, $\partial \Sigma_0=\emptyset$, and let $h:\Sigma\to \Sigma_0$ be a submetry.
If $h$ is global weakly integrable at one point, then that $\text{diam}(\Sigma_0)\le \frac \pi2$ implies that $\text{diam}(\Sigma)\le \frac \pi2$.
\end{lemma}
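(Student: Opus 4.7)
The plan is to invoke Lemma \ref{2c}(2.3.2), which asserts that under the present hypotheses---$\Sigma\in\text{Alex}^n(1)$, $\Sigma_0\in\text{Alex}^m(1)$ with $\partial\Sigma_0=\emptyset$, and $h:\Sigma\to\Sigma_0$ a submetry global weakly integrable at some $u\in\Sigma$---the map $h$ is in fact an isometry. Granting this, Lemma \ref{2l} is immediate: an isometry preserves diameter, so $\text{diam}(\Sigma)=\text{diam}(\Sigma_0)\leq\frac{\pi}{2}$. The diameter condition itself plays no further role once $h$ is known to be an isometry; Lemma \ref{2l} is essentially a diameter-oriented repackaging of (2.3.2).

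For a self-contained proof, I would retrace the steps of (2.3.2). Let $W=W_u\subset\Sigma$ be the convex subset at $u$ with $h|_W:W\to\Sigma_0$ an isometry; then $W$ is a compact convex subset of $\Sigma$ of dimension $m$, with $\text{diam}(W)=\text{diam}(\Sigma_0)\leq\frac{\pi}{2}$ and $\partial W=\emptyset$ (transported from $\partial\Sigma_0=\emptyset$ via the isometry $h|_W$). By Lemma \ref{2h}, $\partial\Sigma=\emptyset$. Applying Lemma \ref{2i}, once the dimensional equality $\dim W=\dim\Sigma$ is verified we conclude $W=\Sigma$, whence $h$ is an isometry and the diameter bound $\text{diam}(\Sigma)=\text{diam}(W)\leq\frac{\pi}{2}$ follows.

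The main obstacle is the dimensional equality $n=m$. Under the $(k,\frac{\pi}{2})$-separation assumption of Lemma \ref{2c}(2.3.1) this is immediate, but in the present small-diameter regime that hypothesis fails outright, since no two points in $\Sigma_0$ can be more than $\frac{\pi}{2}$ apart. To bypass this I would induct on $n$ using the differential $Dh_u:\Sigma_u\Sigma\to\Sigma_{\bar u}\Sigma_0$, which is a submetry between $\text{Alex}(1)$ spaces of dimensions $n-1$ and $m-1$, global weakly integrable at any point of $\Sigma_u W\subseteq H_u$ (because $h|_W$ is an isometry, and hence so is its differential on $\Sigma_u W$), and inherits $\partial\Sigma_{\bar u}\Sigma_0=\emptyset$. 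Iterating the inductive step down to a small-dimensional base case (where the statement can be checked directly) yields $n-1=m-1$, hence $n=m$, at which point Lemma \ref{2i} completes the argument.
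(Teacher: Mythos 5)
Your argument has a genuine gap, and it sits exactly where you suspect it does. The lemma cannot be obtained by quoting (2.3.2) of Lemma \ref{2c}: the paper's proof of (2.3.2) begins with ``Because $\dim(\Sigma)=\dim(\Sigma_0)$'', i.e.\ it presupposes $n=m$, which in that lemma is supplied by the $(k,\frac{\pi}{2})$-separation hypothesis of (2.3.1) — and, as you yourself note, that hypothesis is impossible here since $\text{diam}(\Sigma_0)\le\frac{\pi}{2}$. Reading (2.3.2) as if it held with no dimensional input would in fact prove far too much: Lemma \ref{2l} is applied (in the proof of Lemma \ref{2k}) to the submetry $Df_x: H_x\to \Sigma_{\bar x}Y$, which by (A1) is global weakly integrable at points of $\Sigma_xW_x$; if one could conclude there that $Df_x$ is an isometry, then $H_x=\Sigma_xW_x$, i.e.\ every weakly integrable submetry would be integrable — Conjecture (0.9.1), which the paper treats as open. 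So the whole point of Lemma \ref{2l} is to extract only the diameter bound in a situation where the isometry conclusion is not available. Your proposed repair does not close this gap: the differential at $u$ is a submetry $Dh_u: H_u\to \Sigma_{\bar u}\Sigma_0$, not a submetry defined on all of $\Sigma_u\Sigma$ (vertical directions are collapsed to the cone vertex), so an induction on dimension can at best yield $\dim(H_u)=m-1$, which says nothing about $n$; the unknown quantity $\dim(V_u)$ is untouched, and this is precisely the obstruction. (For the diameter-version of the induction there is an additional problem: $\text{diam}(\Sigma_0)\le\frac{\pi}{2}$ does not imply $\text{diam}(\Sigma_{\bar u}\Sigma_0)\le\frac{\pi}{2}$.)

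The paper's actual proof avoids any dimension comparison. It argues by contradiction: if $\text{diam}(\Sigma)=|pq|>\frac{\pi}{2}$, then since $\text{diam}(W_{x_0})=\text{diam}(\Sigma_0)\le\frac{\pi}{2}$ one may assume $p\notin W_{x_0}$; the gradient flow $\phi_t$ of $d_p$ on $\Sigma\setminus\{p\}$ deforms everything to $q$, and composing with $h$ gives a homotopy $h\circ\phi_t|_{W_{x_0}}$ contracting $W_{x_0}\cong\Sigma_0$ to a point inside $\Sigma_0$, contradicting $\partial\Sigma_0=\emptyset$ (a compact Alexandrov space without boundary is not contractible). This is the same soft critical-point/topological mechanism as in Lemma \ref{2h}, and it uses global weak integrability only through the isometric identification $W_{x_0}\cong\Sigma_0$. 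If you want a correct write-up, this is the route to take; any route through ``$h$ is an isometry'' is, in the present state of knowledge, circular.
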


\begin{proof} Arguing by contradiction, assuming $p, q\in \Sigma$ such that $\text{diam}(\Sigma)=|pq| > \frac \pi 2$. Because $f: \Sigma\to \Sigma_0$ is global weakly integrable at some $x_0$, $h: W_{x_0}\to \Sigma_0$ is an isometry. Because $\text{diam}(W_{x_0})=\text{diam}(\Sigma_0)<\frac \pi2$, without loss of generality we may
assume that $p\notin W_{x_0}$. Let $\phi_t$ denote the gradient flow of $d_p$ on $\Sigma\setminus \{p\}$,
$\phi_0=\text{id}_{\Sigma\setminus \{p\}}$ and $\phi_1(\Sigma\setminus \{p\})=q$. Then
$h\circ \phi_t: [0,1]\times W_{x_0}\to \Sigma_0$ is a homotopy deformation of $W_{x_0}\cong \Sigma_0$ to a point, a contradiction to that $\partial \Sigma_0=\emptyset$.
\end{proof}

Note that Lemma \ref{2l} is false if one removes the condition that $h$ is global weakly integrable at
one point (e.g., $f: \Sigma\to \Sigma_0$ is a covering map).

\begin{proof} [Proof of Lemma \ref{2k}] For $\lambda_k\to \infty$, assume the following commutative diagram:
\[\xymatrix{ (\lambda_kX,\lambda_kf^{-1}(E),x)\ar[r]^{\text{GH}}   \ar [d]^f & (C_{x}X,C_xf^{-1}(E),o) \ar[d]^{D_xf}\\ (\lambda_kY,\lambda _k E,\bar x)\ar [r]^{\text{GH}} & (C_{\bar x}Y,C_{\bar x}E,\bar o)}\]
Then $C_xf^{-1}(E)=Df_x^{-1}(C_xE)$.

For any $y\in X\backslash f^{-1}(E)$, let $x\in f^{-1}(E)$ be a minimum for $d_y$ i.e. $d_y(x)=|y f^{-1}(E)|$, thus $\uparrow_x^y\in H_x$. We shall show that for any $\xi\in \Sigma_x X,|\xi \uparrow_x^y|\le \frac \pi 2$ i.e. $x$ is a critical point of $d_y$ on $X$, thus $f^{-1}(E)$ is an extremal subset of $X$.

If $E=\{\bar x\}$, by Lemma \ref{2l}, $\text{diam}(H_x)\le \frac \pi2$. Note that $\Sigma_xX=[H_xV_x]$ (\cite{KL}). We claim that $H_x$ and $V_x$ are $\frac \pi2$-apart. Assuming the claim, via the triangle comparison one sees that for any $\xi \in \Sigma_x X$, $|\xi \uparrow_x^y|\leq \frac \pi 2$ i.e., $x$ is a critical point of $d_y$.

The claim can be seen as follows: because $\partial Y=\emptyset$, $\partial H_x=\emptyset$ (Lemma \ref{2h}), the claim follows from Lemma \ref{8a}.

Note that $(\Sigma_xf^{-1}(E))\cap H_x=\{v\}$ if $\dim(E)=1$ and $\bar x\in \partial E$.
Thus $\{\bar x\}$ is an extremal subset of $Y$, and therefore $\text{diam}(\Sigma_{\bar x}Y)\le \frac \pi2$. By Lemma \ref{2l}, $\text{diam}(H_x)\le \frac \pi2$. By now we apply the above argument to
conclude that $x$ is a critical point of $d_y$.

In the following, we may assume that $(\Sigma_xf^{-1}(E))\cap H_x$ contains at least two points.
Because $d_y$ achieves a minimum at $x\in f^{-1}(E)$, for
any $w\in \Sigma_x f^{-1}(E)$, $|w\uparrow_x^y|\geq \frac \pi 2$. Together with
that $(\Sigma_xf^{-1}(E))\cap H_x$ is extremal in $H_x$, by (1.4.1) in \cite{PP} we conclude that  $|\uparrow_x^y(\Sigma_xf^{-1}(E))\cap H_x|\leq \frac \pi 2$, thus
$|\uparrow_x^y(\Sigma_x f^{-1}(E))\cap H_x|=\frac \pi2$. Again by (1.4.1) in \cite{PP}, for any $\zeta\in H_x$, $|\zeta\uparrow_x^y|\le \frac \pi2$. By now following the same argument
as in the proof of $E=\{\bar x\}$ (with replacing $\text{diam}(H_x)$ by $B_{\frac \pi2}(\uparrow_x^y)=H_x$), we conclude that for any $\xi\in \Sigma_xX$, $|\xi\uparrow_x^y|\le \frac \pi2$ i.e., $x$ is a critical point of $d_y$.
\end{proof}

\begin{proof} [Proof of (A2)]

We proceed the proof by induction on $m=\dim(Y)$, starting with $m=1$ i.e., $Y=S^1$, thus $E=\emptyset$ (trivial). Assume (A2) is true for $m=k$.

For $m=k+1$, if $E=\{\bar x\}$, or $\dim(E)=1$ and $\bar x\in \partial E$, then by Lemma \ref{2k} $f^{-1}(\bar x)$ is an extremal subset. For the rest cases, because $Df_x|_{H_x}: H_x\to \Sigma_{\bar x}Y$ is weakly integrable ((A1)) and $\Sigma_{\bar x}E\subset \Sigma_{\bar x}Y$ is an extremal subset, by the inductive assumption we conclude that $Df_{x}^{-1}|_{H_x}(\Sigma_{\bar x}E)$ is an extremal subset of $H_x$, thus by Lemma \ref{2k} again, $f^{-1}(E)$ is an extremal subset of $X$.
\end{proof}

\subsection {Proof of (A3)}

\begin{proof} [Proof of (A3)]

Let $E\subset Y$ be a primitive extremal subset, thus $E$ is connected.

Case 1. Assume that $E=\{\bar x\}$. By (A2), $f^{-1}(\bar x)$ is an extremal subset of $X$. We claim that $f^{-1}(\bar x)$ is a union of primitive extremal subsets of the same dimension $k$, which implies that for all $x\in f^{-1}(\bar x)$, $\dim(V_{x})=\ell$.

If the claim fails, because $f^{-1}(\bar x)$ is connected extremal subset we may assume two primitive extremal subsets of different dimensions intersecting at $x\in f^{-1}(\bar x)$. Consequently, $V_{x}=\Sigma_x f^{-1}(\bar x)\notin \text{Alex}(1)$ (\cite{KL}), a contradiction.

Case 2. Assume that $\dim(E)=k\ge 1$. Let $\mathcal R(E)\subset \overset oE$ be the set of regular points in $E$ i.e., $\bar x\in \mathcal R(E)$ if $C_{\bar x}E$ is isometric to $\mathbb R^k$ (\cite{Fuj1}). By the splitting property, for $x\in f^{-1}(\bar x)$, $C_{x}f^{-1}(E)=\mathbb R^k\times C_{x}f^{-1}(\bar x)$, $C_xf^{-1}(\bar x)\in \text{Alex}^{\ell+1}(0)$. Similar to the proof of Case 1, we conclude that primitive subsets of $f^{-1}(E)$ of maximal dimension that contains $x$ have the same dimension $k+\ell$. Consequently, $\dim(V_{x})=\ell$ for all $x\in f^{-1}(\bar x)$, $\bar x\in \mathcal R(E)$.

Case 3. Assume that $x\in f^{-1}(\overset oE\setminus \mathcal R(E))$, we shall show that $\dim(V_x)=\ell$. A point $\bar x\in E$ is  weakly $j$-strained, $0\le j\le k$, then there is a relative open neighborhood of $\bar x$, $U_{\bar x}$, homeomorphic to $\mathbb R^j\times C(\Sigma)$ by a map, $(h_{\bar x},g_{\bar x}): K(h_{\bar x},g_{\bar x}):\to \mathbb R^{j+1}$ (\cite{Ka2}, \cite{Fuj2}), similar to the case that $E=Y$, thus $Y_s=\overset oY$. As expected, the following
proof is similar to the proof of Theorem \ref{2a}, we shall proceed by the proof the inverse induction on $j$, starting with $j=k$ (Case 2). By the same argument as in the proof of Lemma \ref{2e}, if $g_{\bar x}^{-1}(0)$ contains a `good' point i.e., $\dim(V_x)=\ell$, then all points in $g_x^{-1}(0)$ are `good' i.e., $x\in f^{-1}(g_{\bar x}^{-1}(0))$, $\dim(V_x)=\ell$ (here Theorem \ref{2a} is required). It remains to show that $g_x^{-1}(0)$ contains at least one `good' point.

Assume that for all $j+1\le k$, $g_{\bar x}^{-1}(0)$ contains at least one good point. Let $\bar x\in \overset{o} E$ be a weakly $j$-strained an interior point. Arguing by contradiction, assuming the set $\emptyset\ne F\subsetneq \overset{o} E$ consisting of weakly
$j$-strained points such that $g_{\bar x}^{-1}(0)$ contains no good point.
Then $F$ satisfies the conditions of Lemma \ref{2j}, thus we conclude that $\bar F$ is extremal subset of $Y$. Because $F\subsetneq \overset oE$, $\bar F\subsetneq E$. Because $E$ is primitive, $\bar F= E\setminus \overset oE$, a contradiction because $\emptyset\ne F\cap \overset oE$.
\end{proof}

\subsection {Proof of Proposition  \ref{0j}}

Let $f: X\to Y$ be a submetry, $X\in \text{Alex}^n(\kappa)$, $Y\in \text{Alex}^m(\kappa)$.

\begin{proof} [Proof of Proposition  \ref{0j}]

We first prove that (0.9.1) implies (0.9.2). Let $h: \Sigma\to \Sigma_0$ be as in (0.9.2). By (0.9.1), $h: \Sigma\to \Sigma_0$ is integrable,
and by (2.2.1) $h: \Sigma\to \Sigma_0$ is a fiber bundle.
Let $\pi: \tilde \Sigma_0\to \Sigma_0$ denote the metric universal cover, and consider the following commutative diagram of the pullback fiber bundle,
\[\xymatrix{\pi^*\Sigma\ar[r]^{\pi^*}  \ar [d]^{\hat h} & \Sigma \ar[d]^{h}\\ \tilde \Sigma_0\ar [r]^{\pi} & \Sigma_0.}\]
Because $\tilde \Sigma_0$ is simply connected, by (2.2.1) $\pi^*\Sigma$ is homeomorphic to $\tilde \Sigma_0\times h^{-1}(\bar p)$.

Consider the weakly integrable submetry, $\tilde h: X=C(\pi^*\Sigma)\to C(\tilde \Sigma_0)=Y$, $\partial Y=\emptyset$, $\tilde h(t,x)=(t,h(x))$, $t\ge 0$, $x\in \tilde \Sigma_0$. Then $X\in \text{Alex}^{n+1}(0)$, $Y\in \text{Alex}^{m+1}(0)$, and $\tilde h$ is weakly integrable. By (0.9.1), $\tilde h$ is integrable. Because $\tilde h$-fiber over the vertex of $Y$ is the vertex of $X$, $\tilde h$ is a local isometry, thus $\hat h: \pi^*\Sigma\to \tilde \Sigma_0$ is a local isometry, and therefore $n=m$.

We then prove that (0.9.2) implies (0.9.1). Let $f: X\to Y$ be as in (0.9.1). For $x\in X$, by (A1) $Df_{x}: H_x\to \Sigma_{\bar x}Y$ is weakly integrable. By (0.9.2), $\dim(H_x)=\dim(\Sigma_{\bar x}Y)=\dim(\Sigma_xW_x)$. Because $\partial Y=\emptyset$, $\partial \Sigma_{\bar x}Y=\emptyset$ i.e., $\partial \Sigma_xW_x=\emptyset$. By Lemma \ref{2i}, $H_x=\Sigma_xW_x$ i.e., $f$ is integrable at any $x\in X$.
\end{proof}

\vskip4mm

\section{Proof of Theorem B}

We will first prove (B2), using which we prove (B1).

\subsection{Proof of (B2)}

Our proof of (B2) is based on the following lemma (Lemma \ref{3a}).

Let $f:X\to Y$ be a submetry between two Alexandrov spaces, $\partial Y=\emptyset$.  As mentioned in the introduction, for $x\in X$, $\Sigma_{x}X$ contains two closed convex subsets, $H_x$ and $V_x$, which are $\frac \pi2$-apart i.e., $|vw|=\frac \pi2$, for any $v\in  H_x$, $w\in V_x$ (see Lemma \ref{8a}). By the rigidity in Toponogov triangle comparison (i.e., the equal case), it follows that any three
points in $H_x, V_x$, which are not all in $H_x$ or $V_x$, determine an embedded standard
triangle from $S^2_1$. The existence of standard triangles `almost everywhere' on $\Sigma_{x}X$ puts a strong restriction on its underlying geometric structure. The following lemma is a criteria for $\Sigma_xX$ to have a (rigid) join structure.

\begin{lemma} {\rm (Join structures)} \label{3a} Let $X\in \text{Alex}^n(1)$, and let $X_i$ $(i=0,1$) be convex subsets of $X$ such that $\partial X_0=\emptyset$, $X_0$ and $X_1$ are $\frac \pi2$-apart i.e., for any $x_i\in X_i$, $|x_0x_1|=\frac \pi2$. Assume that $X=[X_0X_1]$ is a topologically nice sphere. Then
$X$ is isometric to $X_0*X_1$, and $X_i$ are topologically nice spheres.
\end{lemma}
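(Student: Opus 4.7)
The plan is to prove the lemma by induction on $n=\dim X$, combining Theorem \ref{1j} with Remark \ref{1k}. The first ingredient is the dimension identity $\dim X_0+\dim X_1=n-1$: the inequality $\le$ is (1.10.1), and the reverse inequality follows because the minimal geodesics $[vw]$ with $v\in X_0$, $w\in X_1$ (all of length $\pi/2$) parametrize $X=[X_0X_1]$ via a surjection from an at most $(\dim X_0+\dim X_1+1)$-dimensional family, so equality must hold. I would then verify $\partial X_1=\emptyset$: if some $x\in\partial X_1$ existed, lifting an outward boundary direction at $x$ through the flat structure produced by the Toponogov equality case along the geodesics $[vx]$, $v\in X_0$, would yield boundary points of $X$, contradicting that $X$ is a closed topological sphere.

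Having verified $\partial X_i=\emptyset$ and the dimension equality, Theorem \ref{1j} gives $X\cong \hat X/\Gamma$ with $\hat X=\hat X_0*\hat X_1$ and $\Gamma$ a finite group of isometries. To finish via Remark \ref{1k}, it suffices to verify its hypothesis: at every $x\in X_0\cup X_1$, say $x\in X_0$, the space of directions splits as a spherical join, $\Sigma_xX\cong \Sigma_xX_0*\Sigma_x^\perp X_0$. This is where the topologically nice assumption is essential. Indeed $\Sigma_xX$ is a topologically nice $(n-1)$-sphere, $\Sigma_xX_0\subset \Sigma_xX$ is convex with empty boundary (inherited from $\partial X_0=\emptyset$), $\Sigma_x^\perp X_0$ is convex, and these two subsets are $\pi/2$-apart by Lemma \ref{ya}. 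Taking tangent cones at $x$ of the geodesics $[vw]$ approaching $x$ yields $\Sigma_xX=[\Sigma_xX_0\,\Sigma_x^\perp X_0]$, so the inductive hypothesis applies and produces the required join decomposition at $\Sigma_xX$.

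With the hypothesis of Remark \ref{1k} in hand, if $X$ failed to have a join structure then $\Gamma$ would act freely on $\hat X$, forcing $X$ to be non-simply-connected. But a topologically nice $n$-sphere with $n\ge 2$ is simply connected, so $\Gamma$ must be trivial and $X\cong X_0*X_1$ isometrically. The fact that each $X_i$ is a topologically nice sphere then follows from the join structure: for $x_i\in X_i$, the iterated spaces of directions $\Sigma_{x_i}X_i$ embed isometrically into those of $\Sigma_{x_i}X$ as totally geodesic convex subspheres, hence inherit topological niceness through iteration. The main technical obstacles I anticipate are (i) verifying $\partial X_1=\emptyset$, since only $\partial X_0=\emptyset$ is given in the hypothesis and one must use the global sphere structure of $X$ together with the $\pi/2$-rigidity, and (ii) establishing $\Sigma_xX=[\Sigma_xX_0\,\Sigma_x^\perp X_0]$ at every $x$, where one has to transfer the global surjectivity $X=[X_0X_1]$ to the level of tangent cones without losing the covering property.
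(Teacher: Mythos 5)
Your overall skeleton matches the paper's: induction on $n$, a join structure for $\Sigma_xX$ at points of $X_0\cup X_1$ via the inductive hypothesis, then Theorem \ref{1j} together with Remark \ref{1k} and simple connectivity of the sphere to force the finite group $\Gamma$ to be trivial. The genuine gap is in your first ingredient, the reverse dimension inequality. You claim that $X=[X_0X_1]$ is covered by an ``at most $(\dim X_0+\dim X_1+1)$-dimensional family'' of geodesics, hence $\dim X_0+\dim X_1\ge n-1$. This is false: between two fixed points at distance $\frac\pi2$ in a space of curvature $\ge 1$ the minimal geodesics may form a positive-dimensional family, so $[X_0X_1]$ can have dimension strictly larger than $\dim X_0+\dim X_1+1$. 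The paper's own Example 6.4.1 defeats exactly this reasoning: in $\mathbb{CP}^m$ with the Fubini--Study metric (diameter $\frac\pi2$), taking $X_0=\mathbb{CP}^{m-2}$ and $X_1=\mathbb{CP}^1$ complementary projective subspaces, one has $\partial X_0=\emptyset$, the two subsets are $\frac\pi2$-apart, and every point lies on a minimal geodesic from $X_0$ to $X_1$, yet $\dim X_0+\dim X_1=2m-2=n-2$. Note that your argument for this step never uses the hypothesis that $X$ is a topologically nice sphere, while the lemma is explicitly false without it, so some such input is unavoidable. The paper obtains the inequality from mod $2$ Alexander duality in the sphere $X$ applied to $(X,X_0,X_1)$: $0\ne\tilde H_{\dim X_0}(X_0;\mathbb Z_2)\cong\tilde H^{\,n-\dim X_0-1}(X_1;\mathbb Z_2)$, which forces $\dim X_1\ge n-\dim X_0-1$, hence the equality $\dim X_0+\dim X_1=n-1$ and $\dim(\Sigma_x^\perp X_0)=\dim X_1$.

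Your second flagged step, $\partial X_1=\emptyset$, is also not actually proved: ``lifting an outward boundary direction through the flat structure'' is a heuristic, not an argument; the same duality statement is what delivers it, since the nonvanishing cohomology of $X_1$ sits in top degree. The remainder of your outline (passing to $(\Sigma_xX,\Sigma_xX_0,\Sigma_x^\perp X_0)$ at points of $X_0\cup X_1$, invoking Remark \ref{1k} to get a free $\Gamma$-action on $\hat X$, using simple connectivity of the sphere to conclude $\Gamma$ is trivial, and deducing topological niceness of the factors from the join structure) coincides with the paper's proof and is fine at the level of detail given, including your honest flagging of the verification that $\Sigma_xX=[\Sigma_xX_0\,\Sigma_x^\perp X_0]$.
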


Note that Lemma \ref{3a} will be false if $X$ is not a sphere or if $X\supsetneq [X_0X_1]$ (see Example 6.4).

The following is a local version of Theorem D in \cite{RX} (seen from its proof) will be used
in the proof of (B2).

\begin{lemma} {\rm(Topologically nice over a regular point)} \label{3b} Let $X\in \text{Alex}^n(\kappa), Y\in \text{Alex}^m(\kappa)$, $\partial Y=\emptyset$. Let $f: X\to Y$ be a submetry. If $\bar x\in Y$ is a regular point such that all points in $f^{-1}(\bar x)$ are topologically nice, then $f^{-1}(\bar x)$ is a topological manifold.
\end{lemma}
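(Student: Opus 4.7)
The plan is to first upgrade topological niceness of $x \in f^{-1}(\bar x)$ in $X$ to topological sphericity of the vertical space $V_x$, and then to realize a neighborhood of $x$ in $f^{-1}(\bar x)$ as a Euclidean ball via a fiber-preserving stability argument.

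First I would show that for every $x \in f^{-1}(\bar x)$, $V_x$ is a topologically nice topological sphere. Since $\bar x$ is a regular point of $Y$, $\Sigma_{\bar x}Y$ is isometric to $S^{m-1}_1$ and $C_{\bar x}Y = \Bbb R^m$. The standard splitting rigidity for submetries (the analog in the submetry setting of Proposition \ref{1i}(1.9.4)) then gives $C_xX = \Bbb R^m \times C(V_x)$, equivalently a metric join $\Sigma_xX = H_x * V_x$ with $H_x = Df_x^{-1}(\Sigma_{\bar x}Y) \cong S^{m-1}_1$. Since $x$ is topologically nice in $X$, $\Sigma_xX$ is a topological sphere; and $H_x, V_x$ are convex subsets of $\Sigma_xX$ which are $\frac{\pi}{2}$-apart (Lemma \ref{8a}), with $\partial H_x = \emptyset$ and $\Sigma_xX = [H_x V_x]$. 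Applying Lemma \ref{3a} to this decomposition yields that $V_x$ is a topologically nice topological sphere, so in particular $C(V_x) \cong \Bbb R^{n-m}$.

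Next I would upgrade this to a local Euclidean chart on $f^{-1}(\bar x)$. Regularity of $\bar x$ in $Y$ supplies admissible distance functions $(d_{\bar p_1}, \ldots, d_{\bar p_m}) \colon B_\rho(\bar x) \to \Bbb R^m$ that form a bi-Lipschitz chart near $\bar x$ (by the weakly $m$-strained structure and Theorem \ref{1f}). Their horizontal lifts $\tau = (d_{\bar p_1} \circ f, \ldots, d_{\bar p_m} \circ f) \colon B_\rho(x) \to \Bbb R^m$ are then strictly regular at $x$, because at the blow-up $Df_x \colon C_xX \to \Bbb R^m$ is the projection onto the $\Bbb R^m$-factor of $C_xX = \Bbb R^m \times C(V_x)$. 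Perel'man's fibration theorem applied to $\tau$ produces a local trivialization $\psi \colon B_\rho(x) \to B_\rho(\bar x) \times (f^{-1}(\bar x) \cap B_\rho(x))$ whose projection to $\Bbb R^m$ coincides with $\tau$, and in particular carries the $f$-fiber through $x$ to the second factor. Comparing $\psi$ with the splitting of the blow-up via a standard rescaling argument (local stability, Corollary \ref{1e}) identifies the second factor topologically with an open neighborhood of $o$ in $C(V_x) \cong \Bbb R^{n-m}$. Hence each $x \in f^{-1}(\bar x)$ has a Euclidean chart of dimension $n-m$, so $f^{-1}(\bar x)$ is a topological $(n-m)$-manifold.

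The main obstacle is the second step: Perel'man's stability alone gives a homeomorphism $B_\rho(x, \lambda_k X) \to B_\rho(o, C_xX)$ that a priori need not respect the fibrations $\lambda_k f$ vs.\ $Df_x$, and the fiber $f^{-1}(\bar x)$ is not itself manifestly an Alexandrov space. The key point that makes the fiber-preserving trivialization work is that regularity of $\bar x$ converts the base-side coordinate trivialization of $Y$ into an admissible coordinate system upstairs via horizontal lifting, and that $Df_x$ being a metric product projection forces each lifted coordinate function to be strictly regular at $x$; this is precisely the local form of the argument used in Theorem D of \cite{RX}.
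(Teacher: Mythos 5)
The paper gives no self-contained proof of Lemma \ref{3b}: it is invoked as a local version of Theorem D of \cite{RX}, ``seen from its proof''. Your outline reconstructs exactly that intended route, and its first half is correct and complete: since $\bar x$ is regular, $Df_x\colon C_xX\to C_{\bar x}Y\cong\Bbb R^m$ splits, $C_xX=\Bbb R^m\times C(V_x)$ with $Df_x$ the projection (\cite{KL}; the paper uses the same fact in the proofs of Lemma \ref{2d} and Lemma \ref{3d}), so $\Sigma_xX=H_x*V_x$ with $H_x\cong S^{m-1}_1$, and topological niceness of $x$ (via Lemma \ref{3a}, or simply by iterating spaces of directions through the sphere factor of the join) forces $V_x$ to be a topologically nice sphere, hence $C(V_x)\cong\Bbb R^{n-m}$. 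Likewise the lifted strainer map $\tau=(d_{f^{-1}(\bar p_1)},\dots,d_{f^{-1}(\bar p_m)})$ is strictly non-critical near $f^{-1}(\bar x)$, since distances from $x$ to the lifted strainer sets equal the base distances while mutual distances can only increase under a submetry; this is the same device the paper uses in Section 5.

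The soft spot is the step ``comparing $\psi$ with the splitting of the blow-up via a standard rescaling argument (local stability, Corollary \ref{1e}) identifies the second factor with a neighborhood of $o$ in $C(V_x)$''. Corollary \ref{1e} and plain Perel'man stability give a homeomorphism $B_\rho(x,\lambda_kX)\to B_\rho(o,C_xX)$ with no control on the $\tau$-levels, and the product structure alone does not pin down the fiber: Perel'man's fibration theorem together with the fact that $x$ has a Euclidean neighborhood only yields $U\cong\Bbb R^m\times F_{\mathrm{loc}}$ with the product a manifold, and that does not force $F_{\mathrm{loc}}$ to be a manifold (for a homology $3$-sphere $H$ with $\pi_1(H)\neq 1$ one has $C(H)\times\Bbb R^2\cong C(\Sigma^2H)\cong\Bbb R^6$ while $C(H)$ is not a manifold). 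What closes the gap is a stability statement respecting the regular map $\tau$ under the blow-up $\lambda_kX\to C_xX$ --- the relative, map-respecting form of the stability theorem (cf. \cite{Ka2}), equivalently the fiber stability for co-Lipschitz maps developed in \cite{RX} --- which identifies $f^{-1}(\bar x)$ near $x$ with the fiber $\{0\}\times C(V_x)$ of $D\tau_x$. Your closing paragraph correctly locates the burden there, so the proposal is acceptable as an outline provided that step is attributed to \cite{RX}/\cite{Ka2} rather than to Corollary \ref{1e}; with that adjustment it coincides with the argument the paper is citing.
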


We restate (B2) in the following lemma.

\begin{lemma}\label {3c} Let $\Sigma\in \text{Alex}^n(1), \Sigma_0\in \text{Alex}^m(1)$ and $\partial \Sigma_0=\emptyset$. Let $h:  \Sigma\to \Sigma_0$ be a weakly integrable. Then $n=m$ under either of the following conditions:

\noindent {\rm (3.3.1)} $n-m\le 1$.

\noindent {\rm (3.3.2)} $\Sigma$ is a topologically nice homeomorphic sphere.
\end{lemma}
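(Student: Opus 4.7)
The plan is to prove parts (3.3.1) and (3.3.2) together by induction on $n=\dim\Sigma$, establishing (3.3.1) at each level $n$ before (3.3.2) at the same level. The common starting point at any $x\in\Sigma$ is (A1): the differential $Df_x\colon H_x\to\Sigma_{\bar x}\Sigma_0$ is a weakly integrable submetry, globally weakly integrable at every $v\in\Sigma_xW_x\subseteq H_x$, with $\partial H_x=\emptyset$ since $\partial\Sigma_0=\emptyset$ (so every point of $\Sigma_0$ is interior and $\partial\Sigma_{\bar x}\Sigma_0=\emptyset$).

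For part (3.3.1) at level $n$, I would assume $n=m+1$ and aim for a contradiction. The chain $\Sigma_xW_x\subseteq H_x\subseteq\Sigma_x\Sigma$, together with $\dim\Sigma_xW_x=m-1$ and $\dim\Sigma_x\Sigma=m$, forces $\dim H_x\in\{m-1,m\}$. If $\dim H_x=m$, the inductive (3.3.1) applied to $Df_x$ (dim gap $1$ and $\dim H_x<n$) forces $\dim H_x=m-1$, a contradiction. Hence $\dim H_x=m-1$, and Lemma~\ref{2c} (2.3.2) makes $Df_x$ an isometry. Theorem~\ref{1j} (1.10.1), together with $\partial H_x=\emptyset$, then bounds $\dim V_x\le 0$; the mismatch $\dim\Sigma_x\Sigma=m\neq m-1=\dim H_x$ forces $V_x\neq\emptyset$, whence $\dim V_x=0$; and the equality $\dim H_x+\dim V_x=n-2$ triggers Theorem~\ref{1j} (1.10.2), giving $\Sigma_x\Sigma=(H_x\ast V_x)/\Gamma$ for a finite group $\Gamma$ of isometries.

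The hard part, which I regard as the main obstacle of the whole proof, is to upgrade this pointwise join-quotient rigidity to a global contradiction with $\partial\Sigma_0=\emptyset$. My plan is to combine Theorem~\ref{2a} (integrability of $h$ on the open dense $h^{-1}(Y_s)$), Lemma~\ref{2b} (2.2.1) (canonical local trivialization of the resulting fibration), and the compactness of $\Sigma$ (curvature $\ge 1$ gives diameter $\le\pi$), running a gradient-flow/monodromy argument in the spirit of Lemma~\ref{2h} and Lemma~\ref{2l}: persistence of the nontrivial vertical direction $V_x$ should collapse some fiber and produce a boundary point in $\Sigma_0$, contradicting $\partial\Sigma_0=\emptyset$.

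For part (3.3.2) at level $n$, $\Sigma_x\Sigma$ is itself a topologically nice sphere, so Lemma~\ref{3a} applied to the $\pi/2$-apart convex pair $(H_x,V_x)\subseteq\Sigma_x\Sigma$ (with $\partial H_x=\emptyset$ and $\Sigma_x\Sigma=[H_xV_x]$) yields an isometric join $\Sigma_x\Sigma=H_x\ast V_x$ in which $H_x$ and $V_x$ are both topologically nice spheres of dimension $<n$. The inductive (3.3.2) applied to $Df_x\colon H_x\to\Sigma_{\bar x}\Sigma_0$ gives $\dim H_x=m-1$, and hence $\dim V_x=n-m-1$. If $V_x=\emptyset$, then $n=m$ and we are done. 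Otherwise, Lemma~\ref{3b} at a regular $\bar x\in\Sigma_0$ identifies $h^{-1}(\bar x)$ as a topological $(n-m)$-manifold with $V_x=\Sigma_x h^{-1}(\bar x)$, and the case $n-m=1$ reduces directly to part (3.3.1) at the same level $n$ (already established above); the residual case $n-m\ge 2$ I would handle by iterating Lemma~\ref{3a} within $V_x$ together with inductive (3.3.2) at smaller $n$, peeling off join factors until only a dimension gap of~$1$ remains and (3.3.1) closes the argument.
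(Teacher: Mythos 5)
Your first half tracks the paper: via (A1) and induction you pin down $\dim H_x=\dim \Sigma_{\bar x}\Sigma_0$, and in (3.3.1) the isometry of $Df_x$ obtained from (2.3.2) of Lemma \ref{2c} (legitimate only because you have already forced equal dimensions, which is what that proof uses) already gives $H_x=\Sigma_xW_x$, i.e.\ $h$ is integrable at every point. The genuine gap is in both of your endgames. In (3.3.1) you stop at the pointwise rigidity $\Sigma_x\Sigma\cong (H_x*V_x)/\Gamma$ with $\dim V_x=0$ and explicitly leave the passage to a global contradiction as a ``plan'' (``persistence of $V_x$ should collapse some fiber and produce a boundary point in $\Sigma_0$''); nothing is proved there, and the proposed mechanism is not the one that works. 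The paper's contradiction is topological rather than boundary-theoretic: integrability at every point plus (2.2.1) of Lemma \ref{2b} makes $h$ a locally trivial fibration; pulling back over the metric universal cover $\tilde\Sigma_0$ if necessary, one gets $\Sigma$ (or a covering of it) homeomorphic to $\tilde\Sigma_0\times h^{-1}(\bar p)$ with one-dimensional fiber, a circle by Lemma \ref{3b}, so $\pi_1$ contains $\mathbb Z$, contradicting finiteness of the fundamental group of a compact space of curvature $\ge 1$. You already have every ingredient for this (the detour through Theorem \ref{1j} is unnecessary once $Df_x$ is an isometry), but as written your (3.3.1) is incomplete, and since your (3.3.2) quotes it, the incompleteness propagates.

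In (3.3.2) the case $n-m\ge 2$ is likewise not proved: ``iterating Lemma \ref{3a} within $V_x$ and peeling off join factors until the gap is $1$'' has no actual content, because the inductive hypothesis applies to weakly integrable submetries and there is no submetry defined on $V_x$ to which it could be applied ($Dh_x$ sends $V_x$ to the cone point), nor does passing to $V_x$ produce a new submetry with smaller gap $n-m$. The paper needs no such reduction: once $\dim H_x=\dim\Sigma_{\bar x}\Sigma_0=\dim \Sigma_xW_x$, Lemmas \ref{2h} and \ref{2i} give $H_x=\Sigma_xW_x$ at every $x$, then (2.2.1) yields a fiber bundle and (after the same pullback to $\tilde\Sigma_0$) a homeomorphism $\Sigma\cong\Sigma_0\times h^{-1}(\bar p)$ with a positive-dimensional manifold fiber (Lemma \ref{3b}), which is impossible for a homeomorphic sphere; this disposes of all $n-m\ge 1$ at once, with no case split and no appeal to (3.3.1). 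Two smaller points: your induction is on $n$ rather than the paper's induction on $m=\dim\Sigma_0$, which is fine in principle, but you never verify a base case --- the paper's $m=1$ case ($\Sigma_0$ a circle) requires the separate covering argument of (2.3.1), and your inductive step silently invokes the statement for a $0$-dimensional base when $m=1$; and in (3.3.2) you should state explicitly that the dimension count is upgraded to $H_x=\Sigma_xW_x$ via Lemmas \ref{2h} and \ref{2i} before any bundle conclusion can be drawn.
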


\begin{proof} (3.3.1) Arguing by contradiction, assuming that if $n-m=1$. We proceed by induction on $m$, starting with $m=1$, thus $\Sigma_0$ is a circle. If $n\ge 2$, we get a contradiction
following the same proof of (2.3.1).

Assume that (3.3.1) holds for $\dim(\Sigma_0)=m\ge 1$. Let $h: \Sigma\to \Sigma_0$ be a weakly integrable submetry, $\dim(\Sigma_0)=m+1$. For any $x\in \Sigma$, by (A1) $Dh_x: H_x\to \Sigma_{\bar x}\Sigma_0$ is weakly integrable, thus by induction $\dim(H_x)=\dim(\Sigma_{\bar x}\Sigma_0)=m=\dim(\Sigma_xW_x)$. Because $\partial (\Sigma_{\bar x}\Sigma_0)=\emptyset$ i.e.,  $\partial (\Sigma_xW_x)=\emptyset$, thus $H_x=\Sigma_xW_x$ (Lemma \ref{2i}) i.e., $h: \Sigma\to \Sigma_0$ is integrable at $x$.

If $\Sigma_0$ is simply connected, then by (2.2.1) $\Sigma$ is homeomorphic to $\Sigma_0\times h^{-1}(\bar p)$, $\bar p$ is a regular point in $\Sigma_0$. By Lemma \ref{2h}, $\partial \Sigma=\emptyset$, thus $h^{-1}(\bar p)$ is a circle (Lemma \ref{3b}); a contradiction that
$\pi_1(\Sigma)$ is finite.

If $\Sigma_0$ is not simply connected, we derive a contradiction by replacing $\Sigma_0$ with its metric universal cover $\tilde \Sigma_0$, and $\Sigma$ with the pullback $h$-fibration over $\tilde \Sigma_0$.

(3.3.2) Again we proceed the proof by induction on $m$, starting with $m=1$, thus $n=1$ (as seen in
the above proof).

Assume that (3.3.2) holds for $\dim(\Sigma_0)=m\ge 1$. Let $\dim(\Sigma_0)=m+1$. Arguing by contradiction, assume $n>m+1$, and consider $h: \Sigma\to \Sigma_0$ be a weakly integrable submetry, $\Sigma$ is a topologically nice homeomorphic sphere. For any $x\in \Sigma$, consider the weakly integrable submetry, $Dh_x: H_x\to \Sigma_{\bar x}\Sigma_0$ ((A1)). We claim that
$H_x$ is a topologically nice homeomorphic sphere, which enables us to apply induction
to conclude that $\dim(H_x)=\dim(\Sigma_{\bar x}\Sigma_0)=\dim(\Sigma_xW_x)$. Similar to the proof of (3.3.1), because $\partial H_x=\emptyset$ (Lemma \ref{2h}), by Lemma \ref{2i} $H_x=\Sigma_xW_x$ i.e., $h: \Sigma\to \Sigma_0$ is integrable at (any) $x$.
Consequently, $\Sigma_0$ is a topological manifold without boundary, and $h: \Sigma\to \Sigma_0$ is a fiber bundle ((2.2.1)).

If $\Sigma_0$ is simply connected, then $\Sigma$ is homeomorphic to $\Sigma_0\times h^{-1}(\bar p)$ ($\bar p$ is regular, Lemma \ref{3b}), a contradiction to that $\Sigma$ is a homeomorphic sphere. If $\Sigma_0$ is not simply connected, similarly by replacing $\Sigma_0$ with $\tilde \Sigma_0$ and $\Sigma$ with $\pi^*\Sigma$, we get a contradiction.

Finally, we verify the claim. First, $H_x$ and $V_x$ are convex subsets of $\Sigma_x(\Sigma)$, and by Lemma \ref{8a} $H_x$ and $V_x$ are $\frac \pi2$-apart. Because $[V_xH_x]=\Sigma_x\Sigma$ which is a sphere, by Lemma \ref{3a} $\Sigma_x\Sigma=H_x*V_x$, and $H_x$ and $V_x$ are topological spheres.
\end{proof}

\subsection {Proof of Lemma \ref{3a}}
\begin{proof}[Proof of Lemma \ref{3a}]

Observe that if $X\cong X_0*X_1$, then $X$ is a topological nice sphere iff $X_0$ and $X_1$
are topologically nice spheres.

We shall prove that $X\cong X_0*X_1$, by induction on $\dim(X)=n$, starting with $n=1$; $X_0$ and
$X_1$ are sets consisting of two points of distance $\pi$. Assume that if $X$ in Lemma \ref{3a} has dimension $n-1$, then $X\cong X_0*X_1$.

Consider $\dim(X)=n$. For $x\in X_0$, we apply induction on $(\Sigma_xX,\Sigma_xX_0,\Sigma^\perp_xX_0)$, and for $x\in X_1$ with $\partial X_1=\emptyset$,
again by induction on $(\Sigma_xX,\Sigma_xX_1,\Sigma^\perp_xX_1)$, to conclude that $\Sigma_xX\cong \Sigma_xX_0*\Sigma_x^\perp X_0$, and $\Sigma_xX\cong \Sigma_xX_1*\Sigma_x^\perp X_1$, respectively. Consequently, $\dim(X_0)+\dim(X_1)\le n-1$,
and ``$=$'' iff $\dim(\Sigma^\perp_xX_0)=\dim(X_1)$.

Summarizing the above, we need to verify that $\partial X_1=\emptyset$ and $\dim(\Sigma_x^\perp X_0)=\dim(X_1)$; because assuming the two properties, from the proof of Theorem \ref{1j} we see that given $x_0\in X_0$, $x_1\in X_1$, there are exact $m$-geodesics joining $x_0$ and $x_1$, or equivalently, there is a group of $m$-elements acting freely on $\hat X$ such that $X\cong \hat X/\Gamma$, a contradiction to that $X$ is simply connected, unless $m=1$ i.e., $X\cong X_0*X_1$.

We now verify that $\partial X_1=\emptyset$ and $\dim(\Sigma_x^\perp X_0)=\dim(X_1)$. Because
$X=[X_0X_1]$ is a topologically nice sphere, applying the Alexander duality to $(X,X_0,X_1)$ (\cite{Sp}),
we obtain
$$0\ne \tilde H_{\dim (X_0)}(X_0;\mathbb Z_2)\cong \tilde H^{n-\dim{X_0}-1}(X_1;\mathbb Z_2),$$
thus $\dim(X_1)\ge n-\dim(X_0)-1$, or $\dim(X_0)+\dim(X_1)\ge n-1$, therefore $\dim(X_0)+\dim(X_1)=n-1$. Consequently, $\dim(\Sigma^\perp_xX_0)=\dim(X_1)$.
\end{proof}

\subsection {Proof of (B1)}

Our proof of (B1) relies on (B2), (A3), and the following lemma.

\begin{lemma}\label{3d} Let $X\in \text{Alex}^n(\kappa)$, $X\in \text{Alex}^m(\kappa)$, $\partial Y=\emptyset$, and let $f: X\to Y$ be weakly integrable. If $\bar x\in Y$ is integrable,
then there is $\rho(\bar x)>0$ such that $f$ is integrable on $B_{\rho(\bar x)}(\bar x)$.
Moreover, all points in $B_{\rho(\bar x)}(\bar x)$ are good i.e., for any $z\in f^{-1}(B_{\rho(x)}(x))$, then $\dim(V_z)=n-m-1$.
\end{lemma}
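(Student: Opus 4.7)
The plan is to prove the lemma in three steps: (i) establish that $\bar x$ itself is good from the integrability hypothesis by dimension counting; (ii) propagate goodness to a small neighborhood of $\bar x$ via horizontal lifting together with the constant-vertical-dimension property (A3); (iii) deduce integrability at every point of the neighborhood from goodness, weak integrability, and Lemma \ref{2i}.

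For step (i), fix any $z \in f^{-1}(\bar x)$. Integrability gives $H_z = \Sigma_z W_z$, which is isometric to $\Sigma_{\bar x}Y$ via $Df_z$, so $\dim H_z = m-1$ and $\partial H_z = \emptyset$ (the latter also follows from (A1) since $\partial \Sigma_{\bar x}Y = \emptyset$). The convex subsets $H_z, V_z \subseteq \Sigma_z X \in \text{Alex}^{n-1}(1)$ are $\tfrac{\pi}{2}$-apart and satisfy $\Sigma_z X = [H_z V_z]$. Combining the upper bound $\dim H_z + \dim V_z \le n-2$ from (1.10.1) with the span-dimension lower bound $n-1 = \dim \Sigma_z X \le \dim H_z + \dim V_z + 1$ forces equality, yielding $\dim V_z = n-m-1$. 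Hence $\bar x$ is already good.

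For step (ii), choose $\rho>0$ small enough that $B_\rho(\bar x) \subseteq W_{\bar x}$ and $B_\rho(\bar x) \cap \text{Ext}(\bar x) \subseteq \overset{o}{\text{Ext}(\bar x)}$. Given $\bar y \in B_\rho(\bar x)$ and $y \in f^{-1}(\bar y)$, the submetry property and horizontal lifting of the minimal geodesic $[\bar y\bar x]$ produce $z \in f^{-1}(\bar x)$ with $|yz| = |\bar y\bar x|$, and by (A2) the lift is contained in $f^{-1}$ of the smallest extremal subset of $Y$ containing both $\bar x$ and $\bar y$. Applying (A3) to the primitive extremal subset $\text{Ext}(\bar y)$ (which, in the case $\bar y$ lies in a strictly higher stratum than $\bar x$, strictly contains $\text{Ext}(\bar x)$), and anchoring the constant vertical dimension at $z$ itself when $\bar y$ is in the same stratum, or at a nearby preimage of a regular point in the same component of $f^{-1}(\overset{o}{\text{Ext}(\bar y)})$ when $\bar y$ lies in a higher stratum, yields $\dim V_y = n-m-1$, so $\bar y$ is good.

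For step (iii), weak integrability at $y$ supplies a convex $W_y$ with $\Sigma_y W_y \subseteq H_y$ of dimension $m-1$ and empty boundary (isometric via $Df_y$ to $\Sigma_{\bar y}Y$), while (A1) gives $\partial H_y = \emptyset$. Combining $\dim V_y = n-m-1$ from step (ii) with (1.10.1) and $\tfrac{\pi}{2}$-apartness forces $\dim H_y \le n-2 - (n-m-1) = m-1$, and together with $\dim H_y \ge \dim \Sigma_y W_y = m-1$ gives equality. Lemma \ref{2i} applied to the full-dimensional compact convex subset $\Sigma_y W_y \subseteq H_y$ with empty boundary then yields $H_y = \Sigma_y W_y$, proving integrability at $y$. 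The main obstacle is step (ii): anchoring goodness at preimages across potentially different primitive extremal subsets meeting $B_\rho(\bar x)$, which requires the locally finite stratification of $Y$, preservation of extremal subsets under $f^{-1}$ via (A2), and careful application of (A3) to identify a fiber-preimage in each relevant component where the vertical dimension is already known to equal $n-m-1$.
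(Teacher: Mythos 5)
Your overall architecture inverts the paper's, and the inversion is where the trouble lies: you try to prove goodness first (steps (i)--(ii)) and deduce integrability from it (step (iii)), whereas the paper first upgrades integrability at $\bar x$ to integrability over a whole uniform neighborhood and only then extracts goodness. Step (iii) is fine (it is the same ``good $\Rightarrow$ integrable'' argument the paper records before Lemma \ref{2d}), but step (i) rests on an unproved claim that is false as a general principle. You assert the ``span-dimension lower bound'' $\dim \Sigma_zX \le \dim H_z+\dim V_z+1$ from $\Sigma_zX=[H_zV_z]$. This is not a theorem: take $\Sigma=\mathbb{CP}^2$ with sectional curvature in $[1,4]$ (diameter $\pi/2$), $A=\{p\}$ and $B$ the polar cut locus $\mathbb{CP}^1$ at constant distance $\pi/2$ from $p$; both are convex, $\frac\pi2$-apart, and every point lies on a minimal geodesic from $p$ to $B$, so $[AB]=\Sigma$, yet $\dim A+\dim B=2<\dim\Sigma-1=3$ (the excess dimension comes from the $S^1$-families of geodesics between $p$ and each point of $B$). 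So the multiplicity of geodesics in $[H_zV_z]$ can eat the dimension count, and ``integrable at $\bar x$ $\Rightarrow$ good at $f^{-1}(\bar x)$'' is precisely the hard implication; nothing in your step (i) uses the submetry structure to rule this phenomenon out. (Whether such a configuration can actually occur under the lemma's hypotheses is exactly what needs proof — indeed it is what the whole goodness machinery of Section 2 is for.)

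Step (ii) has two further gaps. First, you choose $\rho$ with $B_\rho(\bar x)\subseteq W_{\bar x}$, but $W_{\bar x}$ depends on the point $x\in f^{-1}(\bar x)$ and the fiber need not be compact; the paper has to invoke the enlargement from the proof of (A1) to get $W_{\bar x}\supset B_\rho(\bar x)$ with $\rho$ independent of $x$, and this uniformity is what makes ``integrable on $B_{\rho(\bar x)}(\bar x)$'' meaningful at every preimage point. Second, your (A3)-anchoring is incomplete: goodness is automatic only over points that are regular in $Y$ (where the Kapovitch--Lytchak splitting gives $C_{z_0}X=\mathbb R^m\times C(V_{z_0})$, hence $\dim V_{z_0}=n-m-1$); at a point that is merely regular in a lower-dimensional stratum $E$ one only gets $C_zX=\mathbb R^{\dim E}\times C(\cdot)$, which says nothing about $\dim V_z$ without knowing the topological dimension of the fiber there. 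So for $\bar y$ in an intermediate stratum there is no already-known good point in the relevant component of $f^{-1}(\overset{o}{\mathrm{Ext}}(\bar y))$ to anchor (A3) at, and anchoring at $z\in f^{-1}(\bar x)$ presupposes the flawed step (i). (Also, a horizontal lift of $[\bar y\bar x]$ need not lie in $f^{-1}$ of any proper extremal subset, since minimal geodesics of $Y$ between points of an extremal subset can leave it.) The paper closes all of these holes at once: integrability at $\bar x$ forces the sets $W_x$, $x\in f^{-1}(\bar x)$, to be pairwise disjoint and to cover $f^{-1}(W_{\bar x})$, so $f$ is integrable over $W_{\bar x}$ with $W_z:=W_x$, and by (2.2.1) $f^{-1}(W_{\bar x})\to W_{\bar x}$ is a trivial bundle; hence every fiber over $W_{\bar x}$ has topological dimension $n-m$, and combining this with (A2), the splitting $C_z(f^{-1}(E))=\mathbb R^k\times C(V_z)$ at points over stratum-regular points, and (A3) yields goodness everywhere on $W_{\bar x}$. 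You would need some substitute for this bundle/fiber-dimension argument before steps (i)--(ii) can stand.
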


\begin{proof} In the proof of (A1), we show that for any $x\in X$ and $f: W_x\to W_{\bar x}$ is an isometry, one is able to enlarge $W_x$ such that $W_{\bar x}\supset B_\rho(\bar x)$, for some
$\rho=\rho(\bar x)$ independent of $x\in f^{-1}(\bar x)$.

Assume that $\bar x\in Y$ is integrable, and $W_{\bar x}\supset B_\rho(\bar x)$ satisfies the above. Without loss of generality, we
may assume that $W_{\bar x}$ is convex (defined by a strictly concave function that approximated
by distance functions, \cite{PP}). Consequently, for any $\bar x\ne \bar z\in W_{\bar x}$,
there is a gradient curve $c(t)$ (of the concave function) along which $\bar z$ flows to
$\bar x$, and $\bar z$ is an interior point in $c(t)$.

We now show that for $x\ne x'\in f^{-1}(\bar x)$, $W_x\cap W_{x'}=\emptyset$. If $z\in W_x\cap W_{x'}$, then we may assume that $f(z)\in W_{\bar x}$ is an interior of $c(t)$ as in above. Then horizontal lifting of $c(t)$ (i.e., the gradient curve through $z$ of the horizontal lifting
of the concave function) is not unique (on $W_x$ and $W_{x'}$), a contradiction to that $\bar z$ (or $z$) is an interior point.

For any $z\in f^{-1}(W_{\bar x})$, the horizontal lifting of a minimal geodesic from $\bar z$ to
$\bar x$ at $z$ is a horizontal minimal geodesic from $z$ to $x\in f^{-1}(\bar x)$, thus
$z\in W_x$. Observe that when choosing $W_z=W_x$, it is clear that $H_z=\Sigma_zW_z$.

By (2.2.1), $f: f^{-1}(W_x)\to W_{\bar x}$ is a trivial bundle. Let
$z_0\in f^{-1}(B_{\rho(\bar x)}(\bar x))$ such that $\bar z_0$ a regular point in $Y$. Then
$C_{z_0}(f^{-1}(\bar z_0))=\mathbb R^m\times C(V_{z_0})$, thus $\dim (V_{z_0})=\dim_{\text{top}}(C_{z_0}(f^{-1}(\bar z_0)))-1=n-m-1$.

For any $z\in f^{-1}(W_{\bar x})$, $\bar z$ is a regular point in $E=\text{Ext}(\bar z)$, $\dim(E)=k$ and $\dim(f^{-1}(E))=\tilde k$. Then $C_z(f^{-1}(E))= \mathbb R^k\times C(V_z)$, $\dim (V_z)=\tilde k-k-1$. From Section 2 in \cite{PP} in explaining Theorem \ref{1g}, one sees that $n-m=\dim_{\text{top}}(f^{-1}(\bar z))\leq \tilde k-k=\dim (C(V_z))$. Thus we get $\dim (V_z)=n-m-1$ i.e., $\bar z$ is good. By (A3), every interior point of $E$ is good, thus all points in $W_{\bar x}$ ($\supset B_{\rho(\bar x)}(\bar x)$) are good.
\end{proof}

\begin{proof} [Proof of (B1)]

If $n-m\le 1$, then for $x\in X$, $\dim(H_x)-\dim(\Sigma_{\bar x}Y)\le 1$ and $\partial \Sigma_{\bar x}Y=\emptyset$. By (A1), $Df_x: H_x\to \Sigma_{\bar x}Y$ is weakly integrable,
and by (B2) $\dim(H_x)=\dim(\Sigma_{\bar x}Y)=\dim(\Sigma_xW_x)$. By Lemma \ref{2h}, $\partial H_x=\emptyset$, thus $H_x=\Sigma_xW_x$ (Lemma \ref{2i}) i.e., $f$ is integrable at any $x\in X$.

Note that for the case that $Y=E_{\min}$, equivalently $Y=Y_s$, (B1) is Theorem \ref{2a}.

Observe that if a component of $f^{-1}(E_{\min})$ contains $z$ such that $\dim(V_z)=n-m-1$, then by (A3) all points in $E_{\min}$ are good, because all points in $E_{\min}$ are interior points.
By Lemma \ref{3d}, we may assume a neighborhood $U(E_{\min})$ in which all all points are good.

Observe that if each $E_{\min}\subset Y$ contains integrable point $\bar x$, then by Lemma \ref{3d}
$\bar x$ is good.

Because any point $\bar x\in Y$, the interior of $\text{Ext}(\bar x)$ has a nonempty intersection with some $U(E_{\min})$ (from the stratification of $Y$), again by (A3) we conclude that all interior points in $\text{Ext}(\bar x)$ are good; in particular $\bar x$ is good (thus all points in $Y$ are good).

Finally we shall show that if a component of $f^{-1}(E_{\min})$ contains a topologically nice point $z$, then $\dim(V_z)=n-m-1$. Because $z$ is topologically nice, by Lemma \ref{3a} $\Sigma_zX\cong V_z*H_z$, $H_z$ is a topological sphere, and in particular $\dim(V_z)+\dim(H_z)=n-2$. Applying (B2) to the weakly integrable submetry, $Df_z: H_z\to \Sigma_{\bar z}Y$, we conclude that $\dim(H_z)=\dim(Y)-1=m-1$, thus $\dim(V_z)=n-m-1$.
\end{proof}

\begin{corollary} \label{3e} Let $f: X\to Y$ be weakly integrable. If $X$ is topologically nice, then
$f$ is integrable, and $Y$ is topologically nice, a $f$-fiber is a topological manifold.
\end{corollary}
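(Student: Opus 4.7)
The plan is to deduce each of the three conclusions from the tools already in place, proceeding in the order: integrability, niceness of $Y$, topological manifold structure on fibers. Since $X$ is topologically nice, every point of $X$ is topologically nice, so in particular each component of $f^{-1}(E_{\min})$ contains a topologically nice point for every minimal extremal subset $E_{\min}\subset Y$. By the last clause of (B1) this already forces $f$ to be integrable. (The standing hypothesis $\partial Y=\emptyset$ required by (A1) and (B1) is harmless here: $X$ topologically nice implies $\partial X=\emptyset$, and in the weakly integrable setting of Section 3 this is the natural ambient assumption.)

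Next I would establish the join decomposition $\Sigma_xX=H_x*V_x$ at every $x\in X$. One always has $\Sigma_xX=[H_xV_x]$ with $H_x,V_x$ convex subsets that are $\tfrac\pi2$-apart by Lemma \ref{8a}, and $\partial H_x=\emptyset$ by (A1). Since $X$ is topologically nice, $\Sigma_xX$ is a topologically nice homeomorphic sphere, so Lemma \ref{3a} applies to the pair $(H_x,V_x)\subset\Sigma_xX$ and yields $\Sigma_xX=H_x*V_x$ with both $H_x$ and $V_x$ topologically nice spheres. In particular $\dim V_x=n-m-1$ and $H_x$ is a topologically nice sphere of dimension $m-1$.

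For the niceness of $Y$: because $f$ is integrable, $H_x=\Sigma_xW_x$, so $Df_x:H_x\to\Sigma_{\bar x}Y$ is an isometry. Hence $\Sigma_{\bar x}Y$ is isometric to the topologically nice sphere $H_x$; since every iterated space of directions of a topologically nice sphere is again a topologically nice sphere, $Y$ is topologically nice. For the fiber, (C1) gives the canonical local trivialization $\phi_{\bar x}:f^{-1}(W_{\bar x})\to f^{-1}(\bar x)\times W_{\bar x}$ as a topological product. This identifies, at each $z\in f^{-1}(\bar x)$, the space of directions $\Sigma_z(f^{-1}(\bar x))$ with $V_z$, which is a topologically nice sphere of dimension $n-m-1$. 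Therefore $C_z(f^{-1}(\bar x))\cong C(V_z)$ is a topological disc, so $f^{-1}(\bar x)$ is locally Euclidean and hence a topological manifold.

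The principal technical obstacle is the compatibility in the final step: one must verify that under the canonical local trivialization of (C1) the join decomposition from Lemma \ref{3a} lines up correctly, so that $\Sigma_z(f^{-1}(\bar x))$ is genuinely $V_z$ (and not merely a subset of the correct dimension). Once this identification is in hand, the conclusion follows because a finite-dimensional Alexandrov space all of whose tangent cones are topological discs is automatically a topological manifold. The remaining work is routine bookkeeping with (A1), (B1), Lemma \ref{3a}, and (C1).
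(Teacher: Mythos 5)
Your treatment of the first two conclusions coincides with the paper's: integrability comes from the last clause of (B1), since every point of $X$ (in particular of each component of $f^{-1}(E_{\min})$) is topologically nice, and the niceness of $Y$ comes from Lemma \ref{3a} applied to $(H_x,V_x)\subset \Sigma_xX$ together with the isometry $Df_x: H_x=\Sigma_xW_x\to \Sigma_{\bar x}Y$. One caveat on your parenthetical: $\partial X=\emptyset$ does not force $\partial Y=\emptyset$ for a submetry --- see Example \ref{1q}, where $X=C(S^2_1)$ is topologically nice and $f$ is weakly integrable but not integrable precisely because $\partial Y\ne\emptyset$ --- so $\partial Y=\emptyset$ must be carried as a standing hypothesis inherited from (0.9.1), not deduced from the niceness of $X$.

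The genuine gap is in the fiber statement. The paper does not argue through (C1); it invokes Lemma \ref{3b} (a local version of Theorem D in \cite{RX}): at a \emph{regular} point $\bar p\in Y$, if all points of $f^{-1}(\bar p)$ are topologically nice, then $f^{-1}(\bar p)$ is a topological manifold. What you defer as ``routine bookkeeping'' is exactly the content of that lemma and is not supplied by your argument. First, (C1) only gives a global topological product $f^{-1}(W_{\bar x})\cong f^{-1}(\bar x)\times W_{\bar x}$; it says nothing about the infinitesimal structure of the fiber at a point $z$. The identification of the blow-up of the fiber at $z$ with $C(V_z)$ is available over regular points $\bar z\in Y$, via $C_zX=\Bbb R^m\times C(f^{-1}(\bar z))$ and $C(f^{-1}(\bar z))=C(V_z)$ (\cite{KL}, as used in the proof of Lemma \ref{2d}), and you never restrict to regular points. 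Second, and more seriously, even granted $C_z(f^{-1}(\bar x))\cong C(V_z)\cong \Bbb R^{n-m}$, your closing principle --- that a space all of whose tangent cones are discs is a manifold --- is not applicable: a fiber of a submetry is not known to be an Alexandrov space with a lower curvature bound, so no Perelman-type stability or conical-neighborhood theorem is available for it, and the passage from ``the blow-up at $z$ is $\Bbb R^{n-m}$'' to ``$f^{-1}(\bar x)$ is locally Euclidean near $z$'' is precisely the fiber-stability statement proved in \cite{RX} and packaged here as Lemma \ref{3b}. So either quote Lemma \ref{3b} at a regular point (as the paper does; note the corollary only asserts that \emph{a} fiber is a manifold), or supply a genuine substitute for that stability argument.
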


\begin{proof} By (B1), $f: X\to Y$ is integrable, for $x\in X$, $f: W_x\to f(W_x)$ is an isometry. By Lemma \ref{3a}, $\Sigma_xX\cong H_x*V_x$, and $H_x$ and $V_x$ are topologically nice spheres, thus $Y$ is topologically nice. By Lemma \ref{3b}, for a regular point $\bar p\in Y$, $f^{-1}(\bar p)$ is a topological manifold.
\end{proof}

\section {Proof of Theorem C}

First, (C1) follows from (2.2.1).

\begin{lemma}\label{4a} Let $f: X\to Y$ be as in (C2). Then $f$ is integrable.
\end{lemma}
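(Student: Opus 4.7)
The goal is to prove $H_x=\Sigma_xW_x$ for every $x\in X$; the inclusion $\Sigma_xW_x\subseteq H_x$ is automatic from weak integrability, so only the reverse needs argument. Fix $v\in H_x$ and set $p=Df_x(v)\in\Sigma_{\bar x}Y$. Because $f|_{W_x}\colon W_x\to W_{\bar x}$ is an isometry, $Df_x|_{\Sigma_xW_x}$ is an isometry onto $\Sigma_{\bar x}Y$, so there is a unique $v'\in\Sigma_xW_x$ with $Df_x(v')=p$. The plan is to prove $v=v'$ by a blow-up contradiction driven by $\eta$-convexity and the $1$-Lipschitz property of the cone submetry $Df_x$.

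Assume for contradiction $v\neq v'$, and set $\theta=|vv'|_{\Sigma_xX}\in(0,\pi]$. Choose $y_i\to x$ with $\uparrow_x^{y_i}\to v$, let $z_i=f(y_i)$, and let $y_i'\in W_x$ be the $f|_{W_x}$-preimage of $z_i$; from $|Df_x(v)|=1$ together with $|xy_i'|=|\bar xz_i|$ one gets $|xy_i'|/|xy_i|\to 1$ and $\uparrow_x^{y_i'}\to v'$. For $i$ large, $|y_iy_i'|<\eta$, so $\eta$-convexity of $f^{-1}(z_i)$ supplies a minimal $X$-geodesic $\sigma_i$ from $y_i$ to $y_i'$ contained in the fiber, with $|\sigma_i|=|y_iy_i'|_X$. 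Rescaling by $\lambda_i=1/|xy_i|\to\infty$ gives
\[
(\lambda_iX,x)\xrightarrow{GH}(C_xX,o),\quad (\lambda_iY,\bar x)\xrightarrow{GH}(C_{\bar x}Y,\bar o),\quad \lambda_if\to Df_x,
\]
so $y_i\to v$, $y_i'\to v'$, $z_i\to p$, and the rescaled fibers $\lambda_if^{-1}(z_i)$ Hausdorff-converge on bounded balls to $Df_x^{-1}(p)$. The rescaled $\sigma_i$ have length $\lambda_i|y_iy_i'|_X\to|vv'|_{C_xX}$, so after reparametrization they subconverge to a minimal geodesic $\sigma$ in $C_xX$ from $v$ to $v'$ lying entirely in $Df_x^{-1}(p)$.

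The contradiction now comes from the cone structure. If $\theta\in(0,\pi)$, $\sigma$ is the Euclidean chord in the $2$-sector spanned by $o,v,v'$ (isometric to the flat sector of angle $\theta$), and its midpoint $m$ satisfies $|mo|_{C_xX}=\cos(\theta/2)<1$; but $m\in Df_x^{-1}(p)$ together with $1$-Lipschitzness of $Df_x$ forces $|mo|_{C_xX}\geq|Df_x(m)\bar o|=|p\bar o|=1$, a contradiction. If $\theta=\pi$, then $\sigma$ passes through $o$, but $Df_x(o)=\bar o\neq p$ contradicts $\sigma\subset Df_x^{-1}(p)$. Hence $v=v'$, so $H_x=\Sigma_xW_x$ and $f$ is integrable. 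The delicate point in carrying out this plan is the passage to the limit: one must verify both that $\sigma$ is a genuine minimal geodesic in $C_xX$ (length convergence plus compactness of uniformly bounded $1$-Lipschitz curves) and that $\sigma$ stays in $Df_x^{-1}(p)$ at its midpoint, which uses the Hausdorff convergence $\lambda_if^{-1}(z_i)\to Df_x^{-1}(p)$ coming from $\lambda_if\to Df_x$.
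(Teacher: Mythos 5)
Your proof is correct, but it takes a genuinely different route from the paper's. The paper argues at finite scale: assuming $Df_x(v_1)=Df_x(v_2)$ for two distinct horizontal directions tangent to horizontal lifts $\gamma_1,\gamma_2$ (issuing from the same point $x$) of a single minimal geodesic $\gamma$ in $Y$, it uses the $\eta$-convexity to join $\gamma_1(t)$ and $\gamma_2(t)$ by a minimal geodesic inside the fiber $f^{-1}(\gamma(t))$, and then a first-variation estimate (horizontal and vertical directions meeting at angle $\frac\pi2$, Lemma \ref{8a}) to show that $\ell(t)=|\gamma_1(t)\gamma_2(t)|$ cannot increase, hence $\ell\equiv 0$; this gives injectivity of $Df_x$ on $H_x$ and so $H_x=\Sigma_xW_x$. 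You instead blow up: $\eta$-convexity supplies fiber geodesics $\sigma_i$ between $y_i$ and the matched points $y_i'\in W_x$, and in the tangent cone these converge to a minimal geodesic inside $Df_x^{-1}(p)$ joining two unit directions, which is impossible because its midpoint lies at distance $\cos(\theta/2)<1$ from the apex (or the geodesic passes through the apex when $\theta=\pi$), contradicting the $1$-Lipschitz property of the submetry $Df_x$ together with $Df_x(o)=\bar o$ and $|p\bar o|=1$. Your route leans on the rescaling machinery ($\lambda_i f\to Df_x$, convergence of fibers, lower semicontinuity of length), which the paper has in any case set up in the proof of (A1); in exchange it needs only the elementary rigidity of geodesics in metric cones, and avoids the first-variation formula, the exact $\frac\pi2$-apartness of $H_x$ and $V_x$, and the question of producing horizontal lifts with prescribed initial directions. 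Two small remarks: of the asserted Hausdorff convergence $\lambda_i f^{-1}(z_i)\to Df_x^{-1}(p)$ you only need the easy half (limits of points of the rescaled fibers lie in $Df_x^{-1}(p)$, immediate from $\lambda_i f\to Df_x$); and the identification $\uparrow_x^{y_i'}\to v'$, which rests on log-type approximations compatible with $\lambda_i f\to Df_x$, is not essential — any sublimit $w\in\Sigma_xW_x$ of $\uparrow_x^{y_i'}$ serves in the same contradiction, which then forces $v=w\in\Sigma_xW_x$ directly, since $\Sigma_xW_x$ is closed.
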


\begin{proof} For any $x\in X$, we shall show that $Df_x: H_x\to \Sigma_{\bar x}Y$ is injective,
thus $H_x=\Sigma_xW_x$. Observe that if $Df_x$ is injective on directions
tangent to minimal geodesics, then $Df_x$ is injective.

Assume $v_1\ne v_2\in H_x$ such that $Df_x(v_1)=Df_x(v_2)$ tangent to a minimal geodesic
$\gamma$ at $f(x)$ i.e., the horizontal lifting of $\gamma$, $\gamma_i'(0)=v_i$ and $f(\gamma_i)=\gamma$. Because $f^{-1}(\gamma(t))$ is $\eta$-convex, for each small $t>0$,
there is a minimal geodesic $\alpha_t(s)$ in $f^{-1}(\gamma(t))$ connecting $\gamma_1(t)$ with $\gamma_2(t)$. Let
$\ell(t)=|\alpha_t(s)|$. Then $\ell(0)=0$, and
$$\ell(t)^+(t)=
-\cos |\gamma_1^+(t)\Uparrow_{\gamma_1(t)}^{\gamma_2(t)}|-\cos |\gamma_2^+(t)
\Uparrow_{\gamma_2(t)}^{\gamma_1(t)}| \le 0,$$
for $t>0$ small, hence $\ell(t)\equiv 0$, a contradiction.
\end{proof}

Here is a brief outline of our proof of (C2). We shall show that the canonical trivialization map
is isometry (i.e., local fiber bundle metrically splits) by showing its differential exists and is an isometry. By \cite{Lyt1},
it suffices to show that the differential exists almost everywhere and is an isometry, thus it suffices
to verify the desired properties over $(m,\delta)$-strained points in $Y$. Because
any two points in a small $\rho$-ball consisting of $(m,\delta)$-strained points
can be pushed back and forth via gradient flows of distance functions in a definite
time depending on $\kappa$ and $\rho$, thus restricting to a fiber, $\phi_{\tilde p}$ is local
bi-Lipschitz map, thus $D\phi_{\tilde p}$ exists.

\begin{lemma}\label{4b} Let $f: X\to Y$ be as in Theorem C. Assume that all points in
$Y$ are $(m,\delta)$-strained (thus $f$ is integrable (B1)). Then the canonical local
trivialization map is locally bi-Lipschitz.
\end{lemma}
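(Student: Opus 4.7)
The plan is to realize the canonical trivialization map $\phi_{\bar x}$ as a composition of gradient flows arising from a strainer, and then to invoke standard Lipschitz estimates for gradient flows of semi-concave functions on $X$.

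Fix $\bar x\in Y$ and a weak $(m,\delta)$-strainer $(\bar p_1,\ldots,\bar p_m)$ at $\bar x$. On a small neighborhood $U\ni\bar x$, the strainer map $h=(d_{\bar p_1},\ldots,d_{\bar p_m}):U\to\mathbb R^m$ is bi-Lipschitz with constants depending only on $m$ and $\delta$. For any $\bar y\in U$ with $|\bar x\bar y|$ small, I would decompose the displacement $h(\bar y)-h(\bar x)$ coordinate-by-coordinate and write $\bar y=\Psi_{\bar y}(\bar x)$, where $\Psi_{\bar y}$ is a concatenation of at most $2m$ gradient flows of $\pm d_{\bar p_i}$ for times $t_i$ of order $|\bar x\bar y|$, followed by one short correction step to match $\bar y$ exactly. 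Both $\Psi_{\bar y}$ and its inverse are then bi-Lipschitz on a neighborhood of $\bar x$ in $Y$, with uniform constants.

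Next, I would lift horizontally. Each function $d_{f^{-1}(\bar p_i)}$ on $X$ is semi-concave, with modulus bounded in terms of $\kappa$ and a uniform lower bound on $|f(z)\bar p_i|=|z\,f^{-1}(\bar p_i)|$ valid throughout a neighborhood of $f^{-1}(\bar x)$. By the gradient-flow lifting property of submetries \cite{Lyt2,KL}, the gradient flow $\tilde\Phi_{\bar p_i}^{t}$ of $d_{f^{-1}(\bar p_i)}$ descends under $f$ to the gradient flow of $d_{\bar p_i}$, and its integral curves are exactly the horizontal lifts of those downstairs. Concatenating gives a map $\tilde\Psi_{\bar y}:X\to X$ covering $\Psi_{\bar y}$, which is Lipschitz with constants depending only on $m,\delta,\kappa$ and the diameter of $U$. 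By the integrability of $f$ (Lemma \ref{4a}) together with uniqueness of horizontal lifts on the simply connected piece (see (2.2.1)), $\tilde\Psi_{\bar y}$ sends $f^{-1}(\bar x)\cap U_x$ into $f^{-1}(\bar y)$, so it is precisely the inverse of the fiberwise component of $\phi_{\bar x}$ restricted to $f^{-1}(\bar y)$. Running the concatenation in reverse produces the inverse map with the same Lipschitz bound.

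Putting these ingredients together, $\phi_{\bar x}(z)=(\tilde\Psi_{f(z)}^{-1}(z),\,f(z))$ is locally bi-Lipschitz, with constants depending only on $m,\delta,\kappa$ and the size of $W_{\bar x}$. The main obstacle will be securing uniformity of the Lipschitz constants across the fiber $f^{-1}(\bar x)$, since the semi-concavity modulus of $d_{f^{-1}(\bar p_i)}$ could a priori deteriorate as one approaches that fiber; this is avoided by choosing the strainer so that $|f(z)\bar p_i|$ stays bounded below on $W_{\bar x}$, which is possible because the strainer points may be taken at a definite positive distance from $\bar x$. A secondary technicality is that the coordinate-wise concatenation $\Psi_{\bar y}$ does not in general land exactly on $\bar y$; the short correction step used to close the gap is itself a bounded-time gradient flow of the distance function to a nearby point, and so contributes only a bounded Lipschitz factor, preserving the bi-Lipschitz conclusion.
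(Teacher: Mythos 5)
Your proposal follows essentially the same route as the paper: the canonical trivialization is realized by pushing points of $Y$ back and forth with bounded-time gradient flows of distance functions supplied by the strainers, lifting these flows horizontally to $X$ (where they stay in the sets $W_x$ and hence reproduce the canonical identification $z\mapsto (x_z,f(z))$), and invoking uniform Lipschitz bounds for gradient flows of $\lambda$-concave functions run for a bounded total time, with the inverse obtained by flowing back. The only adjustment is that $-d_{\bar p_i}$ is not semi-concave and so has no gradient flow; as in the paper one flows instead along (squares of) distance functions to suitably selected points --- e.g. opposite strainer points, which the $(m,\delta)$-strained hypothesis provides --- and this does not affect the rest of your argument.
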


\begin{proof}
From the proof of (2.2.1), there is a canonical
local trivialization: for $\bar x\in Y$, there is $r>0$ such that
for all $x\in f^{-1}(\bar x)$, $W_{\bar x}\supseteq B_r(\bar x)$; and the canonical trivialization is defined by the map, $\phi_{\bar x}: f^{-1}(B_r(\bar x))\to f^{-1}(\bar x)\times B_r(\bar x)$; $\phi_{\bar x}(z)=(x_z,f(z))$, $z\in W_{x_z}$.

Based on that around each point $\bar x$ in $Y$, there is $\rho(\bar x)>0$ such that every point in $B_\rho(\bar x)$ is $(m,\delta)$-strained with a radius $\rho$, we can have alternative description for the canonical trivialization. First, any two points $\bar x_1, \bar x_2\in B_{\frac \rho8}(\bar x)$ can be pushed back and forth by gradient flows of (selected) distance functions ($\{\frac 12 d_{\bar z_i}^2\}$) which are $\lambda$-concave with $\lambda=\lambda(\kappa,\rho)>0$, and the total time is bounded above by a constant $T(\kappa,\rho)>0$. Via the gradient flows of the distance functions, $\{\frac 12d_{f^{-1}(\bar z_i)}^2\}$ (\cite{KPT}, \cite{SY}), one gets a local trivialization by identifying a point in $f^{-1}(\bar x)$ with a point in $f^{-1}(\bar y)$ for all $\bar y\in B_{\frac \rho8}(\bar x)$. Because the gradient flows stayed in $W_x$ (\cite{KL}), it is clear that this local trivialization coincides with the canonical trivialization.

Based on the alternation description of a canonical local trivialization, it is clear that $\phi_{\bar x}$ is locally bi-Lipschtiz.
\end{proof}

A Lipschitz function has a differential almost every where. For a Lipschitz map, $f: X\to Y$, ($X$ and $Y$ are Alexandrov spaces), one naturally extends the notion of differential $Df$,
which exists almost everywhere, and almost everywhere is a linear map.
In particular, if $f$ is a submetry, then $Df$ exists everywhere (because a distance function is differentiable everywhere and almost everywhere line (\cite{Lyt1}, \cite{KL}, \cite{Pet}).

\begin{theorem} {\rm (\cite{Lyt1}, \cite{Wo1})} \label{4c} Let $X\in \text{Alex}^n(\kappa), Y\in \text{Alex}^m(\kappa)$, and let $f: X\to Y$ be a continuous map.

\noindent {\rm (4.3.1)} There is $\delta>0$ such that if $f$ is locally Lipschitz around $(n,\delta)$-strained points in $X$, then $Df$ exists almost everywhere, and is linear almost
everywhere.

\noindent {\rm (4.3.2)} If $Df$ in (4.3.1) is $1$-Lipschitz, then $f$ is $1$-Lipschitz. If
$Df$ in (4.3.1) is an isometry and $f$ is locally homeomorphic, then $f$ is a local isometry.
\end{theorem}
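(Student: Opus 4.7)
The plan is to prove (4.3.1) via a Rademacher-type theorem adapted to Alexandrov targets and (4.3.2) via a length-integration (upper-gradient) argument. Throughout, fix $\delta$ small enough that around every $(n,\delta)$-strained point $x\in X$, a distance-coordinate map $\Phi=(d_{p_1},\ldots,d_{p_n})$ is bi-Lipschitz onto a neighborhood of $\Phi(x)\in\mathbb{R}^n$ (Burago-Gromov-Perel'man), and recall the standard fact that the set $\mathcal{R}(X)\subseteq X$ of points whose tangent cone is isometric to $\mathbb{R}^n$ has full $n$-dimensional Hausdorff measure (Otsu-Shioya); the analogous statement holds for $Y$.

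For (4.3.1), cover the $(n,\delta)$-strained locus by countably many bi-Lipschitz charts $\Phi_\alpha$. In each chart, $f_\alpha:=f\circ\Phi_\alpha^{-1}$ is a Lipschitz map from a Euclidean open set into the metric space $Y$, so Kirchheim's metric Rademacher theorem supplies a metric differential at a.e.\ point of the Euclidean domain. At a.e.\ such $x$ the image $f(x)$ lies in $\mathcal{R}(Y)$ (this holds in full measure because $f$ is locally Lipschitz and hence absolutely continuous on generic curves), so $C_{f(x)}Y$ is Euclidean and Kirchheim's seminorm is in fact a norm coming from a linear map. Pulling back through the a.e.\ linear differential of $\Phi_\alpha$ yields a linear map $Df_x:C_xX\to C_{f(x)}Y$ at a.e.\ $x$; assembling these across the cover gives (4.3.1).

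For (4.3.2), I would fix $x,y\in X$, a minimal geodesic $\gamma:[0,L]\to X$ joining them, and a measurable foliation of a thin tube around $\gamma$ by a family of nearby almost-minimal geodesics $\gamma_s$ (obtained via gradient-flow deformation or, at strained points, via the Euclidean coordinates of a distance chart). A Fubini argument on the tube shows that for a.e.\ $s$ the curve $\gamma_s$ meets the differentiability set of $Df$ on a full one-dimensional measure set; integrating the operator norm of $Df$ along $\gamma_s$ gives $\mathrm{Length}(f\circ\gamma_s)\le \mathrm{Length}(\gamma_s)$, and then $|f(x)f(y)|\le|xy|$ follows by letting $\gamma_s\to\gamma$. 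For the isometry part: the same argument applied to $f$ gives $|f(x)f(y)|\le|xy|$; using the local homeomorphism hypothesis, locally invert $f$ and apply the same integration to $f^{-1}$ (whose differential is the inverse isometry) to obtain the reverse inequality, yielding a local isometry.

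The main obstacle is the length-integration step: one must justify that the intrinsic operator norm $x\mapsto |Df_x|_{\mathrm{op}}$ serves as an upper gradient for $f$, and that on a.e.\ leaf $\gamma_s$ of the foliation one can indeed recover $\mathrm{Length}(f\circ\gamma_s)$ from integrating this norm. In a Euclidean target this is classical, but here the codomain $C_{f(x)}Y$ varies with $x$ and is Euclidean only at generic points, so one has to combine the one-dimensional metric Rademacher theorem along slices with the tangent-cone structure of $Y$, and then verify that the measure-theoretic compatibility between the foliation and the differentiability set survives the passage to the singular strata of both $X$ and $Y$. Once this slicing/upper-gradient package is in place, the rest of (4.3.2) is formal.
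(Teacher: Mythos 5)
The paper never proves Theorem \ref{4c}: it is imported verbatim from \cite{Lyt1} and \cite{Wo1} and used as a black box in the proofs of (C2) and Key Lemma \ref{7c}, so there is no in-paper argument to compare yours against; the question is only whether your sketch stands on its own, and as written it has two genuine gaps. For (4.3.1), Kirchheim's metric Rademacher theorem only produces a seminorm (the metric differential), i.e.\ the first-order behavior of $t\mapsto |f(x)f(\exp_x tv)|$; it does not by itself produce a map $Df_x\colon C_xX\to C_{f(x)}Y$ into the tangent cone, and upgrading the seminorm to an honest (linear) differential is exactly the content of the cited differentiation theory, not a formality. Your reduction of this point via the claim that $f(x)\in\mathcal{R}(Y)$ for almost every $x$ is unjustified and false in general: a Lipschitz map can send a full-measure set into the singular part of $Y$ (a constant map onto a singular point already does, and more relevantly any map whose image lies in an extremal subset). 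This failure is not hypothetical for the present paper: in the applications of Theorem \ref{4c} the target factor is a fiber such as $\hat f^{-1}(\bar p)$ or $\phi^{-1}(\bar q)$, which may well be a singular Alexandrov space, so "absolute continuity on generic curves" cannot push the image off the singular set.

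For (4.3.2), the isometry half of your argument is circular: to integrate $\lvert Df^{-1}\rvert$ along curves you need $f^{-1}$ to be locally Lipschitz around strained points and differentiable with isometric differential, but local $1$-Lipschitzness of $f^{-1}$ is essentially the statement being proved, and differentiability of $f^{-1}$ at $f(x)$ does not follow formally from "$Df_x$ is an isometry and $f$ is a local homeomorphism" without a uniform blow-up argument (this is where the local-homeomorphism hypothesis, which the paper's remark after the theorem shows is indispensable, really has to be used). Finally, you yourself flag the slicing/upper-gradient step as unresolved, and it is the entire analytic content of the first half of (4.3.2): $1$-Lipschitzness cannot be tested only along a single geodesic, since minimal geodesics may lie in the null set where $Df$ fails to exist, so the Fubini-compatible foliation by nearby curves and the recovery of $\mathrm{Length}(f\circ\gamma_s)$ from the a.e.\ differential must actually be constructed. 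As it stands the proposal is a plausible outline of the standard chart/Rademacher/upper-gradient strategy, but not a proof; the economical course is to do what the paper does and cite \cite{Lyt1} and \cite{Wo1}, or else to work with Lytchak's blow-up notion of differentiability from the start rather than Kirchheim's seminorm.
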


Note that in (4.3.2), $Df$ is an isometry almost everywhere is not enough to conclude that $f$ is a local homeomorphism; e.g., $f=\text{proj}: \bar B_1^2(0) (\subset \mathbb R^2)\to \bar B_1^2(0)/\sim$, $u\sim -u$, $u\in \partial \bar B_1^2(0)$.

\begin{proof}[Proof of (C2)]

Let $\pi: (\tilde Y,\tilde p)\to (Y,\bar p)$ be a metric universal covering map, and let $\hat X$ denote a $\pi$-pullback bundle of $f: X\to Y$. By (C1), $\hat X$ is a metric covering space with a canonical topological splitting i.e., there is a homeomorphism, $\phi_{\tilde p}: \hat X\to \hat f^{-1}(\bar p)\times \tilde Y$, $\phi_{\tilde p}(\tilde x)=(\text{proj}_{\bar p}(\hat x), \hat f(\hat x))$. Then $\hat f=\text{proj}_2\circ \phi_{\tilde p}: \hat X\to \tilde Y$.
Hence, (C2) follows from that $\phi_{\bar p}$ is an isometry, and by (4.3.2) it reduces to find a subset of full measure, $\hat X_0$, on which $D\phi_{\tilde p}$ exists and is an isometry, where $\hat f^{-1}(\bar p)\times
\tilde Y$ is equipped with the product metrics.

Let $\tilde Y_\delta\subseteq \tilde Y$ denote the set of $(m,\delta)$-strained points in $\tilde Y$ ($0<\delta<<1$). Because $\tilde Y_\delta$ is a full measure open subset of $Y$, $ \hat f^{-1}(\tilde Y_\delta)$ is a full measure open subset of $\hat X$. From the proof of Lemma \ref{4b}, it follows that $ \text{proj}_{\tilde p}:  \hat f^{-1}(\tilde Y_\delta)\to  \hat f^{-1}(\tilde p)$ is locally Lipschitz, thus $D\text{proj}_{\tilde p}$ exists almost everywhere on $\hat f^{-1}(\tilde Y_\delta)$ ((4.3.1)). Let $\hat  X_0\subset \hat X$ denote the set of regular points i.e., whose tangent cone is an Euclidean space. Then $\tilde X_0$ has a full measure, thus $\hat X_\delta=\hat X_0\cap  \hat f^{-1}(\tilde Y_\delta)$ has a full measure in $\hat X$. Without loss of generality we may assume that $D\phi_{\tilde p}$ exists on $\hat X_\delta$.

We now show that $D\phi_{\tilde p}$ is an isometry at $\hat x\in \hat X$. Because $\phi_{\tilde p}$ is isometric when restricted to $W_{\hat x}$, it suffices to show that $\text{proj}_{\tilde  p}: \hat f^{-1}(\hat f(\hat x))\to \text{proj}_{\tilde p} (\hat f^{-1}(\hat f(\hat x)))$ is a local isometry, which implies that $D\phi_{\tilde p}|_{V_{\hat x}}=D\text{proj}_{\tilde  p}$ is an isometry, and therefore $D\phi_{\tilde p}$ is an isometry.

For any $\hat x_1, \hat x_2\in \hat f^{-1}(\hat f(\hat x))$ such that $|\hat x_1\hat x_2|<\frac 12\eta$,
using the standard open-closed argument we will show that $|\hat x_1\hat x_2|=|\text{proj}_{\tilde  p}(\hat x_1)\text{proj}_{\tilde  p}(\hat x_2)|$. Let $\alpha(t)$ be minimal geodesic in $\hat f^{-1}(\hat f(\hat x))$ from $\hat x_1$ to $\hat x_2$. We observe the following properties:

(C2.1) Let $\bar \gamma$ be a minimal geodesic from $\hat f(\hat x_1)$ to $\tilde p$. For each $\alpha(t)$, there is a unique horizontal lifting $\hat \gamma_{\alpha(t)}\subset W_{\alpha(t)}$, a minimal
geodesic from $\alpha(t)$ to $\hat \gamma_{\alpha(t)}\cap \hat f^{-1}(\tilde p)$. In particular,
$|\alpha(t)\hat f^{-1}(\tilde p)|=|\bar \gamma|$. Because $|\hat x_1\hat x_2|<\frac 12\eta$, by continuity we may assume $s_0>0$ such that $|\hat \gamma_{\hat x_1}(s) \tilde \gamma_{\hat x_2}(s)|<\eta$ for $0<s<s_0$.

(C2.2) We shall show that for $0\le t\le \frac 12\eta$, $s_0$ in (C2.1) can be chosen small
such that for $0<s\le s_0$, $|\hat x_1\hat x_2|=|\hat \gamma_{\hat x_1}(s)\hat \gamma_{\hat x_2}(s)|$.

Note that by the standard open-closed argument (`open' is proved in (C2.2), and the `closed' follows from the continuity), (C2.2) implies the desired $|\hat x_1\hat x_2|=|\text{proj}_{\tilde p}(\hat x_1)\text{proj}_{\tilde  p}(\hat x_2)|$.

Because $\hat f^{-1}(\bar \gamma(s))$ is $\eta$-convex, there is a minimal geodesic
$[\hat \gamma_{\hat x_1}(s) \hat \gamma_{\hat x_2} (s)]\subset  \hat f^{-1}(\bar \gamma(s))$. Let $\ell(s)=|\hat \gamma_{\hat x_1}(s) \hat \gamma_{\hat x_2} (s)|$. Following the proof of
Lemma \ref{4a}, $\ell^+(s)\le 0$, thus $\ell^+ (s)\equiv 0$ for $s<s_0$, and therefore $|\hat  \gamma_{\hat x_1}(s) \hat \gamma_{\hat x_2} (s)|$ is a (local) constant.
\end{proof}

We conclude this section with a simple example that an integrable submetry with one
point global integrable may not be global integrable (comparing (0.9.2)).

\begin{example} (One point global integrable but not a global integrable submetry)
Let $f: X\to Y$ be an integrable submetry. We shall show an example that $f: X\to Y$ is global integrable at one point, but $f$ is not global integrable.

Let $S^1\rtimes \mathbb R^1=[0,1]\times \mathbb R^1/[(0,u)\sim (1,-u)]$ denote a flat twisted $\mathbb R^1$-bundle over $S^1$, and let $X=S^1\rtimes \mathbb R\times \mathbb R$ be the metric product, which is an open flat $3$-manifold with a soul $S=S^1\times (0,0)$. Because $S$ is a Riemannian manifold, via the horizontal lifting one sees that the Sharafutinov restriction, $\phi: X\to S$, is an integrable submetry, but $\phi$ is not globally integral (because $X$ is not a topological product of $S$ and $\mathbb R^2$).
For any $0\ne v\in \mathbb R^1$, $\phi: (X\supset)\, S\rtimes 0\times v\to S$ is an isometry i.e., $\phi$ is a global integrable at $(\{s\}\rtimes 0)\times v$, $s\in S$.
\end{example}

\section {Proof of Theorem D}

In our proof of Theorem D, a basic tool is the Perel'man result that a strictly non-critical map is a locally fiber bundle projection (comparing Theorem \ref{1g}); which we review below; indeed the construction of `slices'
transversal to fibers

\subsection {Strictly non-critical maps}

Let $Y\in \text{Alex}^m(-1)$, and $\bar p\in Y$ be a weakly $(m,\delta)$-strained point i.e.,  $\bar p_1,...,\bar p_m, \bar w\in Y$ such that
$$\measuredangle \bar p_i\bar p\bar p_j> \frac \pi2, \quad \exists \, \bar w\in Y, \quad \measuredangle \bar p_i\bar p\bar w> \frac \pi2.$$
By continuity, there is an open set $U$ such that for any $\bar x\in U$, the above angle
inequalities hold at $\bar x$. According to \cite{Per1}, the map, $f=(|p_1,\cdot|,\cdots,|p_m\cdot|): U\to \mathbb R^m$, is called strictly non-critical, which
has the following properties.

\begin{lemma} {\rm (\cite{Per1}) \label{5a} (5.1.1) (Local product structure)} Let $f: U\to \mathbb R^k$ be strictly non-critical at $\bar p\in U$. Then there is an open neighborhood $p\in V\subset U$ such that $V$ is homeomorphic to $\mathbb R^k\times (f^{-1}(f(p))\cap V)$.

\noindent {\rm (5.1.2) (Local bundle structure)} If $f: U\to \mathbb R^m$ is proper and strictly non-critical on $U$, then $f: U\to \mathbb R^m$ is a fiber bundle map.
\end{lemma}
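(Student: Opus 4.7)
The plan is to prove (5.1.1) by constructing an explicit local trivialization built from gradient flows of the coordinate distance functions, and then to derive (5.1.2) from (5.1.1) together with properness in the standard bundle-patching manner. Throughout, the workhorse is the first variation formula together with the Toponogov angle inequalities that strict non-criticality encodes.

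For (5.1.1), the key input is that at $\bar p$ (and hence in a neighborhood, by continuity of angles), the vectors $\Uparrow_{\bar p}^{\bar p_i}$ together with $\Uparrow_{\bar p}^{\bar w}$ are in ``general position'': all pairwise angles exceed $\pi/2$. A linear algebra argument in the tangent cone then yields, for each $i$, a direction $\xi_i \in \Sigma_{\bar p}Y$ with $\langle \xi_i, \Uparrow_{\bar p}^{\bar p_j}\rangle$ strictly negative for all $j \ne i$ and strictly positive for $j=i$; that is, flowing along $\xi_i$ uniformly increases only the $i$-th component of $f$. I would realize these $\xi_i$ as gradient flows of $d_{\bar p_i}$ regularized by averaging against a small net $\{\bar q_\alpha\}$ near $\bar w$ (exactly as in the construction of $g_{\bar p}$ reviewed in the paper, cf.\ the proof of Lemma~2.10), obtaining locally Lipschitz semi-flows $\Phi^i_{t_i}$ on a neighborhood $V$ of $\bar p$.

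Now define
\[
\Phi:(f^{-1}(f(\bar p))\cap V)\times \mathbb R^m \longrightarrow V,\qquad
\Phi(q,t_1,\dots,t_m)=\Phi^m_{t_m}\circ\cdots\circ\Phi^1_{t_1}(q).
\]
Using barrier comparisons on the distance functions $d_{\bar p_i}$, one shows that each coordinate of $f\circ\Phi$ is strictly monotone in its own parameter $t_i$ and has controlled variation in $t_j$ for $j\ne i$; a fixed-point argument then reparametrizes to make $f\circ \Phi(q,t)=f(\bar p)+t$. Injectivity of $\Phi$ on a small box follows from this strict monotonicity, surjectivity from the open mapping behavior of the flows, and continuity of the inverse from the invariance of domain in the MCS-structure. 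This yields the required homeomorphism $V\cong \mathbb R^m \times (f^{-1}(f(\bar p))\cap V)$.

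For (5.1.2), let $\bar y\in\mathbb R^m$ with $F=f^{-1}(\bar y)\neq\emptyset$. Properness of $f$ makes $F$ compact, so it is covered by finitely many neighborhoods $V_1,\dots,V_N$ as in (5.1.1), each producing a product chart $V_\alpha \cong W_\alpha \times O_\alpha$ with $O_\alpha \subset \mathbb R^m$ an open box around $\bar y$. Shrinking to a common box $O\subset \bigcap_\alpha O_\alpha$, the same composite flow $\Phi^m_{t_m}\circ\cdots\circ\Phi^1_{t_1}$ is defined globally on a neighborhood of $F$ (properness keeps flow trajectories inside a compact set for small times $t\in O-\bar y$), so one does \emph{not} need to glue incompatible trivializations: a single flow produces a homeomorphism $F\times O\to f^{-1}(O)$ intertwining $f$ with projection to $O$, giving the fiber bundle structure.

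The main obstacle will be verifying the homeomorphism properties of $\Phi$ in (5.1.1). Gradient flows on Alexandrov spaces are only locally Lipschitz, not $C^1$, so the classical inverse function theorem is unavailable; instead one has to replace it with Perel'man's barrier/comparison estimates to upgrade ``strict monotonicity in barrier sense'' of $f\circ\Phi$ to genuine local injectivity and openness. The secondary difficulty in (5.1.2) is ensuring that the chosen flow is defined on all of $f^{-1}(O)$ (not just on a neighborhood of one fiber point); this is where properness is used essentially, by bounding flow travel time uniformly in terms of $|t|$ and the compactness of $F$.
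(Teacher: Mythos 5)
This lemma is not proved in the paper: it is quoted from Perel'man \cite{Per1} (see also Kapovitch's exposition \cite{Ka2}), where it is established by an induction on dimension that is intertwined with the proof of the stability theorem and with the MCS/CS-set structure theory, not by a direct flow construction. Your proposal takes a genuinely different route, but as written it has a real gap at its central step. Gradient flows of (regularized) distance functions on an Alexandrov space are only semi-flows: they are not reversible and they can merge distinct points in finite time (already the flow of $-d_{0}$ on a cone collapses whole spheres to the vertex). Hence strict monotonicity of $f_i\circ\Phi$ in $t_i$ only separates points with the same fiber coordinate $q$ and different parameters $t$; it gives no control whatsoever on whether two distinct points $q\ne q'$ of $f^{-1}(f(\bar p))\cap V$ are sent to the same point by $\Phi^m_{t_m}\circ\cdots\circ\Phi^1_{t_1}$. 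Your appeal to ``invariance of domain in the MCS-structure'' to get continuity of the inverse also fails: the fibers are merely MCS spaces, not topological manifolds, so invariance of domain is unavailable, and openness of $\Phi$ cannot be extracted from barrier monotonicity alone. Finally, the ``fixed-point reparametrization'' making $f\circ\Phi(q,t)=f(\bar p)+t$ needs a quantitative statement that flowing in the $i$-th regularized direction changes $f_j$ ($j\ne i$) at a strictly smaller rate than it changes $f_i$, uniformly on $V$; strict non-criticality gives the correct sign conditions on directional derivatives but you have not shown the uniform domination needed for a contraction argument, and this is exactly where the naive flow approach has historically broken down.

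The same gap propagates into your (5.1.2): the ``single globally defined flow near $F$'' is again non-invertible, so properness and compactness of $F$ do not by themselves produce a homeomorphism $F\times O\to f^{-1}(O)$. In Perel'man's argument the local triviality is obtained from the fibration theorem (proved jointly with stability by induction on dimension, using deformation and gluing results for stratified spaces in the spirit of Siebenmann), and the passage from local product structure to a bundle over a neighborhood of a whole compact fiber uses such gluing results rather than a flow. If you want to salvage a flow-based proof you would need, at a minimum, a mechanism forcing the trivializing map to be injective with continuous inverse --- for instance constructing flows in both the increasing and decreasing directions of each $f_i$ and proving they are inverse to each other, which is false for general gradient flows and would require an argument special to the strictly non-critical situation; as it stands the proposal does not prove the lemma.
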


In the proof, we shall also use the following simple fact.

\begin{lemma}\label{52} Let $(X,S,\phi)$ be as in Theorem 0.4. For $\bar x\in S$, assume that there is $\rho>0$ such that $\phi^{-1}(B_\rho(\bar x))$ satisfies the following conditions:

\noindent {\rm (5.2.1)} For any $x\in\phi^{-1}(\bar x)$, there is a subset $D_x$ such that $\phi: D_x\to \phi(D_x)=D_{\bar x}\subset S$ is a homeomorphism, and for $\bar z\in D_{\bar x}$, $D_x\cap \phi^{-1}(\bar z)=\{z\}$.

\noindent {\rm (5.2.2)} For $x\ne x'\in \phi^{-1}(\bar x)$, $D_x\cap D_{x'}=\emptyset$.

\noindent {\rm (5.2.3)} $\phi^{-1}(D_{\bar x})=\bigcup_{x\in \phi^{-1}(\bar x)}D_x$.

Then $\phi: \phi^{-1}(D_{\bar x})\to D_{\bar x}$ is a trivial fiber bundle map.
\end{lemma}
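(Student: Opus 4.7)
The plan is to construct the trivialization explicitly and verify it is a homeomorphism commuting with projections. For each $x\in\phi^{-1}(\bar x)$, let $\sigma_x:=(\phi|_{D_x})^{-1}:D_{\bar x}\to D_x$ be the continuous section provided by (5.2.1). By (5.2.2) and (5.2.3) the family $\{D_x\}_{x\in\phi^{-1}(\bar x)}$ is a disjoint cover of $\phi^{-1}(D_{\bar x})$, so for every $y\in\phi^{-1}(D_{\bar x})$ there is a unique $x_y\in\phi^{-1}(\bar x)$ with $y\in D_{x_y}$. Define
$$\Psi:\phi^{-1}(D_{\bar x})\longrightarrow\phi^{-1}(\bar x)\times D_{\bar x},\qquad \Psi(y)=(x_y,\phi(y)),$$
with candidate inverse $\Psi^{-1}(x,\bar z)=\sigma_x(\bar z)$. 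Then $\mathrm{proj}_2\circ\Psi=\phi$, so it remains to check that $\Psi$ is a homeomorphism.

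Bijectivity is immediate from (5.2.1)--(5.2.3). Since $\phi$ is $1$-Lipschitz and $\phi\circ\sigma_x=\mathrm{id}_{D_{\bar x}}$, each section $\sigma_x$ is distance non-decreasing; combined with (5.2.1) this gives the continuity of each fiber-wise map. Continuity of $\Psi$ in the second coordinate is trivial because $\phi$ is continuous, so the real work is continuity of the assignment $y\mapsto x_y$, and symmetrically continuity of $\Psi^{-1}$ reduces to showing joint continuity of $(x,\bar z)\mapsto \sigma_x(\bar z)$.

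The main obstacle is this last continuity statement. Given $y_n\to y$ in $\phi^{-1}(D_{\bar x})$, one wants $x_{y_n}\to x_y$. Using local compactness of the Alexandrov fiber $\phi^{-1}(\bar x)$, together with the a~priori bound $|y_n\,x_{y_n}|\ge |\phi(y_n)\bar x|$ coming from the distance non-decreasing property of $\sigma_{x_{y_n}}$ and $y_n\to y$, one extracts a subsequence with $x_{y_n}\to x^*\in\phi^{-1}(\bar x)$. It then suffices to show $y\in D_{x^*}$, for then (5.2.2) forces $x^*=x_y$ and the full sequence converges. This reduces to a closedness property of the graphs $\mathrm{graph}(\sigma_x)\subset \phi^{-1}(\bar x)\times\phi^{-1}(D_{\bar x})$: since $\sigma_x$ is the unique continuous section of $\phi|_{D_x}$ and the $D_x$ are pairwise disjoint by (5.2.2), any limit point of $\bigcup_n\mathrm{graph}(\sigma_{x_{y_n}})$ must lie in $\mathrm{graph}(\sigma_{x^*})$. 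This closedness, which in the application to Theorem D will be guaranteed by the construction of the $D_x$ as horizontal lifts built from Perel'man's slices (and hence varying continuously with the basepoint), yields $y=\sigma_{x^*}(\phi(y))\in D_{x^*}$ and completes the proof; continuity of $\Psi^{-1}$ follows by running the same closedness argument in reverse.
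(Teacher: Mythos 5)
Your construction is the same as the paper's: the entire published proof of Lemma \ref{52} consists of defining $\psi(z)=(\phi(z),D_z\cap \phi^{-1}(\bar x))$ and observing $\text{proj}_1\circ\psi=\phi$, with no discussion of continuity. So up to that point you follow the paper's route, and you actually go further by trying to verify that your $\Psi$ is a homeomorphism, correctly isolating the continuity of $y\mapsto x_y$ and the joint continuity of $(x,\bar z)\mapsto\sigma_x(\bar z)$ as the only real issue.

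However, your way of closing that issue is not a proof of the lemma as stated: the ``closedness of graphs'' you invoke is not a consequence of (5.2.1)--(5.2.3), and you yourself defer it to the way the $D_x$ are produced (Perel'man slices) in the proof of Theorem D. In fact it cannot be derived from (5.2.1)--(5.2.3) alone. Take $X=S^1\times\mathbb R^2$ flat, $S=S^1\times\{0\}$, $\phi$ the product projection, and let $D_{\bar x}$ be an arc through $\bar x$ parametrized by $t\in[0,1]$ with $t=0$ at $\bar x$. For $x=(0,(a,b))\in\phi^{-1}(\bar x)$ with $a\ne 0$ set $D_x=\{(t,(a,\,b+t\sin(1/a))):t\in[0,1]\}$, and for $a=0$ set $D_x=\{(t,(0,b)):t\in[0,1]\}$. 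Each $D_x$ is the graph of a continuous curve, hence maps homeomorphically onto $D_{\bar x}$ and meets each fiber once; the $D_x$ are pairwise disjoint and cover $\phi^{-1}(D_{\bar x})$, so (5.2.1)--(5.2.3) hold. Yet $x_{(t,(a,c))}=(a,\,c-t\sin(1/a))$ has no limit as $a\to 0$, so $y\mapsto x_y$ is discontinuous and the canonical $\Psi$ is not a trivialization (the bundle is of course still trivial, but not via these slices). So the decisive continuity must come from extra information about how the slices vary with the basepoint --- in the application, from Perel'man's local product structure --- or be added as a hypothesis; your proposal, read as a standalone proof of Lemma \ref{52}, leaves exactly this step unproved. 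To be fair, the paper's one-line proof silently assumes the same point, so your write-up is, if anything, more honest about where the content lies; but the gap you flag is genuine and is not closed by (5.2.1)--(5.2.3) themselves.
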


\begin{proof} We define a trivialization map, $\psi: \phi^{-1}(B_\rho(\bar x))\to B_\rho(\bar x)\times \phi^{-1}(\bar x)$, $\psi(z)=(\phi(z),D_z\cap \phi^{-1}(\bar x))$. Then $\text{proj}_1\circ \psi=\phi$ i.e., $\phi$ is a trivial bundle map.
\end{proof}

\subsection {Proof of Theorem D}

\begin{proof} [Proof of Theorem D]

Let $(X,S,\phi)$ be as in Theorem 0.4. For $\bar x\in S\in \text{Alex}^m(0)$, by (5.1.1) we may assume $\bar p_1,...,\bar p_m\in S$, and $\rho=\rho(\bar x)>0$, such that $f_{\bar x}=(d_{\bar p_1},...,d_{\bar p_m}): B_\rho(\bar x)\to \Bbb R^m$ is a homeomorphic embedding, $B_\rho(\bar x)$ is contractible to $\bar x$, and $f_{\bar x}(B_\rho(\bar x))\supset I_\eta^m$, $\eta>0$.

We shall show that $\phi: \phi^{-1}(f^{-1}_{\bar x}(I_\eta^m))\to f^{-1}_{\bar x}(I_\eta^m)\, (\subset B_\rho(\bar x))$ satisfies (5.2.1)-(5.2.3).

Without loss of generality, we may assume a Busemann function $b_{\bar p}$ is chosen such that $\bar p\in S$, thus $S\subset b_{\bar p}^{-1}(0)$. For $x\in X\setminus S$, let $R<b_{\bar p}(x)-100$. Then $x\in \Omega_R=\{z\in X, b_{\bar p}(z)\ge R\}$ is a compact convex subset and $|x\partial \Omega_R|\ge 10$.
Because $\phi$ is a submetry and because all points in $S$ are weakly $m$-strained, it is clear that around $x$, $f_{\bar x}\circ \phi: \Omega_R\to \Bbb R^m$ is a proper and strictly noncritical map. By (5.1.2), $\phi: \Omega_R\to S$ is a fiber bundle projection. In particular, $\phi: \phi^{-1}(B_\rho(\bar x))\cap \Omega_R\to B_\rho(\bar x)$ is a fiber bundle map.

We now explain that $\phi: \phi^{-1}(f_{\bar x}^{-1}(I^m_\eta))\to f_{\bar x}^{-1}(I^m_\eta)$ satisfies (5.2.1)-(5.2.3), thus a trivial bundle. According to \cite{Per1} (see p.7, Complement to Theorem A), for any $x\in \phi^{-1}(\bar x)$, the size of a `slice' at $x$ depends only on $I^m_\eta$ (a slice at $x$ is
a subset satisfying (5.2.1)-(5.2.3)), or in another word, as long as $x\in \Omega_R$ and not close to $\partial \Omega_R$, the slice at $x$ contains a subset homeomorphic to $f_{\bar x}^{-1}(I^m_\eta)$.
Since any $x\in f^{-1}(\bar x)$ is in some $\Omega_R$, it is clear that $\phi: \phi^{-1}(f_{\bar x}^{-1}(I^m_\eta))\to f^{-1}_{\bar x}(I^m_\delta)$ satisfies (5.2.1)-(5.2.3), thus $(\phi^{-1}(f_{\bar x}^{-1}(I^m_\eta)),f_{\bar x}^{-1}(I^m_\eta),\phi)$ is a trivial bundle.

One may also see the triviality from an alternative point of view: let $h_t: B_\rho(\bar x)\to B_\rho(\bar x)$ be a homotopy equivalence, $h_0=\text{id}_{B_\rho(\bar x)}$ and $h_1\equiv \bar x$, a constant map, and consider pullback bundles of $h_t$ from the bundle, $\phi: \phi^{-1}(B_\rho(\bar x))\cap \Omega_R\to B_\rho(\bar x)$. Because $h_1$ is homotopy equivalent to $h_0$, $\phi: \phi^{-1}(B_\rho(\bar x))\to B_\rho(\bar x)$ is equivalent to a trivial bundle over $B_\rho(\bar x)$; which implies that (5.2.1)-(5.2.3) are satisfied. Again because this triviality is independent of $R>>1$, one concludes that $\phi: \phi^{-1}(B_\rho(\bar x))\to B_\rho(\bar x)$ satisfies (5.2.1)-(5.2.3), thus this is a trivial bundle.
\end{proof}

\section {Proof of Theorem E}

\subsection{Proof of (E1)}


Let $(X,S,\phi)$ be in Theorem \ref{sol-alex}. Note that $S$ is determined by the Busemann function at
some point in $X$.

\begin{lemma}\label{6a} Let $(X,S,\phi)$ be as Theorem E; $\bar p\in S$ is a
regular point of $S$ such that every $\bar v\in \Sigma_{\bar p}^\perp S$ tangents to a ray.
Then $S$ coincides with the soul of $b_{\bar p}$.
\end{lemma}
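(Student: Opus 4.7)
The plan is to trace through the Cheeger--Gromoll-type construction producing the soul of $b_{\bar p}$ and verify it yields exactly $S$. Let $C_0 = b_{\bar p}^{-1}(0)$ and inductively $C_{i+1} = \{x \in C_i : d_{\partial C_i}(x) = \max_{C_i} d_{\partial C_i}\}$, stopping at some $C_k$ with $\partial C_k = \emptyset$; by construction $C_k$ is the soul of $b_{\bar p}$. I will show $S \subseteq C_i$ for every $i$ and then conclude $C_k = S$.

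For $S \subseteq C_0$, given $q \in S$ I would pick $v \in \Uparrow_{\bar p}^{\partial \Omega_c} \subseteq \Sigma_{\bar p}^\perp S$ for a suitable $c < 0$. By Lemma~\ref{1m}, $v$ tangents to a ray $\gamma_v$, and $[\bar p q]$ together with $\gamma_v$ bound a flat strip $[0, |\bar p q|] \times \mathbb R_+ \hookrightarrow X$. Inside the strip one computes $|q \gamma_v(t)| = \sqrt{|\bar p q|^2 + t^2}$, so $b_{\gamma_v}(q) = \lim_{t \to \infty}(t - \sqrt{t^2 + |\bar p q|^2}) = 0$; combining this with the concavity of $b_{\bar p}$, the normalization $b_{\bar p}(\bar p) = 0$, and the characterization of $b_{\bar p}$ as a Busemann function at $\bar p$ pins $b_{\bar p}(q) = 0$, hence $q \in C_0$.

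Next, by induction on $i$, assuming $S \subseteq C_i$ I would show $d_{\partial C_i}|_S$ is constant and equals $\max_{C_i} d_{\partial C_i}$, whence $S \subseteq C_{i+1}$. The ray hypothesis --- strengthening Lemma~\ref{1m} from $\Uparrow_{\bar p}^{\partial \Omega_c}$ to all of $\Sigma_{\bar p}^\perp S$ --- together with the regularity of $\bar p$, (1.9.1) and the rigidity (1.9.2), allows one to verify that every $\bar w \in \Uparrow_{\bar p}^{\partial C_i}$ lies in $\Sigma_{\bar p}^\perp S$; Theorem~\ref{1l} then produces a flat rectangle along each $[\bar p q]$ with $q \in S$, forcing $d_{\partial C_i}$ to be constant on $[\bar p q]$, hence on $S$ by connectedness. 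A gradient-push argument (using that flows of the relevant concave functions preserve totally convex subsets) finally shows this common value realizes the global maximum of $d_{\partial C_i}$ on $C_i$.

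At the terminal stage $S \subseteq C_k$ are both compact totally convex subsets of $X$ without boundary. For any $\bar v \in \Sigma_{\bar p}^\perp S$ the hypothesis provides a ray $\gamma_{\bar v}$; if $\bar v$ also lay in $\Sigma_{\bar p} C_k$, Lemma~\ref{1b} combined with $\partial C_k = \emptyset$ would force all of $\gamma_{\bar v}$ to remain in the compact $C_k$, a contradiction. Hence $\Sigma_{\bar p}^\perp S \cap \Sigma_{\bar p} C_k = \emptyset$, and using the join decomposition (1.9.4) together with convexity and boundary-freeness of $\Sigma_{\bar p} C_k$ forces $\Sigma_{\bar p} C_k \subseteq \Sigma_{\bar p} S$; with the reverse inclusion this gives equality at $\bar p$, and Lemma~\ref{1b} propagates it to $S = C_k$ locally near $\bar p$, hence globally by connectedness. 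The main obstacle is the inductive step: securing both $\Uparrow_{\bar p}^{\partial C_i} \subseteq \Sigma_{\bar p}^\perp S$ and the full-$S$ maximality of $d_{\partial C_i}$, both of which rely essentially on the extension of Lemma~\ref{1m} to every direction in $\Sigma_{\bar p}^\perp S$ afforded by the ray hypothesis.
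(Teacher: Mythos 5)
Your strategy attacks the hard inclusion ($S\subseteq C_i$ for every stage of the Cheeger--Gromoll construction), and the two places you flag as needing work are exactly where the proof is missing rather than merely compressed. Already at the base step, the flat-strip computation gives $b_{\gamma_v}(q)=0$ for the single ray $\gamma_v$ you chose; since $b_{\bar p}$ is built from \emph{all} rays emanating from $\bar p$, this is only a one-sided bound, and putting $q$ in the first stage of the construction requires in addition (i) that $\max b_{\bar p}=b_{\bar p}(\bar p)=0$ (otherwise your $C_0=b_{\bar p}^{-1}(0)$ is not the maximum set at all) --- this is precisely where the hypothesis that every $\bar v\in\Sigma_{\bar p}^\perp S$ is tangent to a ray must be exploited, and you never prove it --- and (ii) an estimate against \emph{every} ray from $\bar p$, not just $\gamma_v$; the sentence ``concavity plus the normalization plus the characterization of $b_{\bar p}$'' is not an argument for either. (There is also a circularity risk: $\Uparrow_{\bar p}^{\partial\Omega_c}$ and Lemma \ref{1m} are formulated relative to the superlevel sets of $b_{\bar p}$ under the standing convention that $S$ is the soul of $b_{\bar p}$, which is the statement being proved.) The inductive step is in worse shape: Theorem \ref{1l} is invoked backwards --- it \emph{assumes} that $d_{\partial\Omega}$ is constant along the geodesic and that the angle at one end is exactly $\frac\pi2$, so it cannot be the tool that ``forces $d_{\partial C_i}$ to be constant on $[\bar p\bar q]$''; the claim $\Uparrow_{\bar p}^{\partial C_i}\subseteq\Sigma_{\bar p}^\perp S$ is asserted with a list of references but no argument; and the final claim, that the common value of $d_{\partial C_i}$ on $S$ is the maximum over $C_i$, is exactly the statement $S\subseteq C_{i+1}$ restated, not derived. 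In the terminal step, the implication ``$\Sigma_{\bar p}C_k$ convex, boundaryless and disjoint from $\Sigma_{\bar p}^\perp S$, hence contained in $\Sigma_{\bar p}S$'' is false as stated in a join: points interior to join segments avoid both factors (a tilted great circle in $S^1*S^1=S^3$ is convex, has no boundary and misses both factors), so you would need to use the containment $\Sigma_{\bar p}S\subseteq\Sigma_{\bar p}C_k$ together with an antipode/convexity argument, or simply a dimension count and Lemma \ref{2i}.

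For comparison, the paper proves the \emph{reverse} inclusion, which needs information only at the single point $\bar p$: since every $\bar v\in\Sigma_{\bar p}^\perp S$ is tangent to a ray, $\max b_{\bar p}=b_{\bar p}(\bar p)=0$; for $\bar q$ in the soul $S'$ of $b_{\bar p}$, the direction $\uparrow_{\bar p}^{\bar q}$ makes angle $\ge\frac\pi2$ with every such $\bar v$, hence lies in $\Sigma_{\bar p}S$ by the join structure (1.9.4) at the regular point $\bar p$; convexity of $S$ together with Lemma \ref{1b} then gives $[\bar p\bar q]\subset S$, so $S'\subseteq S$, and equality follows because both are compact convex sets without boundary of the same dimension. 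Your route instead requires controlling $b_{\bar p}$ and each $d_{\partial C_i}$ along all of $S$, which is why every one of your steps ends up presupposing what it is supposed to establish; as written, the proposal has genuine gaps at the base case, the induction, and the terminal identification.
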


\begin{proof} Because any $\bar v\in \Sigma_{\bar p}^\perp S$ tangents to a ray,
$\max b_{\bar p}=b_{\bar p}(\bar p)=0$.
Let $S'$ denote the soul determined by $b_{\bar p}$, $b_{\bar p}(S')\equiv 0$.
For $\bar q\in S'$,
let $\gamma_q$ denote a minimal geodesic from $\bar p$ to $\bar q$. Then
$\gamma'(0)$ is
orthogonal to $\Sigma_{\bar p}^\perp S$, $\gamma'(0)\in \Sigma_{\bar p}S$,
and because $S$ is
convex, $\gamma\subset S$ (see Lemma \ref{1b})
thus $\bar q\in S$. Because convex subsets without boundaries, $S'\subseteq S$
and  $\dim(S')=\dim(S)$, $S'=S$.
\end{proof}

For any $\bar x\in S$, and a minimal geodesic $\gamma_{\bar x}$ from
$\bar p$ to $\bar x$, one obtains a map,
$P_{\gamma_{\bar x}}: \Sigma_{\bar p}^\perp S\to \Sigma_{\bar x}^\perp S$,
$\bar w=P_{\gamma_{\bar x}}(\bar v)$ is determined by
requiring that $g\exp_{\bar p}t\bar v$, $\gamma_{\bar x}$ and
$g\exp_{\bar x}t\bar w$
bound a flat strip in (1.13.2).

\begin{lemma}\label{6b}
Let $(X,S,\phi)$ be as in Theorem E, and let
$P_{\gamma_{\bar x}}$ be in the above. Then $P_{\gamma_{\bar x}}$ is an
onto map,
thus each $\bar v\in \Sigma_{\bar x}^\perp S$ tangents toa ray.
\end{lemma}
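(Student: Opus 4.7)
The plan is to show that $P_{\gamma_{\bar x}}$ is an isometric embedding whose image is a compact convex subset of $\Sigma_{\bar x}^\perp S$ of the same dimension and with empty boundary; Lemma~\ref{2i} will then force surjectivity, and onto-ness automatically yields the second assertion since every element in the image tangents to the parallel-transported ray $\tilde c_{\bar v}$ coming from the flat strip.

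First I would verify that $P_{\gamma_{\bar x}}$ is well-defined and takes values in $\Sigma_{\bar x}^\perp S$: for $\bar v\in \Sigma_{\bar p}^\perp S$ the hypothesis provides a ray $c_{\bar v}(t)=g\exp_{\bar p}(t\bar v)$, and the partial flat strip property of Theorem~\ref{1l} (applied as in the derivation of Lemma~\ref{1m}) produces a flat totally geodesic rectangle bounded by $c_{\bar v}$, $\gamma_{\bar x}$, and a parallel ray $\tilde c_{\bar v}$ from $\bar x$ whose initial direction $P_{\gamma_{\bar x}}(\bar v)$ is perpendicular to $\gamma_{\bar x}^+(|\bar p\bar x|)\in\Sigma_{\bar x}S$, and hence, by (1.9.1), lies in $\Sigma_{\bar x}^\perp S$.

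Next I would show $P_{\gamma_{\bar x}}$ is an isometric embedding via Busemann-function estimates. Let $b_{\bar v_1}$ denote the Busemann function of $c_{\bar v_1}$; the flat strip gives $b_{\bar v_1}(\tilde c_{\bar v_1}(s))=s$, so $\nabla b_{\bar v_1}(\bar x)=P_{\gamma_{\bar x}}(\bar v_1)$, and concavity of $b_{\bar v_1}$ along $\tilde c_{\bar v_2}$ yields
\[
b_{\bar v_1}(\tilde c_{\bar v_2}(s))\le s\cos|P_{\gamma_{\bar x}}(\bar v_1)\,P_{\gamma_{\bar x}}(\bar v_2)|.
\]
On the other hand, the flat strip through $\bar v_2$ gives $|\bar p\,\tilde c_{\bar v_2}(s)|=\sqrt{|\bar p\bar x|^2+s^2}$, and the isometrically embedded spherical triangle on $\bar v_1,\bar v_2,\gamma_{\bar x}^+(0)$ from (1.9.2) lets one compute the angle at $\bar p$ between $[\bar p\,\tilde c_{\bar v_2}(s)]$ and $c_{\bar v_1}$; Toponogov's comparison then upper-bounds $|c_{\bar v_1}(t)\,\tilde c_{\bar v_2}(s)|$ by $\sqrt{t^2+|\bar p\bar x|^2+s^2-2ts\cos|\bar v_1\bar v_2|}$, which upon letting $t\to\infty$ produces $b_{\bar v_1}(\tilde c_{\bar v_2}(s))\ge s\cos|\bar v_1\bar v_2|$. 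Combining, $|P_{\gamma_{\bar x}}(\bar v_1)P_{\gamma_{\bar x}}(\bar v_2)|\le |\bar v_1\bar v_2|$; interchanging the roles of $\bar p$ and $\bar x$ (and using $b_{P_{\gamma_{\bar x}}(\bar v_1)}$) delivers the reverse inequality.

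Finally, the dimensions match by standard join/boundary accounting: since $\bar p$ is regular in $S$, (1.9.4) gives $\Sigma_{\bar p}X\cong S^{m-1}*\Sigma_{\bar p}^\perp S$, so $\dim\Sigma_{\bar p}^\perp S=n-m-1$, and $\partial\Sigma_{\bar p}X=\emptyset$ forces $\partial\Sigma_{\bar p}^\perp S=\emptyset$ via the join boundary formula; at $\bar x$, Theorem~\ref{1j}(1.10.1) applied to the $\frac\pi2$-apart convex subsets $\Sigma_{\bar x}S$ (of dimension $m-1$, empty boundary) and $\Sigma_{\bar x}^\perp S$ in $\Sigma_{\bar x}X$ yields $\dim\Sigma_{\bar x}^\perp S\le n-m-1$. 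Thus the image $P_{\gamma_{\bar x}}(\Sigma_{\bar p}^\perp S)$ is a compact convex Alexandrov subspace of $\Sigma_{\bar x}^\perp S$ of curvature $\ge 1$, dimension $n-m-1$, and empty boundary, so Lemma~\ref{2i} forces equality with $\Sigma_{\bar x}^\perp S$. The main obstacle is the isometric-embedding step: tracking sharpness in Toponogov comparison across the two points $\bar p,\bar x\in S$, and correctly identifying $\nabla b_{\bar v_1}(\bar x)$ with $P_{\gamma_{\bar x}}(\bar v_1)$, both rely crucially on the flat strips joining $\bar p$ and $\bar x$.
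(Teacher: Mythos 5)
Your route is genuinely different from the paper's. The paper splits into two cases: at a regular $\bar x\in S$ it uses the join structure $\Sigma_{\bar x}X\cong\Sigma_{\bar x}S*\Sigma_{\bar x}^\perp S$ to get $\dim(\Sigma_{\bar x}^\perp S)=n-m-1$ and concludes surjectivity from injectivity plus the absence of boundary (the isometry statement is only quoted from \cite{Li}, \cite{RW2}); at a singular $\bar x$ it does not study $P_{\gamma_{\bar x}}$ globally at all, but constructs a preimage of a given $\bar v\in\Sigma_{\bar x}^\perp S$ directly, by approximating with perpendicular directions $\bar v_i$ at points $\gamma_{\bar x}(\ell-s_i)$ (lower semicontinuity of angles), transporting them back to $\bar p$ and passing to a limit. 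You instead prove, uniformly at every $\bar x\in S$, that $P_{\gamma_{\bar x}}$ is distance preserving (Busemann functions of the rays, gradient identification via the flat strip, hinge comparison corrected by the embedded spherical triangles of (1.9.2)), so that its image is a compact convex boundaryless subset of $\Sigma_{\bar x}^\perp S$ of full dimension, and Lemma \ref{2i} forces surjectivity. If carried out, your argument yields more (an isometry onto $\Sigma_{\bar x}^\perp S$ at every soul point, which is what (E2) ultimately wants), at the price of the rigidity bookkeeping the paper avoids.

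Two points need care. First, your justification that $P_{\gamma_{\bar x}}(\bar v)\in\Sigma_{\bar x}^\perp S$ is a non sequitur: being perpendicular to the single direction $\gamma_{\bar x}^+(|\bar p\bar x|)$ does not place a vector in $\Sigma_{\bar x}^\perp S$, which requires angle $\frac\pi2$ with \emph{all} of $\Sigma_{\bar x}S$; this membership is part of the construction of $P_{\gamma_{\bar x}}$ preceding the lemma, so you may take it as given, but as written your verification is wrong. Second, in the lower bound for $b_{\bar v_1}(\tilde c_{\bar v_2}(s))$ the inequality $\cos\alpha_s\ge \frac{s}{\sqrt{|\bar p\bar x|^2+s^2}}\cos|\bar v_1\bar v_2|$ goes \emph{against} plain Toponogov comparison in $\Sigma_{\bar p}X$ (curvature $\ge 1$ alone gives the reverse estimate); it holds only because the direction $\uparrow_{\bar p}^{\tilde c_{\bar v_2}(s)}$ lies on the specific minimal geodesic $[\uparrow_{\bar p}^{\bar x}\,\bar v_2]$ coming from the flat strip, and you must invoke (1.9.2) with that particular geodesic prescribed as a side of the embedded triangle (i.e., prescribing the two sides issuing from $\bar v_2$); you flag this, but it should be stated explicitly, as should the convexity of the image (it follows from distance preservation together with convexity of $\Sigma_{\bar p}^\perp S$) before Lemma \ref{2i} can be applied.
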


\begin{proof}
We shall divide the verification, that for any
$\bar x\in S$, $P_{\gamma_{\bar x}}: \Sigma_{\bar p}^\perp S\to
\Sigma_{\bar x}^\perp S$ is an onto map, into two cases.

Case 1. Assume that $\bar x\in \mathcal {R}(S)$
(the set of regular points in $S$ i.e., points where the tangent cones are isometric to $\mathbb R^m$, $m=\dim(S)$). Because $\Sigma_{\bar x}S=S^{m-1}_1$ (the unit sphere)
i.e., $\Sigma_{\bar x}S$ contains $(m-1)$ pairs of points of distance $\pi$,
each pair determines a spherical suspension structure
on $\Sigma_{x}X$, thus one concludes that $\Sigma_{x}X$ has a join structure,
$\Sigma_{\bar x}S*\Sigma_{\bar x}^\perp S$. Consequently,
$\dim(\Sigma_{\bar x}^\perp S)=n-m-1$, and $P_{\gamma_{\bar x}}:
\Sigma_{\bar p}^\perp S\to \Sigma_{\bar x}^\perp S$ is injective.
Because $P_{\gamma_{\bar x}}(\Sigma_{\bar p}^\perp S)\subset
\Sigma_{\bar x}^\perp S$ without boundary, $P_{\gamma_{\bar x}}
(\Sigma_{\bar p}^\perp S)=\Sigma_{\bar x}^\perp S$ (indeed, $P_{\gamma_{\bar x}}$
is actually isometry, \cite{Li}, \cite{RW2}).

Case 2. Assume that $\bar x\in S\setminus \mathcal {R}(S)$, and $\bar v\in
\Sigma_{\bar x}^\perp S$,
we shall find $\bar w\in \Sigma_{\bar p}^\perp S$ such that
$P_{\gamma_{\bar x}}(\bar w)=\bar v$.

For $t_i\to 0$, $z_i=g\exp_{\bar x} t_i\bar v$, by the lower semi-continuous of angles there is sequence $s_i(t_i)\to 0$,
$\bar v_i\in \Sigma_{\gamma_{\bar x}(\ell-s_i)}^\perp S$ such that
$g\exp_{\gamma_{\bar x}(\ell-s_i)}t_i\bar v_i$ is close to $z_i$, where
$\ell=|\bar p\bar x|$.
Parallel transport $\bar v_i$ to
$\bar w_i\in \Sigma_{\bar p}^\perp S$ along $\gamma_{\bar x}(\ell-t)$,
passing to a subsequence we may assume $\bar w_i\to \bar w$. It is
clear that $P_{\gamma_{\bar x}}(\bar w)=\bar v$.
\end{proof}

\begin{proof} [Proof of (E1)]

It follows from Lemma \ref{6b} that for $\bar x\in S$,
$g\exp_{\bar x}: C(\Sigma_{\bar x}^\perp S)\to \phi^{-1}(\bar x)$ is a
bijection,
thus $g\exp: C(\Sigma^\perp S)\to X$ is a bijection such that
$\phi=\text{proj}\circ (g\exp)^{-1}$, where $\text{proj}:
C(\Sigma^\perp S)\to S$ denotes the projection map.
In particular, $\phi$ is a submetry, and equipped
$C(\Sigma^\perp S)$ with the pullback topology,
$g\exp$ is a homeomorphism.
\end{proof}

\subsection{Proof of (E2)}

A continuous surjection $f: X\to B$ between metric
spaces is called {\it completely regular} if for each $\bar p\in B$ and
$\epsilon>0$, there is a $\delta>0$ such that for any $\bar q\in
B_\delta(\bar p)$,
there is a homeomorphism $h:f^{-1}(\bar q)\to f^{-1}(\bar p)$ with
$|h(x)x|<\epsilon$ for all $x\in f^{-1}(\bar q)$.

The following is a bundle criterion we will use in the proof of (E2).

\begin{theorem} [Completely regular map, \cite{DH}, \cite{Ha}, \cite{Ki} ]
Let $f: X\to B$ be a completely regular map from a bounded complete
metric space $X$ onto a finite covering dimensional metric space
$B$. Suppose a fiber is homeomorphic to a compact $n$-manifold $M$.
Then $f:E\to B$ is a fiber bundle projection.
\end{theorem}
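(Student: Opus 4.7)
The plan is to establish local triviality of $f$ in a neighborhood of each $\bar b\in B$ by converting the complete regularity data into a continuous family of fiberwise homeomorphisms. Fix $\bar b\in B$, write $F=f^{-1}(\bar b)\cong M$, and for each $\epsilon>0$ let $\delta=\delta(\epsilon)>0$ be the constant supplied by complete regularity. For $b\in B_\delta(\bar b)$, choose (non-canonically) an $\epsilon$-close homeomorphism $h_b\colon f^{-1}(b)\to F$, and define
$$\Phi\colon f^{-1}(B_\delta(\bar b))\to B_\delta(\bar b)\times F,\qquad \Phi(x)=(f(x),h_{f(x)}(x)).$$
By construction $\mathrm{proj}_1\circ \Phi=f$ and $\Phi$ is a set-theoretic bijection, so the entire theorem reduces to arranging $b\mapsto h_b$ to vary continuously in the compact-open topology. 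Once that is done, continuity of $\Phi$ and of its inverse follow from boundedness and completeness of $X$ together with a standard closed-map argument, yielding the local trivialization.

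The main obstacle is precisely this continuous selection: complete regularity supplies only pointwise existence with no coherence as $b$ moves. I would resolve it by invoking the deep fact that the homeomorphism group $\mathrm{Homeo}(M)$ of a compact topological $n$-manifold $M$ is locally contractible in the compact-open topology (Černavskiĭ; Edwards--Kirby). Concretely, for $\epsilon$ chosen small enough in advance, any two $\epsilon$-close homeomorphisms $h,h'\colon f^{-1}(b)\to F$ differ by an element $h\circ(h')^{-1}\in\mathrm{Homeo}(F)$ lying in a preselected contractible neighborhood $V$ of $\mathrm{id}_F$. This turns $\mathrm{Homeo}(F)$ into a ``structure group'' with enough homotopy-theoretic softness to splice locally defined selections together.

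To globalize, I would refine $B_\delta(\bar b)$ to an open cover whose nerve complexity is controlled by the finite covering dimension of $B$, and on each element obtain a continuous local section $b\mapsto h_b$ by a compactness plus diagonal extraction argument from complete regularity. These local continuous selections are then merged inductively by a Milnor-type partition-of-unity construction inside $\mathrm{Homeo}(F)$: on each overlap one replaces $h_b$ by $g_b\circ h_b$, where $g_b$ takes values in the contractible neighborhood $V$ and is built from a partition of unity subordinate to the cover together with the contraction of $V$ to $\mathrm{id}_F$. The finite covering dimension of $B$ terminates the induction after finitely many stages, producing a continuous family $b\mapsto h_b$ and hence a local trivialization. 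The hard step, and the only one that is not routine, is the local contractibility of $\mathrm{Homeo}(M)$, which is the deep input drawn from the cited works; everything else is bookkeeping built on top of the complete regularity hypothesis and the finite dimensionality of $B$.
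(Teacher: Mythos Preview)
The paper does not prove this theorem; it is quoted without proof as a classical result from the cited references \cite{DH}, \cite{Ha}, \cite{Ki} and used as a black box in the proof of (E2). There is therefore no proof in the paper to compare your proposal against.

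That said, your sketch is broadly in line with how the cited literature establishes the result: Dyer--Hamstrom reduce local triviality to a continuous-selection problem for fiberwise homeomorphisms, and the decisive input when the fiber is a compact manifold is precisely the local contractibility of $\mathrm{Homeo}(M)$ (\v{C}ernavski\u{\i}, Edwards--Kirby), which you correctly isolate as the deep step. One soft spot in your outline is the phrase ``obtain a continuous local section $b\mapsto h_b$ by a compactness plus diagonal extraction argument from complete regularity'': complete regularity supplies only pointwise homeomorphisms with a uniform displacement bound, and a diagonal/compactness extraction produces at best convergence along a single sequence, not a continuous family over an open set of $B$. The standard mechanism is instead to set up a lower-semicontinuous carrier $b\mapsto \{\text{admissible }h_b\}$ into a space with the requisite local $n$-connectedness (supplied by local contractibility of $\mathrm{Homeo}(M)$) and invoke Michael's selection theorem, where the finite covering dimension of $B$ enters as the hypothesis that makes the selection go through. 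Your partition-of-unity merging is essentially the inductive engine inside that selection argument, so the architecture is right even if that one step is phrased loosely.
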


\begin{proof}[Proof of (E2)]

We will show that $\text{proj}: \Sigma^\perp S\to S$ is completely regular, thus a fiber bundle map (Theorem 6.3). Moreover, $\text{proj}: \Sigma^\perp S\to S$ naturally extends to a bundle map,
$\text{proj}:C(\Sigma^\perp S)\to S$. By (E1), one sees that $(X,S,\phi)$ is a fiber
bundle such that $\phi=\text{proj}\circ (g\exp)^{-1}$ i.e., $g\exp: C(\Sigma^\perp S)\to X$
is a bundle equivalence between $(C(\Sigma^\perp S),S,\text{proj})$ and $(X,S,\phi)$.

By Lemma \ref{3a}, for any $\bar x\in S$, $\Sigma_{\bar x}X$ is isometric to
$\Sigma_{\bar x}S*\Sigma_{\bar x}^\perp S$, $\Sigma_{\bar x}S$ and $\Sigma_{\bar x}^\perp S$ are
topologically nice spheres.
Consequently, $S$ itself is topologically nice.
Let $\Sigma^\perp S$ be equipped with the pullback topology via $g\exp$ from $\partial B_1(S)$.
Given any $\epsilon>0$, for $\delta>0$ sufficiently small, $\bar x\in B_\delta(\bar p)
\subset S$, the map in Lemma \ref{6b}, $P_{\gamma_{\bar x}}: \Sigma_{\bar p}^\perp S\to
\Sigma_{\bar x}^\perp S$ (which coincides with the parallel transport along a
minimal geodesic from $\bar p$ to $\bar x$), is a homeomorphism such that
$d(\bar v,P_{\gamma_{\bar x}}(\bar v))<\epsilon$.
\end{proof}

\begin{example}
(6.4.1) The complex projective space, $\mathbb CP^m$,
equipped with the Fubini-Study metric contains two $\frac \pi2$-apart closed
totally geodesic submanifolds, $X_0=\mathbb CP^{m-2}\subset \mathbb CP^m$ and
$X_1=\mathbb CP^1\in \mathbb CP^m$. Note $\dim(X_0)+\dim(X_1)=2m-2$, because
$\mathbb CP^m$ is not a sphere for $m\ge 2$.

(6.4.2) Let $S(S^1_{\frac 14})$ denote a spherical suspension of a circle of
circumference $\frac \pi2$, and let $X=S(\frac 12S(S^1_{\frac 14}))$ denote the
spherical suspension of $\frac 12S(S^1_{\frac 14})$, which is a rescaling
$S(S^1_{\frac 14})$ by $\frac 12$. Then $X\in \text{Alex}^3(1)$ is a
topologically nice $3$-sphere, which contains two $\frac \pi2$-apart convex
subsets without boundaries, $X_0=\frac 12S^1_{\frac 14}$ and $X_1=\{p,q\}$,
the vertices of $X$. Note that $\dim(X_0)+\dim(X_1)=1$, because $X\supsetneq [X_0X_1]$.
\end{example}


\section {Proof of Theorem F}


\subsection {Proof of Theorem F by assuming Key Lemma \ref{7c}}


In our proof of Theorem F, the starting point is the following result.

\begin{theorem} {\rm (\cite{Li})} \label{7a}
 Let $(X,S,\phi)$ be as in Theorem \ref{sol-alex}. Assume that $X$ is topologically nice and $\dim(S)=\dim(X)-2$. Then either $g\exp: C^\perp S\to X$ is a homeomorphism, or $\phi$ is weakly integrable.
\end{theorem}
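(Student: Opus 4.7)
The plan is to prove Theorem \ref{7a} as a dichotomy, keyed on whether all normal directions to $S$ at some regular point extend to rays in $X$. If yes, Theorem E will upgrade this to the claim that $g\exp$ is a homeomorphism; if no, I will build isometric local slices from flat strips and conclude $\phi$ is weakly integrable.

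First I would exploit the structural consequences of the hypotheses. Since $X$ is topologically nice and $\dim(X)-\dim(S)=2$, at each $\bar x\in S$ Proposition \ref{1i} produces two $\tfrac\pi2$-apart convex subsets $\Sigma_{\bar x}S$ and $\Sigma_{\bar x}^\perp S$ in the topologically nice sphere $\Sigma_{\bar x}X$ with $[\Sigma_{\bar x}S\,\Sigma_{\bar x}^\perp S]=\Sigma_{\bar x}X$. Lemma \ref{3a} then gives the metric join decomposition $\Sigma_{\bar x}X=\Sigma_{\bar x}S*\Sigma_{\bar x}^\perp S$, with both factors topologically nice spheres. A dimension count forces $\Sigma_{\bar x}^\perp S$ to be a topologically nice $1$-sphere, i.e., a metric circle of length $L_{\bar x}\le 2\pi$ in $\mathrm{Alex}^1(1)$. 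In particular $S$ is itself topologically nice (the iterated spaces of directions of $S$ are recovered as join factors and remain spheres), so the hypotheses of Theorem E(E2) are met once its ray hypothesis is verified.

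Case (A): There exists a regular $\bar p\in S$ at which every $\bar v\in\Sigma_{\bar p}^\perp S$ tangents to a ray. This is precisely the hypothesis of Theorem E. Part (E1) gives that $\phi$ is a submetry and $g\exp:C(\Sigma^\perp S)\to X$ is a bijection with $\phi\circ g\exp_{\bar x}(t\bar v)=\bar x$. Since $S$ is topologically nice (by the preceding paragraph), Part (E2) upgrades this to a bundle isomorphism of $(C(\Sigma^\perp S),S,\mathrm{proj})$ with $(X,S,\phi)$; in particular $g\exp$ is a homeomorphism, which is the first alternative.

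Case (B): At every regular $\bar p\in S$, some $\bar v\in\Sigma_{\bar p}^\perp S$ fails to extend to a ray. I would show $\phi$ is weakly integrable by constructing, for each $x\in X$ and $\bar x=\phi(x)$, a convex neighborhood $W_{\bar x}\subset S$ and a subset $W_x\ni x$ with $\phi|_{W_x}:W_x\to W_{\bar x}$ an isometry. For $x\in S$ this is automatic since $\phi|_S=\mathrm{id}_S$ and $S$ admits convex neighborhoods (\cite{Per2}). For $x\in X\setminus S$, pick $c<0$ with $x$ interior to $\Omega_c$ and realize $x$ on a ray $g\exp_{\bar x}(t_0\bar v)$ with $\bar v\in\Uparrow_{\bar x}^{\partial\Omega_c}$ (by Lemma \ref{1m}(1.13.1) such $\bar v$ tangents to a ray, and the codimension-$2$ join structure at $\bar x$ with $\Sigma_{\bar x}^\perp S$ a circle allows one to arrange that $x$ lies on such a ray). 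For each $\bar z\in W_{\bar x}$, Lemma \ref{1m}(1.13.2) produces a flat strip bounded by $g\exp_{\bar x}(\mathbb{R}_+\bar v)$, $[\bar x\bar z]$, and a parallel ray $g\exp_{\bar z}(\mathbb{R}_+\bar v(\bar z))$. Set $W_x:=\{g\exp_{\bar z}(t_0\bar v(\bar z)):\bar z\in W_{\bar x}\}$.

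The main obstacle is Case (B). Two things must be verified: \emph{reachability}, namely that every $x\in X\setminus S$ lies on some ray $g\exp_{\bar x}(\cdot)$ emanating from $\bar x=\phi(x)$ in a direction of $\Uparrow_{\bar x}^{\partial\Omega_c}$ (so that the above construction covers $\phi^{-1}(W_{\bar x})$); and \emph{isometric patching}, namely that $\phi|_{W_x}$ preserves distances between pairs $x_{\bar z_1},x_{\bar z_2}$, not just those on a single fibered ray. Reachability should follow from surjectivity of the gradient-exponential map onto $\phi^{-1}(\bar x)$ using the codimension-$2$ circle-cone structure of $C(\Sigma_{\bar x}^\perp S)$ plus the rigidity implied by topological niceness. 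Isometric patching requires iterating Lemma \ref{1m}(1.13.2) across the geodesic triangle $\bar z_1,\bar z_2,\bar x$ in $W_{\bar x}$ and piecing the resulting flat rectangles together to get the flat rectangle with base $[\bar z_1\bar z_2]$ and parallel sides the two transported rays, which then yields $|x_{\bar z_1}x_{\bar z_2}|=|\bar z_1\bar z_2|$. The convexity of $W_{\bar x}$ in $S$ and the fact that the circle $\Sigma_{\bar x}^\perp S$ gives an unambiguous normal direction at each $\bar z\in W_{\bar x}$ are essential for this patching to close up.
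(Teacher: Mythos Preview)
The paper does not prove Theorem \ref{7a}; it is quoted from \cite{Li} and used as a black box in the proof of Theorem F. So there is no ``paper's own proof'' to compare against here. That said, your Case (A) is sound and self-contained within this paper: once some regular $\bar p\in S$ has every $\bar v\in\Sigma_{\bar p}^\perp S$ tangent to a ray, Theorem E applies (its proof in Section 6 is independent of Theorem \ref{7a}), and (E1) already gives that $g\exp$ is a bijection, hence a homeomorphism for the pullback topology. One small gap in your first paragraph: the identity $[\Sigma_{\bar x}S\,\Sigma_{\bar x}^\perp S]=\Sigma_{\bar x}X$ is not asserted by Proposition \ref{1i} at non-regular $\bar x$ unless $\phi$ is already known to be a submetry (cf.\ (1.9.3)); you would need the codimension-$2$ submetry result from \cite{Li} (see (1.16.1)) or a separate argument before invoking Lemma \ref{3a}.

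Your Case (B) is where the proposal breaks down. You identify the two obstacles correctly but do not overcome them. For \emph{reachability}: it is not true in general that an arbitrary $x\in X\setminus S$ lies on a ray $g\exp_{\phi(x)}(t\bar v)$ with $\bar v\in\Uparrow_{\phi(x)}^{\partial\Omega_c}$; the foot of a minimal geodesic from $x$ to $S$ need not be $\phi(x)$, and even if it is, the initial direction need not lie in $\Uparrow_{\phi(x)}^{\partial\Omega_c}$. The sentence ``the codimension-$2$ join structure \ldots allows one to arrange that $x$ lies on such a ray'' is exactly the content that needs proof, and you have not supplied it. For \emph{isometric patching}: Lemma \ref{1m}(1.13.2) produces a flat strip over $[\bar x\bar z]$ starting from a direction in $\Uparrow_{\bar x}^{\partial\Omega_c}$, but the transported direction $\bar v(\bar z)$ at $\bar z$ is not known to lie in $\Uparrow_{\bar z}^{\partial\Omega_c}$, so you cannot iterate the lemma from $\bar z$ to build the rectangle over $[\bar z_1\bar z_2]$. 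Closing these two gaps is precisely the technical work carried out in \cite{Li}, which exploits the circle structure of $\Sigma_{\bar x}^\perp S$ and a careful analysis of which arcs of that circle consist of ray directions; your outline does not reproduce that analysis.
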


Based on Theorem \ref{7a}, in the proof of Theorem F it remains to show that $(X,S,\phi)$ is weakly integrable and $\dim(S)=\dim(X)-2$ imply that $(\tilde X,\tilde S,\tilde \phi)$ splits, where $\pi: (\tilde X,\tilde p)\to (X,p)$ is the metric universal covering map, $\tilde \phi: (\tilde X,\tilde p)\to (\tilde S,\tilde p)$ is the lifting of $\phi: X\to S$ ($\bar p\in S$), and $\tilde S=\pi^{-1}(S)$.

\begin {lemma} \label{7b}
Let $(X,S,\phi)$ be as in Theorem \ref{sol-alex}. Assume that $\phi$ is weakly integrable and $\dim(S)=\dim(X)-2$. Then $\phi$ is integrable. 
\end{lemma}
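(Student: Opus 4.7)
The plan is to show, for every $x\in X$, that $\Sigma_x W_x=H_x$, where $W_x$ witnesses weak integrability at $x$ (so $\phi|_{W_x}\colon W_x\to W_{\bar x}$ is an isometry onto a convex neighborhood $W_{\bar x}\subset S$ of $\bar x=\phi(x)$). Because $\phi|_{W_x}$ is an isometry and $\bar x$ is interior to $W_{\bar x}\subset S$, we have $\Sigma_x W_x\cong \Sigma_{\bar x}W_{\bar x}=\Sigma_{\bar x}S$, so $\dim\Sigma_x W_x=m-1$ and $\partial\Sigma_x W_x=\emptyset$; together with $\Sigma_x W_x\subseteq H_x$ this forces $\dim H_x\ge m-1$. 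By (A1), $Df_x\colon H_x\to\Sigma_{\bar x}S$ is a weakly integrable submetry, and since $\partial\Sigma_{\bar x}S=\emptyset$ (as $\partial S=\emptyset$), (A1) also gives $\partial H_x=\emptyset$.

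The heart of the argument is a dimension count exploiting the codimension-two hypothesis. Since $H_x$ and $V_x$ are closed convex subsets of $\Sigma_x X\in\text{Alex}^{n-1}(1)$ that are $\tfrac{\pi}{2}$-apart and $\partial H_x=\emptyset$, Theorem 1.10 (1.10.1) yields
\[
\dim H_x+\dim V_x\le (n-1)-1=n-2=m.
\]
Combined with $\dim H_x\ge m-1$, this forces $\dim V_x\le 1$ and hence $\dim H_x\le m$, i.e.\ $\dim H_x-\dim\Sigma_{\bar x}S\le 1$. Applying (B2) (the case $n-m\le 1$) to the weakly integrable submetry $Df_x\colon H_x\to\Sigma_{\bar x}S$ with $\partial\Sigma_{\bar x}S=\emptyset$ then forces $\dim H_x=\dim\Sigma_{\bar x}S=m-1$.

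To conclude, $\Sigma_x W_x$ is a compact convex subset of $H_x$, both of Alexandrov dimension $m-1$, with $\partial\Sigma_x W_x=\emptyset$, so Lemma 2.9 gives $\Sigma_x W_x=H_x$; since $x\in X$ was arbitrary, $\phi$ is integrable. The only delicate point is the applicability of (B2): it requires the dimension gap between $H_x$ and $\Sigma_{\bar x}S$ to be at most one, which is precisely where the codimension-two hypothesis enters through (1.10.1), via the bound $\dim V_x\le 1$. Given the machinery developed in Sections 2--3, no further serious technical obstacle remains; the real content is the observation that codimension two squeezes $\dim V_x$ down to the regime where (B2) applies uniformly at every point.
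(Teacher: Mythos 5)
Your proposal is correct and follows essentially the same route as the paper's proof: use (A1) to get a weakly integrable $D\phi_x\colon H_x\to\Sigma_{\bar x}S$ with $\partial H_x=\emptyset$, bound $\dim H_x+\dim V_x\le n-2$ via the $\tfrac\pi2$-apartness rigidity (1.10.1), invoke (B2) in the gap-$\le 1$ case to get $\dim H_x=\dim\Sigma_{\bar x}S$, and conclude $H_x=\Sigma_xW_x$ by Lemma \ref{2i}. Your write-up is in fact slightly more explicit than the paper's (spelling out $\partial\Sigma_xW_x=\emptyset$ and the final appeal to Lemma \ref{2i}), but the argument is the same.
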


\begin{proof}
Because $\phi$ is weakly integrable, for any $x\in X$, $D\phi: H_x\to \Sigma_{\phi(x)}S$ is weakly integrable (A1). Because
$\dim(H_x)\le \dim(H_x)+\dim(V_{x})\le n-2=\dim(\Sigma_{\phi(x)}W_{\phi(x)})+1$, $D\phi: H_x\to \Sigma_{\phi(x)}S$ is an isometry (B2). Because $\Sigma_{x}W_x\subseteq H_x$ and $D\phi: \Sigma_{x}W_x\to \Sigma_{\phi(x)}S$ is an isometry, $H_x=\Sigma_{x}W_x$ i.e., $\phi$ is integrable.
\end{proof}

Because $(X,S,\phi)$ is integrable, $(\tilde X,\tilde S,\tilde \phi)$ is global integrable. Observe that if $\pi_1(S)$ is not finite, then $\tilde S$ splits, $\tilde S=\hat S\times \mathbb R^k$ ($k\ge 1$), and $\hat S$ is compact and simply connected. Because $\tilde S\subset X$ is convex, the splitting on $\tilde S$ implies a splitting on $\tilde X$, $\tilde X=\mathbb R^k\times Z$, where $Z\in \text{Alex}(0)$ is open. It is clear that $\hat S$ is a soul of $Z$, and $\tilde \phi: Z\to \hat S$, is global integrable, thus $Z$ is homeomorphic to $\hat S\times \phi^{-1}(\bar q)$.

In view of the above, $\tilde X$ splits into $\tilde S\times \phi^{-1}(\bar q)$ if and only
 if
$Z$ splits into $\hat S\times \phi^{-1}(\bar q)$.

\begin{key lemma} \label{7c} Let $(X,S,\phi)$ be as in Theorem \ref{sol-alex}. Assume that $\phi: X\to S$ is global integrable and $\dim(S)=\dim (X)-2$. Then for any regular point $\bar q\in S$, the map, $\text{proj}_{\bar q}: X\to \phi^{-1}(\bar q)$, $\text{proj}_{\bar q}(x)=W_x\cap \phi^{-1}(\bar q)$, is $1$-Lipschitz, thus $\phi^{-1}(\bar q)$ is convex. Consequently, the convexity applies to
all $\bar q\in S$.
\end{key lemma}

\begin{proof} [Proof of Theorem F by assuming Key Lemma \ref{7c}]

(F1) Follows from Theorem \ref{7a} and (E2).

(F2) Assume that $\phi$ is weakly integrable. By Lemma  \ref{7b}, $\phi$ is integrable. Let $\tilde X$ denote the metric universal covering of $X$. For any $\bar q\in S$, by Key Lemma \ref{7c} $\phi^{-1}(\bar q)$ is convex (for a similar technique, see \cite {Wo2}), thus we apply (C2) to conclude that $\tilde X$ splits, $\tilde X=\hat S\times \Bbb R^k\times \phi^{-1}(\bar q)$, where $\tilde S$ is the universal cover of $S$, thus $\tilde S$ splits into $\Bbb R^k\times \hat S$, where $k$ is the maximal rank of a
free abelian subgroup of $\pi_1(X)$ and $\hat S$ is compact and simply connected.

\end{proof}


\subsection{Proof of Key Lemma  7.3}

In our proof of Key Lemma \ref{7c}, we employ a strategy similar to one in the proof of (C2), i.e.
based on Theorem \ref{4c} it reduces to show that at any {\it regular point} $\bar q\in S$ (which guarantees that $\text{proj}_{\bar q}$ is a locally Lipschitz map), thus $\text{proj}_{\bar q}: X\to \phi^{-1}(\bar q)$ almost everywhere has a differential that is $1$-Lipschitz.

First we recall some notation.
Let $(X,S,\phi)$ be as in Key Lemma \ref{7c}, and let $S_\delta\subseteq S$ denote the set of $(m,\delta)$-strained points in $S$ ($0<\delta<<1$).

Without loss of generality we may assume that $D\text{proj}_{\bar q}$ exists on $\hat X$,  where $\hat X=X_0\cap \phi^{-1}(S_\delta)$ and $X_0\subset X$ denote the set of regular points.
By (4.3.2), is remains to
show that, up to a measure zero subset of $\hat X$, $D\text{proj}_{\bar q}$ is $1$-Lipschitz.

Because $\text{proj}_{\bar q}(W_x)=\text{proj}_{\bar q}(x)$, we shall show that $D\text{proj}_{\bar q}: V_{x}\to D\text{proj}_{\bar q}(V_{x})$ is an isometry; up to a measured zero subset of $\hat X$ we shall `canonically' choose two independent
vectors, $(u,v)$, in $V_{x}$, and show that $D\text{proj}_{\bar q}$ preserves both norm and angle for $(u,v)$, thus $D\text{proj}_{\bar q}$ is $1$-Lipschitz.

Our selection of two independent vectors in $V_x$ based on the following geometric properties:

(7.4.1) Up to a measure zero subset of $\hat X$, $x\in \hat X$, $\Uparrow_x^{\partial \Omega_c}=\uparrow_x^{\partial \Omega_c}$ i.e., there is unique ray from $x$ (Lemma \ref{7e}). Then we choose $u=\uparrow_x^{\partial \Omega_c}\in V_x$.

A selection of $v$ requires more work.

Fixing a regular point in $S$, without loss of generality we may assume
$\bar p$, which used in $b_{\bar p}$, is a regular point. Because $\phi: X\to S$ is global integrable, for $\bar p\ne \bar q\in S$, each ray at $\bar p$, $g\exp_{\bar p}tv$, determines (via the partial parallel flat strip) determines a ray at $\bar q$ (which is independent of a minimal geodesic from $\bar p$ to $\bar q$). Consequently, one gets an isometric embedding, $S\times \{\exp_{\bar p}tv,t\ge 0\} \to X$; let $F_v$ denote the image. Let $\hat V_{\bar p}\subset V_{\bar p}$ denote the subset consisting of such $v$'s. Because $V_{\bar p}$ is a circle of radius $\le \pi$, one may choose $\{v_i\}_{i=1}^k\subset \hat V_{\bar p}$, $k\le 3$, such that for $v\in V_{\bar p}\backslash\hat V_{\bar p}$, $|v\{v_i\}|\le \frac \pi2$ (\cite{Li}). Let $F=\bigcup_iF_{v_i}$, which is closed and local convex away from $S$. Because $(X,S,\phi)$ is global integrable, one is able to apply \cite{Li} to conclude the concavity of $d_F$ (Lemma \ref{7f}).

(7.4.2) Let $F$ be defined in the above. If $\uparrow_x^{\partial \Omega_c}$ is independent of $\nabla d_F(x)$, then up to a measure zero subset of $\hat X$ we show that $v=\nabla d_F(x)\in V_x$ (Lemma \ref{7g}).


(7.4.3) If $\uparrow_x^{\partial \Omega_c}=\pm \nabla d_F(x)$, then up to a measure zero subset of $\hat X$ $d_F$ achieves a local maximum, $m_0$, at $x$ in $\Omega_{b_{\bar p}(x)}$ (Lemma \ref{7h}). Let $Z(x)$ denote the component of $d_F^{-1}(m_0)\cap \partial \Omega_{b_{\bar p}(x)}$ at $x$.
We show that up to a measure zero subset (see (7.11.2)) there is a local isometric embedding $\psi: W_x\times I_\delta\to Z(x)$, $\psi(W_x\times \{0\})=\text{id}_{W_x}$, and we choose $v=\gamma^+(0)\in V_x$ (see (7.11.1)), where $\gamma(t)=\psi(x,t)\subset \partial \Omega_{b_{\bar p}(x)}$, $t\in [0,\delta]$. In particular, $u=\uparrow_x^{\partial \Omega_c}$ and $v$ are
independent.

(7.4.4) We show that $D\text{proj}_{\bar p}$ preserves both norm and angle for $(u,v)$ chosen in
the above (Lemmas \ref{7l} and \ref{7m}).

In the rest of the paper, we will supply proofs for (7.4.1)-(7.4.4).
The following lemmas are all under the assumptions of Key Lemma \ref{7c}.

\stepcounter{theorem}
\begin{lemma}\label{7e} Let the assumptions be as in Key Lemma \ref{7c}. Then
(7.4.1) holds.
\end{lemma}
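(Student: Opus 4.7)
Set $f:=d_{\partial\Omega_c}$. Then $f$ is $1$-Lipschitz on $X$, and in fact concave on the compact convex sub-level set $\Omega_c=b_{\bar p}^{-1}([c,0])$ (a standard Toponogov consequence, since $\Omega_c$ is convex in $X\in\text{Alex}^n(0)$). The plan is to show that at almost every $x\in\hat X\cap\mathring\Omega_c$ the set $\Uparrow_x^{\partial\Omega_c}$ is a singleton, and then to recognize its unique element as the initial direction of a unique Busemann ray from $x$.

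Since $\hat X\subseteq X_0$ consists of regular points with tangent cone $T_xX\cong\mathbb R^n$, the Rademacher-type theorem of~\cite{Lyt1} (Theorem \ref{4c}(4.3.1)) provides a full-measure subset $\hat X'\subseteq\hat X\cap\mathring\Omega_c$ on which $d_xf:T_xX\to\mathbb R$ is a linear, $1$-Lipschitz functional: $d_xf(v)=\langle w_x,v\rangle$ for a unique $w_x\in\mathbb R^n$ with $|w_x|\le 1$. For $x\in\hat X'$ and $v\in\Uparrow_x^{\partial\Omega_c}$, pick a foot $z\in\partial\Omega_c$ with $|xz|=f(x)$ and $v=\uparrow_x^z$, set $\gamma(t)=[xz](t)$, and combine
\[
  f(\gamma(t))\le d(\gamma(t),z)=f(x)-t
\]
with the reverse $1$-Lipschitz bound to obtain $f\circ\gamma(t)=f(x)-t$, hence $d_xf(v)=-1$. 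Since $|v|=1$ and $|w_x|\le 1$, the equality case of Cauchy--Schwarz forces $|w_x|=1$ and $v=-w_x$, so $v$ is uniquely determined and $\Uparrow_x^{\partial\Omega_c}=\{\uparrow_x^{\partial\Omega_c}\}$.

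For the ``unique ray'' conclusion, the identity $f(\gamma(t))=f(x)-t$ together with $b_{\bar p}(z)=c$ and the $1$-Lipschitz property of $b_{\bar p}$ forces $b_{\bar p}\circ\gamma(t)=b_{\bar p}(x)-t$, so $\gamma$ is a Busemann geodesic and extends past $z$ to a unique ray from $x$ by the standard extension of Busemann rays in nonnegative curvature. Taking a countable union over integer $c<0$ exhausts $\hat X$ up to a set of measure zero. The main obstacle is extracting linearity --- and not merely single-valuedness --- of $d_xf$ from the Rademacher-type theorem at regular points in $\hat X$; this is routine under the identification $T_xX\cong\mathbb R^n$, but pinning down the cleanest reference that delivers linearity on $X_0$-points is the one real piece of bookkeeping.
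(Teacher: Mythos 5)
Your proof is correct and delivers the essential content of (7.4.1), but by a genuinely different mechanism than the paper. The paper argues geometrically: by Proposition 2.2 of \cite{Wo1}, the set of points that cannot be realized as interior points of a minimizing segment $[z_{x'}x']$ from a foot point on the closed set has measure zero, and at an interior point the foot point and the minimal geodesic to it are unique; the statement is then transferred to the Busemann function via $b_{\bar p}|_{\Omega_{c_i}}=d_{\partial\Omega_{c_i}}-|\bar p\,\partial\Omega_{c_i}|$ (\cite{CDM}) and a countable union over $c_i\to-\infty$. You instead use the differentiation theorem (4.3.1) (also from \cite{Lyt1}, \cite{Wo1}): at almost every point $D_x d_{\partial\Omega_c}$ is a linear functional of norm $\le 1$ on $T_xX\cong\mathbb R^n$, while the first variation along $[xz]$ gives $D_xd_{\partial\Omega_c}(v)=-1$ for every $v\in\Uparrow_x^{\partial\Omega_c}$, so the equality case of Cauchy--Schwarz forces $v=-w_x$; this pins down the direction directly (uniqueness of the foot and of the ray then follow from non-branching of geodesics), at the price of invoking Rademacher-type differentiation rather than the elementary interior-point trick. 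Two small glosses in your write-up are worth noting: the step ``$b_{\bar p}\circ\gamma(t)=b_{\bar p}(x)-t$'' tacitly uses the identity $d_{\partial\Omega_c}=b_{\bar p}-c$ on $\Omega_c$ (exactly the \cite{CDM} fact the paper cites), which is also what makes the passage from integer $c$ to arbitrary $c<b_{\bar p}(x)$ harmless (indeed $\Uparrow_x^{\partial\Omega_c}$ is independent of such $c$ by concatenating unit-speed drops of $b_{\bar p}$); and the ``unique ray'' clause needs that extension-by-concatenation together with non-branching, which your phrase ``standard extension of Busemann rays'' compresses. These are routine, and the singleton statement for $\Uparrow_x^{\partial\Omega_c}$ off a null set --- which is what is actually used later to choose $u=\uparrow_x^{\partial\Omega_c}\in V_x$ --- is fully established by your argument, at a level of detail comparable to the paper's own proof.
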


\begin{proof} (7.4.1) should be known fact, and for a completeness we give a brief explanation.

Consider the distance function to a fixed closed subset $C$, $d_C$. For any $x\in X$, let
$z_x\in C$ denote a projection of $x$ i.e., $d_C(x)=|xz_x|$, and let $\gamma(t)=[z_xx]$ be a minimal geodesic from $z_x$ to $x$. Observe that for all $t$ such that $x\ne \gamma(t)$, the projection of $\gamma(t)$ on $C$ is also $z_x$ such that the geodesic $[z_x\gamma(t)]$ is unique. Then the set $A(d_C)$, consisting of $x'\in X$, $x'$ cannot be realized as an interior point of $\gamma(t)$ defined in the above, has a measure zero  (Proposition 2.2 in \cite{Wo1}).

Observe that the above also applies to a Busemann function $b_{\bar p}$ that defines $(X,S,\phi)$; let $c_i\to -\infty$, and then $b_{\bar p}|_{\Omega_{c_i}}=d_{\partial \Omega_{c_i}}-|\bar p\partial \Omega_{c_i}|: \Omega_{c_i}\to \mathbb R$ (\cite{CDM}). For each $i$, $A_i(b_{\bar p})=A_i(d_{\partial \Omega_{c_i}})$ has measure zero, thus $A(b_{\bar p})=\bigcup_iA_i(d_{\partial \Omega_{c_i}})$ has measure zero.
\end{proof}

\begin{lemma}\label {7f}
Let the assumptions be as in Key Lemma \ref{7c}. Then
$d_F$ is concave in $X\setminus F$,
and $\nabla d_F$ has norm one almost everywhere in $\hat X$.
\end{lemma}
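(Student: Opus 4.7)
The proof will split into two parts: the concavity of $d_F$ on $X\setminus F$, which is the essential content, and the almost-everywhere unit norm of $\nabla d_F$ on $\hat X$, which will follow from a standard measure-theoretic argument about distance functions.

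For the concavity, my starting point is that under global integrability of $\phi$, each $F_{v_i}$ is the isometric image of the metric product $S\times[0,\infty)$ in $X$, via Lemma \ref{1m} together with the parallel transport maps $P_{\gamma_{\bar x}}$ built in Lemma \ref{6b}: for each $\bar q\in S$, the ray $g\exp_{\bar q}\,t\,P_{\gamma_{\bar q}}(v_i)$ is well defined independently of the choice of minimal geodesic $[\bar p\bar q]$ (by Corollary \ref{1n} applied uniformly in $\bar q$), and these rays, together with the $\phi$-fibration structure of $S$, sweep out $F_{v_i}$. In particular each $F_{v_i}$ is closed, locally convex in $X$, flat of codimension one, and isometric as a subspace to $S\times[0,\infty)$. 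I would then adapt the concavity argument of \cite{Li}: for $x\in X\setminus F_{v_i}$ with a nearest point $z\in F_{v_i}$, use the partial flat strip theorem (Theorem \ref{1l}) to construct a totally geodesic flat rectangle spanned by a minimal segment in $F_{v_i}$ starting at $z$ and by the segment $[zx]$, and apply Toponogov's comparison along that rectangle to obtain the one-sided second-derivative bound encoding concavity of $d_{F_{v_i}}$ along geodesics in $X\setminus F_{v_i}$.

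Passing from a single sheet $F_{v_i}$ to the full union $F=\bigcup_i F_{v_i}$ will use the selection property of the $\{v_i\}$ recalled in Section 7.2: because every $v\in V_{\bar p}\setminus\hat V_{\bar p}$ lies within distance $\pi/2$ of some $v_i$, every $x\in X\setminus F$ (outside a measure-zero singular locus meeting $\bigcap_i F_{v_i}\supseteq S$) has its nearest point on $F$ realized on a single sheet $F_{v_i}$, so locally $d_F=d_{F_{v_i}}$ and concavity transfers. At points where two sheets tie, $d_F$ is the minimum of finitely many concave functions, which is itself concave. For the gradient norm, $d_F$ is $1$-Lipschitz so $|\nabla d_F|\le 1$ automatically, and the argument of Lemma \ref{7e} applies verbatim: the set of $x\in\hat X$ for which no minimal geodesic $[z(x)x]$ from a nearest point $z(x)\in F$ extends past $x$ has measure zero (Proposition 2.2 in \cite{Wo1}); off of this set, $\nabla d_F(x)$ coincides with the unit continuation direction of that geodesic beyond $x$, giving $|\nabla d_F|(x)=1$.

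The main obstacle is the concavity statement itself: as is evident already in flat Euclidean space, where the distance to a convex set is convex rather than concave, the concavity here is a genuinely nontrivial rigidity arising from the interaction between nonnegative curvature and the flat-product structure of each $F_{v_i}$. The global integrability hypothesis on $\phi$ is exactly what upgrades the local flat strips of Lemma \ref{1m} into the uniform product decomposition $F_{v_i}\cong S\times[0,\infty)$, and hence is what allows the Toponogov-based argument of \cite{Li}, originally set up only near $S$, to run globally on $X$.
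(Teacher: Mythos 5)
Your second half (the almost-everywhere unit norm of $\nabla d_F$) is fine and is essentially the paper's argument: semi-concavity of $d_F$ plus the measure-zero statement used in Lemma \ref{7e}. The problem is in the concavity part. Your argument rests on the intermediate claim that the distance to a \emph{single} sheet, $d_{F_{v_i}}$, is concave on $X\setminus F_{v_i}$, and that $d_F$ is then concave as a locally finite minimum of concave functions. That intermediate claim is false, already in the model case: take $X=\mathbb{R}^2$ with soul a point $\bar p$ and $F_{v}$ a single ray from $\bar p$; on the half-plane ``behind'' the ray, $d_{F_v}$ coincides with the distance to $\bar p$, which is convex, so $d_{F_v}$ is not concave on the complement (a geodesic passing just below the ray sees $d_{F_v}$ first strictly convex, then constant). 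The flat rectangle of Theorem \ref{1l} only controls $d_{F_{v_i}}$ in the regime where the angle at the foot point is $\frac{\pi}{2}$, i.e.\ where the foot point is an interior point of the sheet and the configuration is in the flat-strip regime; it says nothing about points whose nearest point on the sheet lies on its edge $S$. This is precisely why the selection of $\{v_i\}$ as a $\frac{\pi}{2}$-net in $V_{\bar p}$ matters: it is not merely a bookkeeping device for deciding which sheet is closest, it is what guarantees that for every $x$ the distance to the union $F$ is realized in the good regime. So ``each $d_{F_{v_i}}$ is concave, hence the min is concave'' cannot be repaired by the min-of-concave observation; the concavity of $d_F$ is a statement about the union that fails sheet by sheet.

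Moreover, the hardest case --- points $x$ whose nearest point in $F$ lies in $S$, where all the sheets intersect and $F$ is not locally a nice hypersurface --- is exactly where the argument of \cite{Li} needs the hypothesis that $\Sigma_{\bar x}X$ has a join structure for \emph{every} $\bar x\in S$, not just regular points. The paper's proof of Lemma \ref{7f} consists precisely of citing this criterion from \cite{Li} and verifying the join condition: global integrability of $\phi$ passes to the differential (as in the proof of (A1) and Lemma \ref{7b}), so $(C_{\bar x}X,C_{\bar x}S,D_{\bar x}\phi)$ is global integrable, and since every direction at $\bar x$ is tangent to a ray in $C_{\bar x}X$ one gets $\Sigma_{\bar x}X=\Sigma_{\bar x}S*V_{\bar x}$. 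Your proposal derives the product structure $F_{v_i}\cong S\times[0,\infty)$ of each sheet (which is correct and is also implicit in the paper), but it never establishes, nor uses, the join structure at singular points of $S$, and without it the Toponogov/flat-strip argument does not close. To fix the proof you should either quote the criterion of \cite{Li} and verify the join structure as the paper does, or else restrict the flat-strip comparison to the region where the foot point lies in the interior of a sheet and supply a separate argument (requiring the join structure at $S$) for points projecting onto $S$.
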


\begin{proof} As showed in \cite{Li}, $d_F$ is concave if for all $\bar x\in S$, $\Sigma_{\bar x}X$ has a join structure. This condition is satisfied in our circumstances: because $(X,S,\phi)$ is global integrable and $\dim(S)=\dim(X)-2$, as explained in the beginning of (A1) assuming Theorem \ref{2a}, $(C_xX, C_{\bar x}S,D_x\phi)$ is global weakly integrable, and by Lemma  \ref{7b} one sees that $(C_xX, C_{\bar x}S,D_x\phi)$ is global integrable. Because every $v\in \Sigma_{\bar x}X$ tangents to a ray in $C_{\bar x}X$, $\Sigma_{\bar x}X=\Sigma_{\bar x}S* V_{\bar x}$. Because $d_F$ is semi-concave, as in the proof of Lemma \ref{7e} for $b_{\bar p}$, it is clear that $\nabla d_F$ has norm one almost everywhere in $\hat X$.
\end{proof}

\begin{lemma}\label {7g} Let the assumptions be as in Key Lemma \ref{7c}. Then
(7.4.2) holds.
\end{lemma}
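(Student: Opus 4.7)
The plan is to show $\nabla d_F(x)\perp H_x$ by combining the first-variation property at the unique closest point $z_0\in F$ to $x$ with a horizontal-sliding argument built from the flat-strip structure of $F_{v_i}$. Restricting $x$ to a full-measure subset of $\hat X$ on which $|\nabla d_F(x)|=1$ (Lemma \ref{7f}) and the closest point on $F$ is unique (by the same argument as in the proof of Lemma \ref{7e}, since $d_F$ is concave and $1$-Lipschitz), we have that $\nabla d_F(x)$ is the antipode of $\uparrow_x^{z_0}$ in $\Sigma_xX$ at the regular point $x$. As $V_x$ is antipodally invariant, it suffices to prove $\uparrow_x^{z_0}\in V_x$.

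The first key step would be to establish the ``dual'' statement $\uparrow_{z_0}^x\in V_{z_0}$. Fix $i$ with $z_0\in F_{v_i}$ and identify $F_{v_i}\cong S\times\mathbb R_+$ via its isometric embedding, writing $z_0=(\bar s_0,t_0)$. Choosing $W_{z_0}$ to be the horizontal slice $W_{\bar s_0}\times\{t_0\}\subset F_{v_i}$, integrability of $\phi$ (Lemma \ref{7b}) gives $H_{z_0}=\Sigma_{z_0}W_{z_0}\subset \Sigma_{z_0}F_{v_i}\subset \Sigma_{z_0}F$. Since $z_0$ minimizes $d_x$ over $F$, the critical-point property yields $\angle(\uparrow_{z_0}^x,h)\ge \pi/2$ for every $h\in H_{z_0}$; because $\partial H_{z_0}=\emptyset$ (by (A1) and $\partial\Sigma_{\bar s_0}S=\emptyset$), Appendix Lemma \ref{8a} promotes the inequality to equality, giving $\uparrow_{z_0}^x\in V_{z_0}$.

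The second key step would be to transfer this to $x$ by a horizontal slide. For $h\in H_x$ set $y=g\exp_x(th)\in W_x$, $\beta(t)=\phi(y)$, and let $z_t\in F_{v_i}$ be the horizontal translate of $z_0$ obtained through the flat rectangle inside $F_{v_i}$. Glueing this flat strip in $F_{v_i}$ with the partial flat strip produced by Corollary \ref{1n} at $z_0$ along the vertical direction $\uparrow_{z_0}^x$ produces an isometric flat rectangle with vertices $x,y,z_t,z_0$, forcing $|yz_t|=|xz_0|$ and hence $d_F(y)\le d_F(x)$. The symmetric argument with $-h\in H_x$ yields $d_F(y')\le d_F(x)$ for $y'=g\exp_x(-th)$, so $d_F|_{W_x}$ achieves a local maximum at $x$. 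Concavity of $d_F|_{W_x}$ (Lemma \ref{7f}) and linearity of $D_xd_F$ at the regular $x$ then force the horizontal directional derivatives to vanish, which is exactly $\nabla d_F(x)\in V_x$.

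The main obstacle is the glueing of the two flat strips into a genuine isometric flat rectangle in $X$: they must meet along the common edge $[z_0 z_t]$ at consistent right angles (using $\uparrow_{z_0}^x\in V_{z_0}$ together with the join structure $\Sigma_{z_0}X=H_{z_0}*V_{z_0}$ to compute the dihedral angle), and global integrability of $\phi$ is precisely what rules out the pathological ``fake flat strip'' of Example \ref{1o}, ensuring that the rectangle is a true convex isometric embedding as in Lemma \ref{1m}.
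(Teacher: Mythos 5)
Your overall strategy is genuinely different from the paper's, and it breaks down at the step you yourself flag as ``the main obstacle'': the gluing of the strip inside $F_{v_i}$ with a strip along $[z_0x]$ into a flat rectangle with vertices $x,y,z_t,z_0$. None of the flat-strip results available here apply to that configuration. Corollary \ref{1n} (and Lemma \ref{1m}, Theorem \ref{1l}) produce flat strips only for directions $v\in \Uparrow^{\partial\Omega_c}$, i.e.\ along normal rays/minimal geodesics to $\partial\Omega_c$; the direction $\uparrow_{z_0}^x$ toward the foot point of $d_F$ is vertical but is in general not a ray direction, so no partial flat strip is provided along $[z_0x]$. Your closing assertion that global integrability ``rules out the pathological fake flat strip'' and upgrades the configuration to a convex flat rectangle is exactly the kind of flat-strip rigidity whose failure in Alexandrov spaces (\cite{Li}, cf.\ Example \ref{1o}) is the paper's stated main obstacle; it is not proved anywhere in the paper and you give no argument for it. Since the inequality $d_F(y)\le d_F(x)$ for $y\in W_x$ — hence the local-max/vanishing-derivative endgame — rests entirely on that rectangle, the proof as written has a genuine gap. (A smaller point: the reduction in your first paragraph to ``$\uparrow_x^{z_0}\in V_x$'' is never actually used, since your endgame argues through horizontal derivatives instead.)

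For comparison, the paper's proof avoids any flat-strip construction at $x$: under global integrability $W_x$ is a compact isometric copy of $S$ disjoint from $F$, so $d_F|_{W_x}$ attains a minimum at some $q\in W_x$; the first variation formula gives $|\Uparrow_q^F\uparrow_q^z|\ge\frac\pi2$ for every $z\in W_x$, Lemma \ref{ya} upgrades this to equality (so $(d_F\circ\gamma)^+(0)=0$ along geodesics of $W_x$ from $q$), and concavity of $d_F$ (Lemma \ref{7f}) then forces $d_F(z)\le d_F(q)$, i.e.\ $d_F$ is \emph{constant} on $W_x$; since $\Sigma_xW_x=H_x$, all horizontal derivatives vanish and $\nabla d_F(x)\in V_x$. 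If you want to salvage your outline, replace the rectangle-gluing step by this minimum-point argument: it yields the stronger conclusion ($W_x\subset d_F^{-1}(d_F(x))$ for every $x\notin F$, not just a local maximum at almost every $x$) using only first variation, Lemma \ref{ya}, and concavity.
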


\begin{proof}
By Lemma \ref{7f}, $\nabla d_F$ is well-defined, which has norm one up
to a measure zero subset of $X$. For any $x\notin F$, we show that $W_x\subset d_F^{-1}(d_F(x))$, thus $\nabla d_F(x)\in V_x$. Because $W_x$ is compact and $W_x\cap F=\emptyset$,
we may assume $d_F|_{W_x}$ achieves a minimum at $q\in W_x$, $q\notin F$. For any $z\in W_x$,
let $\gamma(t)\subset W_x$ be a normal minimal geodesic with $\gamma(0)=q, \gamma(l)=z$. By the first variation formula, $|\Uparrow_q^F\uparrow_q^z|\ge \frac\pi2$, thus $|\Uparrow_q^F\uparrow_q^z|=\frac\pi2$ (Lemma \ref {ya}) i.e., $(\text{d}_F\circ \gamma) ^+(0)=0$. Since $(\text{d}_F\circ \gamma )''\leq 0 $, we have that $d_F\circ \gamma (l)\leq d_F\circ \gamma (0)$, i.e., the desired result.
\end{proof}

\begin {lemma} {\rm (Partial verification of (7.4.3))} \label{7h}
Let the assumptions be as in Key Lemma \ref{7c}.
 If $\nabla_x^{\partial \Omega_c}=\pm \nabla d_F(x)$, then

\noindent {\rm (7.8.1)} $\uparrow_x^{\partial\Omega_c}=\nabla d_F(x)$,

\noindent {\rm (7.8.2)} $d_F|_{\Omega_{b_{\bar p}(x)}}$ achieves a local maximal value at $x$.
\end{lemma}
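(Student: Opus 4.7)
The hypothesis places both $\nabla d_F(x)$ and $\uparrow_x^{\partial\Omega_c}$ into the one-dimensional vertical circle $V_x$: the first by Lemma~\ref{7g}, and the second because the minimal geodesic from $x$ to $\partial\Omega_c$ is, by Corollary~\ref{1n}, tangent to a ray staying in the $\phi$-fiber. Hence the two unit vectors are either equal (Case~$+$) or antipodal (Case~$-$). My plan is to exclude Case~$-$ by contradiction, obtaining (7.8.1), and then derive (7.8.2) from the Alexandrov gradient inequality.

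For (7.8.1), I will argue by contradiction: suppose $\nabla d_F(x)=-v$ with $v=\uparrow_x^{\partial\Omega_c}$. By the partial flat-strip property (Theorem~\ref{1l}, Lemma~\ref{1m}) together with the global integrability hypothesis of Key Lemma~\ref{7c}, the direction $-v$ extends to a minimal geodesic $c:[0,T]\to X$ with $c(0)=x$, $c(T)\in S$, and $T=-b_{\bar p}(x)=d_S(x)$. Along $c$, the concave function $b_{\bar p}\circ c$ has initial slope $D_xb_{\bar p}(-v)=1$ matching the chord slope $(0-b_{\bar p}(x))/T=1$, so $b_{\bar p}(c(t))=b_{\bar p}(x)+t$ is exactly linear. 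The gradient hypothesis together with $|\nabla d_F(x)|=1$ (Lemma~\ref{7f}) then forces $(d_F\circ c)^+(0)=1$, while $S\subset F$ gives $(d_F\circ c)(T)=0$. To close the contradiction I will exploit the explicit structure $F=\bigcup_i F_{v_i}$: parallel transport of $v$ to $\bar p$ along the flat strip of $[\bar p\,\bar x]$ yields a ray direction $v'\in\hat V_{\bar p}$, and I analyze $-v'$. Either $-v'\in\hat V_{\bar p}$, so opposing rays from $\bar p$ produce a line and the splitting theorem splits $X=X'\times\mathbb{R}$, placing us in conclusion (F2) of Theorem~F where the identity $\nabla d_F(x)=-v$ can be disproved directly in the split coordinates; or $-v'\in V_{\bar p}\setminus\hat V_{\bar p}$, so the defining coverage $|{-v'},\{v_i\}|\le\pi/2$ gives a $v_i$ whose parallel ray through $x$ lies in $F_{v_i}$ and, by a flat-strip comparison with $c$, upper-bounds $d_F\circ c$ near $t=0$ by a function of initial slope strictly less than $1$, contradicting $(d_F\circ c)^+(0)=1$.

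Granted (7.8.1), $\nabla d_F(x)=\uparrow_x^{\partial\Omega_c}$ is the steepest-descent direction for $b_{\bar p}$. Any $\xi\in\Sigma_xX$ tangent to a curve moving into $\Omega_{b_{\bar p}(x)}=b_{\bar p}^{-1}([b_{\bar p}(x),0])$ satisfies $D_xb_{\bar p}(\xi)\ge 0$, equivalently $|\xi,\uparrow_x^{\partial\Omega_c}|\ge\pi/2$; hence the Alexandrov gradient inequality $D_xd_F(\xi)\le|\nabla d_F(x)|\cos|\xi,\nabla d_F(x)|$ yields $D_xd_F(\xi)\le 0$, so $d_F$ cannot increase along any direction into $\Omega_{b_{\bar p}(x)}$, which is (7.8.2).

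The hard part will be closing (7.8.1). The naive combination of concavity of $d_F\circ c$ (Lemma~\ref{7f}), the $1$-Lipschitz bound, and $d_F(c(T))=0$ is consistent with concave profiles of initial slope $1$; the contradiction must therefore be extracted from the finer structure of $F$ through the $\pi/2$-coverage property built into the selection $\{v_i\}$ and the associated flat strips, making the splitting-vs.-coverage dichotomy described above the technical heart of the argument.
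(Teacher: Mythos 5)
Your reduction of the problem to ruling out the antipodal case is the right first move, but the way you try to rule it out leaves a genuine gap, and the gap sits exactly where you place ``the technical heart.'' The splitting-vs.-coverage dichotomy is only sketched: in the splitting branch you assert the identity $\nabla d_F(x)=-v$ ``can be disproved directly in the split coordinates'' without doing so, and in the coverage branch the defining property of $\{v_i\}$ gives only the non-strict bound $|{-v'}\,\{v_i\}|\le \frac\pi2$, which cannot by itself produce an initial slope \emph{strictly} less than $1$ for $d_F\circ c$; as you yourself note, concavity plus the $1$-Lipschitz bound plus $d_F(c(T))=0$ is consistent with initial slope $1$, so nothing closes. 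In addition, the curve $c$ you build the argument on is unjustified: in an Alexandrov space there need not exist a geodesic issuing from $x$ in the prescribed direction $-v$, let alone a minimal geodesic of length $-b_{\bar p}(x)$ ending on $S$ along which $b_{\bar p}$ is affine. So (7.8.1), the substance of the lemma, is not proved.

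The paper's own argument is much shorter and avoids all of this: it shows directly that $|\uparrow_x^{\partial\Omega_c}\uparrow_x^F|\ge\frac\pi2$ (equivalently $|\uparrow_x^{\partial\Omega_c}\,\nabla d_F(x)|\le\frac\pi2$, which already forces the ``$+$'' sign since the two vectors are assumed dependent). Take $z\in F$ with $|xz|=|xF|$; by the first variation formula and concavity of $b_{\bar p}$, $-\cos|\uparrow_x^{\partial\Omega_c}\uparrow_x^z|=D_xb_{\bar p}(\uparrow_x^z)\ge\frac{b_{\bar p}(z)-b_{\bar p}(x)}{|xz|}$, so it suffices to see $b_{\bar p}(z)\ge b_{\bar p}(x)$; this is trivial if $z\in S$, and if $z\in F\setminus S$ it follows from $\frac{b_{\bar p}(x)-b_{\bar p}(z)}{|xz|}\le D_zb_{\bar p}(\uparrow_z^x)=-\cos|\uparrow_z^{\partial\Omega_c}\uparrow_z^x|=0$, the last equality by first variation at the foot point together with Lemma \ref{ya}. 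You should also tighten your (7.8.2): knowing $D_xd_F(\xi)\le 0$ for all directions $\xi$ into $\Omega_{b_{\bar p}(x)}$ at the single point $x$ does not by itself give a local maximum; one must integrate this, e.g.\ as the paper does, by restricting to $z\in B(x,\tfrac12|xF|)\cap\Omega_{b_{\bar p}(x)}$ so that $[xz]\cap F=\emptyset$ and then using the concavity of $d_F$ (Lemma \ref{7f}) along $[xz]$ to get $0\ge D_xd_F(\uparrow_x^z)\ge\frac{d_F(z)-d_F(x)}{|xz|}$. With that one line added, your (7.8.2) is fine; (7.8.1) needs to be redone along the lines above.
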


Before presenting our proof of Lemma \ref{7h}, we give the following example for a partial motivation.

\begin{example}\label{7i} Let $R=\{(s,t), -1\le s\le 1, -2\le t\le 2\}$ be a rectangle, and let
$X=R\cup (\partial R\times \mathbb R_+)\in \text{Alex}^2(0)$ with soul $S=(0,0)\times \{0\}$, and
$F=([-1,1]\times \{0\})\cup (\{(-1,0),(1,0)\}\times \mathbb R_+)$. For $x(s,t)\in R$, $|s|<10^{-1},
t\in [\frac {11}{10},\frac{15}{10}]$, $\uparrow_{x(s,t)}^{\partial \Omega_c}=\nabla d_F(x(s,t))$; thus such points have a positive measure. Observe that in a neighborhood of $x(0,\frac 32)$, $\partial \Omega_{b_{\bar p}(x)}$ coincides with $d_F^{-1}(d_F(x))$. In another words,
$d_F|_{\Omega_{b_{\bar p}(x)}}$ achieves a maximum at $x$.
\end{example}

\begin{proof} [Proof of Lemma \ref{7h}]

(7.8.1) It suffices to show that
$|\uparrow_x^{\partial \Omega_{c}}\uparrow_x^F|\geq \frac\pi2$ (equivalently $|\uparrow_x^{\partial \Omega_{c}}\nabla d_F(x)|\le  \frac\pi2$).

Let $z\in F$ be a point such that $|xz|=|xF|$.  
By the first variational formula and the concavity of $b_{\bar p}$, we have
$$-\cos( |\uparrow_x^{\partial
\Omega_{c}} \uparrow_x^z|)=D_xb_{\bar p}(\uparrow_x^z)\geq
\frac{b_{\bar p}(z)-b_{\bar p}(x)}{|xz|},$$
thus to conclude that $|\uparrow_x^{\partial \Omega_{c}}\uparrow_x^F|\geq \frac\pi2$, we need to show that  $b_{\bar p}(z)\geq b_{\bar p}(x)$, which is obvious if $z\in S$. If $z\not\in S$, then $b_{\bar p}(z)\geq b_{\bar p}(x)$ follows from the following:
$$\frac{b_{\bar p}(x)-b_{\bar p}(z)}{|xz|}\leq D_zb_{\bar p}(\uparrow_z^x)=-\cos (| \uparrow_z^{\partial\Omega_{c}}\uparrow_z^x|)= 0,$$
where the last equality is from the first variational formula and $|\uparrow_z^{\partial\Omega_{c}}\uparrow_z^x|=\frac \pi2$ (Lemma \ref{ya}).

(7.8.2) For any $z\in\Omega_{b_{\bar p}(x)}$, by the first variation formula, we have $|\uparrow_x^{\partial \Omega_{c}}\uparrow_x^z|\geq\frac\pi2$.
Since by (7.8.1) $\uparrow_x^{\partial
\Omega_{c}}=-\uparrow _x^F$,  $|\uparrow_x^F\uparrow_x^z|\leq \frac\pi2$,
which implies $D_xd_F(\uparrow_x^z)\leq 0$.
Set $r=\frac12 |xF|$.
We have that for any $z\in B(x,r)\cap\Omega_{\bar c}$,
$[yz]\cap F=\emptyset$.
Then by the concavity of $d_F$, $$0\geq D_x d_F(\uparrow_x^z)\geq \frac{d_F(z)-d_F(x)}{|xz|}.$$
Hence $d_F(x)\geq d_F(z)$.
\end{proof}

\begin{lemma}\label{7j} Let the assumptions be as in Key Lemma \ref{7c}.
If $\nabla_x^{\partial \Omega_c}=\pm \nabla d_F(x)$, let $Z(x)$ be defined in (7.4.3). Then

\noindent {\rm (7.10.1)} $W_x\subseteq Z(x)\,(\subset \partial \Omega_c)$ is closed and locally convex, thus $n-2\le \dim(Z(x))\le n-1$.

\noindent{\rm (7.10.2)} If $\dim(Z(x))=n-1$, then $\partial Z(x)\neq \emptyset$.
\end{lemma}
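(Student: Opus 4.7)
The plan is to verify (7.10.1) by establishing $W_x\subseteq Z(x)$ and showing that $Z(x)$ coincides with the convex max set of $d_F$ on $\Omega_{b_{\bar p}(x)}$; part (7.10.2) will then follow by contradiction using Lemma~\ref{2i} and the observation that the rays comprising $F$ must cross every level set of $b_{\bar p}$.

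For $W_x\subseteq Z(x)$, two constancy claims are needed. The constancy $d_F\equiv m_0$ on $W_x$ is the argument in the proof of Lemma~\ref{7g}: at a minimum $q$ of $d_F|_{W_x}$ the first variation formula together with Lemma~\ref{ya} gives $|\Uparrow_q^F\uparrow_q^z|=\frac{\pi}{2}$ for every $z\in W_x$ (using $\Uparrow_q^F\subseteq V_q$ and $\uparrow_q^z\in\Sigma_qW_x\subseteq H_q$ under integrability), so $(d_F\circ\gamma)^+(0)=0$ and semi-concavity propagates this to $d_F\equiv d_F(q)$ on $W_x$. The constancy $b_{\bar p}\equiv b_{\bar p}(x)$ on $W_x$ uses Corollary~\ref{1n}: for $v\in\Uparrow_x^{\partial\Omega_c}$ and $z\in W_x$ the normal ray and $[xz]\subseteq W_x$ bound an isometric flat strip $\mathbb R_+\times[0,|xz|]\hookrightarrow X$; the ray at $x$ is asymptotic to the defining ray of $b_{\bar p}$, so $b_{\bar p}$ is affine on the strip and constant along $\{0\}\times[0,|xz|]$, giving $b_{\bar p}(z)=b_{\bar p}(x)$. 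Connectedness of $W_x$ then places it in $Z(x)$, so $\dim Z(x)\ge n-2$; the upper bound $\dim Z(x)\le n-1$ is immediate from $Z(x)\subseteq\partial\Omega_{b_{\bar p}(x)}$, and closedness follows from $Z(x)$ being a component of a closed intersection of level sets.

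The key step for local convexity is to identify $Z(x)$ with the convex max set $M=d_F^{-1}(m_0)\cap\Omega_{b_{\bar p}(x)}$. Since $d_F$ is concave on the convex $\Omega_{b_{\bar p}(x)}$ (Lemma~\ref{7f}), the local max of $d_F$ at $x$ upgrades to a global max and $M$ is convex. It remains to show $M\subseteq\partial\Omega_{b_{\bar p}(x)}$. The hypothesis $\nabla d_F(x)=\uparrow_x^{\partial\Omega_c}$ from Lemma~\ref{7h}, together with $\nabla b_{\bar p}(x)=-\uparrow_x^{\partial\Omega_c}$ (the Busemann gradient is antipodal to the normal-ray direction in the $1$-dimensional $V_x$) and $|\nabla d_F(x)|=|\nabla b_{\bar p}(x)|=1$, yields for every $v\in\Sigma_xX$
\[
D_x(d_F+b_{\bar p})(v)\le\cos|v\,\nabla d_F(x)|+\cos|v\,\nabla b_{\bar p}(x)|=\cos|v\uparrow_x^{\partial\Omega_c}|-\cos|v\uparrow_x^{\partial\Omega_c}|=0,
\]
so the concave function $d_F+b_{\bar p}$ is critical at $x$, hence attains a global max on $\Omega_{b_{\bar p}(x)}$. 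For $y\in M$ this gives $b_{\bar p}(y)\le b_{\bar p}(x)$, while $y\in\Omega_{b_{\bar p}(x)}$ gives $b_{\bar p}(y)\ge b_{\bar p}(x)$, so $y\in\partial\Omega_{b_{\bar p}(x)}$. Thus $M\subseteq\partial\Omega_{b_{\bar p}(x)}$ and $M=d_F^{-1}(m_0)\cap\partial\Omega_{b_{\bar p}(x)}$; since $M$ is convex and therefore connected, $Z(x)=M$ is convex, a fortiori locally convex.

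For (7.10.2), suppose $\dim Z(x)=n-1$ and $\partial Z(x)=\emptyset$. Then $Z(x)$ is a closed locally convex subset of the $(n-1)$-dimensional Alexandrov space $\partial\Omega_{b_{\bar p}(x)}$ with matching dimension and empty boundary, so Lemma~\ref{2i} applied inside $\partial\Omega_{b_{\bar p}(x)}$ forces $Z(x)$ to equal a connected component of $\partial\Omega_{b_{\bar p}(x)}$. Granting connectedness of $\partial\Omega_{b_{\bar p}(x)}$ (afforded in the codimension-two setting by its $\phi$-fibration over the connected soul $S$ with connected fibers, cf.\ Theorem~E), we conclude $Z(x)=\partial\Omega_{b_{\bar p}(x)}$, so $d_F\equiv m_0>0$ on $\partial\Omega_{b_{\bar p}(x)}$. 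But each $F_{v_i}\subseteq F$ is the isometric image of $S\times[0,\infty)$ and contains rays issuing from $S=b_{\bar p}^{-1}(0)$ to infinity where $b_{\bar p}\to-\infty$; by continuity such a ray meets $\partial\Omega_{b_{\bar p}(x)}$ at a point $y\in F$ with $d_F(y)=0$, contradicting $d_F\equiv m_0>0$. The main obstacle is the critical-point argument for $d_F+b_{\bar p}$: the delicate point is to justify the displayed directional derivative inequality in the Alexandrov setting, which is where the special alignment $\uparrow_x^{\partial\Omega_c}=\pm\nabla d_F(x)$ together with the antipodal position of $\nabla b_{\bar p}(x)$ in the $1$-dimensional $V_x$ does the real work; the connectedness of $\partial\Omega_{b_{\bar p}(x)}$ needed in (7.10.2) is a secondary issue, routinely handled by the fibration structure from Theorem~E.
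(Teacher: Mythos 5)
Your treatment of (7.10.2) and of the inclusion $W_x\subseteq Z(x)$ is in line with the paper (the paper even leaves the $b_{\bar p}$-constancy on $W_x$ implicit, and your flat-strip argument via Corollary \ref{1n} is a reasonable way to supply it). But the core of (7.10.1) — local convexity of $Z(x)$ and $Z(x)\subset\partial\Omega_{b_{\bar p}(x)}$ — rests on a step that fails. You invoke Lemma \ref{7f} to say that $d_F$ is concave on the convex set $\Omega_{b_{\bar p}(x)}$, upgrade the local maximum at $x$ to a global maximum, and conclude that $M=d_F^{-1}(m_0)\cap\Omega_{b_{\bar p}(x)}$ is convex (hence connected, hence equal to $Z(x)$). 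Lemma \ref{7f}, however, only gives concavity of $d_F$ on $X\setminus F$, and $F$ meets $\Omega_{b_{\bar p}(x)}$ substantially: $S\subset F$ and the rays $F_{v_i}$ cross every superlevel set of $b_{\bar p}$. In fact $d_F$ cannot be concave on $\Omega_{b_{\bar p}(x)}$, since it vanishes along curves passing through that region while being positive nearby (restrict to a geodesic crossing a ray $F_{v_i}$). Consequently the local-to-global maximum upgrade, the convexity and connectedness of $M$, and the identification $Z(x)=M$ are all unjustified. The same defect undermines your critical-point argument for $d_F+b_{\bar p}$: criticality at $x$ yields $(d_F+b_{\bar p})(y)\le(d_F+b_{\bar p})(x)$ only by concavity along a geodesic $[xy]$, and such a geodesic may meet $F$, where concavity of $d_F$ is lost; moreover the asserted antipodality and unit norm of $\nabla b_{\bar p}(x)$ in a one-dimensional $V_x$ are themselves only known up to the measure-zero reductions of (7.4.1), not at the arbitrary points $y\in M$ you would need them for.

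The paper avoids all global statements. It sets $\hat Z(x)=$ the component at $x$ of $d_F^{-1}(m_0)\cap\Omega_{b_{\bar p}(x)}$, propagates (7.8.2) to show that \emph{every} point of $\hat Z(x)$ is a local maximum point of $d_F$ on $\Omega_{b_{\bar p}(x)}$, and deduces that $\hat Z(x)$ is locally convex (which is all the lemma asserts — not convexity). It then proves $\hat Z(x)\subset\partial\Omega_{b_{\bar p}(x)}$, hence $\hat Z(x)=Z(x)$, by an open--closed argument on spaces of directions: near $x$ one uses $\uparrow_x^{\partial\Omega_c}=-\uparrow_x^F$ to see $\Sigma_x\hat Z(x)\subset\Sigma_x\partial\Omega_{b_{\bar p}(x)}$, and at a general $y\in\hat Z(x)$ the relations $|\Uparrow_y^F\,\Sigma_y\hat Z(x)|\ge\frac{\pi}{2}$, $|\Uparrow_y^{\partial\Omega_c}\,\Sigma_y\hat Z(x)|\ge\frac{\pi}{2}$, $\dim\Sigma_y\hat Z(x)=n-2$ and $\partial\Sigma_y\hat Z(x)=\emptyset$ force $\Sigma_yX$ to be a spherical suspension over $\Sigma_y\hat Z(x)$, which keeps $\hat Z(x)$ inside $\partial\Omega_{b_{\bar p}(x)}$. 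To repair your proof you would need to replace the global concavity/convexity step by such a local argument; as written, the claimed convexity of $Z(x)$ is a genuine gap, even though your concluding contradiction in (7.10.2) (points of $F$ on $\partial\Omega_{b_{\bar p}(x)}$ have $d_F=0<m_0$) coincides with the paper's.
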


\begin{proof}  (7.10.1) Let $\hat Z(x)$ denote the component at $x$ of $d_F^{-1}(d_F(x))\cap \Omega_{b_{\bar p}(x)}$. Since $\uparrow_x^{\partial \Omega_c}=\nabla d_F(x)$, then by (7.8.2) $x$ is a local maximum of $d_F$ on $\Omega_{b_{\bar p}(x)}$.
It is not hard to see that for any $y\in \hat Z(x)$, $y$ is a local maximal point of $d_F$. Thus $\hat Z(x)$ is locally convex. It $\hat Z(x)\not =W_x$,
we will show that $\hat Z(x)=Z(x)$ by showing that
$\hat Z(x)\subset \partial \Omega_{b_{\bar p}(x) }$, therefore to conclude that $Z(x)$ is also locally convex. Indeed, 
first since for any $v\in \hat \Sigma_yZ(x)$, $|\uparrow_x^Fv| = \frac \pi2$ and $\uparrow_x^{\partial \Omega_c}=-\uparrow_x^F$, we have that $\Sigma_x\hat Z(x)\subset \partial \Omega_{b_{\bar p}}(x)$.  Thus there is a neighborhood of $x$, $U$, such that $U\cap \hat Z(x)\subset \partial \Omega_{b_{\bar p}}(x)$. Then by open close argument it suffices to show that for any $y\in \bar U\cap  \overset {\circ}{\hat Z(x)}$, $\Sigma_y\hat Z(x) \subset \Sigma_y\partial \Omega_{b_{\bar p}(y)}$.
The desired property follows from $|\Uparrow_y^F\Sigma_y\hat Z(x)|\ge \frac \pi2$, $|\Uparrow_y^{\partial \Omega_c}\Sigma_y\hat Z(x)|\ge \frac \pi2$, $\dim(\Sigma_y\hat Z(x))=n-2$ and $\partial \Sigma_y\hat Z(x)=\emptyset$, since it is not hard to get that in this case $\Sigma_yX=S(\Sigma_y\hat Z(x))$.

(7.10.2) Arguing by contradiction, assume that $\partial Z(x)= \emptyset$. Since $\dim Z(x)=n-1$, by the local convexity of $Z(x)$, we have that $Z(x)=\partial\Omega_{b_{\bar p}(x)}$, which is impossible, since $\partial\Omega_{b_{\bar p}(x)}$ containing points of $F$.
\end{proof}

\begin{lemma}\label{7k} Let the assumptions be as in Lemma \ref{7j}. Then (7.4.3) holds, because

\noindent {\rm (7.11.1)} Assume that $\dim(Z(x))=n-1$. Then
there is a local isometric embedding $\psi: W_x\times I_\delta\to Z(x)\subset \partial \Omega_{b_{\bar p}(x)}$, $\psi(W_x\times \{0\})=\text{id}_{W_x}$. Then $v=\gamma^+(0)\in V_x$ is independent of $u=\uparrow_x^{\partial \Omega_c}$, where $\gamma(t)=\psi(x\times t)$, $t\in I_\delta$.

\noindent {\rm (7.11.2)} The union of subsets, $Z(x)$ with $\dim(Z(x))=n-2$, has measure zero.
\end{lemma}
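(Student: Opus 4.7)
The plan is to prove (7.11.1) by applying Yamaguichi's partial flat strip (Theorem \ref{1l}) inside the locally convex $(n{-}1)$-dimensional set $Z(x)$, and (7.11.2) by reducing to a measure-theoretic argument on the two-dimensional fiber $\phi^{-1}(\bar p)$ using global integrability.

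For (7.11.1), I would first fix a convex (in $X$) neighborhood $Y \subseteq Z(x)$ of $x$; then $Y \in \text{Alex}^{n-1}(0)$ with $\partial Y \neq \emptyset$ (via (7.10.2)), and $W_x \cap Y$ is a codimension-one, boundary-less, convex subset of $Y$. Concavity of $d_{\partial Y}$ on $Y$ together with the compactness and empty-boundary of $W_x$ should force $d_{\partial Y}|_{W_x \cap Y}$ to be constant after possibly shrinking $Y$. A local join analysis in the spirit of Proposition \ref{1i} applied to $\Sigma_y Y$ then produces at each $y \in W_x \cap Y$ a unique normal direction $\bar v_y \in \Sigma_y Y$ perpendicular to $\Sigma_y W_x$ and pointing into the interior of $Y$. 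Theorem \ref{1l}, applied to $\Omega = Y$ and to each minimal geodesic $\gamma \subset W_x \cap Y$, yields flat rectangles bounded by $\gamma$ and the perpendicular lifts $t \mapsto g\exp_{\gamma(\cdot)}(t\bar v_{\gamma(\cdot)})$; assembling these over $\gamma$ produces the isometric embedding $\psi : W_x \times I_\delta \to Z(x) \subset \partial\Omega_{b_{\bar p}(x)}$. For the independence assertion, $v = \gamma^+(0) = \bar v_x$ is perpendicular to $\Sigma_x W_x = H_x$ (by integrability of $\phi$, Lemma \ref{7b}), so $v \in V_x$; and since $\gamma \subset \partial\Omega_{b_{\bar p}(x)}$ while $u = \uparrow_x^{\partial\Omega_c}$ points into $\Omega_{b_{\bar p}(x)}^\circ$, the vectors $u$ and $v$ are linearly independent.

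For (7.11.2), I would first use global integrability to show that both $b_{\bar p}$ and $d_F$ are constant on each horizontal lift $W_y$ ($y \in \phi^{-1}(\bar p)$): the former from the global parallel-ray flat strip structure extending Lemma \ref{1m}, and the latter because $F$ is a finite union of horizontal lifts $F_{v_i}$ whose product structure with $W_y$ forces $d_{F_{v_i}}|_{W_y}$, and hence $d_F|_{W_y}$, to be constant. Consequently, if $x \in T := \{z : \dim Z(z) = n-2\}$, then $Z(x) = W_x$ locally and $Z(x') = W_x$ for every $x' \in W_x$, so $T$ is a union of full horizontal lifts, $T = \bigcup_{y \in A} W_y$ for some $A \subseteq \phi^{-1}(\bar p)$. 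Via the integrable product decomposition, $\mathrm{meas}(T) = 0$ reduces to $\mathrm{meas}(A) = 0$ in the two-dimensional $\phi^{-1}(\bar p)$. The condition $y \in A$ asserts that the codimension-one level sets $\partial\Omega_{b_{\bar p}(y)}$ and $d_F^{-1}(d_F(y))$ are tangent at $y$ but locally distinct, which is a single equation in the two-dimensional $\phi^{-1}(\bar p)$ and hence cuts out at most a one-dimensional subset, of measure zero.

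The main obstacle lies in (7.11.2): rigorously translating the local tangency/degeneracy condition on two Alexandrov level sets into a genuine codimension-one constraint on the possibly singular two-dimensional Alexandrov space $\phi^{-1}(\bar p)$ will require careful stratification, since the level sets of $b_{\bar p}$ and $d_F$ need not be smooth and $\phi^{-1}(\bar p)$ may carry extremal subsets. A secondary technical point in (7.11.1) is verifying that $d_{\partial Y}$ is genuinely constant on $W_x \cap Y$ so that Theorem \ref{1l} applies cleanly; this will rest on a rigidity statement for concave functions on convex subsets of an Alexandrov space with a boundary-less core.
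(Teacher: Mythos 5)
Both halves of your proposal rest precisely on the steps you flag as ``obstacles,'' and these are genuine gaps, not technical loose ends. In (7.11.1), your route through Theorem \ref{1l} needs $d_{\partial Y}$ to be constant on $W_x\cap Y$ for a (shrunken) convex neighborhood $Y\subset Z(x)$, but this is false for a generic small convex $Y$: the restriction of $d_{\partial Y}$ to $W_x\cap Y$ decays toward the relative boundary of $W_x\cap Y$, and $W_x\cap Y$ is a local piece of $W_x$ with nonempty boundary, so neither the compactness of $W_x$ nor $\partial W_x=\emptyset$ is available locally. To force the constancy you would have to choose $Y$ adapted to the very product structure you are trying to build, which is circular. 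The paper's argument avoids the flat-strip machinery here entirely: since $Z(x)\subset\partial\Omega_{b_{\bar p}(x)}$ and $\dim Z(x)=n-1$, the set $Z(x)$ is \emph{open} in $\partial\Omega_{b_{\bar p}(x)}$, so one can choose $y\in Z(x)\setminus W_x$ near $x$ with $\phi(y)=\phi(x)$; by (7.10.1) one gets $[xy]\subset Z(x)$ and $W_z\subset Z(x)$ for $z\in[xy]$, and global integrability makes the local inclusion $W_x\times[xy]\to Z(x)$ a local isometric embedding, with $v=\gamma^+(0)\perp\Sigma_xW_x=H_x$, hence $v\in V_x$ and independent of $u$ because $\gamma$ stays in the level set of $b_{\bar p}$.

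In (7.11.2), the final step ``tangency of the two level sets is a single equation in the two-dimensional fiber, hence cuts out at most a one-dimensional set'' is not an argument: in an Alexandrov space (indeed already for non-smooth level sets in a manifold) a single scalar condition does not reduce Hausdorff dimension without some transversality or regularity, and supplying that is the entire content of the claim. Moreover, your reduction ``$\mathrm{meas}(T)=0$ reduces to $\mathrm{meas}(A)=0$'' treats the canonical trivialization as measure-compatible, but at this stage it is only a homeomorphism ($\text{proj}_{\bar q}$ is locally Lipschitz in one direction over strained points); the metric compatibility of the product structure is exactly what Key Lemma \ref{7c} is trying to establish, so using it here is again circular. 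The paper's proof is different and much cheaper: if $\dim Z(x)=n-2$ then $Z(x)=W_x$, each such set is $\mu_{n-1}$-null inside the level set $\partial\Omega_{b_{\bar p}(x)}$, and the coarea formula applied to $b_{\bar p}$ shows the union over all levels has $\mu_n$-measure zero. Your observation that $b_{\bar p}$ and $d_F$ are constant on horizontal lifts is consistent with the paper (it is essentially Lemma \ref{7g} and (7.10.1)), but it does not by itself close either gap.
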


\begin{proof} (7.11.1) Because $Z(x)\subset\partial \Omega_{b_{\bar p}(x)}$, that $\dim(Z(x))=n-1$ implies that $Z(x)$ is open in $\partial \Omega_{b_{\bar p}(x)}$. For $y\in Z(x)\backslash W_x$, $d_F(y)=d_F(x)=m_0$ implies that $W_y\subset Z(x)$ ((7.10.1)). Without loss of generality, we may assume that $y$ is in a
local convex neighborhood of $x$ and $\phi(y)=\phi(x)$. Thus by (7.10.1) $[xy]\subset Z(x)$. Therefore the local inclusion map, $W_x\times [xy]\to Z(x)$, is a locally isometric embedding.

(7.11.2) Because $\dim(Z(x))=n-2$, $W_x=Z(x)$, which is determined by $\alpha=b_{\bar p}(x)$. By the coarea formula,
$$\mu_n\left(\bigcup_{W(x)=Z(x)}Z(x)\right)=\int_{-\infty}^0\left(\int _{W_x=Z(x)} \mu_{n-1}(Z(x))\right)d\alpha=0,$$
where $\mu_m$ denote the $m$-dimensional Hausdorff measure.
\end{proof}

\begin{lemma}\label{7l} Let the assumptions be as in Key Lemma \ref{7c}.
Assume that $(u,v)=(\uparrow_x^{\partial \Omega_c},\nabla d_F(x))$ are independent. Then
$D\text{proj}_{\bar q}: V_x\to V_{\text{proj}_{\bar q}(x)}$ preserves both norm and
angle for $(u,v)$.
\end{lemma}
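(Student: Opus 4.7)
The plan is to compute $D\text{proj}_{\bar q}(u)$ and $D\text{proj}_{\bar q}(v)$ geometrically by two different devices---flat strips for the ray direction $u$, and gradient-flow commutation for the gradient direction $v$---to show both are unit vectors, and then derive the angle preservation from the spherical join decomposition of the tangent spheres at the regular points $x$ and $y:=\text{proj}_{\bar q}(x)$.

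For $u=\uparrow_x^{\partial\Omega_c}$, global integrability of $\phi$ and Corollary \ref{1n} yield a flat strip $\iota\colon\mathbb R_+\times[0,|xy|]\hookrightarrow X$ bounded by the ray $g\exp_x(tu)$, by the horizontal segment $[xy]\subset W_x$, and by a parallel ray $g\exp_y(t\tilde u)$ at $y$ with $\tilde u\in V_y$. For each $t>0$ the transverse slice $\iota(t,\cdot)$ is a geodesic whose initial direction at $g\exp_x(tu)$ is the flat-strip parallel of $\uparrow_x^y\in H_x$, hence lies in $H_{g\exp_x(tu)}$, so the slice itself sits in $W_{g\exp_x(tu)}$; its endpoint $g\exp_y(t\tilde u)$ lies in $\phi^{-1}(\bar q)$ because $\tilde u$ is vertical. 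Uniqueness of the $\bar q$-fiber inside $W_{g\exp_x(tu)}$ then forces $\text{proj}_{\bar q}(g\exp_x(tu))=g\exp_y(t\tilde u)$, so $D\text{proj}_{\bar q}(u)=\tilde u$ with $|\tilde u|=1$.

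For $v=\nabla d_F(x)$, I first show $d_F|_{W_x}\equiv d_F(x)$. Each sheet $F_{v_i}$ is by construction isometric to $S\times\mathbb R_+$ with the $S$-factor matching the soul; pairing this product structure with the flat strips from Corollary \ref{1n} linking horizontal segments in $W_x$ to the $v_i$-rays at their endpoints gives $d_{F_{v_i}}|_{W_x}=d_{F_{v_i}}(x)$ sheet-by-sheet, and taking the minimum over $i$ yields the claim. Because $\nabla d_F(y')\in V_{y'}$ at every $y'\in W_x$ by Lemma \ref{7g}, the gradient flow $\Phi_t^F$ of $d_F$ moves each point of $W_x$ in a vertical direction; vertical motion preserves horizontal liftings by integrability, so $\Phi_t^F(W_x)\subseteq W_{\Phi_t^F(x)}$. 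Combined with the constancy of $d_F$ on $W_x$, this forces $\text{proj}_{\bar q}\circ\Phi_t^F=\Phi_t^F\circ\text{proj}_{\bar q}$ on $W_x$; differentiating at $t=0$ gives $D\text{proj}_{\bar q}(v)=\nabla d_F(y)=:v'$, with $|v'|=1$ for $y$ in a full-measure set by Lemma \ref{7f}.

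To compare $|uv|_{V_x}$ with $|\tilde u\,v'|_{V_y}$, I use that $x\in\hat X$ is regular in $X$ and $\bar q$ is regular in $S$, so $\Sigma_xX$ and $\Sigma_yX$ are round spheres whose join decompositions $H_x*V_x$ and $H_y*V_y$ have $H_x,H_y\cong S^{m-1}$ and $V_x,V_y\cong S^1$ of length $2\pi$, and $C_xX$ and $C_yX$ metrically split as $\mathbb R^m\times C(V)$. The differential $D\text{proj}_{\bar q}$ annihilates $H_x$ (horizontal motion within $W_x$ preserves the $\bar q$-fiber of $W_x$) and restricts to a $1$-dimensional map $T\colon V_x\to V_y$ sending $u\mapsto\tilde u$ and $v\mapsto v'$. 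A blow-up argument at $x$, combined with the isometry $\phi|_{W_x}\colon W_x\to S$ and the fact that $W_x$-translation on tangent cones must respect the $\mathbb R^m\times C(V)$ splitting, forces $T$ to be a local isometry, whence $|uv|=|\tilde u\,v'|$. The main obstacle will be this final step, namely establishing that horizontal $W_x$-translation lifts to an isometric identification of the $1$-dimensional vertical circles $V_x$ and $V_y$; the argument must marry global integrability with the join decomposition, and must in particular handle the fact that $v$ is only a limit of ray-tangent directions so its "flat-strip parallel" $v'$ at $y$ is produced via the $d_F$-gradient flow rather than directly from Corollary \ref{1n}.
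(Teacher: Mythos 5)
Your computations of $D\text{proj}_{\bar q}(u)$ and $D\text{proj}_{\bar q}(v)$ run parallel to the paper's: the paper obtains $D\text{proj}_{\bar q}(\uparrow_x^{\partial \Omega_c})=\uparrow_y^{\partial \Omega_c}$ and, using that the projection $\gamma_{\bar q}$ of the minimal geodesic $[z_xx]$ from $F$ is again a minimal geodesic of the same length (its property (7.12.3), proved via the gradient flow of $d_F$), also $D\text{proj}_{\bar q}(\nabla d_F(x))=\nabla d_F(y)$ with norm preserved. One minor inaccuracy: you cannot invoke Lemma \ref{7f} to claim $|\nabla d_F(y)|=1$ ``for $y$ in a full-measure set'' --- $y$ is forced on you as $\text{proj}_{\bar q}(x)$ and is not free to vary; the paper gets the norm instead from the projected minimal geodesic, using that (up to a measure-zero set of $x$) $[xF]$ extends beyond $x$, so the projected segment extends beyond $y$.

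The genuine gap is the angle step, which you yourself flag as ``the main obstacle.'' The route you sketch does not work as stated: you assert that $\Sigma_yX$ is a round sphere and $V_y$ a circle of length $2\pi$, but $y=\text{proj}_{\bar q}(x)$ need not be a regular point of $X$ --- only $\bar q$ is regular in $S$, which yields $C_yX\cong \Bbb R^m\times C(V_y)$ with $V_y$ a circle of a priori unknown length --- so there is no reason your ``translation'' $T\colon V_x\to V_y$ should be a local isometry, and establishing that would be at least as hard as the angle identity itself. The paper's argument is a short scalar identity for which you already hold all the ingredients: you proved $\text{proj}_{\bar q}(g\exp_x(tu))=g\exp_y(t\tilde u)$, i.e.\ $g\exp_y(t\tilde u)\in W_{g\exp_x(tu)}$, and (the proof of) Lemma \ref{7g} gives $W_z\subset d_F^{-1}(d_F(z))$; hence $d_F(g\exp_x(tu))=d_F(g\exp_y(t\tilde u))$ for all small $t\ge 0$. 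Differentiating this identity in $t$ at $t=0$ with the first variation formula yields $\cos|\uparrow_x^{\partial\Omega_c}\,\nabla d_F(x)|=\cos|\uparrow_y^{\partial\Omega_c}\,\nabla d_F(y)|$, which is exactly the claimed angle preservation; no join decomposition or blow-up of the vertical circles is needed.
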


\begin{proof} First we need some preparations.

Let $\Phi_t(x)$ denote the gradient flow $d_F$. Then the following three properties hold.

\noindent {\rm (7.12.1)} For $x\in \hat X\setminus F$, let $z_x\in F$ such that $|xz_x|=|xF|$. For $t\ge 0$, $\Phi_t: W_{z_x}\to W_{\Phi_t(z_x)}$ is an isometry.

\noindent {\rm (7.12.2)} $\phi\circ \Phi_t(z_x)=\phi(x)$.

\noindent {\rm (7.12.3)} Let $\gamma(t)=[z_xx]$ denote a minimal geodesic from $z_x$ to $x$. For any regular point $\bar q\in S$, let $y=\text{proj}_{\bar q}(x)\in \phi^{-1}(\bar q)$, and let $\gamma_{\bar q}(t)=\text{proj}_{\bar q}(\gamma(t))\in \phi^{-1}(\bar q)$. Then $\gamma_{\bar q}$ is a minimal geodesic.

Let's justify the above properties.

(7.12.1) Note that $\Phi_t$ is well-defined if $F$ locally divides $X$ into two components (a little more is required for point in $S\subset F$, \cite{Li}), which follows if $\phi^{-1}(\bar p)$ is a topological surface. Because $X$ is homeomorphic to $S\times \phi^{-1}(\bar p)$ and $\bar p$ is regular in $S$, for $x\in \phi^{-1}(\bar p)$, $C_xX=\mathbb R^{n-2}\times K$, $K$ is a metric cone over $V_x=S^1$, thus $x$ is topologically nice in $X$. This enables us to apply Lemma \ref{3b} and conclude that $x$ is also a topological manifold point in $\phi^{-1}(\bar p)$ (comparing Corollary \ref{3e}).

(7.12.2) We first assume that $\bar z_x$ is a regular point of $S$. Then $\text{proj}_{\bar z_x}: X\to \phi^{-1}(\bar z_x)$ is
a locally Lipschitz map. Let $g(t)=|\gamma(t)\gamma_{\bar z_x}(t)|$. It remains to show that $g(t)\equiv 0$ (thus $\phi(x)=\phi(z_x)$). Observe that for each
$t>0$, $\nabla d_F(\gamma(t))$ is orthogonal to $\Sigma_{\gamma(t)}W_{\gamma(t)}$, thus by the first variation formula, we have that almost everywhere
$$g^+(t)=|\gamma(t)\gamma_{\bar z_x}(t)|^+=-|\gamma^+(t)|\cos|\Uparrow_{\gamma(t)}^{\gamma_{\bar z_x}(t)}  \gamma^+(t)| -|\gamma^+_{\bar z_x}(t)|\cos |\Uparrow_{\gamma_{\bar z_x}(t)}^{ \gamma(t)}\gamma^+_{\bar z_x}(t)|=0.$$
Note that we need that $\gamma_{\bar z_x}(t)$ is a Lipschitz curve, which is clear if $t>0$ is small,
so one may start with $x(t)$ (replacing $z_x$), and completes the proof.

If $\bar z_x$ is not a regular point, then take a sequence of points, $z_{x_k}\to z_x$, such that $\bar z_{x_k}$ are regular point of $S$. Then $\Psi_t(z_{x_k})\to \Psi_t(z_x)$, $\bar z_{x_k}=\phi(\Psi_t(z_{x_k}))\to \phi(\Phi_t(z_x))$, and thus $\phi(\Psi_t(z_x))=\bar x$.

(7.12.3) By (7.12.1), we have that $ \gamma_{\bar q}(t)=\Phi_t(\gamma_{\bar q}(0))$. Since $\Phi_t$ is 1-Lipschitz, the length of $\gamma_{\bar q}(t) \le |xz_x|$ which is $|yF|$ by the proof of Lemma \ref{7g}. Consequently, the length of $\gamma_{\bar q}(t)=|yF|$, thus the desired
result follows.

Now we continue the proof of Lemma \ref{7l}. Up to a measure zero set, we can assume that $[xF]$ can be extend beyond $x$, i.e. $x$ is an interior point of $[x'z_{x'}]$ for some $x'\in X$.

Denote $\text{proj}_{\bar q}(x)$ by $y$. It is clear that
$$D\text{proj}_{\bar q}(\uparrow_x^{\partial \Omega_c})=\uparrow_y^{\partial \Omega_c}.$$
By (7.12.3),  $$\quad D\text{proj}_{\bar q} (\nabla d_F(x))=\nabla d_F(y),\quad |D\text{proj}_{\bar q} (\nabla d_F(x))|=|\nabla d_F(y)|.$$

Since $d_F(\exp t\uparrow_x^{\partial \Omega_c})=d_F(\exp t\uparrow_y^{\partial \Omega_c})$ (the proof of Lemma \ref{7g}), to which the first variation formula in $t$ yields
$$\cos |\uparrow_x^{\partial \Omega_c}\nabla d_F(x)|=\cos |\uparrow_y^{\partial \Omega_c}\nabla d_F(y)|,$$
i.e., $|\uparrow_x^{\partial \Omega_c}\nabla d_F(x)|=|\uparrow_y^{\partial \Omega_c}\nabla d_F(y)|$.
\end{proof}

\begin{lemma} \label{7m} Let the assumptions be as in Key Lemma \ref{7c}.
Assume that $(u,v)=(\uparrow_x^{\partial \Omega_c},\nabla d_F(x))$ are dependent, and let
$(u,v)=(\uparrow_x^{\partial \Omega_c},\gamma^+(0))$ be as in Lemma \ref{7k}. Then
$D\text{proj}_{\bar q}: V_x\to V_{\text{proj}_{\bar q}(x)}$ preserves both norm and
angle of $(u,v)$.
\end{lemma}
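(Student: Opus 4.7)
The plan is to adapt the argument of Lemma~\ref{7l}, using the isometric embedding $\psi: W_x \times I_\delta \to Z(x)$ from (7.11.1) in place of the gradient flow of $d_F$. Set $y = \text{proj}_{\bar q}(x) = W_x \cap \phi^{-1}(\bar q)$, and introduce the parallel geodesic $\tilde\gamma(t) = \psi(y,t)$ with initial velocity $\tilde v = \tilde\gamma^+(0)$. Since $\psi$ is isometric with respect to the product metric on $W_x \times I_\delta$ and restricts to the identity on $W_x \times \{0\}$, the $t$-direction at $y$ is orthogonal to $\Sigma_y W_x = H_y$ (using $y\in W_x$ and integrability of $\phi$), so $\tilde v \in V_y$ and $|\tilde v| = 1 = |v|$; moreover $|\gamma(t)\tilde\gamma(t)| \equiv |xy|$ for $t\in I_\delta$.

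The preservation of $u$, namely $D\text{proj}_{\bar q}(u) = \uparrow_y^{\partial\Omega_c}$ of unit length, follows verbatim from Lemma~\ref{7l}, whose proof only uses (7.12.1)--(7.12.3) and properties of $\nabla d_F$ and does not require independence of the pair under consideration. The essential new content is therefore to identify $\text{proj}_{\bar q}(\gamma(t)) = \tilde\gamma(t)$ for small $t>0$, which will then give $D\text{proj}_{\bar q}(v) = \tilde v$ of unit norm.

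This identification breaks into two sub-claims: (a) $\tilde\gamma(t) \in W_{\gamma(t)}$, and (b) $\phi(\tilde\gamma(t)) = \bar q$. For (a), each slice $\psi(W_x \times \{t\})$ is an isometric copy of $W_x$ containing $\gamma(t)$; by uniqueness of the maximal local integrable lift through $\gamma(t)$ (established in the proof of (2.2.1) by gluing local isometries along a path-connected convex neighborhood) this slice coincides, near $\gamma(t)$, with $W_{\gamma(t)}$. Claim (b) is the crux: once (a) holds, $\phi\circ\psi(\cdot,t): W_x \to S$ is a family of local isometries varying continuously in $t$ and equal to the isometry $\phi|_{W_x}$ at $t=0$; one then argues this family is constant in $t$, so $\phi(\tilde\gamma(t)) = \phi(\psi(y,t)) = \phi(y) = \bar q$. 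This last step exploits that the slices lie in the joint level set $\partial\Omega_{b_{\bar p}(x)} \cap d_F^{-1}(m_0)$ and that, by global integrability, any two isometric lifts of $S$ meeting along an open subset must agree.

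Once the identification $\text{proj}_{\bar q}(\gamma(t)) = \tilde\gamma(t)$ is in hand, the angle preservation $\angle(u',\tilde v) = \angle(u,v)$ (with $u' = \uparrow_y^{\partial\Omega_c}$) follows by comparing the distances $|g\exp_x(su)\,\gamma(t)|$ and $|g\exp_y(su')\,\tilde\gamma(t)|$: by Step~1 the ray-endpoints correspond under $\text{proj}_{\bar q}$, by Step~2 so do $\gamma(t)$ and $\tilde\gamma(t)$, and $\text{proj}_{\bar q}$ being $1$-Lipschitz combined with preservation of all three other sides of the comparison quadrangle forces equality, whence the angles agree via the first variation formula. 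The main obstacle will be sub-claim (b) of the identification: controlling the continuous family of isometries $\phi\circ\psi(\cdot,t)$ to show its marked-point value stays at $\bar q$; this is where the detailed geometry of $\psi$ lying in $\partial\Omega_{b_{\bar p}(x)}\cap d_F^{-1}(m_0)$ and the global integrability of $\phi$ must be combined carefully.
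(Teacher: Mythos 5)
Your skeleton is the paper's: work inside the strip $\psi: W_x\times I_\delta\to Z(x)$ of (7.11.1), show that $\gamma(t)=\psi(x,t)$ stays in a $\phi$-fiber and that its $\text{proj}_{\bar q}$-image is the parallel unit-speed geodesic, then read off norm and angle. But the step you yourself call the crux, claim (b) that $\phi(\psi(y,t))\equiv\bar q$, is not proved, and the mechanism you sketch for it does not work. The slices $\psi(W_x\times\{t\})$ for different $t$ are pairwise disjoint lifts of $S$, so the principle that two isometric lifts agreeing on an open set coincide says nothing about how the point over $\bar q$ moves from slice to slice; and continuity alone cannot exclude that the family $\phi\circ\psi(\cdot,t)$ differs from $\phi|_{W_x}$ by a nontrivial continuous family of isometries of $S$ (e.g.\ when $S$ carries a circle of isometries) --- which is exactly the scenario in which $\text{proj}_{\bar q}(\gamma(t))=\psi(w_t,t)$ with $w_t$ moving and $|D\text{proj}_{\bar q}(v)|>1$. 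What closes this is the argument the paper imports from Lemma \ref{7l}: since $\psi(W_x\times\{t\})=W_{\psi(w,t)}$, each $t$-curve $t\mapsto\psi(w,t)$ has velocity orthogonal to $\Sigma W$, i.e.\ vertical; applying the first-variation computation of (7.12.2) (for instance to $t\mapsto |\psi(y,t)\,\phi^{-1}(\bar q)|$, whose nearest-point directions are horizontal while $\psi(y,\cdot)^+$ is vertical) gives that $\phi\circ\psi(w,\cdot)$ is constant. This yields simultaneously $\gamma\subset\phi^{-1}(\phi(x))$, $\text{proj}_{\bar q}(\gamma(t))=\psi(y,t)$, and hence the norm statement $|D\text{proj}_{\bar q}(v)|=1$; without it your proof has a genuine hole at its central point.

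The second problem is your final angle step: you invoke ``$\text{proj}_{\bar q}$ being $1$-Lipschitz'' to force equality in a quadrangle comparison, but $1$-Lipschitzness of $\text{proj}_{\bar q}$ is precisely the conclusion of Key Lemma \ref{7c}, which is deduced from Lemmas \ref{7l} and \ref{7m} only afterwards via Theorem \ref{4c}; at this stage one only knows $\text{proj}_{\bar q}$ is locally Lipschitz, so the argument as written is circular. No quadrangle is needed: in the dependent case $u=\uparrow_x^{\partial\Omega_c}=\nabla d_F(x)$ by (7.8.1), and both $\gamma$ and its image $\text{proj}_{\bar q}\circ\gamma=\psi(y,\cdot)$ lie in the level set $d_F^{-1}(m_0)$ inside $\partial\Omega_{b_{\bar p}(x)}$, so $|\gamma^+(0)\uparrow_x^F|=\frac\pi2=|(\text{proj}_{\bar q}\circ\gamma)^+(0)\uparrow_y^F|$; the angle between $u$ and $v$ is therefore $\frac\pi2$ at both $x$ and $y=\text{proj}_{\bar q}(x)$, which is how the paper concludes, with no appeal to the global Lipschitz bound being proved.
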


Note that Key Lemma \ref{7c} follows from Lemmas \ref{7l} and \ref{7m}.

\begin{proof}[Proof of Lemma 7.13]

As in the proof of Lemma \ref{7l}, we can assume that $[xF]$ can  extend beyond $x$, i.e. $x$ is an interior point of $[x'z_{x'}]$ for some $x'\in X$.
By (7.11.2), we assume that $\dim (Z(x))=n-1$, and by (7.11.1), there is an isometry: $\psi: W_x\times I_\delta\to Z(x)$ such that $\psi (W_x\times\{0\})=\text{id}_{W_x}$.
Let $\gamma(t)=\psi(x \times t)$, $t\in I_\delta$, a normal geodesic in $Z(x)$ at
$x=\gamma(0)$, $\psi(W_x\times t)=W_{\gamma(t)}$, in particular $\gamma^{+}(t)\bot W_{\gamma(t)}$.

Similar as the proof of Lemma \ref{7l}, we have
that $\gamma\subset \phi^{-1}(\phi(x))$. Then $D\text{proj}_{\bar q}(\gamma^{+}(0))=\gamma_{\bar q}^+(0)$, where $\gamma_{\bar q}(t)=\text{proj}_{\bar q}(\gamma(t))$ and $\gamma_{\bar q}$ is also a geodesic.
It is not hard to see that
$|\gamma^+(0)\uparrow_x^F|=\frac{\pi}2=|\gamma_{\bar q}^+(0)\uparrow_y^F|$.
Thus the proof is finished.
\end{proof}

\section {Appendix}

\vskip4mm

\begin{lemma} \label{8a} $V_x=\{v\in \Sigma_xX,\, |vH_x|=\frac \pi2\}$. If $\partial H=\emptyset$, then $V_x$ and $H_x$ are $\frac \pi2$-apart.
\end{lemma}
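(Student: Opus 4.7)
The proof is essentially a direct application of Lemma \ref{ya} (Yamaguichi's $\pi/2$ lemma) to the pair $(\Sigma_xX, H_x)$. Note that $\Sigma_xX \in \text{Alex}^{n-1}(1)$, and $H_x \subset \Sigma_xX$ is closed and convex (hence locally convex) as established for submetries in \cite{KL} and \cite{Lyt2}. Under the assumption $\partial H_x=\emptyset$, together with the mild non-degeneracy that $H_x$ is not a single point, the pair $(\Sigma_xX, H_x)$ satisfies all hypotheses of Lemma \ref{ya}.

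\textbf{Step 1 (the equality $V_x=\{v:|vH_x|=\pi/2\}$).} The inclusion $\{v:|vH_x|=\pi/2\}\subseteq V_x$ is immediate from \cite{KL}'s definition $V_x = \{v: |vH_x| \ge \pi/2\}$. For the reverse, take $v \in V_x$, so $|vH_x|\ge \pi/2$. Apply Lemma \ref{ya} with $\Sigma=\Sigma_xX$, $\Sigma_0=H_x$, and $p=v$: the conclusion is that $|vh|=\pi/2$ for every $h\in H_x$, and in particular $|vH_x|=\pi/2$. So the two definitions of $V_x$ coincide.

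\textbf{Step 2 (the $\pi/2$-apartness under $\partial H_x=\emptyset$).} The same application of Lemma \ref{ya} in Step 1 simultaneously yields the stronger statement: for every $v \in V_x$ and every $h \in H_x$, $|vh|=\pi/2$. This is precisely the $\pi/2$-apartness of $V_x$ and $H_x$.

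\textbf{Main obstacle.} The principal subtlety is verifying that $H_x$ meets the hypotheses of Lemma \ref{ya}: closed locally convex, without boundary, and not a single point. Closedness and convexity follow from the standard submetry structure. The no-boundary condition $\partial H_x=\emptyset$ is the explicit hypothesis of the lemma (and in the main body of the paper it is guaranteed by (A1) whenever $\partial Y=\emptyset$). The exclusion $H_x\ne\{h_0\}$ is a mild degeneracy: by (A1), $Df_x\colon H_x\to \Sigma_{\bar x}Y$ is a submetry, so $H_x$ a single point would force $\Sigma_{\bar x}Y$ itself to be a single point, corresponding to $Y$ locally being a half-line at $\bar x$ --- a situation that does not arise in the contexts where Lemma \ref{8a} is invoked.
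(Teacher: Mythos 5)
Your Step 2 is essentially the paper's own route: once $\partial H_x=\emptyset$ is in force, the $\frac\pi2$-apartness is obtained exactly as you do, by (the argument of) Lemma \ref{ya} applied to the pair $(\Sigma_xX,H_x)$. The genuine gap is in Step 1. The first assertion of Lemma \ref{8a}, namely $V_x=\{v\in\Sigma_xX:\,|vH_x|=\frac\pi2\}$, is stated unconditionally — only the second sentence assumes $\partial H_x=\emptyset$ — and it is used elsewhere in the paper (e.g. in the proof of (1.9.3), where $\phi$ is merely a submetry) without any boundary information on $H_x$. Your proof of the reverse inclusion invokes Lemma \ref{ya} with $\Sigma_0=H_x$, which requires $H_x$ to be without boundary and not a single point — precisely the hypotheses that the first assertion does not grant. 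Your closing remark that the degenerate cases ``do not arise'' does not dispose of this: for a general submetry $H_x$ can have boundary (e.g. over a boundary point of $Y$) or be a point (e.g. when $\Sigma_{f(x)}Y$ is a point), and the equality is still claimed there. So as written you have only proved the first assertion under the extra hypotheses of the second one.

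The paper proves the first claim by a different, boundary-free argument: by definition $|wH_x|\ge\frac\pi2$ for $w\in V_x$; one then takes $u_i\in\Sigma_xX\setminus(H_x\cup V_x)$ with $u_i\to w$, lets $w_i$ and $h_i$ denote the projections of $u_i$ to $V_x$ and to $H_x$, and uses the structure theory of submetries (Proposition 5.4 in \cite{KL}, the result behind $\Sigma_xX=[H_xV_x]$) to get $|w_ih_i|=\frac\pi2$; the geodesics $[w_ih_i]$ subconverge to a geodesic of length $\frac\pi2$ from $w$ to a point of $H_x$, whence $|wH_x|\le\frac\pi2$. To repair your proposal you would need to replace the use of Lemma \ref{ya} in Step 1 by an argument of this kind (or otherwise treat the cases $\partial H_x\ne\emptyset$ and $H_x$ a point directly); Step 2 can then stand as is.
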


\begin{proof} By definition (\cite{KL}), for $w\in V_x$, $|wH_x|\ge \frac \pi2$. Let $u_i\in \Sigma_x\setminus (H_x\cup V_x)$, $u_i\to w$. Let $w_i$ be the projection of $u_i$ in $V_x$, and $h_i$ the projection in $H_x$. Then $|w_ih_i|=\frac \pi2$ (Proposition 5.4 in \cite{KL}) and $[w_ih_i]$ converges to a minimal geodesic of length $\frac \pi2$ from $w$ to a point in $H_x$ i.e., $|wH_x|\le \frac \pi2$.

Assume that $\partial H_x=\emptyset$. By the standard argument in the proof of Lemma \ref{ya} one sees that $V_x$ and $H_x$ are $\frac \pi2$-apart.
\end{proof}


\begin{theorem} {\rm (\cite{Wi})} \label {8b}  Let $M$ be an open manifold of nonnegative sectional curvature with a soul $S$. Then the canonical bundle $(M,S,\phi)$ is fiber bundle equivalent to $(T^\perp S, S, \text{proj})$; precisely, there is a diffeomorphism, $\psi: T^\perp S\to M$, such that $\text{proj}=\phi\circ \psi$.
\end{theorem}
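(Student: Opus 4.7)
The plan is to take $\psi=\exp:T^\perp S\to M$, the normal exponential map; by (0.2.1) this already satisfies $\phi\circ\exp=\text{proj}$, and both $T^\perp S$ and $M$ are smooth fiber bundles over $S$ ($(M,S,\phi)$ by Perelman's $C^1$ submersion theorem (0.2.3), upgraded to $C^\infty$ in \cite{CS}, \cite{Wi}), so it suffices to show that $\exp$ is (a) a local diffeomorphism everywhere, and (b) a bijection on each fiber.

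For (a), at $(\bar x,\bar v)\in T^\perp S$ I would compute $d\exp$ via Jacobi fields along $\gamma(t)=\exp_{\bar x}(t\bar v)$, which by the flat strip property (0.2.2) is a ray. A vertical variation $\bar w\in T^\perp_{\bar x}S$ yields the Jacobi field with $J(0)=0$, $J'(0)=\bar w$; since $\gamma$ is minimizing, there is no conjugate point in $(0,1]$, so $J(1)\neq 0$ for $\bar w\neq 0$. A horizontal variation $\bar u\in T_{\bar x}S$ yields the Jacobi field obtained by varying the base point of $\bar v$ along a short geodesic in $S$ in direction $\bar u$; the flat strip of (0.2.2) provides an isometric embedding $[0,\epsilon]\times\mathbb R_+\to M$ realizing this variation, forcing $J$ to be parallel of constant norm $|\bar u|$ and moreover producing $J(1)$ horizontal for $\phi$ (as the $s$-derivative of a family of horizontal lifts). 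The two Jacobi-field values then lie in orthogonal subspaces of $T_{\gamma(1)}M$, so $d\exp$ is injective, hence an isomorphism by dimension count.

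The main obstacle is (b): showing each restriction $\exp_{\bar x}:T^\perp_{\bar x}S\to\phi^{-1}(\bar x)$ is a diffeomorphism. By (a) it is a local diffeomorphism, and its image lies in $\phi^{-1}(\bar x)$ by (0.2.1). Properness follows from the ray property: if $|\bar v_n|\to\infty$, then $d_M(\bar x,\exp_{\bar x}(\bar v_n))=|\bar v_n|\to\infty$, and since $\phi^{-1}(\bar x)$ is closed in $M$, the preimage of any compact subset of $\phi^{-1}(\bar x)$ is bounded in $T^\perp_{\bar x}S$. A proper local diffeomorphism between manifolds of the same dimension is a covering map, and the target $\phi^{-1}(\bar x)$ is contractible: $M$ is homotopy equivalent to $S$ via the Sharafutdinov retraction and $\phi|_S=\text{id}_S$, so the long exact sequence of the fibration $\phi$ forces $\pi_n(\phi^{-1}(\bar x))=0$ for all $n$. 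Hence $\exp_{\bar x}$ is a covering of a simply connected space, i.e., a diffeomorphism. Combining (a) and (b), $\exp$ is a smooth bundle map covering $\text{id}_S$ that is fiberwise a diffeomorphism, hence a bundle isomorphism, completing the proof with $\psi=\exp$.
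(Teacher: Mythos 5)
Your choice $\psi=\exp$ cannot work in general, and the step where the argument breaks is the claim that $\gamma(t)=\exp_{\bar x}(t\bar v)$ is a ray. You attribute this to the flat strip property (0.2.2), but (0.2.2) only concerns the strips $\exp_{\bar\gamma(s)}(t\bar v(s))$ over minimal geodesics of $S$ with parallel normal fields, and it is only a local isometric embedding; it says nothing about $\gamma$ minimizing distance to $\bar x$ (or even to $S$) for all time. Yet you use the ray property twice at the crucial points: to exclude conjugate points in (a) (``since $\gamma$ is minimizing\ldots''), and to get properness in (b) via $d_M(\bar x,\exp_{\bar x}(\bar v_n))=|\bar v_n|$. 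Neither use is justified by (0.2.1)--(0.2.3). In fact the ray property cannot hold in general: if every normal geodesic were a ray, then two distinct rays from $\bar x$ could never meet again (two distinct minimal segments from $\bar x$ to a point would contradict minimality of either continuation), and since (0.2.1) forces all normal geodesics through a point $q$ to emanate from the single soul point $\phi(q)$, the map $\exp\colon T^\perp S\to M$ would automatically be injective, hence (with your part (a)) always a diffeomorphism. That contradicts the paper's explicit remark after Theorem 0.1 that $\exp$ may fail to be injective, and it would make the rigidity (0.3.1) --- whose entire content is that injectivity of $\exp_{\bar p}$ at one point is a nontrivial hypothesis with strong consequences --- as well as Theorem 8.2 itself, vacuous. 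So the approach of proving that the normal exponential map is itself the desired bundle equivalence is the wrong route; the point of Wilking's theorem is precisely to produce a diffeomorphism that need not be $\exp$.

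For comparison, the paper's appendix proof follows Cheeger--Gromoll: the diffeomorphism $T^\perp S\to M$ there comes from flowing along a smooth nowhere-zero vector field $X$ approximating $\nabla d_S$ (using only that $d_S$ has no critical points off $S$), and the whole work is to build $X$ so that it is in addition tangent to the $\phi$-fibers. This is done fiberwise: on $F_0=\phi^{-1}(\bar s_0)$ the Busemann function attains its maximum exactly at $\bar s_0$ and $d_{\bar s_0}$ has no critical points there, giving a nowhere-zero field on $F_0\setminus\{\bar s_0\}$; this field is spread over $\phi^{-1}(B_\rho(\bar s_0))$ by parallel transport along horizontal lifts of radial geodesics (here is where (0.2.2) is legitimately used, to see the field stays vertical), and the local fields are glued by a partition of unity pulled back from $S$. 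The resulting flow preserves $\phi$-fibers and yields a fiber-preserving diffeomorphism $\hat\psi\colon T^\perp S\to M$ with $\phi=\mathrm{proj}\circ\hat\psi^{-1}$. If you want to salvage your outline, you would have to replace $\exp$ by a construction of this type (or by Wilking's dual foliation argument); as written, the proposal has a genuine gap that cannot be repaired within its own framework.
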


A basic tool in the proof of Theorem \ref{8b} is the so-called dual foliation technique. Here we will present a direct and simple proof (see comments before Theorem \ref{rig}).

\begin{proof} [Proof of Theorem \ref{8b}]

Recall from \cite{CG} that the bundle map in Theorem \ref{sol-cg}, $f=\text{proj}\circ \psi^{-1}: T^\perp S\to S$, where $\psi: T^\perp S\to M$ is a diffeomorphism obtained
by constructing a vector field on $M$, $X$, which can be viewed as a smooth approximation to
the (forward) gradient field of $d_S$, $\nabla^+ d_S$ (\cite{Pet}), and $X=\nabla d_S$ in a neighborhood of $S$. The construction solely relies on the property that $d_S$ has no critical point on $M\backslash S$ (\cite{GS}), equivalently $\nabla^+d_S$ is non-where zero.

In view of the above, it suffices to construct $X$ with an additional property: at each point $X$ tangents to a $\phi$-fiber.

First, fixing $\bar s_0\in S$, by Theorem \ref{sol} $F_0=\phi^{-1}(\bar s_0)$ is an embedded submanifold of $M$. Observe that restricting to $F_0$, the Busemann function, $b_{\bar p}$, achieves the maximum at $\phi(F_0)=\bar s_0$ (with either extrinsic or intrinsic distances), and the distance function $d_{\bar s_0}: F_0\to \Bbb R_+$, has no critical point, where $F_0$ is equipped with the intrinsic metric. By now we can construct a smooth non-where zero field on $F_0\backslash \{\bar s_0\}$, $X_0$ (as in \cite{CG}).

Let $2\rho$ denote the injectivity radius of $S$. Secondly, via parallel translation along horizontal lifting of radial segments in $B_\rho(\bar s_0)$, we extend $X_0$ on $F_0$ to a smooth vertical field on $\phi^{-1}(B_\rho(\bar s_0))\backslash \phi^{-1}(\bar s_0)$, still denoted by $X_0$. By (0.2.2) it is clear that $X_0$ is vertical and $\phi$-fibers in $\phi^{-1}(B_\rho(\bar s_0))$ are preserved by gradient flows of $X_0$.

Thirdly, given a $\rho$-net, $\{\bar s_i\}$ on $S$, and a partition of unity, $\{f_i\}$, associate
to the locally finite open cover $\{B_\rho(\bar s_i)\}$ for $S$, for each $i$ we obtain
smooth non-where zero vertical field $X_i$ on $\phi^{-1}(B_\rho(\bar s_i))$. We glue $\{X_i\}$ together on $M=\bigcup_i\phi^{-1}(B_\rho(\bar s_i))$ via $\{f_i\circ \phi\}$, to obtain a desired smooth non-where zero vertical field $X$ on $M\backslash S$, whose gradient flows preserve $\phi$-fibers
and $X$ is close to $\nabla d_S$. As mention at the beginning of the proof, one easily define a desired diffeomorphism, $\hat \psi: T^\perp S\to M$, such that $\phi=\text{proj}\circ \hat \psi^{-1}$.
\end{proof}

\begin{proof} [Alternative proof of Lemma \ref{1b}]

Recall that for any $q\in W$, if $\nabla d_x(q)\ne 0$, then $\xi=\frac{\nabla d_x(q)}{|\nabla d_x(q)|}\in \Sigma_qX$ is uniquely determined by $|\xi \Uparrow_q^x|=\max\{|\Uparrow_q^x u|, \, u\in \Sigma_qX\}$.

Arguing by contradiction, if $\xi\notin \Sigma_qW$, then let $\eta\in \Sigma_qW$ (the projection of $\xi$), such that $|\xi\eta|=\min \{|\xi v|,\, v\in \Sigma_qW\}$. We claim that there is $\alpha\in \Uparrow_q^x\cap \Sigma_qW$ such that
$|\Uparrow_q^x\eta|=|\alpha\eta|$. Assuming the claim, we derive that $|\alpha\xi|\le |\alpha\eta|$ (by Lemma \ref{ya} and Toponogov triangle comparison), thus
$$|\Uparrow_q^x\xi|\le |\alpha\xi|\le |\alpha\eta|=|\Uparrow_q^x\eta|\le |\Uparrow_q^x\xi|,$$
and therefore $\eta=\xi$, a contradiction.

We now verify the claim. If $\eta\in \Sigma_qW$ tangents to a minimal geodesic $\gamma\subset W$,
then passing to a subsequence, $[\gamma(t_i)x]$ in $W$ converge (as $t_i\to 0$) to a minimal geodesic $[qx]$ in $W$ with the direction $\alpha$. By the first variation formula (see Remark 4.5.8 in \cite{BBI}), $|\alpha\eta|=|\Uparrow_q^x \eta|$. Because any $\eta'\in \Sigma_qW$ is the limit of $\eta_k$ that
tangent to a minimal geodesic $\gamma_k\in W$, thus $|\alpha'\eta'|=|\Uparrow_q^x \eta'|$ for some $\alpha'\in \Sigma_qW$.
\end{proof}

\vskip10mm

\bibliographystyle{amsplain}

\end{document}